\title{Presentations for the ghost algebra and the label algebra}
\author{Madeline Nurcombe}
\date{}
\begin{document}
\maketitle
\begin{abstract}
The ghost algebra is a two-boundary extension of the Temperley-Lieb algebra, constructed recently via a diagrammatic presentation. The existing two-boundary Temperley-Lieb algebra has a basis of two-boundary string diagrams, where the number of strings connected to each boundary must be even. The ghost algebra is similar, but allows this number to be odd, using bookkeeping dots called \textit{ghosts} to assign a consistent parity to each string endpoint on each boundary. Equivalently, one can discard the ghosts and label each string endpoint with its parity; the resulting algebra is readily generalised to allow any number of possible labels, instead of just odd or even. We call the generalisation the \textit{label algebra}, and establish a non-diagrammatic presentation for it. A similar presentation for the ghost algebra follows from this.

\end{abstract}

\tableofcontents

\pagebreak

\section{Introduction}\label{s:intro}
The ghost algebra \cite{Ghost,Thesis} is a two-boundary extension of the Temperley-Lieb (TL) algebra \cite{TL,JonesTL}, introduced as an alternative to the existing two-boundary TL algebra \cite{MNdGB,dGN}. Similar to the two-boundary TL algebra, the ghost algebra is an associative unital algebra with a diagrammatic presentation. Both algebras have a basis of rectangular string diagrams, where non-crossing strings may be connected to all four sides of the rectangle, including the two opposing sides identified as \textit{boundaries}. The ghost algebra primarily differs from the two-boundary TL algebra by including basis diagrams with an odd number of strings connected to each boundary; in the two-boundary TL algebra, this number must be even. In order to preserve both associativity, and the desired subalgebras isomorphic to the one-boundary TL algebra \cite{MartinSaleur}, basis diagrams of the ghost algebra can include bookkeeping dots called \textit{ghosts} on their boundaries, such that the number of strings plus ghosts on each boundary is even. For example, diagrams such as
\begin{align}
\begin{tikzpicture}[baseline={([yshift=-1mm]current bounding box.center)},scale=0.5]
{
\draw[dotted] (0,0.5)--(3,0.5);
\draw[dotted] (0,-4.5)--(3,-4.5);
\draw [very thick](0,-4.5) -- (0,0.5);
\draw [very thick](3,-4.5)--(3,0.5);
\draw (0,0) arc (-90:0:0.6 and 0.5);
\draw (0,-1) to[out=0,in=180] (3,-3);
\draw (0,-2) arc(90:-90:0.5);
\draw (0,-4)--(3,-4);
\draw (3,0) arc(90:270:0.5);
\draw (3,-2) to[out=180,in=270] (1.8,0.5);
}
\end{tikzpicture}\; ,
&&
\begin{tikzpicture}[baseline={([yshift=-1mm]current bounding box.center)},scale=0.5]
{
\draw[dotted] (0,0.5)--(3,0.5);
\draw[dotted] (0,-4.5)--(3,-4.5);
\draw [very thick](0,-4.5) -- (0,0.5);
\draw [very thick](3,-4.5)--(3,0.5);
\draw (0,0) arc (90:-90:0.5);
\draw (0,-2) to[out=0,in=-90] (1.2,0.5);
\draw (0,-3) to[out=0,in=90] (1.2,-4.5);
\draw (0,-4) arc(90:0:0.6 and 0.5);
\draw (3,0) arc(270:180:0.6 and 0.5);
\draw (3,-1) to[out=180,in=90] (1.8,-4.5);
\draw (3,-2) arc(90:270:0.5);
\draw (3,-4) arc(90:180:0.6 and 0.5);
}
\end{tikzpicture} \; ,
&&
\begin{tikzpicture}[baseline={([yshift=-1mm]current bounding box.center)},scale=0.5]
{
\draw[dotted] (0,0.5)--(3,0.5);
\draw[dotted] (0,-4.5)--(3,-4.5);
\draw [very thick](0,-4.5) -- (0,0.5);
\draw [very thick](3,-4.5)--(3,0.5);
\draw (0,0) arc (90:-90:0.5);
\draw (0,-2) to[out=0,in=180] (3,0);
\draw (0,-3) arc(90:-90:0.5);
\draw (3,-1) ..controls (1.5,-1) and (1.5,-4).. (3,-4);
\draw (3,-2) arc(90:270:0.5);
}
\end{tikzpicture} \; ,
&&
\begin{tikzpicture}[baseline={([yshift=-1mm]current bounding box.center)},scale=0.5]
{
\draw[dotted] (0,0.5)--(3,0.5);
\draw[dotted] (0,-4.5)--(3,-4.5);
\draw [very thick](0,-4.5) -- (0,0.5);
\draw [very thick](3,-4.5)--(3,0.5);
\draw (0,0) arc(-90:0:0.6 and 0.5);
\draw (0,-1) to[out=0,in=-90] (1.2,0.5);
\draw (0,-2) arc(90:-90:0.5);
\draw (0,-4) to[out=0,in=-90] (1.8,0.5);
\draw (3,0) arc(270:180:0.6 and 0.5);
\draw (3,-1) arc(90:270:0.5);
\draw (3,-3) arc(90:270:0.5);
}
\end{tikzpicture}
\end{align}
are basis elements of both the two-boundary TL algebra and the ghost algebra---note that they each have an even number of strings connected to each dotted boundary. Diagrams such as
\begin{align}
\begin{tikzpicture}[baseline={([yshift=-1mm]current bounding box.center)},scale=0.5]
{
\draw[dotted] (0,0.5)--(3,0.5);
\draw[dotted] (0,-4.5)--(3,-4.5);
\draw [very thick](0,-4.5) -- (0,0.5);
\draw [very thick](3,-4.5)--(3,0.5);
\draw (0,0) arc(-90:0:0.6 and 0.5);
\draw (0,-1) to[out=0,in=180] (3,-4);
\draw (0,-2) arc(90:-90:0.5);
\draw (0,-4) arc(90:0:0.6 and 0.5);
\draw (3,0) arc(270:180:0.6 and 0.5);
\draw (3,-1) arc (90:270:0.5);
\draw (3,-3) to[out=180,in=270] (1.8,0.5);
}
\end{tikzpicture}
\; ,
&&
\begin{tikzpicture}[baseline={([yshift=-1mm]current bounding box.center)},scale=0.5]
{
\draw[dotted] (0,0.5)--(3,0.5);
\draw[dotted] (0,-4.5)--(3,-4.5);
\draw [very thick](0,-4.5) -- (0,0.5);
\draw [very thick](3,-4.5)--(3,0.5);
\draw (0,0) arc(-90:0:0.6 and 0.5);
\draw (0,-1) to[out=0,in=180] (3,0);
\draw (0,-2) to[out=0,in=180] (3,-1);
\draw (0,-3) to[out=0,in=180] (3,-2);
\draw (0,-4) to[out=0,in=180] (3,-3);
\draw (3,-4) arc(90:180:0.6 and 0.5);
}
\end{tikzpicture}
\; ,
&&
\begin{tikzpicture}[baseline={([yshift=-1mm]current bounding box.center)},scale=0.5]
{
\draw[dotted] (0,0.5)--(3,0.5);
\draw[dotted] (0,-4.5)--(3,-4.5);
\draw [very thick](0,-4.5) -- (0,0.5);
\draw [very thick](3,-4.5)--(3,0.5);
\draw (3,0) arc(270:180:0.5);
\draw (3,-1) ..controls (2.6,-1) and (2.2,-0.6).. (2.2,0.5);
\draw (3,-2) ..controls (2.4,-2) and (1.9,-0.8).. (1.9,0.5);
\draw (3,-3) ..controls (2.2,-3) and (1.6,-0.9).. (1.6,0.5);
\draw (3,-4) ..controls (2,-4) and (1.3,-1).. (1.3,0.5);
\draw (0,0) ..controls (1,0) and (1.7,-3).. (1.7,-4.5);
\draw (0,-1) ..controls (0.8,-1) and (1.4,-3.1).. (1.4,-4.5);
\draw (0,-2) ..controls (0.6,-2) and (1.1,-3.2).. (1.1,-4.5);
\draw (0,-3) ..controls (0.4,-3) and (0.8,-3.3).. (0.8,-4.5);
\draw (0,-4) arc(90:0:0.5);
}
\end{tikzpicture} \; ,
&&
\begin{tikzpicture}[baseline={([yshift=-1mm]current bounding box.center)},scale=0.5]
{
\draw[dotted] (0,0.5)--(3,0.5);
\draw[dotted] (0,-4.5)--(3,-4.5);
\draw [very thick](0,-4.5) -- (0,0.5);
\draw [very thick](3,-4.5)--(3,0.5);
\draw (0,0) to[out=0,in=180] (3,-1);
\draw (0,-1) arc(90:-90:0.5);
\draw (0,-3) to[out=0,in=180] (3,-2);
\draw (0,-4) arc(90:0:0.6 and 0.5);
\draw (3,0) arc(270:180:0.6 and 0.5);
\draw (3,-3) to[out=180,in=90] (1.8,-4.5);
\draw (3,-4) arc(90:180:0.6 and 0.5);
}
\end{tikzpicture}
\end{align}
are not basis diagrams of the two-boundary TL algebra, as they have an odd number of strings connected to each boundary. They are also not basis diagrams of the ghost algebra, as the number of strings plus ghosts is odd along each boundary. However, similar diagrams such as 
\begin{align}
\begin{tikzpicture}[baseline={([yshift=-1mm]current bounding box.center)},scale=0.5]
{
\draw[dotted] (0,0.5)--(3,0.5);
\draw[dotted] (0,-4.5)--(3,-4.5);
\draw [very thick](0,-4.5) -- (0,0.5);
\draw [very thick](3,-4.5)--(3,0.5);
\draw (0,0) arc(-90:0:0.6 and 0.5);
\draw (0,-1) to[out=0,in=180] (3,-4);
\draw (0,-2) arc(90:-90:0.5);
\draw (0,-4) arc(90:0:0.6 and 0.5);
\draw (3,0) arc(270:180:0.6 and 0.5);
\draw (3,-1) arc (90:270:0.5);
\draw (3,-3) to[out=180,in=270] (1.8,0.5);
\filldraw (1.2,0.5) circle (0.08);
\filldraw (1.8,-4.5) circle (0.08);
}
\end{tikzpicture}\; ,
&&
\begin{tikzpicture}[baseline={([yshift=-1mm]current bounding box.center)},scale=0.5]
{
\draw[dotted] (0,0.5)--(3,0.5);
\draw[dotted] (0,-4.5)--(3,-4.5);
\draw [very thick](0,-4.5) -- (0,0.5);
\draw [very thick](3,-4.5)--(3,0.5);
\draw (0,0) arc(-90:0:0.8 and 0.5);
\draw (0,-1) to[out=0,in=180] (3,0);
\draw (0,-2) to[out=0,in=180] (3,-1);
\draw (0,-3) to[out=0,in=180] (3,-2);
\draw (0,-4) to[out=0,in=180] (3,-3);
\draw (3,-4) arc(90:180:0.6 and 0.5);
\filldraw (0.4,0.5) circle (0.08);
\filldraw (1.2,-4.5) circle (0.08);
}
\end{tikzpicture}
\; ,
&&
\begin{tikzpicture}[baseline={([yshift=-1mm]current bounding box.center)},scale=0.5]
{
\draw[dotted] (0,0.5)--(3,0.5);
\draw[dotted] (0,-4.5)--(3,-4.5);
\draw [very thick](0,-4.5) -- (0,0.5);
\draw [very thick](3,-4.5)--(3,0.5);
\draw (3,0) arc(270:180:0.5);
\draw (3,-1) ..controls (2.6,-1) and (2.2,-0.6).. (2.2,0.5);
\draw (3,-2) ..controls (2.4,-2) and (1.9,-0.8).. (1.9,0.5);
\draw (3,-3) ..controls (2.2,-3) and (1.6,-0.9).. (1.6,0.5);
\draw (3,-4) ..controls (2,-4) and (1.3,-1).. (1.3,0.5);
\draw (0,0) ..controls (1,0) and (1.7,-3).. (1.7,-4.5);
\draw (0,-1) ..controls (0.8,-1) and (1.4,-3.1).. (1.4,-4.5);
\draw (0,-2) ..controls (0.6,-2) and (1.1,-3.2).. (1.1,-4.5);
\draw (0,-3) ..controls (0.4,-3) and (0.8,-3.3).. (0.8,-4.5);
\draw (0,-4) arc(90:0:0.5);
\filldraw (0.65,0.5) circle (0.08);
\filldraw (2.35,-4.5) circle (0.08);
}
\end{tikzpicture} \; ,
&&
\begin{tikzpicture}[baseline={([yshift=-1mm]current bounding box.center)},scale=0.5]
{
\draw[dotted] (0,0.5)--(3,0.5);
\draw[dotted] (0,-4.5)--(3,-4.5);
\draw [very thick](0,-4.5) -- (0,0.5);
\draw [very thick](3,-4.5)--(3,0.5);
\draw (0,0) to[out=0,in=180] (3,-1);
\draw (0,-1) arc(90:-90:0.5);
\draw (0,-3) to[out=0,in=180] (3,-2);
\draw (0,-4) arc(90:0:0.6 and 0.5);
\draw (3,0) arc(270:180:0.6 and 0.5);
\draw (3,-3) to[out=180,in=90] (1.4,-4.5);
\draw (3,-4) arc(90:180:0.8 and 0.5);
\filldraw (1.25,0.5) circle (0.08);
\filldraw (1,-4.5) circle (0.08);
\filldraw (1.8,-4.5) circle (0.08);
\filldraw (2.6,-4.5) circle (0.08);
}
\end{tikzpicture}
\end{align}
are indeed basis diagrams of the ghost algebra, because the number of string endpoints plus ghosts on each boundary is even. 

In both algebras, multiplication is by concatenation of basis diagrams, extended bilinearly. Concatenation may produce strings connected to the boundaries at both ends, called \textit{boundary arcs}, or loops; each of these is removed and replaced by a factor of one of the defining parameters of the algebra, except for top-to-bottom boundary arcs in the two-boundary TL algebra, which are not removed.

One motivation for constructing the ghost algebra was that, by relaxing the requirement of an even number of strings connected to each boundary, it would encode more general boundary conditions in the corresponding statistical mechanical lattice model, and thus potentially describe more general physical behaviour, than the two-boundary TL algebra. As seen in \cite{Ghost}, the ghost algebra does admit more general solutions to the boundary Yang-Baxter equation than the two-boundary TL algebra, suggesting that this may be the case, though the resulting physics has not yet been studied.

To better understand the ghost algebra, and its representation theory, we seek a non-diagrammatic presentation. As we will see in Section \ref{ss:ghostdef}, where we define the ghost algebra, the inclusion of ghosts in ghost algebra basis diagrams serves only to consistently assign a parity to each string endpoint on each boundary. We can thus obtain an isomorphic algebra by removing the ghosts, and labelling each string endpoint on each boundary as even or odd. It is natural to generalise the resulting algebra by allowing labels from any fixed set $X$; we call this the \textit{label algebra} $\lab_n(X)$. 

Our goal for this paper is thus to find a non-diagrammatic presentation for the label algebra, from which a similar presentation for the ghost algebra may be deduced. Our strategy is to define an algebra via a list of non-diagrammatic generators and relations---the \textit{algebraic label algebra} $\A_n(X)$---and show that this is isomorphic to the \textit{diagrammatic label algebra} $\lab_n(X)$. We establish an analogous set of generators for the diagrammatic label algebra, and define a map $\phi$ that sends generators of $\A_n(X)$ to the corresponding generators of $\lab_n(X)$. Noting that this map preserves the relations of $\A_n(X)$, it follows that $\phi$ is a surjective homomorphism. We then show that $\phi$ is injective, and thus the algebraic and diagrammatic label algebras are isomorphic.

A more detailed layout of the paper is as follows.

We begin by presenting the diagrammatic definitions of the ghost algebra and the label algebra, in Sections \ref{ss:ghostdef} and \ref{ss:label}, respectively. We also show that the label algebra has a specialisation with two labels that is isomorphic to the ghost algebra, as indicated above.

Next, in Section \ref{s:diagramgenerators}, we establish a set of diagrammatic generators for $\lab_n(X)$. We write each basis diagram as a product of the diagrammatic generators that does not produce any boundary arcs or loops. We separate our basis diagrams into \textit{even} and \textit{odd} diagrams, according to the parity of the number of strings connected to each boundary. In particular, we write each odd diagram as a product $\undertilde{W}\undertilde{T}$, where $\undertilde{W}$ is an odd diagram of a particular form, and $\undertilde{T}$ is an even diagram.

In Section \ref{s:algrels}, we introduce the algebraic label algebra $\A_n(X)$ by a set of generators and relations, distinguishing this from the diagrammatic label algebra $\lab_n(X)$; we wish to show that these algebras are isomorphic. We thus introduce a homomorphism $\phi:\A_n(X) \to \lab_n(X)$, defined by mapping algebraic generators to diagrammatic generators, and deduce from the results in Section \ref{s:diagramgenerators} that it is surjective. The goal for the remainder of the paper is then to show that $\phi$ is injective.

In Section \ref{s:ParityReduced}, we introduce the notion of parity of words in $\A_n(X)$, and two generalisations of the idea of a reduced word. The concept of an \textit{even-reduced} word is particularly useful as it is closely tied to the notion of \textit{reduced monomials} of the symplectic blob algebra \cite{TowersI}, a finite-dimensional quotient of the two-boundary TL algebra, discussed later in Section \ref{s:sympblob}. The term \textit{label-reduced} is more basic, and applies to words containing a minimal number of generators that map to diagrammatic generators with strings connected to the boundaries; this is used even more regularly throughout the rest of the paper.

In Section \ref{s:WTform}, we show that each word in $\A_n(X)$ is equal to a scalar multiple of a word in $WT$ \textit{form}, analogous to the diagram products $\undertilde{W}\undertilde{T}$ from Section \ref{s:diagramgenerators}. That is, each word is equal to a scalar multiple of a word $WT$, where $W$ is either the identity, or an odd word of a particular form, and $T$ is an even-reduced word. The idea is to show that if $\phi$ maps two words in $WT$ form to the same diagram in $\lab_n(X)$, then those words are equal in $\A_n(X)$. However, we first need to show that each word in $WT$ form is mapped to a basis diagram of $\lab_n(X)$.

In Section \ref{s:sympblob}, we establish the connection between the even generators of $\A_n(X)$ and the generators of the symplectic blob algebra. We then exploit this to show that even-reduced words in $\A_n(X)$ map to even diagrams in $\lab_n(X)$, and that $\phi$ is injective when restricted to even words, using the presentation for the symplectic blob algebra established in \cite{SympBlobPres}. This is everything we need for even words and diagrams for injectivity; it remains to handle the odd words and diagrams. Section \ref{s:leftdescentsets} deals primarily with those odd words, providing some further information on odd words in $WT$ form.

Finally, in Section \ref{s:labeliso}, we show that each word in $WT$ form maps to a basis diagram of $\lab_n(X)$. We present a series of technical lemmas, and use them to establish that concatenating the diagrams $\phi(W)$ and $\phi(T)$ does not produce any boundary arcs. We then show that if two words in $WT$ form map to the same diagram in $\lab_n(X)$, then they are equal in $\A_n(X)$. It follows that $\phi$ is an isomorphism.

\subsection*{Conventions}
All algebras are over $\C$, and all parameters are taken to be indeterminates. Each diagram algebra in this paper has a subscript indicating the number of nodes on each side of its basis diagrams, and a superscript indicating the number of boundaries, if any. We use $\N$ to mean the positive integers; that is, $0 \notin \N$.

\subsection*{Acknowledgements}
The author was supported by the Australian Government Research Training Program Stipend and Tuition Fee Offset, as a PhD student at the University of Queensland, and then by the Australian Research Council under the Discovery Project DP240101572.
The author thanks J\o{}rgen Rasmussen, Jon Links and Yvan Saint-Aubin for helpful discussions and comments.

\section{Algebra definitions} \label{s:definitions}
In this section, we define the ghost algebra and the label algebra diagrammatically, and show that the ghost algebra is a specialisation of the label algebra.

\subsection{Ghost algebra} \label{ss:ghostdef}
We first define the basis diagrams of the ghost algebra $\Gh_n$. A $\Gh_n$-\textit{diagram} consists of a rectangle with $n$ nodes on each of its left and right sides, and dotted \textit{boundary} lines at the top and bottom. Non-crossing strings are drawn within the rectangle such that each string connects a node to another node, or to a boundary. Each node has exactly one string endpoint attached to it. The string endpoints attached to each boundary partition the boundaries into a number of disjoint regions called \textit{domains}, and each domain may contain a finite number of filled black circles called \textit{ghosts}. We require that, on each boundary, the number of string endpoints plus ghosts must be even; once we define multiplication, this ensures that the algebra is associative.

For example,
\begin{align}
\begin{tikzpicture}[baseline={([yshift=-1mm]current bounding box.center)},scale=0.5]
{
\draw[dotted] (0,0.5)--(3,0.5);
\draw[dotted] (0,-5.5)--(3,-5.5);
\draw [very thick](0,-5.5) -- (0,0.5);
\draw [very thick](3,-5.5)--(3,0.5);
\draw (0,0) arc(90:-90:0.5);
\draw (0,-2) to[out=0,in=-90] (1.2,0.5);
\draw (0,-3) to[out=0,in=180] (3,-2);
\draw (0,-4) arc(90:-90:0.5);
\draw (3,0) arc(90:270:0.5);
\draw (3,-3) to[out=180,in=90] (2,-5.5);
\draw (3,-4) arc(90:270:0.5);
\filldraw (0.6,0.5) circle (0.08);
\filldraw (1,-5.5) circle (0.08);
}
\end{tikzpicture}\; ,
&&
\begin{tikzpicture}[baseline={([yshift=-1mm]current bounding box.center)},scale=0.5]
{
\draw[dotted] (0,0.5)--(3,0.5);
\draw[dotted] (0,-5.5)--(3,-5.5);
\draw [very thick](0,-5.5) -- (0,0.5);
\draw [very thick](3,-5.5)--(3,0.5);
\draw (0,0) ..controls (1.5,0) and (1.8,-3.5).. (1.8,-5.5);
\draw (0,-1) to[out=0,in=0] (0,-4);
\draw (0,-2) arc(90:-90:0.5);
\draw (0,-5) arc(90:0:0.6 and 0.5);
\draw (3,0) ..controls (1.7,0) and (1.7,-5).. (3,-5);
\draw (3,-1) arc(90:270:0.5);
\draw (3,-3) arc(90:270:0.5);
\filldraw (1.2,-5.5) circle (0.08);
\filldraw (2.4,-5.5) circle (0.08);
}
\end{tikzpicture}\; ,
&&
\begin{tikzpicture}[baseline={([yshift=-1mm]current bounding box.center)},scale=0.5]
{
\draw[dotted] (0,0.5)--(3,0.5);
\draw[dotted] (0,-5.5)--(3,-5.5);
\draw [very thick](0,-5.5) -- (0,0.5);
\draw [very thick](3,-5.5)--(3,0.5);
\draw (0,0) arc(90:-90:0.5);
\draw (0,-2) to[out=0,in=-90] (1,0.5);
\draw (0,-3) arc(-90:0:0.25) arc(180:0:0.25) --(0.75,-3.5) arc(-180:0:0.25) --(1.25,-2) arc(180:0:0.25) --(1.75,-2.5) arc(-180:0:0.25) --(2.25,-1.75) arc(180:0:0.25) arc(180:270:0.25);
\draw (0,-4) arc(90:-90:0.5);
\draw (3,0) arc(90:270:0.5);
\draw (3,-3) to[out=180,in=90] (2,-5.5);
\draw (3,-4) arc(90:270:0.5);
\filldraw (0.5,0.5) circle (0.08);
\filldraw (1.4,0.5) circle (0.08);
\filldraw (1.8,0.5) circle (0.08);
\filldraw (2.2,0.5) circle (0.08);
\filldraw (2.6,0.5) circle (0.08);
\filldraw (1,-5.5) circle (0.08);
}
\end{tikzpicture}\; ,
&&
\begin{tikzpicture}[baseline={([yshift=-1mm]current bounding box.center)},scale=0.5]
{
\draw[dotted] (0,0.5)--(3,0.5);
\draw[dotted] (0,-5.5)--(3,-5.5);
\draw [very thick](0,-5.5) -- (0,0.5);
\draw [very thick](3,-5.5)--(3,0.5);
\draw (0,0) to[out=0,in=180] (3,-2);
\draw (0,-1) to[out=0,in=180] (3,-3);
\draw (0,-2) to[out=0,in=180] (3,-4);
\draw (0,-3) to[out=0,in=180] (3,-5);
\draw (0,-4) arc(90:-90:0.5);
\draw (3,0) arc(270:180:0.5);
\draw (3,-1) to[out=180,in=270] (2,0.5);
}
\end{tikzpicture}\; ,
&&
\begin{tikzpicture}[baseline={([yshift=-1mm]current bounding box.center)},scale=0.5]
{
\draw[dotted] (0,0.5)--(3,0.5);
\draw[dotted] (0,-5.5)--(3,-5.5);
\draw [very thick](0,-5.5) -- (0,0.5);
\draw [very thick](3,-5.5)--(3,0.5);
\draw (0,0) arc(90:-90:0.5);
\draw (0,-2) to[out=0,in=-90] (1.2,0.5);
\draw (0,-3) to[out=0,in=180] (3,-2);
\draw (0,-4) arc(90:-90:0.5);
\draw (3,0) arc(90:270:0.5);
\draw (3,-3) to[out=180,in=90] (2,-5.5);
\draw (3,-4) arc(90:270:0.5);
\filldraw (0.6,0.5) circle (0.08);
\filldraw (2.5,-5.5) circle (0.08);
}
\end{tikzpicture}
\end{align}
are all $\Gh_6$-diagrams.

We say two $\Gh_n$-diagrams are equal if their strings connect the same nodes to each other or to the same boundary, and the numbers of ghosts in corresponding domains are equivalent modulo $2$. This means that the first and third diagrams above are equal, while the first and fifth are not, due to the placement of the ghost on the bottom boundary. Hence each $\Gh_n$-diagram is equal to a $\Gh_n$-diagram with at most one ghost in each domain. Noting that strings with both ends on the boundaries are disallowed, it follows that there are finitely many $\Gh_n$-diagrams for each $n \in \N$, and thus the ghost algebra $\Gh_n$ is finite-dimensional. Indeed, the dimension of $\Gh_n$ is calculated in \cite[App. A]{Ghost}. In contrast, basis diagrams of the two-boundary TL algebra may contain arbitrarily many strings connecting the top boundary to the bottom boundary, so the two-boundary TL algebra is infinite-dimensional, unlike the TL and one-boundary TL algebras.

The \textit{ghost algebra} $\Gh_n$ is the complex vector space with the set of all $\Gh_n$-diagrams as its basis, and multiplication defined on pairs of $\Gh_n$-diagrams as follows, extended bilinearly. To multiply two $\Gh_n$-diagrams, we first concatenate them, and look for any loops or \textit{boundary arcs}---strings with both ends connected to either boundary---formed. Each loop is removed and replaced by a factor of the \textit{loop parameter} $\beta$. Along each boundary, we number the string endpoints and ghosts from left to right, starting from $1$. Each boundary arc is removed, with a ghost left at each of its endpoints, and replaced by a factor of the appropriate \textit{boundary parameter} from Table \ref{tab:params}, according to the parity of its endpoints. Removing the vertical line segment in the middle of the concatenated diagrams yields a scalar multiple of a $\Gh_n$-diagram; this may be neatened by continuously deforming the strings and, in each domain, removing pairs of ghosts to leave at most one ghost, if desired.

\begin{table}[H]
\centering
\caption{Table of boundary arcs and their associated parameters in the ghost algebra.}\label{tab:params}
\vspace{2mm}
\begin{tabular}{|c|c||c|c|}
\hline
Parameter & Boundary arc & Parameter & Boundary arc\\
\hline
$\alpha_1$ & $\begin{tikzpicture}[baseline={([yshift=-1mm]current bounding box.center)},xscale=0.5,yscale=0.5]
{
\draw[dotted] (0,0.5)--(2,0.5);
\draw (0.2,0.5) arc (-180:0:0.8);
\node at (0.2,0.5) [anchor=south] {\scriptsize odd};
\node at (1.8,0.5) [anchor=south] {\scriptsize even};
\node at (1,-0.5) {};
}
\end{tikzpicture}$
&$\delta_1$ &$\begin{tikzpicture}[baseline={([yshift=-1mm]current bounding box.center)},xscale=0.5,yscale=0.5]
{
\draw[dotted] (0,0)--(2,0);
\draw (0.2,0) arc (180:0:0.8);
\node at (0.2,-0.9) [anchor=south] {\scriptsize odd};
\node at (1.8,-0.9) [anchor=south] {\scriptsize even};
\node at (1,1) {};
}
\end{tikzpicture}$\\
\hline
$\alpha_2$ &$\begin{tikzpicture}[baseline={([yshift=-1mm]current bounding box.center)},xscale=0.5,yscale=0.5]
{
\draw[dotted] (0,0.5)--(2,0.5);
\draw (0.2,0.5) arc (-180:0:0.8);
\node at (0.2,0.5) [anchor=south] {\scriptsize even};
\node at (1.8,0.5) [anchor=south] {\scriptsize odd};
\node at (1,-0.5) {};
}
\end{tikzpicture}$ 
&$\delta_2$& $\begin{tikzpicture}[baseline={([yshift=-1mm]current bounding box.center)},xscale=0.5,yscale=0.5]
{
\draw[dotted] (0,0)--(2,0);
\draw (0.2,0) arc (180:0:0.8);
\node at (0.2,-0.9) [anchor=south] {\scriptsize even};
\node at (1.8,-0.9) [anchor=south] {\scriptsize odd};
\node at (1,1) {};
}
\end{tikzpicture}$\\
\hline
$\alpha_3$ & $\begin{tikzpicture}[baseline={([yshift=4mm]current bounding box.south)},xscale=0.5,yscale=0.5]
{
\draw[dotted] (0,0.5)--(2,0.5);
\draw (0.2,0.5) arc (-180:0:0.8);
\node at (0.2,0.5) [anchor=south] {\scriptsize odd};
\node at (1.8,0.5) [anchor=south] {\scriptsize odd};
\node at (1,-0.5) {};
}
\end{tikzpicture}\, , \  \begin{tikzpicture}[baseline={([yshift=4mm]current bounding box.south)},xscale=0.5,yscale=0.5]
{
\draw[dotted] (0,0.5)--(2,0.5);
\draw (0.2,0.5) arc (-180:0:0.8);
\node at (0.2,0.5) [anchor=south] {\scriptsize even};
\node at (1.8,0.5) [anchor=south] {\scriptsize even};
\node at (1,-0.5) {};
}
\end{tikzpicture}$
&$\delta_3$ &$\begin{tikzpicture}[baseline={([yshift=-1mm]current bounding box.center)},xscale=0.5,yscale=0.5]
{
\draw[dotted] (0,0)--(2,0);
\draw (0.2,0) arc (180:0:0.8);
\node at (0.2,-0.9) [anchor=south] {\scriptsize odd};
\node at (1.8,-0.9) [anchor=south] {\scriptsize odd};
\node at (1,1) {};
}
\end{tikzpicture}\, , \ \begin{tikzpicture}[baseline={([yshift=-1mm]current bounding box.center)},xscale=0.5,yscale=0.5]
{
\draw[dotted] (0,0)--(2,0);
\draw (0.2,0) arc (180:0:0.8);
\node at (0.2,-0.9) [anchor=south] {\scriptsize even};
\node at (1.8,-0.9) [anchor=south] {\scriptsize even};
\node at (1,1) {};
}
\end{tikzpicture}$\\
\hline \hline
$\gamma_{12}$ & $\begin{tikzpicture}[baseline={([yshift=5mm]current bounding box.south)},xscale=0.5,yscale=0.5]
{
\draw[dotted] (0,0)--(2,0);
\draw[dotted] (0,1)--(2,1);
\draw (1,1)--(1,0);
\node at (1,1) [anchor=south] {\scriptsize odd};
\node at (1,-0.9) [anchor=south] {\scriptsize even};
}
\end{tikzpicture}\, , \ \  \begin{tikzpicture}[baseline={([yshift=5mm]current bounding box.south)},xscale=0.5,yscale=0.5]
{
\draw[dotted] (0,0)--(2,0);
\draw[dotted] (0,1)--(2,1);
\draw (1,1)--(1,0);
\node at (1,1) [anchor=south] {\scriptsize even};
\node at (1,-0.9) [anchor=south] {\scriptsize odd};
}
\end{tikzpicture}$
&$\gamma_3$ & $\begin{tikzpicture}[baseline={([yshift=5mm]current bounding box.south)},xscale=0.5,yscale=0.5]
{
\draw[dotted] (0,0)--(2,0);
\draw[dotted] (0,1)--(2,1);
\draw (1,1)--(1,0);
\node at (1,1) [anchor=south] {\scriptsize odd};
\node at (1,-0.9) [anchor=south] {\scriptsize odd};
}
\end{tikzpicture}\,  , \ \  \begin{tikzpicture}[baseline={([yshift=5mm]current bounding box.south)},xscale=0.5,yscale=0.5]
{
\draw[dotted] (0,0)--(2,0);
\draw[dotted] (0,1)--(2,1);
\draw (1,1)--(1,0);
\node at (1,1) [anchor=south] {\scriptsize even};
\node at (1,-0.9) [anchor=south] {\scriptsize even};
}
\end{tikzpicture}$
\\
\hline
\end{tabular}
\end{table}

For example, with $n=8$, we have
\begin{align}
\begin{tikzpicture}[baseline={([yshift=-1mm]current bounding box.center)},scale=0.55]
{
\draw[dotted] (0,0.5)--(6,0.5);
\draw[dotted] (0,-7.5)--(6,-7.5);
\draw [very thick] (0,-7.5)--(0,0.5);
\draw [very thick] (3,-7.5)--(3,0.5);
\draw [very thick] (6,-7.5)--(6,0.5);
\draw (0,-2) ..controls (1.5,-2) and (1.5,-7).. (0,-7);
\draw (0,0) arc (90:-90:0.5);
\draw (0,-3) arc (90:-90:0.5);
\draw (0,-5) arc (90:-90:0.5);
\node at (0.6,0.5) [anchor=south] {\scriptsize 1};
\node at (1.2,0.5) [anchor=south] {\scriptsize 2};
\filldraw (0.6,0.5) circle (0.08);
\draw (3,0) arc (90:270:0.5);
\draw (3,-2) to[out=180,in=270] (1.8,0.5);
\draw (3,-3) arc (90:270:0.5);
\draw (3,-5) arc (90:270:0.5);
\draw (3,-7) ..controls (1.5,-7) and (1.2,-2).. (1.2,0.5);
\node at (2.4,0.5) [anchor=south] {\scriptsize 4};
\node at (1.8,0.5) [anchor=south] {\scriptsize 3};
\filldraw (2.4,0.5) circle (0.08);
\draw (3,0) arc(-90:0:0.6 and 0.5);
\draw (3,-1) to[out=0,in=-90] (4.2,0.5);
\draw (3,-2) ..controls (4.6,-2) and (4.8,-5).. (4.8,-7.5);
\draw (3,-3) to[out=0,in=0] (3,-6);
\draw (3,-4) arc(90:-90:0.5);
\draw (3,-7) arc (90:0:0.6 and 0.5);
\node at (3.6,0.5) [anchor=south] {\scriptsize 5};
\node at (4.2,0.5) [anchor=south] {\scriptsize 6};
\node at (4.8,0.5) [anchor=south] {\scriptsize 7};
\node at (5.4,0.5) [anchor=south] {\scriptsize 8};
\node at (3.6,-7.5) [anchor=north] {\scriptsize 1};
\node at (4.2,-7.5) [anchor=north] {\scriptsize 2};
\node at (4.8,-7.5) [anchor=north] {\scriptsize 3};
\node at (5.4,-7.5) [anchor=north] {\scriptsize 4};
\filldraw (4.2,-7.5) circle (0.08);
\filldraw (5.4,-7.5) circle (0.08);
\draw (6,0) arc (270:180:0.6 and 0.5);
\draw (6,-3) arc (90:270:0.5);
\draw (6,-1) to[out=180,in=270] (4.8,0.5);
\draw (6,-2) to[out=180,in=180] (6,-5);
\draw (6,-6) arc(90:270:0.5);
}
\end{tikzpicture}
\ &=\,
\beta\alpha_1\gamma_{12}\gamma_3\;
\begin{tikzpicture}[baseline={([yshift=-1mm]current bounding box.center)},scale=0.55]
{
\draw[dotted] (0,0.5)--(6,0.5);
\draw[dotted] (0,-7.5)--(6,-7.5);
\draw [very thick] (0,-7.5)--(0,0.5);
\draw [very thick] (3,-7.5)--(3,0.5);
\draw [very thick] (6,-7.5)--(6,0.5);
\draw (0,-2) ..controls (1.5,-2) and (1.5,-7).. (0,-7);
\draw (0,0) arc (90:-90:0.5);
\draw (0,-3) arc (90:-90:0.5);
\draw (0,-5) arc (90:-90:0.5);
\node at (0.6,0.5) [anchor=south] {\scriptsize 1};
\node at (1.2,0.5) [anchor=south] {\scriptsize 2};
\filldraw (0.6,0.5) circle (0.08);
\filldraw (1.2,0.5) circle (0.08);
\node at (2.4,0.5) [anchor=south] {\scriptsize 4};
\node at (1.8,0.5) [anchor=south] {\scriptsize 3};
\filldraw (1.8,0.5) circle (0.08);
\filldraw (2.4,0.5) circle (0.08);
\node at (3.6,0.5) [anchor=south] {\scriptsize 5};
\node at (4.2,0.5) [anchor=south] {\scriptsize 6};
\node at (4.8,0.5) [anchor=south] {\scriptsize 7};
\node at (5.4,0.5) [anchor=south] {\scriptsize 8};
\filldraw (3.6,0.5) circle (0.08);
\filldraw (4.2,0.5) circle (0.08);
\node at (3.6,-7.5) [anchor=north] {\scriptsize 1};
\node at (4.2,-7.5) [anchor=north] {\scriptsize 2};
\node at (4.8,-7.5) [anchor=north] {\scriptsize 3};
\node at (5.4,-7.5) [anchor=north] {\scriptsize 4};
\filldraw (3.6,-7.5) circle (0.08);
\filldraw (4.2,-7.5) circle (0.08);
\filldraw (4.8,-7.5) circle (0.08);
\filldraw (5.4,-7.5) circle (0.08);
\draw (6,0) arc (270:180:0.6 and 0.5);
\draw (6,-3) arc (90:270:0.5);
\draw (6,-1) to[out=180,in=270] (4.8,0.5);
\draw (6,-2) to[out=180,in=180] (6,-5);
\draw (6,-6) arc(90:270:0.5);
}
\end{tikzpicture}
\ =\,
\beta \alpha_1 \gamma_{12}\gamma_3\;
\begin{tikzpicture}[baseline={([yshift=-1mm]current bounding box.center)},scale=0.55]
{
\draw[dotted] (0,0.5)--(3,0.5);
\draw[dotted] (0,-7.5)--(3,-7.5);
\draw [very thick] (0,-7.5)--(0,0.5);
\draw [very thick] (3,-7.5)--(3,0.5);
\draw (0,-2) ..controls (1.5,-2) and (1.5,-7).. (0,-7);
\draw (0,0) arc (90:-90:0.5);
\draw (0,-3) arc (90:-90:0.5);
\draw (0,-5) arc (90:-90:0.5);
\draw (3,0) arc (270:180:0.6 and 0.5);
\draw (3,-3) arc (90:270:0.5);
\draw (3,-1) to[out=180,in=270] (1.8,0.5);
\draw (3,-2) to[out=180,in=180] (3,-5);
\draw (3,-6) arc(90:270:0.5);
\node at (1,0.5) [anchor=south] {\phantom{\scriptsize 1}};
\node at (0.6,-7.5) [anchor=north] {\phantom{\scriptsize 1}};
}
\end{tikzpicture}\; ,
\end{align}
and
\begin{align}
\begin{tikzpicture}[baseline={([yshift=-1mm]current bounding box.center)},scale=0.55]
{
\draw[dotted] (0,0.5)--(6,0.5);
\draw[dotted] (0,-7.5)--(6,-7.5);
\draw [very thick] (0,-7.5)--(0,0.5);
\draw [very thick] (3,-7.5)--(3,0.5);
\draw [very thick] (6,-7.5)--(6,0.5);
\draw (0,0) arc (90:-90:0.5);
\draw (0,-2) to[out=0,in=-90] (1.2,0.5);
\draw (0,-3) to[out=0,in=180] (3,-2);
\draw (0,-4) arc (90:-90:0.5);
\draw (0,-6) arc (90:-90:0.5);
\filldraw (0.6,0.5) circle (0.08);
\node at (0.6,0.5) [anchor=south] {\scriptsize 1};
\node at (1.2,0.5) [anchor=south] {\scriptsize 2};
\node at (0.6,-7.5) [anchor=north] {\scriptsize 1};
\draw (3,0) arc (270:180:0.6 and 0.5);
\draw (3,-1) arc (270:180:1.2 and 1.5);
\draw (3,-3) arc (90:270:0.5);
\draw (3,-5) arc (90:180:1.8 and 2.5);
\draw (3,-6) arc (90:180:1.2 and 1.5);
\draw (3,-7) arc (90:180:0.6 and 0.5);
\filldraw (0.6,-7.5) circle (0.08);
\node at (1.8,0.5) [anchor=south] {\scriptsize 3};
\node at (2.4,0.5) [anchor=south] {\scriptsize 4};
\node at (1.2,-7.5) [anchor=north] {\scriptsize 2};
\node at (1.8,-7.5) [anchor=north] {\scriptsize 3};
\node at (2.4,-7.5) [anchor=north] {\scriptsize 4};
\draw (3,0) arc (-90:0:0.6 and 0.5);
\draw (3,-1) arc (-90:0:1.8 and 1.5);
\draw (3,-2) to[out=0,in=180] (6,0);
\draw (3,-3) to[out=0,in=180] (6,-1);
\draw (3,-4) to[out=0,in=180] (6,-6);
\draw (3,-5) arc (90:0:1.725 and 2.5);
\draw (3,-6) ..controls (3.8,-6) and (3.825,-7).. (3.825,-7.5);
\draw (3,-7) arc (90:0:0.375 and 0.5);
\filldraw (4.2,0.5) circle (0.08);
\filldraw (5.4,0.5) circle (0.08);
\filldraw (4.275,-7.5) circle (0.08);
\node at (3.6,0.5) [anchor=south] {\scriptsize 5};
\node at (4.2,0.5) [anchor=south] {\scriptsize 6};
\node at (4.8,0.5) [anchor=south] {\scriptsize 7};
\node at (5.4,0.5) [anchor=south] {\scriptsize 8};
\node at (3.375,-7.5) [anchor=north] {\scriptsize 5};
\node at (3.825,-7.5) [anchor=north] {\scriptsize 6};
\node at (4.275,-7.5) [anchor=north] {\scriptsize 7};
\node at (4.725,-7.5) [anchor=north] {\scriptsize 8};
\draw (6,-2) arc (90:270:0.5);
\draw (6,-4) arc (90:270:0.5);
\draw (6,-7) arc (90:180:0.825 and 0.5);
\filldraw (5.625,-7.5) circle (0.08);
\node at (5.175,-7.5) [anchor=north] {\scriptsize 9};
\node at (5.625,-7.5) [anchor=north] {\scriptsize 10};
}
\end{tikzpicture}
\ = \,\alpha_2 \alpha_3 \delta_1\delta_2\delta_3\;
\begin{tikzpicture}[baseline={([yshift=-1mm]current bounding box.center)},scale=0.55]
{
\draw[dotted] (0,0.5)--(6,0.5);
\draw[dotted] (0,-7.5)--(6,-7.5);
\draw [very thick] (0,-7.5)--(0,0.5);
\draw [very thick] (3,-7.5)--(3,0.5);
\draw [very thick] (6,-7.5)--(6,0.5);
\draw (0,0) arc (90:-90:0.5);
\draw (0,-2) to[out=0,in=-90] (1.2,0.5);
\draw (0,-3) to[out=0,in=180] (3,-2);
\draw (0,-4) arc (90:-90:0.5);
\draw (0,-6) arc (90:-90:0.5);
\filldraw (0.6,0.5) circle (0.08);
\node at (0.6,0.5) [anchor=south] {\scriptsize 1};
\node at (1.2,0.5) [anchor=south] {\scriptsize 2};
\node at (0.6,-7.5) [anchor=north] {\scriptsize 1};
\draw (3,-3) arc (90:270:0.5);
\filldraw (0.6,-7.5) circle (0.08);
\node at (1.8,0.5) [anchor=south] {\scriptsize 3};
\node at (2.4,0.5) [anchor=south] {\scriptsize 4};
\node at (1.2,-7.5) [anchor=north] {\scriptsize 2};
\node at (1.8,-7.5) [anchor=north] {\scriptsize 3};
\node at (2.4,-7.5) [anchor=north] {\scriptsize 4};
\filldraw (1.8,0.5) circle (0.08);
\filldraw (2.4,0.5) circle (0.08);
\filldraw (1.2,-7.5) circle (0.08);
\filldraw (1.8,-7.5) circle (0.08);
\filldraw (2.4,-7.5) circle (0.08);
\draw (3,-2) to[out=0,in=180] (6,0);
\draw (3,-3) to[out=0,in=180] (6,-1);
\draw (3,-4) to[out=0,in=180] (6,-6);
\filldraw (4.2,0.5) circle (0.08);
\filldraw (5.4,0.5) circle (0.08);
\filldraw (4.725,-7.5) circle (0.08);
\node at (3.6,0.5) [anchor=south] {\scriptsize 5};
\node at (4.2,0.5) [anchor=south] {\scriptsize 6};
\node at (4.8,0.5) [anchor=south] {\scriptsize 7};
\node at (5.4,0.5) [anchor=south] {\scriptsize 8};
\node at (3.375,-7.5) [anchor=north] {\scriptsize 5};
\node at (3.825,-7.5) [anchor=north] {\scriptsize 6};
\node at (4.275,-7.5) [anchor=north] {\scriptsize 7};
\node at (4.725,-7.5) [anchor=north] {\scriptsize 8};
\filldraw (3.375,-7.5) circle (0.08);
\filldraw (3.825,-7.5) circle (0.08);
\filldraw (4.275,-7.5) circle (0.08);
\filldraw (3.6,0.5) circle (0.08);
\filldraw (4.8,0.5) circle (0.08);
\draw (6,-2) arc (90:270:0.5);
\draw (6,-4) arc (90:270:0.5);
\draw (6,-7) arc (90:180:0.825 and 0.5);
\filldraw (5.625,-7.5) circle (0.08);
\node at (5.175,-7.5) [anchor=north] {\scriptsize 9};
\node at (5.625,-7.5) [anchor=north] {\scriptsize 10};
}
\end{tikzpicture}
\ = \,\alpha_2 \alpha_3 \delta_1\delta_2\delta_3\;
\begin{tikzpicture}[baseline={([yshift=-1mm]current bounding box.center)},scale=0.55]
{
\draw[dotted] (0,0.5)--(3,0.5);
\draw[dotted] (0,-7.5)--(3,-7.5);
\draw [very thick] (0,-7.5)--(0,0.5);
\draw [very thick] (3,-7.5)--(3,0.5);
\draw (0,0) arc (90:-90:0.5);
\draw (0,-2) to[out=0,in=-90] (1.2,0.5);
\draw (0,-3) to[out=0,in=180] (3,0);
\draw (0,-4) arc (90:-90:0.5);
\draw (0,-6) arc (90:-90:0.5);
\filldraw (0.6,0.5) circle (0.08);
\draw (3,-1) ..controls (1.5,-1) and (1.5,-6).. (3,-6);
\draw (3,-2) arc (90:270:0.5);
\draw (3,-4) arc (90:270:0.5);
\draw (3,-7) arc (90:180:0.825 and 0.5);
\filldraw (2.625,-7.5) circle (0.08);
\node at (1,0.5) [anchor=south] {\phantom{\scriptsize 1}};
\node at (0.6,-7.5) [anchor=north] {\phantom{\scriptsize 1}};
}
\end{tikzpicture}\; .
\end{align}

As shown in \cite[App. B]{Ghost}, the ghost algebra is associative. Broadly, this holds because the process of concatenating diagrams is associative, and the parity assigned to each string endpoint on each boundary---and thus the parameter assigned to each boundary arc---is unaffected by the order of multiplication. Indeed, since parity is determined by numbering ghosts and string endpoints left-to-right, and the number of ghosts plus string endpoints on each boundary is even in each diagram, concatenation can change the number associated with each string endpoint, but not its parity.

The ghost algebra is also unital, with identity
\begin{align}
    \did = \; \begin{tikzpicture}[baseline={([yshift=-1mm]current bounding box.center)},scale=0.5]
{
\draw[dotted] (0,0.5)--(3,0.5);
\draw[dotted] (0,-5.5)--(3,-5.5);
\draw [very thick] (0,-5.5)--(0,0.5);
\draw [very thick] (3,-5.5)--(3,0.5);
\draw (0,-1)--(3,-1);
\draw (0,0) -- (3,0);
\draw (0,-5)--(3,-5);
\node at (1.5,-2.8) [anchor=center] {$\vdots$};
}
\end{tikzpicture}\; . \label{eq:iddiag}
\end{align}

As in \cite{Tipunin} (see also \cite{Thesis}), the one-boundary TL algebra $\BTL_n(\beta;\alpha_1,\alpha_2)$ may be defined in terms of similar diagrams with no ghosts, and no strings connected to the bottom boundary. Multiplication is defined analogously, but without introducing any ghosts. When such diagrams are multiplied in the ghost algebra, each boundary arc has both ends connected to the top boundary, and thus leaves a pair of ghosts in a single domain when all boundary arcs are removed. Hence all of the resulting ghosts may be removed, leaving a scalar multiple of a diagram with no ghosts, and no strings connected to the bottom boundary. Hence the result is the same as if the diagrams were multiplied in $\BTL_n(\beta;\alpha_1,\alpha_2)$. This means that such diagrams span a subalgebra of $\Gh_n$ that is isomorphic to $\BTL_n(\beta;\alpha_1,\alpha_2)$. A similar subalgebra isomorphic to $\BTL_n(\beta;\delta_1,\delta_2)$ is spanned by the $\Gh_n$-diagrams with no ghosts, and no strings connected to the top boundary.

\subsection{Label algebra} \label{ss:label}
We now define the label algebra, and show that the ghost algebra is isomorphic to a specialisation of the label algebra.

Let $X$ be a nonempty set. The \textit{label algebra} $\lab_n(X)$ with \textit{label set} $X$ is a diagram algebra, and has a basis consisting of $\lab_n(X)$-diagrams. A $\lab_n(X)$-\textit{diagram} is constructed similarly to a $\Gh_n$-diagram, except that it has no ghosts, we no longer require the number of ghosts and strings on each boundary to be even, and each string endpoint on each boundary must be labelled with some label from $X$. We say two $\lab_n(X)$-diagrams are equal if their strings connect the same nodes to each other or to the same boundary, and the labels of corresponding string endpoints on the boundaries are the same. 

For example, if $X=\{\mathrm{a},\mathrm{b},\mathrm{c}\}$, some $\lab_6(X)$-diagrams are
\begin{align}
\begin{tikzpicture}[baseline={([yshift=-1mm]current bounding box.center)},xscale=0.5,yscale=0.5]
{
\draw[dotted] (0,0.5)--(3,0.5);
\draw[dotted] (0,-5.5)--(3,-5.5);
\draw [very thick](0,-5.5) -- (0,0.5);
\draw [very thick](3,-5.5)--(3,0.5);
\draw (0,0) arc (-90:0:0.5);
\draw (0,-1) arc (90:-90:0.5);
\draw (0,-3) to[out=0,in=-90] (1.3,0.5);
\draw (0,-4) to[out=0,in=180] (3,-1);
\draw (0,-5) to[out=0,in=180] (3,-2);
\draw (3,0) arc (270:180:0.5);
\draw (3,-3) to[out=180,in=90] (1.7,-5.5);
\draw (3,-4) arc (90:270:0.5);
\node at (0.5,0.4) [anchor=south] {\footnotesize $\mathrm{a}$};
\node at (1.3,0.4) [anchor=south] {\footnotesize $\mathrm{c}$};
\node at (2.5,0.4) [anchor=south] {\footnotesize $\mathrm{b}$};
\node at (1.7,-5.4) [anchor=north] {\footnotesize $\mathrm{a}\vphantom{\mathrm{b}}$};
}
\end{tikzpicture}\; ,
&&
\begin{tikzpicture}[baseline={([yshift=-1mm]current bounding box.center)},xscale=0.5,yscale=0.5]
{
\draw[dotted] (0,0.5)--(3,0.5);
\draw[dotted] (0,-5.5)--(3,-5.5);
\draw [very thick](0,-5.5) -- (0,0.5);
\draw [very thick](3,-5.5)--(3,0.5);
\draw (0,0) to[out=0,in=180] (3,-4);
\draw (0,-1) to[out=0,in=0] (0,-4);
\draw (0,-2) arc (90:-90:0.5);
\draw (0,-5) -- (3,-5);
\draw (3,0) arc (90:270:0.5);
\draw (3,-2) arc (90:270:0.5);
\node at (2.5,0.4) [anchor=south] {\footnotesize $\vphantom{\mathrm{b}}$};
\node at (1.7,-5.4) [anchor=north] {\footnotesize $\vphantom{\mathrm{b}}$};
}
\end{tikzpicture}\; ,
&&
\begin{tikzpicture}[baseline={([yshift=-1mm]current bounding box.center)},xscale=0.5,yscale=0.5]
{
\draw[dotted] (0,0.5)--(3,0.5);
\draw[dotted] (0,-5.5)--(3,-5.5);
\draw [very thick](0,-5.5) -- (0,0.5);
\draw [very thick](3,-5.5)--(3,0.5);
\draw (0,0) arc (-90:0:0.5);
\draw (0,-1) arc (90:-90:0.5);
\draw (0,-3) to[out=0,in=-90] (1.3,0.5);
\draw (0,-4) to[out=0,in=180] (3,-1);
\draw (0,-5) to[out=0,in=180] (3,-2);
\draw (3,0) arc (270:180:0.5);
\draw (3,-3) to[out=180,in=90] (1.7,-5.5);
\draw (3,-4) arc (90:270:0.5);
\node at (0.5,0.4) [anchor=south] {\footnotesize $\mathrm{b}$};
\node at (1.3,0.4) [anchor=south] {\footnotesize $\mathrm{c}$};
\node at (2.5,0.4) [anchor=south] {\footnotesize $\mathrm{a}$};
\node at (1.7,-5.4) [anchor=north] {\footnotesize $\mathrm{c}\vphantom{\mathrm{b}}$};
}
\end{tikzpicture}\; ,
&&
\begin{tikzpicture}[baseline={([yshift=-1mm]current bounding box.center)},xscale=0.5,yscale=0.5]
{
\draw[dotted] (0,0.5)--(3,0.5);
\draw[dotted] (0,-5.5)--(3,-5.5);
\draw [very thick](0,-5.5) -- (0,0.5);
\draw [very thick](3,-5.5)--(3,0.5);
\draw (0,0) arc (-90:0:0.8 and 0.5);
\draw (0,-1) to[out=0,in=180] (3,0);
\draw (0,-2) to[out=0,in=180] (3,-1);
\draw (0,-3) to[out=0,in=180] (3,-2);
\draw (0,-4) to[out=0,in=180] (3,-3);
\draw (0,-5) to[out=0,in=180] (3,-4);
\draw (3,-5) arc (90:180:0.8 and 0.5);
\node at (0.8,0.4) [anchor=south] {\footnotesize $\mathrm{c}\vphantom{\mathrm{b}}$};
\node at (2.2,-5.4) [anchor=north] {\footnotesize $\mathrm{b}$};
}
\end{tikzpicture}\; 
. \label{eq:eglabeldiags}
\end{align}
The first and third diagrams are not equal because their labels differ, even though their strings connect the same nodes and boundaries. 

Multiplication in $\lab_n(X)$ is defined on pairs of $\lab_n(X)$-diagrams by concatenation, replacing each loop by a factor of $\beta$, and each boundary arc by a factor of the appropriate parameter listed in Table \ref{tab:labelparams}.

\begin{table}[H]
\centering
\caption{Table of boundary arcs and their associated parameter values in the label algebra.}\label{tab:labelparams}
\vspace{2mm}
\begin{tabular}{|c|c|}
\hline
Parameter & Boundary arc\\
\hline
$\aup{a}{b}$ & $\begin{tikzpicture}[baseline={([yshift=-1mm]current bounding box.center)},xscale=0.5,yscale=0.5]
{
\draw[dotted] (0,0.5)--(2,0.5);
\draw (0.2,0.5) arc (-180:0:0.8);
\node at (0.2,0.5) [anchor=south] {\scriptsize $\mathrm{a}$};
\node at (1.8,0.5) [anchor=south] {\scriptsize $\mathrm{b}$};
\node at (1,-0.5) {};
}
\end{tikzpicture}$\\
\hline
$\glab{a}{b}$ & $\begin{tikzpicture}[baseline={([yshift=5mm]current bounding box.south)},xscale=0.5,yscale=0.5]
{
\draw[dotted] (0,0)--(2,0);
\draw[dotted] (0,1)--(2,1);
\draw (1,1)--(1,0);
\node at (1,1) [anchor=south] {\scriptsize $\mathrm{a}$};
\node at (1,-0.9) [anchor=south] {\scriptsize $\mathrm{b}$};
}
\end{tikzpicture}$
\\
\hline
$\ddo{a}{b}$ &$\begin{tikzpicture}[baseline={([yshift=-1mm]current bounding box.center)},xscale=0.5,yscale=0.5]
{
\draw[dotted] (0,0)--(2,0);
\draw (0.2,0) arc (180:0:0.8);
\node at (0.2,-0.9) [anchor=south] {\scriptsize $\mathrm{a}$};
\node at (1.8,-0.9) [anchor=south] {\scriptsize $\mathrm{b}$};
\node at (1,1) {};
}
\end{tikzpicture}$
\\
\hline
\end{tabular}
\end{table}

For example, with $X=\{0,1 \}$ and $n=8$, we have
\begin{align}
\begin{tikzpicture}[baseline={([yshift=-1mm]current bounding box.center)},scale=0.55]
{
\draw[dotted] (0,0.5)--(6,0.5);
\draw[dotted] (0,-7.5)--(6,-7.5);
\draw [very thick] (0,-7.5)--(0,0.5);
\draw [very thick] (3,-7.5)--(3,0.5);
\draw [very thick] (6,-7.5)--(6,0.5);
\draw (0,-2) ..controls (1.5,-2) and (1.5,-7).. (0,-7);
\draw (0,0) arc (90:-90:0.5);
\draw (0,-3) arc (90:-90:0.5);
\draw (0,-5) arc (90:-90:0.5);
\node at (1.2,0.5) [anchor=south] {\scriptsize 0};
\draw (3,0) arc (90:270:0.5);
\draw (3,-2) to[out=180,in=270] (1.8,0.5);
\draw (3,-3) arc (90:270:0.5);
\draw (3,-5) arc (90:270:0.5);
\draw (3,-7) ..controls (1.5,-7) and (1.2,-2).. (1.2,0.5);
\node at (1.8,0.5) [anchor=south] {\scriptsize 1};
\draw (3,0) arc(-90:0:0.6 and 0.5);
\draw (3,-1) to[out=0,in=-90] (4.2,0.5);
\draw (3,-2) ..controls (4.6,-2) and (4.8,-5).. (4.8,-7.5);
\draw (3,-3) to[out=0,in=0] (3,-6);
\draw (3,-4) arc(90:-90:0.5);
\draw (3,-7) arc (90:0:0.6 and 0.5);
\node at (3.6,0.5) [anchor=south] {\scriptsize 1};
\node at (4.2,0.5) [anchor=south] {\scriptsize 0};
\node at (4.8,0.5) [anchor=south] {\scriptsize 1};
\node at (5.4,0.5) [anchor=south] {\scriptsize 0};
\node at (3.6,-7.5) [anchor=north] {\scriptsize 1};
\node at (4.8,-7.5) [anchor=north] {\scriptsize 1};
\draw (6,0) arc (270:180:0.6 and 0.5);
\draw (6,-3) arc (90:270:0.5);
\draw (6,-1) to[out=180,in=270] (4.8,0.5);
\draw (6,-2) to[out=180,in=180] (6,-5);
\draw (6,-6) arc(90:270:0.5);
}
\end{tikzpicture}
\ &=\,
\beta \aup{1}{0} \glab{0}{1}\glab{1}{1}\;
\begin{tikzpicture}[baseline={([yshift=-1mm]current bounding box.center)},scale=0.55]
{
\draw[dotted] (0,0.5)--(3,0.5);
\draw[dotted] (0,-7.5)--(3,-7.5);
\draw [very thick] (0,-7.5)--(0,0.5);
\draw [very thick] (3,-7.5)--(3,0.5);
\draw (0,-2) ..controls (1.5,-2) and (1.5,-7).. (0,-7);
\draw (0,0) arc (90:-90:0.5);
\draw (0,-3) arc (90:-90:0.5);
\draw (0,-5) arc (90:-90:0.5);
\draw (3,0) arc (270:180:0.6 and 0.5);
\draw (3,-3) arc (90:270:0.5);
\draw (3,-1) to[out=180,in=270] (1.8,0.5);
\draw (3,-2) to[out=180,in=180] (3,-5);
\draw (3,-6) arc(90:270:0.5);
\node at (1.8,0.5) [anchor=south] {\scriptsize 1};
\node at (2.4,0.5) [anchor=south] {\scriptsize 0};
\node at (1.8,-7.5) [anchor=north] {\scriptsize \vphantom{1}};
}
\end{tikzpicture}\; ,
\end{align}
and
\begin{align}
\begin{tikzpicture}[baseline={([yshift=-1mm]current bounding box.center)},scale=0.55]
{
\draw[dotted] (0,0.5)--(6,0.5);
\draw[dotted] (0,-7.5)--(6,-7.5);
\draw [very thick] (0,-7.5)--(0,0.5);
\draw [very thick] (3,-7.5)--(3,0.5);
\draw [very thick] (6,-7.5)--(6,0.5);
\draw (0,0) arc (90:-90:0.5);
\draw (0,-2) to[out=0,in=-90] (1.2,0.5);
\draw (0,-3) to[out=0,in=180] (3,-2);
\draw (0,-4) arc (90:-90:0.5);
\draw (0,-6) arc (90:-90:0.5);
\node at (1.2,0.5) [anchor=south] {\scriptsize 0};
\draw (3,0) arc (270:180:0.6 and 0.5);
\draw (3,-1) arc (270:180:1.2 and 1.5);
\draw (3,-3) arc (90:270:0.5);
\draw (3,-5) arc (90:180:1.8 and 2.5);
\draw (3,-6) arc (90:180:1.2 and 1.5);
\draw (3,-7) arc (90:180:0.6 and 0.5);
\node at (1.8,0.5) [anchor=south] {\scriptsize 1};
\node at (2.4,0.5) [anchor=south] {\scriptsize 0};
\node at (1.2,-7.5) [anchor=north] {\scriptsize 0};
\node at (1.8,-7.5) [anchor=north] {\scriptsize 1};
\node at (2.4,-7.5) [anchor=north] {\scriptsize 0};
\draw (3,0) arc (-90:0:0.6 and 0.5);
\draw (3,-1) arc (-90:0:1.2 and 1.5);
\draw (3,-2) to[out=0,in=180] (6,0);
\draw (3,-3) to[out=0,in=180] (6,-1);
\draw (3,-4) to[out=0,in=180] (6,-6);
\draw (3,-5) arc (90:0:1.8 and 2.5);
\draw (3,-6) arc(90:0:1.2 and 1.5);
\draw (3,-7) arc (90:0:0.6 and 0.5);
\node at (3.6,0.5) [anchor=south] {\scriptsize 1};
\node at (4.2,0.5) [anchor=south] {\scriptsize 1};
\node at (3.6,-7.5) [anchor=north] {\scriptsize 1};
\node at (4.2,-7.5) [anchor=north] {\scriptsize 0};
\node at (4.8,-7.5) [anchor=north] {\scriptsize 0};
\draw (6,-2) arc (90:270:0.5);
\draw (6,-4) arc (90:270:0.5);
\draw (6,-7) arc (90:180:0.6 and 0.5);
\node at (5.4,-7.5) [anchor=north] {\scriptsize 1};
}
\end{tikzpicture}
\ = \,\aup{1}{1} \aup{0}{1} \ddo{0}{0} \ddo{1}{0} \ddo{0}{1}\;
\begin{tikzpicture}[baseline={([yshift=-1mm]current bounding box.center)},scale=0.55]
{
\draw[dotted] (0,0.5)--(3,0.5);
\draw[dotted] (0,-7.5)--(3,-7.5);
\draw [very thick] (0,-7.5)--(0,0.5);
\draw [very thick] (3,-7.5)--(3,0.5);
\draw (0,0) arc (90:-90:0.5);
\draw (0,-2) to[out=0,in=-90] (1.2,0.5);
\draw (0,-3) to[out=0,in=180] (3,0);
\draw (0,-4) arc (90:-90:0.5);
\draw (0,-6) arc (90:-90:0.5);
\draw (3,-1) ..controls (1.5,-1) and (1.5,-6).. (3,-6);
\draw (3,-2) arc (90:270:0.5);
\draw (3,-4) arc (90:270:0.5);
\draw (3,-7) arc (90:180:0.6 and 0.5);
\node at (1.2,0.5) [anchor=south] {\scriptsize 0\vphantom{1}};
\node at (2.4,-7.5) [anchor=north] {\scriptsize 1\vphantom{0}};
}
\end{tikzpicture}\; .
\end{align}

The label algebra $\lab_n(X)$ is unital, with the same identity diagram as the ghost algebra, \eqref{eq:iddiag}.

\begin{theorem}\label{prop:labelassoc}
The label algebra $\lab_n(X)$ is associative.
\end{theorem}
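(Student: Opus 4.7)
The plan is to reduce, by bilinearity, to verifying $(D_1 D_2) D_3 = D_1 (D_2 D_3)$ for basis diagrams $D_1, D_2, D_3$ of $\lab_n(X)$. I would then split the check into two independent claims: (i) the underlying $\lab_n(X)$-diagram obtained on each side is the same, and (ii) the scalar factor collected from removing loops and boundary arcs is the same.

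For (i), the operation of concatenation---stacking rectangles and identifying the right-hand nodes of one with the left-hand nodes of the next---is strictly associative as a purely topological operation on planar string pictures. Hence the fully stacked picture of $D_1$, $D_2$, $D_3$ depends only on the triple, not on the bracketing, and its decomposition into loops, boundary arcs (strings with both endpoints on an outer boundary), and strings touching at least one outer node is therefore bracketing-independent. In particular, the resulting underlying $\lab_n(X)$-diagram is the same on both sides.

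For (ii), the key point---and what distinguishes the label algebra from the ghost algebra---is that each string endpoint on a boundary carries an intrinsic label inherited from the diagram in which it was drawn. Unlike the parities of the ghost algebra, these labels are \emph{not} recomputed from any global left-to-right counting after concatenation, so they cannot depend on the order of multiplication. Thus each boundary arc in the stacked picture is a maximal string with two outer-boundary endpoints carrying a well-defined ordered pair of labels $(\mathrm{a},\mathrm{b})$, which determines its parameter from Table \ref{tab:labelparams}, and each loop contributes a factor $\beta$. The multiset of loops and boundary arcs, together with the labels on each boundary arc's endpoints, is a function of the stacked picture alone, so the total scalar factor is bracketing-invariant.

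The main subtlety is bookkeeping: tracking how a boundary arc that is formed and removed at an intermediate step---say during $D_1 D_2$---corresponds to a sub-path of either a boundary arc or a loop in the full triple stack when the second multiplication is performed the other way. This is essentially the same topological argument used to establish associativity of the ghost algebra in \cite[App. B]{Ghost}, but substantially easier here because no parity assignments need to be re-examined: the label on each endpoint is fixed from the outset and is simply carried along. Once this correspondence is set up, the scalar factors on either side of the associativity equation match term by term, and combined with (i), this proves the theorem.
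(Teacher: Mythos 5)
Your proposal is correct and follows essentially the same route as the paper: both reduce to concatenation of basis diagrams being topologically associative, and both observe that the parameter attached to each loop or boundary arc depends only on the fixed labels of its endpoints, so the scalar factor is bracketing-independent (with the paper likewise deferring the detailed bookkeeping to the argument of \cite[App.\ B]{Ghost}). Your remark that the label algebra case is easier than the ghost algebra case because no parity recomputation is needed matches the paper's reasoning exactly.
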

\begin{proof}
The proof that $\lab_n(X)$ is associative is analogous to the proof that $\Gh_n(X)$ is associative from \cite[App. B]{Ghost}. Briefly, concatenating diagrams is associative, and each boundary arc is assigned the same parameter regardless of bracketing, because the parameters depend only upon the labels associated with each string endpoint, which are unchanged by multiplication. The proof in \cite{Ghost} may be modified to fit the label algebra by omitting the ghosts from the diagrams, adding a label to each string endpoint on each boundary, and redefining the function $\chi$ to use the label algebra parameters instead of the ghost algebra parameters. 
\end{proof}

We note that, if the label set $X$ is finite, then there are finitely many inequivalent $\lab_n(X)$-diagrams, and thus $\lab_n(X)$ is finite-dimensional. Its dimension may be computed similarly to that of the ghost algebra, as in \cite[App. A]{Ghost}, but instead of choosing whether each string endpoint on each boundary is odd or even, we choose its label from $X$. That is,
\begin{align}
\dim \lab_n(X)  &= \sum_{d=0}^n \left(\sum_{j=0}^\floor{\frac{n-d}{2}} |X|^{n-2j-d}(n-2j-d+1)\left(\binom{n}{j}-\binom{n}{j-1}\right) \right)^2. \label{eq:labeldim}
\end{align}
We also observe that if there exists a bijection between $X$ and $Y$, then $\lab_n(X) \cong \lab_n(Y)$, assuming the parameters of each algebra are identified appropriately.

We now give the specialisation of the label algebra that is isomorphic to the ghost algebra.

\begin{proposition}\label{prop:labelghostiso}
The ghost algebra $\Gh_n$ is isomorphic to the label algebra $\lab_n(X)$ with $X = \{ 0,1 \}$, and
\begin{align*}
    \aup{1}{0} &= \alpha_1, & \aup{0}{1} &= \alpha_2, & \aup{1}{1} = \aup{0}{0} &= \alpha_3, \\
    \ddo{1}{0} &= \delta_1, & \ddo{0}{1} &= \delta_2, & \ddo{1}{1} =\ddo{0}{0}&= \delta_3, \\
    \glab{1}{0} = \glab{0}{1} &= \gamma_{12} &&& \glab{1}{1} = \glab{0}{0} &= \gamma_3.
\end{align*}
\end{proposition}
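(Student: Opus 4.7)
The plan is to construct an explicit isomorphism $\psi : \Gh_n \to \lab_n(X)$, with $X = \{0,1\}$, defined on basis diagrams and extended linearly. Each $\Gh_n$-diagram has a unique canonical representative in its equivalence class with at most one ghost per domain. I would define $\psi$ on each basis $\Gh_n$-diagram by passing to this canonical representative, deleting every ghost, and labelling each string endpoint on each boundary with the parity of its left-to-right position: $1$ if odd, $0$ if even. This yields a well-defined $\lab_n(X)$-diagram.

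To see that $\psi$ restricts to a bijection between bases, I would construct the inverse explicitly. Given a $\lab_n(X)$-diagram, insert ghosts along each boundary so that each string endpoint lies at a position matching its label's parity. A direct calculation shows that such a placement exists with at most one ghost per domain, is unique, and yields a $\Gh_n$-diagram whose total number of string endpoints plus ghosts on each boundary is even. Concretely, one inserts a ghost between two consecutive string endpoints exactly when their labels agree, at the start of a boundary exactly when its leftmost string endpoint has label $0$, and at the end of a boundary exactly when its rightmost string endpoint has label $1$. Uniqueness of the canonical ghost form then implies injectivity of $\psi$.

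The remaining task is to verify that $\psi$ preserves multiplication. Concatenation of diagrams commutes with the deletion of ghosts, and the parity of each string endpoint on each boundary is preserved under concatenation, as noted at the end of Section \ref{ss:ghostdef}: each boundary of each diagram has an even total of strings plus ghosts, so the positions contributed by one factor shift those of the other by an even amount. In both algebras, loops produced during concatenation are replaced by a factor of $\beta$, and each boundary arc is replaced by a parameter determined by the type of its endpoints---parities in $\Gh_n$ via Table \ref{tab:params}, labels in $\lab_n(X)$ via Table \ref{tab:labelparams}. The parameter identification stated in the proposition is chosen precisely to match these two tables case by case; for instance, a top boundary arc joining an odd and an even endpoint contributes $\alpha_1$ in $\Gh_n$ and $\aup{1}{0}$ in $\lab_n(X)$, so setting $\aup{1}{0} = \alpha_1$ aligns them, and analogously for the remaining top, bottom, and top-to-bottom boundary arcs. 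The main obstacle is not any single deep step but the careful case-by-case verification of parameter matching together with the bookkeeping of canonical ghost placements under concatenation, both of which reduce to the parity-preservation property.
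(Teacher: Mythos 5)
Your proposal is correct and follows essentially the same route as the paper: the paper's proof defines the same map (number the strings and ghosts on each boundary from left to right, delete the ghosts, label odd positions $1$ and even positions $0$) and justifies well-definedness by the same parity-preservation observation for multiplication. One caution on wording: the parity of each string endpoint's position must be computed \emph{before} the ghosts are deleted (as your inverse construction implicitly assumes), since the ghosts are precisely what shift those parities.
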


\begin{proof}
The isomorphism is found by taking each $\Gh_n$-diagram, numbering the strings and ghosts at each boundary from left to right, removing the ghosts, then replacing each odd number with the label $1$ and each even number with $0$. For example, the images of the diagrams used in the ghost algebra multiplication examples are precisely the diagrams used in the label algebra multiplication examples. This isomorphism is well-defined because the parity of each string endpoint on the boundary is unchanged by multiplication in the ghost algebra. Indeed, since ghosts are left at the endpoints of removed strings, removing strings does not affect the numbering of any remaining strings, and simplifying a diagram by removing pairs of ghosts from each domain can only decrease the numbering of a string by a multiple of two, thereby preserving its parity. 
\end{proof}

We note that some of the label algebra parameters in the above proposition are set to be equal to each other, such as $\aup{1}{1}$ and $\aup{0}{0}$, as their corresponding boundary arcs in the ghost algebra are assigned the same parameter, even though their parities differ. Assigning such boundary arcs the same parameter means that the ghost algebra is \textit{cellular}, in the sense of \cite{GL}, with respect to the anti-involution given by reflecting its basis diagrams about a vertical line, consistent with the zero- and one-boundary TL algebras. The cellularity of the ghost algebra is discussed in \cite{Thesis}, along with some of the resulting representation theory.

In \cite{Ghost}, we also defined the \textit{dilute ghost algebra}, based on the \textit{dilute TL algebra} \cite{BloteNienhuis,Grimm}, whose basis diagrams may include nodes with no strings attached. A dilute label algebra can be defined analogously---see \cite{Thesis} for more detail. One could also define an alternative label algebra with distinct label sets for the top and bottom boundaries; we have avoided this for simplicity, but the non-diagrammatic presentation would be analogous to the one found in this paper.

\section{Diagrammatic generators}\label{s:diagramgenerators}
In this section, we establish a set of generators for the diagrammatic label algebra $\lab_n(X)$.

To distinguish diagrammatic and non-diagrammatic generators, we will write diagrammatic generators with tildes underneath, and non-diagrammatic generators without tildes. The isomorphism between the algebraic and diagrammatic label algebras discussed throughout the rest of the paper will be obtained by identifying each non-diagrammatic generator with its diagrammatic counterpart by adding a tilde.

For any $i=1, \dots , n-1$ and $\mathrm{a},\mathrm{b} \in X$, let
\begin{align}
\did = \,\begin{tikzpicture}[baseline={([yshift=-1mm]current bounding box.center)},scale=0.5]
{
\draw[dotted] (0,0.5)--(3,0.5);
\draw[dotted] (0,-5.5)--(3,-5.5);
\draw [very thick] (0,-5.5)--(0,0.5);
\draw [very thick] (3,-5.5)--(3,0.5);
\draw (0,0) -- (3,0);
\draw (0,-5)--(3,-5);
\node at (1.5,-2.3) [anchor=center] {$\vdots$};
\node at (2.4,0.4) [anchor=south] {\footnotesize $\vphantom{\mathrm{b}}$};
\node at (0.6,-5.4) [anchor=north] {\footnotesize $\vphantom{\mathrm{b}}$};
}
\end{tikzpicture}\; &,
&\dfup{a}{b} &=\, \begin{tikzpicture}[baseline={([yshift=-1mm]current bounding box.center)},scale=0.5]
{
\draw[dotted] (0,0.5)--(3,0.5);
\draw[dotted] (0,-5.5)--(3,-5.5);
\draw [very thick] (0,-5.5)--(0,0.5);
\draw [very thick] (3,-5.5)--(3,0.5);
\draw (0,0) arc (-90:0:0.6 and 0.5);
\draw (3,0) arc (270:180:0.6 and 0.5);
\draw (0,-1)--(3,-1);
\draw (0,-5)--(3,-5);
\node at (1.5,-2.8) [anchor=center] {$\vdots$};
\node at (0.6,0.4) [anchor=south] {\footnotesize $\mathrm{a}$};
\node at (2.4,0.4) [anchor=south] {\footnotesize $\mathrm{b}$};
\node at (2.4,0.4) [anchor=south] {\footnotesize $\vphantom{\mathrm{b}}$};
\node at (0.6,-5.4) [anchor=north] {\footnotesize $\vphantom{\mathrm{b}}$};
}
\end{tikzpicture}\; ,
&\dfdo{a}{b} &=\, \begin{tikzpicture}[baseline={([yshift=-1mm]current bounding box.center)},scale=0.5]
{
\draw[dotted] (0,0.5)--(3,0.5);
\draw[dotted] (0,-5.5)--(3,-5.5);
\draw [very thick] (0,-5.5)--(0,0.5);
\draw [very thick] (3,-5.5)--(3,0.5);
\draw (0,0) -- (3,0);
\draw (0,-5) arc (90:0:0.6 and 0.5);
\draw (3,-5) arc (90:180:0.6 and 0.5);
\draw (0,-4)--(3,-4);
\node at (1.5,-1.8) [anchor=center] {$\vdots$};
\node at (0.6,-5.4) [anchor=north] {\footnotesize $\mathrm{a}\vphantom{\mathrm{b}}$};
\node at (2.4,-5.4) [anchor=north] {\footnotesize $\mathrm{b}$};
\node at (0.6,0.4) [anchor=south] {\footnotesize $\vphantom{\mathrm{b}}$};
}
\end{tikzpicture}\; ,
\\[2mm]
\de{i} = \begin{tikzpicture}[baseline={([yshift=-1mm]current bounding box.center)},scale=0.5]
{
\draw[dotted] (0,0.5)--(3,0.5);
\draw[dotted] (0,-5.5)--(3,-5.5);
\draw [very thick](0,-5.5) -- (0,0.5);
\draw (0,0) -- (3,0);
\draw (1.5,-0.3) node[anchor=center]{$\vdots$};
\draw (0,-1) -- (3,-1);
\draw (0,-1) node[font=\scriptsize,anchor=east]{$i-1$};
\draw (0,-2) arc (90:-90:0.5);
\draw (3,-2) arc (90:270:0.5);
\draw (0,-2) node[font=\scriptsize,anchor=east]{$i$};
\draw (0,-3) node[font=\scriptsize,anchor=east]{$i+1$};
\draw (0,-4) -- (3,-4);
\draw (0,-4) node[font=\scriptsize,anchor=east]{$i+2$};
\draw (1.5,-4.3) node[anchor=center]{$\vdots$};
\draw (0,-5) -- (3,-5);
\draw (0,-5) node[font=\scriptsize,anchor=east]{$n$};
\draw [very thick](3,-5.5) -- (3,0.5);
\draw (0,0) node[font=\scriptsize,anchor=east]{$1$};
\node at (2.4,0.4) [anchor=south] {\footnotesize $\vphantom{\mathrm{b}}$};
\node at (0.6,-5.4) [anchor=north] {\footnotesize $\vphantom{\mathrm{b}}$};
}
\end{tikzpicture}\; &,
&\dwup{a}{b} &= \,\begin{tikzpicture}[baseline={([yshift=-1mm]current bounding box.center)},scale=0.5]
{
\draw[dotted] (0,0.5)--(3,0.5);
\draw[dotted] (0,-5.5)--(3,-5.5);
\draw [very thick] (0,-5.5)--(0,0.5);
\draw [very thick] (3,-5.5)--(3,0.5);
\draw (0,0) arc (-90:0:0.6 and 0.5);
\draw (0,-1) to[out=0,in=180] (3,0);
\draw (0,-5) to[out=0,in=180] (3,-4);
\draw (3,-5) arc (90:180:0.6 and 0.5);
\node at (1.5,-2.3) [anchor=center] {$\vdots$};
\node at (0.6,0.4) [anchor=south] {\footnotesize $\mathrm{a}$};
\node at (2.4,-5.4) [anchor=north] {\footnotesize $\mathrm{b}$};
\node at (2.4,0.4) [anchor=south] {\footnotesize $\vphantom{\mathrm{b}}$};
\node at (0.6,-5.4) [anchor=north] {\footnotesize $\vphantom{\mathrm{b}}$};
}
\end{tikzpicture}\; ,
&\dwdo{a}{b} &= \,\begin{tikzpicture}[baseline={([yshift=-1mm]current bounding box.center)},scale=0.5]
{
\draw[dotted] (0,0.5)--(3,0.5);
\draw[dotted] (0,-5.5)--(3,-5.5);
\draw [very thick] (0,-5.5)--(0,0.5);
\draw [very thick] (3,-5.5)--(3,0.5);
\draw (3,0) arc (270:180:0.6 and 0.5);
\draw (0,0) to[out=0,in=180] (3,-1);
\draw (0,-4) to[out=0,in=180] (3,-5);
\draw (0,-5) arc (90:0:0.6 and 0.5);
\node at (1.5,-2.3) [anchor=center] {$\vdots$};
\node at (2.4,0.4) [anchor=south] {\footnotesize $\mathrm{a}$};
\node at (0.6,-5.4) [anchor=north] {\footnotesize $\mathrm{b}$};
\node at (2.4,0.4) [anchor=south] {\footnotesize $\vphantom{\mathrm{b}}$};
\node at (0.6,-5.4) [anchor=north] {\footnotesize $\vphantom{\mathrm{b}}$};
}
\end{tikzpicture}\; .
\end{align}

We now introduce some diagrammatic terminology. A \textit{boundary link} is a string connecting a node to a boundary. A \textit{link} is a string connecting two nodes on the same side of a diagram. With these definitions, we stress that a boundary link is not a link. A \textit{simple link} is a link connecting adjacent nodes. The nodes on each side of the diagram are numbered from top to bottom, starting at $1$. We say a diagram has a simple link at $k$ on the left (right) if it has a simple link connecting node $k$ to node $k+1$ on its left (right) side. A \textit{throughline} is a string that connects nodes on opposite sides of a diagram. For example, the identity diagram is the unique $\lab_n(X)$-diagram with $n$ throughlines.

We say a diagram is \textit{even (odd)} if it has an even (odd) number of top boundary links and an even (odd) number of bottom boundary links. Note that the total number of boundary links in an $\lab_n(X)$-diagram is even, because there are $2n$ nodes and every node has exactly one string endpoint attached to it. It follows that every diagram is either even or odd.

\begin{proposition}\label{prop:evengen}
Any even $\lab_n(X)$-diagram $\undertilde{T}$ can be written as a product of the diagrammatic generators $\de{i}$, $i=1, \dots, n-1$; $\dfup{a}{b}$, $\mathrm{a},\mathrm{b} \in X$; and $\dfdo{a}{b}$, $\mathrm{a},\mathrm{b} \in X$; without producing any loops or boundary arcs.
\end{proposition}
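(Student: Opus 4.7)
The plan is to proceed by strong induction on the total number of boundary links in $\undertilde{T}$. In the base case, $\undertilde{T}$ has no boundary links, so every string is either a throughline or a link between two nodes on the same side of the diagram; no labels need to be tracked, and $\undertilde{T}$ is essentially a classical Temperley-Lieb diagram on $2n$ nodes. The standard theorem that such diagrams are generated by $\de{1}, \ldots, \de{n-1}$, with a reduced-word expression producing no loops, yields the desired factorisation.

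For the inductive step, assume $\undertilde{T}$ has at least one boundary link. By the top–bottom symmetry of the generators $\dfup{a}{b}$ and $\dfdo{a}{b}$, it suffices to handle the case where $\undertilde{T}$ has top boundary links; by evenness, there are at least two. The non-crossing structure on the top pairs up their top-boundary endpoints into nested arcs, and I focus on the leftmost such pair. One endpoint corresponds to a boundary link at some row $j$ on the left side of $\undertilde{T}$, with label $\mathrm{a}$; its partner corresponds to a boundary link either at some row $k > j$ on the left, or at some row $k'$ on the right, with label $\mathrm{b}$. In each sub-case, I form a new even diagram $\undertilde{T}'$ with two fewer top boundary links by deleting this pair of links and reconnecting their node endpoints: by a simple link on the left in the first sub-case, and by a throughline in the second. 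By the induction hypothesis, $\undertilde{T}'$ has the desired factorisation.

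To lift this to a factorisation of $\undertilde{T}$, I exhibit $\undertilde{T}$ explicitly as the product of $\undertilde{T}'$, one copy of $\dfup{a}{b}$, and a sequence of $\de{i}$'s that translates the two boundary endpoints of $\dfup{a}{b}$ from row $1$ on each side down to the target rows $j$ and $k$ (or $j$ on the left and $k'$ on the right). The precise form of this expression depends on the sub-case, but in each it is a finite word in the generators listed in the statement; no $\dfdo{a}{b}$ is required unless $\undertilde{T}$ also has bottom boundary links, in which case those are peeled off first in an analogous way.

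The main obstacle is verifying that these intermediate products introduce no loops or boundary arcs, since the proposition demands an honest equality of diagrams in $\lab_n(X)$ rather than an equality up to scalars. This amounts to a careful combinatorial check during concatenation. Crucially, because the pair of top boundary links peeled off at each step is the \emph{outermost} (leftmost) pair, the $\de{i}$'s used in the translation act only on regions of the diagram that carry no other boundary link endpoints, so planarity ensures that no matching cup–cap pair or pair of boundary endpoints can ever close up into an unwanted boundary arc or loop.
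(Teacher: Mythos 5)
Your base case and your instinct to peel off the leftmost two top boundary links match the paper's strategy, but the inductive step as written cannot be carried out. The fatal point is the reconstruction of $\undertilde{T}$ from $\undertilde{T}'$. In your first sub-case you delete the two top boundary links at rows $j$ and $k$ on the left and rejoin those two nodes by a link to form $\undertilde{T}'$, and then assert that $\undertilde{T}$ is a product having $\undertilde{T}'$ as one factor and a word in $\dfup{a}{b}$ and the $\de{i}$ as the others. But concatenation of diagrams can only extend, merge, or close strings; it can never split one string into two. The link joining nodes $j$ and $k$ in $\undertilde{T}'$ is a single string, whereas $\undertilde{T}$ has two distinct strings at those nodes, so no product containing $\undertilde{T}'$ as a factor can equal $\undertilde{T}$ --- unless that link is closed against two boundary links supplied by the neighbouring factors, which produces exactly the forbidden top boundary arc and a factor of $\aup{a}{b}$ rather than $\undertilde{T}$ itself. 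The same objection kills the sub-case where the pair is replaced by a throughline. So the verification you defer to the end is not a routine planarity check; it is where the construction breaks. (Two secondary issues: the top boundary links of an $\lab_n(X)$-diagram are single strings from a node to the boundary and are not canonically ``paired into nested arcs''; and your case analysis assumes the leftmost top boundary link is attached to the left side of the diagram, whereas it can be attached to the right side --- this happens precisely when every top boundary link comes from the right, and is one of the paper's four cases.)

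The paper avoids the splitting problem by never forming a reconnected diagram. Instead $\undertilde{T}$ is written directly as a product of three diagrams $L\cdot M\cdot R$, where the middle factor $M=\dfup{a}{b}\de{2}\de{4}\cdots \de{2k}$ carries the two new top boundary links at node $1$ on each of its sides, zigzag patterns of simple links at the two interfaces route those boundary links to the correct positions, and the outer factors $L$ and $R$ are even diagrams each with strictly fewer top boundary links, to which the procedure is applied recursively. If you wish to salvage an induction on the number of boundary links, it must run over such outer factors, which genuinely retain pieces of $\undertilde{T}$'s structure, rather than over a modified copy of $\undertilde{T}$ in which the two removed boundary links have been fused into one string.
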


\begin{proof}
If $\undertilde{T}$ has no boundary links, then it is a basis diagram of the TL algebra, and can be written as a product of $\de{i}$'s using the method from \cite[\S 2]{RSA}. Otherwise, $\undertilde{T}$ has at least two top boundary links, or at least two bottom boundary links.

If $\undertilde{T}$ has at least two top boundary links, consider the leftmost two, and suppose their labels are $\mathrm{a},\mathrm{b} \in X$. We now consider four cases, according to which side of the diagram each of those boundary links is connected to, and write $\undertilde{T}$ as a product of three simpler diagrams in each case.

If both of the leftmost two top boundary links are connected to the left side of the diagram, then we can write $\undertilde{T}$ as a product of three diagrams, as
\begin{align}
\undertilde{T} &=\;  \begin{tikzpicture}[baseline={([yshift=-3mm]current bounding box.center)},xscale=0.5,yscale=0.5]
{
\filldraw[blue!30!cyan!40] (0,0) arc (90:-90:0.65 and 0.75) --cycle;
\filldraw[orange!40] (0,-2.5) arc (90:-90:0.65 and 0.75) --cycle;
\draw (0,-2) to[out=0,in=-90] (1,0.5);
\draw (0,-4.5) ..controls (1.4,-4.5) and (1.5,-2).. (1.5,0.5);
\filldraw[red!10!magenta!30] (0,-5) ..controls (1.8,-5) and (1.9,-2).. (1.9,0.5) -- (2.5,0.5) arc (180:270:0.5) -- (3,-6) arc (90:180:0.5) -- (0.5,-6.5) arc (0:90:0.5) -- cycle;
\draw[dotted] (0,0.5)--(3,0.5);
\draw[dotted] (0,-6.5)--(3,-6.5);
\draw [very thick](0,-6.5) -- (0,0.5);
\draw [very thick](3,-6.5)--(3,0.5);
\node at (1,0.4) [anchor=south] {\footnotesize $\mathrm{a}$};
\node at (1.5,0.4) [anchor=south] {\footnotesize $\mathrm{b}$};
}
\end{tikzpicture}
\; =\;  
\begin{tikzpicture}[baseline={([yshift=-3mm]current bounding box.center)},xscale=0.5,yscale=0.5]
{
\filldraw[blue!30!cyan!40] (0,0) arc (90:-90:0.65 and 0.75) --cycle;
\filldraw[orange!40] (0,-2.5) arc (90:-90:0.65 and 0.75) --cycle;
\draw (0,-2) to[out=0,in=180] (3,-4);
\draw (0,-4.5) -- (6,-4.5);
\draw (0,-5) -- (6,-5);
\node at (1.5,-5.3) [anchor=center] {$\vdots$};
\draw (0,-6)--(6,-6);
\draw (3,0) arc (90:270:0.3 and 0.25);
\draw (3,-1) arc (90:270:0.3 and 0.25);
\node at (2.7,-2.05) [anchor=center] {$\vdots$};
\draw (3,-3) arc (90:270:0.3 and 0.25);
\begin{scope}[shift={(3,0)}]
\draw (0,0) arc(-90:0:0.5);
\draw (3,0) arc(270:180:0.5);
\draw (0,-0.5) arc (90:-90:0.3 and 0.25);
\draw (3,-0.5) arc (90:270:0.3 and 0.25);
\draw (0,-1.5) arc (90:-90:0.3 and 0.25);
\draw (3,-1.5) arc (90:270:0.3 and 0.25);
\node at (2.7,-2.55) [anchor=center] {$\vdots$};
\node at (0.3,-2.55) [anchor=center] {$\vdots$};
\draw (0,-3.5) arc (90:-90:0.3 and 0.25);
\draw (3,-3.5) arc (90:270:0.3 and 0.25);
\node at (1.5,-5.3) [anchor=center] {$\vdots$};
\end{scope}
\begin{scope}[shift={(6,0)}]
\draw (0,0) arc(90:-90:0.3 and 0.25);
\draw (0,-1) arc(90:-90:0.3 and 0.25);
\node at (0.3,-2.05) [anchor=center] {$\vdots$};
\draw (0,-3) arc(90:-90:0.3 and 0.25);
\draw (0,-4) arc(90:-90:0.3 and 0.25);
\filldraw[red!10!magenta!30] (0,-5) ..controls (1.8,-5) and (1.9,-2).. (1.9,0.5) -- (2.5,0.5) arc (180:270:0.5) -- (3,-6) arc (90:180:0.5) -- (0.5,-6.5) arc (0:90:0.5) -- cycle;
\end{scope}
\draw[dotted] (0,0.5)--(9,0.5);
\draw[dotted] (0,-6.5)--(9,-6.5);
\draw [very thick](0,-6.5) -- (0,0.5);
\draw [very thick](3,-6.5)--(3,0.5);
\draw [very thick](6,-6.5)--(6,0.5);
\draw [very thick](9,-6.5)--(9,0.5);
\node at (3.5,0.4) [anchor=south] {\footnotesize $\mathrm{a}$};
\node at (5.5,0.4) [anchor=south] {\footnotesize $\mathrm{b}$};
}
\end{tikzpicture}\; .
\end{align}
Here, each shaded region in each diagram is a placeholder for some number of strings (possibly none) that lie entirely within that region, such that the full diagram with those strings drawn in is an $\lab_n(X)$-diagram. Corresponding shaded regions on each side of the equality contain the same arrangement of strings; this correspondence is indicated by their matching colours, but should also be clear from their shapes and positions.

Note that the two smaller shaded regions on the left (blue, orange) of $\undertilde{T}$ do not contain any boundary links; the two strings explicitly drawn are the leftmost two top boundary links. This means these regions each cover an even number of nodes. Therefore, on the right side of the leftmost diagram in the product of three diagrams, there is an even number of nodes above each end of the topmost throughline, as is required for this construction to exist, given the zigzagging strings in the middle.

If the leftmost two top boundary links in $\undertilde{T}$ are connected to the left and right sides, then we have two cases, according to which of these boundary links is connected to a higher node. If the node for the leftmost boundary link is at the same height or above that of the second-leftmost, then we have
\begin{align}
\undertilde{T} &=\;  \begin{tikzpicture}[baseline={([yshift=-3mm]current bounding box.center)},xscale=0.5,yscale=0.5]
{
\filldraw[blue!30!cyan!40] (0,0) arc (90:-90:0.65 and 0.75) --cycle;
\filldraw[orange!40] (3,-3.5) ..controls (2.1,-3.5) and (1.9,-2).. (1.9,0.5) -- (2.5,0.5) arc (180:270:0.5) --cycle;
\draw (0,-2) to[out=0,in=-90] (1,0.5);
\draw (3,-4) ..controls (1.7,-4) and (1.5,-2).. (1.5,0.5);
\filldraw[red!10!magenta!30] (0,-2.5) to[out=0,in=180] (3,-4.5) -- (3,-6) arc (90:180:0.5) -- (0.5,-6.5) arc (0:90:0.5) -- cycle;
\draw[dotted] (0,0.5)--(3,0.5);
\draw[dotted] (0,-6.5)--(3,-6.5);
\draw [very thick](0,-6.5) -- (0,0.5);
\draw [very thick](3,-6.5)--(3,0.5);
\node at (1,0.4) [anchor=south] {\footnotesize $\mathrm{a}$};
\node at (1.5,0.4) [anchor=south] {\footnotesize $\mathrm{b}$};
}
\end{tikzpicture}
\; =\;  
\begin{tikzpicture}[baseline={([yshift=-3mm]current bounding box.center)},xscale=0.5,yscale=0.5]
{
\filldraw[blue!30!cyan!40] (0,0) arc (90:-90:0.65 and 0.75) --cycle;
\draw (0,-2) ..controls (1.7,-2) and (1.7,-4).. (3,-4);
\draw (3,-4.5) -- (9,-4.5);
\draw (3,-5)--(9,-5);
\draw (3,-6)--(9,-6);
\draw (6,-4)--(9,-4);
\filldraw[red!10!magenta!30] (0,-2.5) to[out=0,in=180] (3,-4.5) -- (3,-6) arc (90:180:0.5) -- (0.5,-6.5) arc (0:90:0.5) -- cycle;
\draw (3,0) arc (90:270:0.3 and 0.25);
\draw (3,-1) arc (90:270:0.3 and 0.25);
\node at (2.7,-2.05) [anchor=center] {$\vdots$};
\draw (3,-3) arc (90:270:0.3 and 0.25);
\begin{scope}[shift={(3,0)}]
\draw (0,0) arc(-90:0:0.5);
\draw (3,0) arc(270:180:0.5);
\draw (0,-0.5) arc (90:-90:0.3 and 0.25);
\draw (3,-0.5) arc (90:270:0.3 and 0.25);
\draw (0,-1.5) arc (90:-90:0.3 and 0.25);
\draw (3,-1.5) arc (90:270:0.3 and 0.25);
\node at (2.7,-2.55) [anchor=center] {$\vdots$};
\node at (0.3,-2.55) [anchor=center] {$\vdots$};
\draw (0,-3.5) arc (90:-90:0.3 and 0.25);
\draw (3,-3.5) arc (90:270:0.3 and 0.25);
\node at (1.5,-5.3) [anchor=center] {$\vdots$};
\end{scope}
\begin{scope}[shift={(6,0)}]
\filldraw[orange!40] (3,-3.5) ..controls (2.1,-3.5) and (1.9,-2).. (1.9,0.5) -- (2.5,0.5) arc (180:270:0.5) --cycle;
\draw (0,0) arc(90:-90:0.3 and 0.25);
\draw (0,-1) arc(90:-90:0.3 and 0.25);
\node at (0.3,-2.05) [anchor=center] {$\vdots$};
\draw (0,-3) arc(90:-90:0.3 and 0.25);
\end{scope}
\draw[dotted] (0,0.5)--(9,0.5);
\draw[dotted] (0,-6.5)--(9,-6.5);
\draw [very thick](0,-6.5) -- (0,0.5);
\draw [very thick](3,-6.5)--(3,0.5);
\draw [very thick](6,-6.5)--(6,0.5);
\draw [very thick](9,-6.5)--(9,0.5);
\node at (3.5,0.4) [anchor=south] {\footnotesize $\mathrm{a}$};
\node at (5.5,0.4) [anchor=south] {\footnotesize $\mathrm{b}$};
}
\end{tikzpicture} \; .
\end{align}
Note that the shaded region connected to the top boundary in $\undertilde{T}$ (orange) must contain an even number (possibly zero) of top boundary links, because there are two outside this region, and the total must be even. This means that this region covers an even number of nodes, so indeed the number of nodes above the right side of the lower shaded region (pink) must be odd, as required.

If the node for the leftmost boundary link is below that of the second-leftmost, then we have
\begin{align}
\undertilde{T} &=\;  \begin{tikzpicture}[baseline={([yshift=-3mm]current bounding box.center)},xscale=0.5,yscale=0.5]
{
\filldraw[blue!30!cyan!40] (0,0) arc (90:-90:0.8 and 1.75) --cycle;
\filldraw[orange!40] (3,-1.5) ..controls (2.2,-1.5) and (2.1,-0.5).. (2.1,0.5) -- (2.5,0.5) arc (180:270:0.5) --cycle;
\draw (0,-4) ..controls (1.1,-4) and (1.2,-2).. (1.2,0.5);
\draw (3,-2) ..controls (1.7,-2) and (1.7,-0.5).. (1.7,0.5);
\filldraw[red!10!magenta!30] (3,-2.5) to[out=180,in=0] (0,-4.5) -- (0,-6) arc (90:0:0.5) -- (2.5,-6.5) arc (180:90:0.5) -- cycle;
\draw[dotted] (0,0.5)--(3,0.5);
\draw[dotted] (0,-6.5)--(3,-6.5);
\draw [very thick](0,-6.5) -- (0,0.5);
\draw [very thick](3,-6.5)--(3,0.5);
\node at (1.2,0.4) [anchor=south] {\footnotesize $\mathrm{a}$};
\node at (1.7,0.4) [anchor=south] {\footnotesize $\mathrm{b}$};
}
\end{tikzpicture}
\; =\;  
\begin{tikzpicture}[baseline={([yshift=-3mm]current bounding box.center)},xscale=0.5,yscale=0.5]
{
\filldraw[blue!30!cyan!40] (0,0) arc (90:-90:0.8 and 1.75) --cycle;
\draw (0,-4) -- (3,-4);
\draw (0,-4.5) -- (6,-4.5);
\draw (0,-5)--(6,-5);
\draw (0,-6)--(6,-6);
\draw (3,0) arc (90:270:0.3 and 0.25);
\draw (3,-1) arc (90:270:0.3 and 0.25);
\node at (2.7,-2.05) [anchor=center] {$\vdots$};
\draw (3,-3) arc (90:270:0.3 and 0.25);
\begin{scope}[shift={(3,0)}]
\draw (0,0) arc(-90:0:0.5);
\draw (3,0) arc(270:180:0.5);
\draw (0,-0.5) arc (90:-90:0.3 and 0.25);
\draw (3,-0.5) arc (90:270:0.3 and 0.25);
\draw (0,-1.5) arc (90:-90:0.3 and 0.25);
\draw (3,-1.5) arc (90:270:0.3 and 0.25);
\node at (2.7,-2.55) [anchor=center] {$\vdots$};
\node at (0.3,-2.55) [anchor=center] {$\vdots$};
\draw (0,-3.5) arc (90:-90:0.3 and 0.25);
\draw (3,-3.5) arc (90:270:0.3 and 0.25);
\node at (1.5,-5.3) [anchor=center] {$\vdots$};
\end{scope}
\begin{scope}[shift={(6,0)}]
\filldraw[red!10!magenta!30] (3,-2.5) to[out=180,in=0] (0,-4.5) -- (0,-6) arc (90:0:0.5) -- (2.5,-6.5) arc (180:90:0.5) -- cycle;
\filldraw[orange!40] (3,-1.5) ..controls (2.2,-1.5) and (2.1,-0.5).. (2.1,0.5) -- (2.5,0.5) arc (180:270:0.5) --cycle;
\draw (0,0) arc(90:-90:0.3 and 0.25);
\draw (0,-1) arc(90:-90:0.3 and 0.25);
\node at (0.3,-2.05) [anchor=center] {$\vdots$};
\draw (0,-3) arc(90:-90:0.3 and 0.25);
\draw (0,-4) ..controls (1.3,-4) and (1.3,-2).. (3,-2);
\end{scope}
\draw[dotted] (0,0.5)--(9,0.5);
\draw[dotted] (0,-6.5)--(9,-6.5);
\draw [very thick](0,-6.5) -- (0,0.5);
\draw [very thick](3,-6.5)--(3,0.5);
\draw [very thick](6,-6.5)--(6,0.5);
\draw [very thick](9,-6.5)--(9,0.5);
\node at (3.5,0.4) [anchor=south] {\footnotesize $\mathrm{a}$};
\node at (5.5,0.4) [anchor=south] {\footnotesize $\mathrm{b}$};
}
\end{tikzpicture} \; ,
\end{align}
where the top left shaded region (blue) has no boundary links, so covers an even number of nodes, as required.

If both of the leftmost two top boundary links are connected to the right side of the diagram, then
\begin{align}
\undertilde{T} &=\; 
\begin{tikzpicture}[baseline={([yshift=-3mm]current bounding box.center)},xscale=0.5,yscale=0.5]
{
\filldraw[blue!30!cyan!40] (3,-1.5) ..controls (2.2,-1.5) and (2.1,-0.5).. (2.1,0.5) -- (2.5,0.5) arc (180:270:0.5) --cycle;
\filldraw[orange!40] (3,-2.5) arc (90:270:0.65 and 0.75) --cycle;
\draw (3,-2) to[out=180,in=270] (1.7,0.5);
\draw (3,-4.5) ..controls (1.6,-4.5) and (1.2,-2).. (1.2,0.5);
\filldraw[red!10!magenta!30] (0,0) ..controls (1.2,0) and (0.7,-5).. (3,-5) -- (3,-6) arc (90:180:0.5) -- (0.5,-6.5) arc (0:90:0.5) -- cycle;
\draw[dotted] (0,0.5)--(3,0.5);
\draw[dotted] (0,-6.5)--(3,-6.5);
\draw [very thick](0,-6.5) -- (0,0.5);
\draw [very thick](3,-6.5)--(3,0.5);
\node at (1.2,0.4) [anchor=south] {\footnotesize $\mathrm{a}$};
\node at (1.7,0.4) [anchor=south] {\footnotesize $\mathrm{b}$};
}
\end{tikzpicture}
\; = \;
\begin{tikzpicture}[baseline={([yshift=-3mm]current bounding box.center)},xscale=0.5,yscale=0.5]
{
\filldraw[red!10!magenta!30] (0,0) ..controls (1.2,0) and (0.7,-5).. (3,-5) -- (3,-6) arc (90:180:0.5) -- (0.5,-6.5) arc (0:90:0.5) -- cycle;
\draw (3,0) arc (90:270:0.3 and 0.25);
\draw (3,-1) arc (90:270:0.3 and 0.25);
\node at (2.7,-2.05) [anchor=center] {$\vdots$};
\draw (3,-3) arc (90:270:0.3 and 0.25);
\draw (3,-4) arc (90:270:0.3 and 0.25);
\draw (3,-4.5)--(9,-4.5);
\draw (3,-5)--(9,-5);
\draw (3,-6)--(9,-6);
\begin{scope}[shift={(3,0)}]
\draw (0,0) arc(-90:0:0.5);
\draw (3,0) arc(270:180:0.5);
\draw (0,-0.5) arc (90:-90:0.3 and 0.25);
\draw (3,-0.5) arc (90:270:0.3 and 0.25);
\draw (0,-1.5) arc (90:-90:0.3 and 0.25);
\draw (3,-1.5) arc (90:270:0.3 and 0.25);
\node at (2.7,-2.55) [anchor=center] {$\vdots$};
\node at (0.3,-2.55) [anchor=center] {$\vdots$};
\draw (0,-3.5) arc (90:-90:0.3 and 0.25);
\draw (3,-3.5) arc (90:270:0.3 and 0.25);
\node at (1.5,-5.3) [anchor=center] {$\vdots$};
\end{scope}
\begin{scope}[shift={(6,0)}]
\filldraw[blue!30!cyan!40] (3,-1.5) ..controls (2.2,-1.5) and (2.1,-0.5).. (2.1,0.5) -- (2.5,0.5) arc (180:270:0.5) --cycle;
\filldraw[orange!40] (3,-2.5) arc (90:270:0.65 and 0.75) --cycle;
\draw (3,-2) to[out=180,in=0] (0,-4);
\draw (0,0) arc(90:-90:0.3 and 0.25);
\draw (0,-1) arc(90:-90:0.3 and 0.25);
\node at (0.3,-2.05) [anchor=center] {$\vdots$};
\draw (0,-3) arc(90:-90:0.3 and 0.25);
\end{scope}
\draw[dotted] (0,0.5)--(9,0.5);
\draw[dotted] (0,-6.5)--(9,-6.5);
\draw [very thick](0,-6.5) -- (0,0.5);
\draw [very thick](3,-6.5)--(3,0.5);
\draw [very thick](6,-6.5)--(6,0.5);
\draw [very thick](9,-6.5)--(9,0.5);
\node at (3.5,0.4) [anchor=south] {\footnotesize $\mathrm{a}$};
\node at (5.5,0.4) [anchor=south] {\footnotesize $\mathrm{b}$};
}
\end{tikzpicture}\; .
\end{align}
Observe that the top right shaded region (blue) must contain an even number of boundary links, and thus each of the top two shaded regions (blue, orange) must cover an even number of nodes, as required.

In each of these four cases, we have written $\undertilde{T}$ as a product of three diagrams. The left and right diagrams are even diagrams with strictly fewer top boundary links than $\undertilde{T}$, and one of them has no bottom boundary links, while the other has the same number as $\undertilde{T}$. The middle diagram is a product $\dfup{a}{b}\de{2}\de{4} \dots \de{2k}$ for some $k$. 

We can then apply this process to the left or right diagram, if they have any top boundary links. Doing this repeatedly, we arrive at a product consisting of the middle diagrams of the form $\dfup{a}{b}\de{2}\de{4} \dots \de{2k}$, diagrams with no boundary links, and at most one even diagram with bottom boundary links, but no top boundary links. Reflecting this process about a horizontal line, it can similarly be used to express a diagram with only bottom boundary links, as a product of diagrams of the form $\dfdo{a}{b} \de{n-2}\de{n-4} \dots \de{n-2k}$, and diagrams with no boundary links. Each diagram without boundary links is a basis diagram of the TL algebra, and can be written as a product of $\de{i}$ generators using the method from \cite[\S 2]{RSA}, which does not produce any loops. What remains is a product of generators of the form $\de{i}$, $\fup{a}{b}$ and $\fdo{a}{b}$, that is equal to $\undertilde{T}$, and does not contain any loops or boundary arcs, as desired.
\end{proof}

\begin{proposition}\label{prop:oddgen}
Any odd $\lab_n(X)$-diagram $\undertilde{D}$ can be written as a product $\dw{a}{b}(j)\undertilde{T}$, such that $\undertilde{T}$ is an even diagram, $\undertilde{W}^a_b(j)$ is one of the diagrams below, and the concatenation of $\dw{a}{b}(j)$ and $\undertilde{T}$ does not produce any loops or boundary arcs. Moreover, given $\undertilde{D}$ and $\dw{a}{b}(j)$, there is at most one diagram $\undertilde{T}$ such that concatenating $\dw{a}{b}(j)$ and $\undertilde{T}$ gives the diagram $\undertilde{D}$ without forming any loops or boundary arcs.
\begin{align}
\dw{a}{b}(0) &\coloneqq \dwup{a}{b} =\;  \begin{tikzpicture}[baseline={([yshift=-1mm]current bounding box.center)},scale=0.5]
{
\draw[dotted] (0,0.5)--(3,0.5);
\draw[dotted] (0,-5.5)--(3,-5.5);
\draw [very thick] (0,-5.5)--(0,0.5);
\draw [very thick] (3,-5.5)--(3,0.5);
\draw (0,0) arc (-90:0:0.6 and 0.5);
\draw (0,-0.5) to[out=0,in=180] (3,0);
\draw (0,-5) to[out=0,in=180] (3,-4.5);
\draw (3,-5) arc (90:180:0.6 and 0.5);
\node at (1.5,-2.3) [anchor=center] {$\vdots$};
\node at (0.6,0.4) [anchor=south] {\footnotesize $\mathrm{a}$};
\node at (2.4,-5.4) [anchor=north] {\footnotesize $\mathrm{b}$};
}
\end{tikzpicture}\; ;
&\dw{a}{b}(n) &\coloneqq \dwdo{a}{b} = \; 
\begin{tikzpicture}[baseline={([yshift=-1mm]current bounding box.center)},scale=0.5]
{
\draw[dotted] (0,0.5)--(3,0.5);
\draw[dotted] (0,-5.5)--(3,-5.5);
\draw [very thick] (0,-5.5)--(0,0.5);
\draw [very thick] (3,-5.5)--(3,0.5);
\draw (3,0) arc (270:180:0.6 and 0.5);
\draw (0,0) to[out=0,in=180] (3,-0.5);
\draw (0,-4.5) to[out=0,in=180] (3,-5);
\draw (0,-5) arc (90:0:0.6 and 0.5);
\node at (1.5,-2.3) [anchor=center] {$\vdots$};
\node at (2.4,0.4) [anchor=south] {\footnotesize $\mathrm{a}$};
\node at (0.6,-5.4) [anchor=north] {\footnotesize $\mathrm{b}$};
}
\end{tikzpicture}\; ,
\end{align}
\begin{align}
\dw{a}{b}(i) &\coloneqq \; \de{i}\de{i-1}\dots \de{2}\de{1}\dwup{a}{b}\;  =\;  
\begin{tikzpicture}[baseline={([yshift=-1mm]current bounding box.center)},scale=0.5]
{
\draw[dotted] (0,0.5)--(3,0.5);
\draw[dotted] (0,-5.5)--(3,-5.5);
\draw [very thick] (0,-5.5)--(0,0.5);
\draw [very thick] (3,-5.5)--(3,0.5);
\draw (3,0) arc (270:180:0.6 and 0.5);
\draw (0,0) to[out=0,in=180] (3,-0.5);
\draw (0,-1.5) to[out=0,in=180] (3,-2);
\draw (0,-2) arc(90:-90:0.3 and 0.25);
\draw (0,-3) to[out=0,in=180] (3,-2.5);
\draw (0,-5) to[out=0,in=180] (3,-4.5);
\draw (3,-5) arc (90:180:0.6 and 0.5);
\node at (1.5,-0.8) [anchor=center] {$\vdots$};
\node at (1.5,-3.55) [anchor=center] {$\vdots$};
\node at (0,-2) [anchor=east] {\scriptsize $i$};
\node at (0,-2.5) [anchor=east] {\scriptsize $i\! + \! 1$};
\node at (2.4,0.4) [anchor=south] {\footnotesize $\mathrm{a}$};
\node at (2.4,-5.4) [anchor=north] {\footnotesize $\mathrm{b}$};
}
\end{tikzpicture}\; , \qquad 1 \leq i \leq n-1.
\end{align}
\end{proposition}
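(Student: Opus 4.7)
The plan is to identify, for each odd diagram $\undertilde{D}$, a \emph{canonical feature} on its left side that determines $j$: (i) left node $1$ attached to the top boundary (giving $j=0$), (ii) left node $n$ attached to the bottom boundary (giving $j=n$), or (iii) a simple link between left nodes $i$ and $i+1$ for some $1\le i\le n-1$ (giving $j=i$). Given such a feature, the labels $\mathrm{a}$ and $\mathrm{b}$ are read off from the corresponding boundary links in $\undertilde{D}$, and $\undertilde{T}$ is constructed by tracing each string of $\undertilde{D}$ backward through the internal connections of $\dw{a}{b}(j)$: each left node of $\undertilde{T}$ inherits its partner by following the corresponding string chain.

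To prove that every odd $\undertilde{D}$ admits such a feature, I plan a case analysis on the connection of left node $1$. If left $1$ is attached to the top boundary, feature (i) applies. If any simple link exists between adjacent left nodes, feature (iii) applies; note that the mere presence of any left-link forces a simple link, because the innermost left-link in the nested non-crossing structure must be simple. In the remaining case there are no left-links at all on the left side, and left $1$ is either a bottom boundary link or a throughline; the string from left $1$ then partitions the rectangle into two regions, and within the region containing left nodes $2,\dots,n$, an endpoint-counting argument using the non-crossing constraint together with the odd parity of $\undertilde{D}$ shows that there are strictly more available left endpoints than right ones in this region, so at least one of these left nodes must attach to the bottom boundary, and by non-crossing monotonicity this must include left $n$, giving feature (ii).

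Verification that $\dw{a}{b}(j)\undertilde{T}$ reproduces $\undertilde{D}$ without producing loops or boundary arcs is a direct check on the string chains in the concatenation: each chain terminates at the correct endpoint of $\undertilde{D}$, no chain closes into a loop, and no chain has both ends on boundaries. That $\undertilde{T}$ is an even diagram follows from a parity count: in the absence of boundary arcs, the numbers of top and bottom boundary links are additive across the concatenation, and since both $\undertilde{D}$ and $\dw{a}{b}(j)$ are odd, $\undertilde{T}$ must be even. Uniqueness of $\undertilde{T}$ given $\undertilde{D}$ and $\dw{a}{b}(j)$ is immediate from the same tracing procedure, since the fixed internal connections of $\dw{a}{b}(j)$ force each connection in $\undertilde{T}$. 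The main obstacle will be the throughline case of the existence argument, which requires carefully tracking how non-crossing throughlines partition the sub-region below the string from left $1$ in order to conclude that the bottom-most left node has no alternative but to attach to the bottom boundary.
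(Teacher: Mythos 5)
Your proposal is correct and follows essentially the same route as the paper: the same three-way case split on the left side of $\undertilde{D}$ (top boundary link at $1$ giving $j=0$, a simple link giving $j=i$, and otherwise a forced bottom boundary link at $n$ giving $j=n$, via the same ``innermost link is simple'' and ``all throughlines would make $\undertilde{D}$ the identity'' observations), and the same uniqueness argument based on $\dw{a}{b}(j)$ having no links on its right side. The only real difference is presentational: you define $\undertilde{T}$ once by tracing strings through $\dw{a}{b}(j)$, whereas the paper exhibits $\undertilde{T}$ explicitly in each sub-case by which side the leftmost boundary links attach to; your deferred ``direct check'' that no loops or boundary arcs arise is exactly that case analysis.
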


\begin{proof}
Since $\undertilde{D}$ is odd, it has an odd number of top boundary links, and an odd number of bottom boundary links, and thus at least one of each. Hence let $\mathrm{a},\mathrm{b} \in X$ be the labels of the leftmost top and bottom boundary links in $\undertilde{D}$, respectively.

If $\undertilde{D}$ has a top boundary link at $1$ on the left, take $j=0$, so $\dw{a}{b}(j) = \dw{a}{b}(0) = \dwup{a}{b}$. If the leftmost bottom boundary link comes from the left, we have
\begin{align}
\undertilde{D} &=\;
\begin{tikzpicture}[baseline={([yshift=-1mm]current bounding box.center)},xscale=0.5,yscale=0.5]
{
\draw (0,0) arc (-90:0:0.5);
\draw (0,-3.5) to[out=0,in=90] (1.3,-6.5);
\filldraw[blue!30!cyan!40] (0,-4) arc (90:-90:0.8 and 1) --cycle;
\filldraw[red!10!magenta!30] (0,-0.5) to[out=0,in=-90] (0.9,0.5) -- (2.5,0.5) arc (180:270:0.5) -- (3,-6) arc (90:180:0.5) -- (1.7,-6.5) to[out=90,in=0] (0,-3) -- cycle;
\node at (0.5,0.4) [anchor=south] {\footnotesize $\mathrm{a}$};
\node at (1.3,-6.4) [anchor=north] {\footnotesize $\mathrm{b}$};
\draw[dotted] (0,0.5)--(3,0.5);
\draw[dotted] (0,-6.5)--(3,-6.5);
\draw [very thick](0,-6.5) -- (0,0.5);
\draw [very thick](3,-6.5)--(3,0.5);
}
\end{tikzpicture}
\; =\;
\begin{tikzpicture}[baseline={([yshift=-1mm]current bounding box.center)},xscale=0.5,yscale=0.5]
{
\draw (0,0) arc (-90:0:0.5);
\draw (0,-0.5) to[out=0,in=180] (3,0);
\draw (0,-3) to[out=0,in=180] (3,-2.5);
\draw (0,-3.5) to[out=0,in=180] (3,-3);
\draw (0,-4) to[out=0,in=180] (3,-3.5);
\draw (0,-6) to[out=0,in=180] (3,-5.5);
\draw (3,-6) arc (90:180:0.5);
\node at (1.5,-1.3) [anchor=center] {$\vdots$};
\node at (1.5,-4.55) [anchor=center] {$\vdots$};
\node at (0.5,0.4) [anchor=south] {\footnotesize $\mathrm{a}$};
\node at (2.5,-6.4) [anchor=north] {\footnotesize $\mathrm{b}$};
\begin{scope}[shift={(3,0)}]
\draw (0,-3) arc (90:-90:1.2 and 1.5);
\filldraw[blue!30!cyan!40] (0,-3.5) arc (90:-90:0.8 and 1) --cycle;
\filldraw[red!10!magenta!30] (0,0) arc (-90:0:0.5) -- (2.5,0.5) arc (180:270:0.5) -- (3,-6) arc (90:180:0.5) -- (1.7,-6.5) to[out=90,in=0] (0,-2.5) -- cycle;
\end{scope}
\draw[dotted] (0,0.5)--(6,0.5);
\draw[dotted] (0,-6.5)--(6,-6.5);
\draw [very thick](0,-6.5) -- (0,0.5);
\draw [very thick](3,-6.5)--(3,0.5);
\draw [very thick](6,-6.5)--(6,0.5);
}
\end{tikzpicture}\; . \label{eq:pres21}
\end{align}
Note that the upper region (pink) from $\undertilde{D}$ has been deformed in the product, with its left side positioned one node higher up. The numbers of nodes on each side of the region are the same in each case, however. Similar deformations appear throughout this proof.

If the leftmost boundary link comes from the right, we have
\begin{align}
\undertilde{D} &=\; 
\begin{tikzpicture}[baseline={([yshift=-1mm]current bounding box.center)},xscale=0.5,yscale=0.5]
{
\draw (0,0) arc (-90:0:0.5);
\draw (3,-3.5) to[out=180,in=90] (1.4,-6.5);
\filldraw[blue!30!cyan!40] (3,-4) to[out=180,in=90] (1.8,-6.5) -- (2.5,-6.5) arc (180:90:0.5) --cycle;
\filldraw[red!10!magenta!30] (0,-0.5) to[out=0,in=-90] (0.9,0.5) -- (2.5,0.5) arc (180:270:0.5) -- (3,-3) ..controls (1,-3) and (1.5,-6).. (0,-6) -- cycle;
\node at (0.5,0.4) [anchor=south] {\footnotesize $\mathrm{a}$};
\node at (1.4,-6.4) [anchor=north] {\footnotesize $\mathrm{b}$};
\draw[dotted] (0,0.5)--(3,0.5);
\draw[dotted] (0,-6.5)--(3,-6.5);
\draw [very thick](0,-6.5) -- (0,0.5);
\draw [very thick](3,-6.5)--(3,0.5);
}
\end{tikzpicture}
\; =\;
\begin{tikzpicture}[baseline={([yshift=-1mm]current bounding box.center)},xscale=0.5,yscale=0.5]
{
\draw (0,0) arc (-90:0:0.5);
\draw (0,-0.5) to[out=0,in=180] (3,0);
\draw (0,-6) to[out=0,in=180] (3,-5.5);
\draw (3,-6) arc (90:180:0.5);
\node at (1.5,-2.8) [anchor=center] {$\vdots$};
\node at (0.5,0.4) [anchor=south] {\footnotesize $\mathrm{a}$};
\node at (2.5,-6.4) [anchor=north] {\footnotesize $\mathrm{b}$};
\begin{scope}[shift={(3,0)}]
\draw (3,-3.5) to[out=180,in=0] (0,-6);
\filldraw[blue!30!cyan!40] (3,-4) to[out=180,in=90] (1.8,-6.5) -- (2.5,-6.5) arc (180:90:0.5) --cycle;
\filldraw[red!10!magenta!30] (0,0) arc (-90:0:0.5) -- (2.5,0.5) arc (180:270:0.5) -- (3,-3) ..controls (1.2,-3) and (1.3,-5.5).. (0,-5.5) -- cycle;
\end{scope}
\draw[dotted] (0,0.5)--(6,0.5);
\draw[dotted] (0,-6.5)--(6,-6.5);
\draw [very thick](0,-6.5) -- (0,0.5);
\draw [very thick](3,-6.5)--(3,0.5);
\draw [very thick](6,-6.5)--(6,0.5);
}
\end{tikzpicture}\; . \label{eq:pres22}
\end{align}

If $\undertilde{D}$ does not have a top boundary link at $1$ on the left, then we cannot use $\dw{a}{b}(0)$. Recall that a simple link at $k$ is a link connecting nodes $k$ and $k+1$ on the same side of a diagram. If $\undertilde{D}$ has a simple link on the left, let $i$ be the position of the topmost simple link, and take $j=i$. To find $\undertilde{T}$, we need to consider cases based on which side of the diagram the leftmost boundary links are connected to.

If the leftmost top and bottom boundary links are both connected to the left side of the diagram, then
\vspace{-3mm}
\begin{align}
\undertilde{D} &= \;
\begin{tikzpicture}[baseline={([yshift=-1mm]current bounding box.center)},xscale=0.5,yscale=0.5]
{
\filldraw[blue!30!cyan!40] (0,0) arc (90:-90:1 and 1.25) -- (0,-2) arc (-90:90:0.65 and 0.75) --cycle;
\draw (0,-1) arc (90:-90:0.3 and 0.25);
\draw (0,-3) to[out=0,in=-90] (1.5,0.5);
\filldraw[red!10!magenta!30] (0,-3.5) to[out=0,in=-90] (2,0.5) -- (2.5,0.5) arc (180:270:0.5) -- (3,-6) arc (90:180:0.5) -- (1.6,-6.5) to[out=90,in=0] (0,-4) -- cycle;
\draw (0,-4.5) to[out=0,in=90] (1.2,-6.5);
\filldraw[orange!40] (0,-5) arc (90:-90:0.5)--cycle;
\node at (1.5,0.4) [anchor=south] {\footnotesize $\mathrm{a}$};
\node at (1.2,-6.4) [anchor=north] {\footnotesize $\mathrm{b}$};
\draw[dotted] (0,0.5)--(3,0.5);
\draw[dotted] (0,-6.5)--(3,-6.5);
\draw [very thick](0,-6.5) -- (0,0.5);
\draw [very thick](3,-6.5)--(3,0.5);
}
\end{tikzpicture}
\; =\; 
\begin{tikzpicture}[baseline={([yshift=-1mm]current bounding box.center)},xscale=0.5,yscale=0.5]
{
\draw (0,0) to[out=0,in=180] (3,-0.5);
\draw (0,-0.5) to[out=0,in=180] (3,-1);
\draw (0,-1) arc (90:-90:0.3 and 0.25);
\draw (0,-2) to[out=0,in=180] (3,-1.5);
\draw (0,-2.5) to[out=0,in=180] (3,-2);
\draw (0,-3) to[out=0,in=180] (3,-2.5);
\draw (0,-3.5) to[out=0,in=180] (3,-3);
\draw (0,-4) to[out=0,in=180] (3,-3.5);
\draw (0,-4.5) to[out=0,in=180] (3,-4);
\draw (0,-5) to[out=0,in=180] (3,-4.5);
\draw (0,-6) to[out=0,in=180] (3,-5.5);
\draw (3,0) arc (270:180:0.5);
\draw (3,-6) arc (90:180:0.5);
\node at (1.5,-0.5) {.};
\node at (1.5,-2) {.};
\node at (1.5,-3.5) {.};
\node at (1.5,-5.45) {.};
\node at (1.5,-5.05) {.};
\node at (1.5,-5.25) {.};
\node at (2.5,0.4) [anchor=south] {\footnotesize $\mathrm{a}$};
\node at (2.5,-6.4) [anchor=north] {\footnotesize $\mathrm{b}$};
\begin{scope}[shift={(3,0)}]
\filldraw[blue!30!cyan!40] (0,-0.5) arc (90:-90:0.65 and 0.75) -- (0,-1.5) arc (-90:90:0.25) --cycle;
\draw (0,0) arc (90:-90:1 and 1.25);
\filldraw[red!10!magenta!30] (0,-3) to[out=0,in=-90] (2,0.5) -- (2.5,0.5) arc (180:270:0.5) -- (3,-6) arc (90:180:0.5) -- (1.6,-6.5) to[out=90,in=0] (0,-3.5) -- cycle;
\draw (0,-4) arc (90:-90:0.85 and 1);
\filldraw[orange!40] (0,-4.5) arc (90:-90:0.5)--cycle;
\end{scope}
\draw[dotted] (0,0.5)--(6,0.5);
\draw[dotted] (0,-6.5)--(6,-6.5);
\draw [very thick](0,-6.5) -- (0,0.5);
\draw [very thick](3,-6.5)--(3,0.5);
\draw [very thick](6,-6.5)--(6,0.5);
}
\end{tikzpicture}\; , \label{eq:pres31a}
\end{align}
where a single dot is used in place of a vertical ellipsis where an ellipsis does not fit.

If the leftmost top boundary link in $\undertilde{D}$ comes from the left, and the leftmost bottom boundary link comes from the right, then we have
\begin{align}
\undertilde{D} &=\; 
\begin{tikzpicture}[baseline={([yshift=-1mm]current bounding box.center)},xscale=0.5,yscale=0.5]
{
\filldraw[blue!30!cyan!40] (0,0) arc (90:-90:1.25 and 1.75) -- (0,-2.5) arc (-90:90:0.75) --cycle;
\draw (0,-1.5) arc (90:-90:0.3 and 0.25);
\draw (0,-4) to[out=0,in=-90] (1.5,0.5);
\filldraw[red!10!magenta!30] (0,-4.5) to[out=0,in=-90] (2,0.5) -- (2.5,0.5) arc (180:270:0.5) -- (3,-3.5) to[out=180,in=0] (0,-6) -- cycle;
\draw (3,-4) to[out=180,in=90] (1.5,-6.5);
\filldraw[orange!40] (3,-4.5) to[out=180,in=90] (1.9,-6.5) -- (2.5,-6.5) arc (180:90:0.5)--cycle;
\node at (1.5,0.4) [anchor=south] {\footnotesize $\mathrm{a}$};
\node at (1.5,-6.4) [anchor=north] {\footnotesize $\mathrm{b}$};
\draw[dotted] (0,0.5)--(3,0.5);
\draw[dotted] (0,-6.5)--(3,-6.5);
\draw [very thick](0,-6.5) -- (0,0.5);
\draw [very thick](3,-6.5)--(3,0.5);
}
\end{tikzpicture}
\; =\; 
\begin{tikzpicture}[baseline={([yshift=-1mm]current bounding box.center)},xscale=0.5,yscale=0.5]
{
\draw (0,-1.5) arc (90:-90:0.3 and 0.25);
\draw (0,0) to[out=0,in=180] (3,-0.5);
\draw (0,-1) to[out=0,in=180] (3,-1.5);
\draw (0,-2.5) to[out=0,in=180] (3,-2);
\draw (0,-3.5) to[out=0,in=180] (3,-3);
\draw (0,-4) to[out=0,in=180] (3,-3.5);
\draw (0,-4.5) to[out=0,in=180] (3,-4);
\draw (0,-6) to[out=0,in=180] (3,-5.5);
\draw (3,0) arc (270:180:0.5);
\draw (3,-6) arc (90:180:0.5);
\node at (1.5,-0.75) {.};
\node at (1.5,-0.55) {.};
\node at (1.5,-0.95) {.};
\node at (1.5,-2.75) {.};
\node at (1.5,-2.55) {.};
\node at (1.5,-2.95) {.};
\node at (1.5,-4.8) [anchor=center] {$\vdots$};
\node at (2.5,0.4) [anchor=south] {\footnotesize $\mathrm{a}$};
\node at (2.5,-6.4) [anchor=north] {\footnotesize $\mathrm{b}$};
\begin{scope}[shift={(3,0)}]
\filldraw[blue!30!cyan!40] (0,-0.5) arc (90:-90:1 and 1.25) -- (0,-2) arc (-90:90:0.3 and 0.25) --cycle;
\draw (0,0) arc (90:-90:1.4 and 1.75);
\filldraw[red!10!magenta!30] (0,-4) to[out=0,in=-90] (2,0.5) -- (2.5,0.5) arc (180:270:0.5) -- (3,-3.5) to[out=180,in=0] (0,-5.5) -- cycle;
\draw (0,-6) to[out=0,in=180] (3,-4);
\filldraw[orange!40] (3,-4.5) to[out=180,in=90] (1.9,-6.5) -- (2.5,-6.5) arc (180:90:0.5)--cycle;
\end{scope}
\draw[dotted] (0,0.5)--(6,0.5);
\draw[dotted] (0,-6.5)--(6,-6.5);
\draw [very thick](0,-6.5) -- (0,0.5);
\draw [very thick](3,-6.5)--(3,0.5);
\draw [very thick](6,-6.5)--(6,0.5);
}
\end{tikzpicture}\; . \label{eq:pres31b}
\end{align}

If the leftmost top boundary link comes from the right, and the leftmost bottom boundary link comes from the left, we must also consider whether the bottom boundary link comes from above or below the simple link at $i$ on the left. If below, we have
\begin{align}
\undertilde{D}=\; \begin{tikzpicture}[baseline={([yshift=-1mm]current bounding box.center)},xscale=0.5,yscale=0.5]
{
\filldraw[blue!30!cyan!40] (3,-2.5) ..controls (2,-2.5) and (1.8,-0.5).. (1.8,0.5) -- (2.5,0.5) arc (180:270:0.5) --cycle;
\draw (0,-1.5) arc (90:-90:0.3 and 0.25);
\draw (0,-4) to[out=0,in=90] (1.2,-6.5);
\filldraw[red!10!magenta!30] (0,0) ..controls (1.6,0) and (1.1,-3.5).. (3,-3.5) -- (3,-6) arc (90:180:0.5) -- (1.6,-6.5) to[out=90,in=0] (0,-3.5) -- (0,-2.5) arc (-90:90:0.75) -- cycle;
\draw (3,-3) ..controls (2,-3) and (1.4,-1).. (1.4,0.5);
\filldraw[orange!40] (0,-4.5) arc (90:-90:0.65 and 0.75)--cycle;
\draw[dotted] (0,0.5)--(3,0.5);
\draw[dotted] (0,-6.5)--(3,-6.5);
\draw [very thick](0,-6.5) -- (0,0.5);
\draw [very thick](3,-6.5)--(3,0.5);
\node at (1.4,0.4) [anchor=south] {\footnotesize $\mathrm{a}$};
\node at (1.2,-6.4) [anchor=north] {\footnotesize $\mathrm{b}$};
}
\end{tikzpicture}
\; &=\; 
\begin{tikzpicture}[baseline={([yshift=-1mm]current bounding box.center)},xscale=0.5,yscale=0.5]
{
\draw (0,-1.5) arc (90:-90:0.3 and 0.25);
\draw (0,0) to[out=0,in=180] (3,-0.5);
\draw (0,-1) to[out=0,in=180] (3,-1.5);
\draw (0,-2.5) to[out=0,in=180] (3,-2);
\draw (0,-3.5) to[out=0,in=180] (3,-3);
\draw (0,-4) to[out=0,in=180] (3,-3.5);
\draw (0,-4.5) to[out=0,in=180] (3,-4);
\draw (0,-6) to[out=0,in=180] (3,-5.5);
\draw (3,0) arc (270:180:0.5);
\draw (3,-6) arc (90:180:0.5);
\node at (2.5,0.4) [anchor=south] {\footnotesize $\mathrm{a}$};
\node at (2.5,-6.4) [anchor=north] {\footnotesize $\mathrm{b}$};
\node at (1.5,-0.75) {.};
\node at (1.5,-0.55) {.};
\node at (1.5,-0.95) {.};
\node at (1.5,-2.75) {.};
\node at (1.5,-2.55) {.};
\node at (1.5,-2.95) {.};
\node at (1.5,-4.8) [anchor=center] {$\vdots$};
\begin{scope}[shift={(3,0)}]
\filldraw[blue!30!cyan!40] (3,-2.5) ..controls (2,-2.5) and (1.8,-0.5).. (1.8,0.5) -- (2.5,0.5) arc (180:270:0.5) --cycle;
\draw (0,0) ..controls (1.9,0) and (1.4,-3).. (3,-3);
\filldraw[red!10!magenta!30] (0,-0.5) ..controls (1.6,-0.5) and (1.1,-3.5).. (3,-3.5) -- (3,-6) arc (90:180:0.5) -- (1.6,-6.5) to[out=90,in=0] (0,-3) -- (0,-2) arc (-90:90:0.3 and 0.25) -- cycle;
\draw (0,-3.5) arc (90:-90:1.07 and 1.25);
\filldraw[orange!40] (0,-4) arc (90:-90:0.65 and 0.75)--cycle;
\end{scope}
\draw[dotted] (0,0.5)--(6,0.5);
\draw[dotted] (0,-6.5)--(6,-6.5);
\draw [very thick](0,-6.5) -- (0,0.5);
\draw [very thick](3,-6.5)--(3,0.5);
\draw [very thick](6,-6.5)--(6,0.5);
}
\end{tikzpicture}\; . \label{eq:pres32a}
\end{align}
and if above, we have
\vspace{-3mm}
\begin{align}
\undertilde{D} =\; \begin{tikzpicture}[baseline={([yshift=-1mm]current bounding box.center)},xscale=0.5,yscale=0.5]
{
\draw (0,-2) ..controls (2.1,-2.1) and (1.5,-5.5).. (1.6,-6.5);
\filldraw[orange!40] (3,-2) ..controls (1.9,-2) and (1.8,0).. (1.8,0.5) -- (2.5,0.5) arc (180:270:0.5) --cycle;
\draw (0,-4) arc (90:-90:0.3 and 0.25);
\filldraw[red!10!magenta!30] (0,0) ..controls (1.4,0) and (1.3,-3).. (3,-3) -- (3,-6) arc (90:180:0.5) -- (2,-6.5) ..controls (2.2,-1.5) and (0.5,-1.5)..(0,-1.5) -- cycle;
\draw (3,-2.5) ..controls (1.9,-2.5) and (1.4,-1).. (1.4,0.5);
\filldraw[blue!30!cyan!40] (0,-2.5) arc (90:-90:1.25 and 1.75) -- (0,-5) arc (-90:90:0.75) --cycle;
\node at (1.4,0.4) [anchor=south] {\footnotesize $\mathrm{a}$};
\node at (1.6,-6.4) [anchor=north] {\footnotesize $\mathrm{b}$};
\draw[dotted] (0,0.5)--(3,0.5);
\draw[dotted] (0,-6.5)--(3,-6.5);
\draw [very thick](0,-6.5) -- (0,0.5);
\draw [very thick](3,-6.5)--(3,0.5);
}
\end{tikzpicture}
\; &=\; 
\begin{tikzpicture}[baseline={([yshift=-1mm]current bounding box.center)},xscale=0.5,yscale=0.5]
{
\draw (0,-4) arc (90:-90:0.3 and 0.25);
\draw (0,0) to[out=0,in=180] (3,-0.5);
\draw (0,-1.5) to[out=0,in=180] (3,-2);
\draw (0,-2) to[out=0,in=180] (3,-2.5);
\draw (0,-2.5) to[out=0,in=180] (3,-3);
\draw (0,-3.5) to[out=0,in=180] (3,-4);
\draw (0,-5) to[out=0,in=180] (3,-4.5);
\draw (0,-6) to[out=0,in=180] (3,-5.5);
\draw (3,0) arc (270:180:0.5);
\draw (3,-6) arc (90:180:0.5);
\node at (2.5,0.4) [anchor=south] {\footnotesize $\mathrm{a}$};
\node at (2.5,-6.4) [anchor=north] {\footnotesize $\mathrm{b}$};
\node at (1.5,-5.25) {.};
\node at (1.5,-5.05) {.};
\node at (1.5,-5.45) {.};
\node at (1.5,-3.25) {.};
\node at (1.5,-3.05) {.};
\node at (1.5,-3.45) {.};
\node at (1.5,-0.8) [anchor=center] {$\vdots$};
\begin{scope}[shift={(3,0)}]
\filldraw[orange!40] (3,-2) ..controls (1.9,-2) and (1.8,0).. (1.8,0.5) -- (2.5,0.5) arc (180:270:0.5) --cycle;
\draw (0,0) ..controls (1.9,0) and (1.4,-2.5).. (3,-2.5);
\filldraw[red!10!magenta!30] (0,-0.5) ..controls (1.6,-0.5) and (1.1,-3).. (3,-3) -- (3,-6) arc (90:180:0.5) -- (2,-6.5) ..controls (2,-2.5) and (0.5,-2).. (0,-2) -- cycle;
\draw (0,-2.5) arc (90:-90:1.2 and 1.75);
\filldraw[blue!30!cyan!40] (0,-3) arc (90:-90:0.8 and 1.25) -- (0,-4.5) arc (-90:90:0.3 and 0.25) --cycle;
\end{scope}
\draw[dotted] (0,0.5)--(6,0.5);
\draw[dotted] (0,-6.5)--(6,-6.5);
\draw [very thick](0,-6.5) -- (0,0.5);
\draw [very thick](3,-6.5)--(3,0.5);
\draw [very thick](6,-6.5)--(6,0.5);
}
\end{tikzpicture}\; . \label{eq:pres32b}
\end{align}

If the leftmost top and bottom boundary links both come from the right, then we have
\begin{align}
\begin{tikzpicture}[baseline={([yshift=-1mm]current bounding box.center)},xscale=0.5,yscale=0.5]
{
\filldraw[orange!40] (3,-2) ..controls (1.9,-2) and (1.8,0).. (1.8,0.5) -- (2.5,0.5) arc (180:270:0.5) --cycle;
\draw (0,-2) arc (90:-90:0.3 and 0.25);
\filldraw[red!10!magenta!30] (0,0) ..controls (1.4,0) and (1.1,-3).. (3,-3) -- (3,-4) ..controls (1.3,-4) and (1.4,-6).. (0,-6) -- (0,-3) arc (-90:90:0.65 and 0.75)-- cycle;
\draw (3,-2.5) ..controls (1.9,-2.5) and (1.4,-1).. (1.4,0.5);
\draw (3,-4.5) ..controls (2.1,-4.5) and (1.4,-5.4).. (1.4,-6.5);
\filldraw[blue!30!cyan!40] (3,-5) to[out=180,in=90] (1.8,-6.5) -- (2.5,-6.5) arc (180:90:0.5) --cycle;
\node at (1.4,0.4) [anchor=south] {\footnotesize $\mathrm{a}$};
\node at (1.4,-6.4) [anchor=north] {\footnotesize $\mathrm{b}$};
\draw[dotted] (0,0.5)--(3,0.5);
\draw[dotted] (0,-6.5)--(3,-6.5);
\draw [very thick](0,-6.5) -- (0,0.5);
\draw [very thick](3,-6.5)--(3,0.5);
}
\end{tikzpicture}
\; &=\; 
\begin{tikzpicture}[baseline={([yshift=-1mm]current bounding box.center)},xscale=0.5,yscale=0.5]
{
\draw (0,0) to[out=0,in=180] (3,-0.5);
\draw (0,-1.5) to[out=0,in=180] (3,-2);
\draw (0,-2) arc (90:-90:0.3 and 0.25);
\draw (0,-3) to[out=0,in=180] (3,-2.5);
\draw (0,-6) to[out=0,in=180] (3,-5.5);
\draw (3,0) arc (270:180:0.5);
\draw (3,-6) arc (90:180:0.5);
\node at (2.5,0.4) [anchor=south] {\footnotesize $\mathrm{a}$};
\node at (2.5,-6.4) [anchor=north] {\footnotesize $\mathrm{b}$};
\node at (1.5,-0.8) [anchor=center] {$\vdots$};
\node at (1.5,-4.05) [anchor=center] {$\vdots$};
\begin{scope}[shift={(3,0)}]
\filldraw[orange!40] (3,-2) ..controls (1.9,-2) and (1.8,0).. (1.8,0.5) -- (2.5,0.5) arc (180:270:0.5) --cycle;
\draw (0,0) ..controls (1.9,0) and (1.4,-2.5).. (3,-2.5);
\filldraw[red!10!magenta!30] (0,-0.5) ..controls (1.4,-0.5) and (1.3,-3).. (3,-3) -- (3,-4) ..controls (1.3,-4) and (1.4,-5.5).. (0,-5.5) -- (0,-2.5) arc (-90:90:0.3 and 0.25)-- cycle;
\draw (3,-4.5) to[out=180,in=0] (0,-6);
\filldraw[blue!30!cyan!40] (3,-5) to[out=180,in=90] (1.8,-6.5) -- (2.5,-6.5) arc (180:90:0.5) --cycle;
\end{scope}
\draw[dotted] (0,0.5)--(6,0.5);
\draw[dotted] (0,-6.5)--(6,-6.5);
\draw [very thick](0,-6.5) -- (0,0.5);
\draw [very thick](3,-6.5)--(3,0.5);
\draw [very thick](6,-6.5)--(6,0.5);
}
\end{tikzpicture}\; . \label{eq:pres32c}
\end{align}

Now, if the left side of $\undertilde{D}$ has neither a top boundary link at $1$ nor any simple links, then every string connected to the left side of $\undertilde{D}$ must be a throughline or a bottom boundary link. If all of the strings connected to the left were throughlines, then $\undertilde{D}=\did$ would be even. Hence $\undertilde{D}$ has at least one bottom boundary link on the left. In particular, since planarity implies that bottom boundary links come from nodes below any throughlines, there is a bottom boundary link at $n$ on the left. In this case, take $j=n$. Then we have
\vspace{-3mm}
\begin{align}
\undertilde{D}=\; \begin{tikzpicture}[baseline={([yshift=-1mm]current bounding box.center)},xscale=0.5,yscale=0.5]
{
\filldraw[blue!30!cyan!40] (3,-2) ..controls (1.9,-2) and (1.8,0).. (1.8,0.5) -- (2.5,0.5) arc (180:270:0.5) --cycle;
\filldraw[red!10!magenta!30] (0,0) ..controls (1.4,0) and (1.1,-3).. (3,-3) -- (3,-6) arc (90:180:0.5) -- (0.9,-6.5) to[out=90,in=0] (0,-5.5) -- cycle;
\draw (3,-2.5) ..controls (1.9,-2.5) and (1.4,-1).. (1.4,0.5);
\draw (0,-6) arc (90:0:0.5);
\node at (1.4,0.4) [anchor=south] {\footnotesize $\mathrm{a}$};
\node at (0.5,-6.4) [anchor=north] {\footnotesize $\mathrm{b}$};
\draw[dotted] (0,0.5)--(3,0.5);
\draw[dotted] (0,-6.5)--(3,-6.5);
\draw [very thick](0,-6.5) -- (0,0.5);
\draw [very thick](3,-6.5)--(3,0.5);
}
\end{tikzpicture}
\; &=\; 
\begin{tikzpicture}[baseline={([yshift=-1mm]current bounding box.center)},xscale=0.5,yscale=0.5]
{
\draw (0,0) to[out=0,in=180] (3,-0.5);
\draw (0,-6) arc (90:0:0.5);
\draw (0,-5.5) to[out=0,in=180] (3,-6);
\draw (3,0) arc (270:180:0.5);
\node at (1.5,-2.8) [anchor=center] {$\vdots$};
\node at (2.5,0.4) [anchor=south] {\footnotesize $\mathrm{a}$};
\node at (0.5,-6.4) [anchor=north] {\footnotesize $\vphantom{\mathrm{d}}\mathrm{b}$};
\begin{scope}[shift={(3,0)}]
\filldraw[blue!30!cyan!40] (3,-2) ..controls (1.9,-2) and (1.8,0).. (1.8,0.5) -- (2.5,0.5) arc (180:270:0.5) --cycle;
\draw (0,0) ..controls (1.9,0) and (1.4,-2.5).. (3,-2.5);
\filldraw[red!10!magenta!30] (0,-0.5) ..controls (1.4,-0.5) and (1.3,-3).. (3,-3) -- (3,-6) arc (90:180:0.5) -- (0.5,-6.5) arc(0:90:0.5) -- cycle;
\end{scope}
\draw[dotted] (0,0.5)--(6,0.5);
\draw[dotted] (0,-6.5)--(6,-6.5);
\draw [very thick](0,-6.5) -- (0,0.5);
\draw [very thick](3,-6.5)--(3,0.5);
\draw [very thick](6,-6.5)--(6,0.5);
}
\end{tikzpicture}\; . \label{eq:pres41}
\end{align}

In all cases we have written $\undertilde{D}$ as a product $\dw{a}{b}(j)\undertilde{T}$, where $\undertilde{T}$ is an even diagram, and concatenating $\dw{a}{b}(j)$ and $\undertilde{T}$ does not produce any loops or boundary arcs. 

For the uniqueness of $\undertilde{T}$, fix $\dw{a}{b}(j)$ and $\undertilde{D}$, and suppose there exists $\undertilde{T}$ such that $\dw{a}{b}(j)\undertilde{T}=\undertilde{D}$, and concatenating $\dw{a}{b}(j)$ and $\undertilde{T}$ does not produce any loops or boundary arcs. We will argue that each string in $\undertilde{T}$ is uniquely determined, given $\dw{a}{b}(j)$ and $\undertilde{D}$. The key observation here is that $\dw{a}{b}(j)$ does not have any links on its right side. As the concatenation of $\dw{a}{b}(j)$ and $\undertilde{T}$ does not produce any loops or boundary arcs, each string in $\dw{a}{b}(j)$ and $\undertilde{T}$ must remain in $\undertilde{D}$. Now, each string on the right side of $\dw{a}{b}(j)$ is either a throughline or a boundary link, so one of its endpoints in $\undertilde{D}$ is already fixed. If the other endpoint of the corresponding string in $\undertilde{D}$ is connected to the right side of $\undertilde{D}$, then $\undertilde{T}$ must have a throughline connecting the relevant nodes. Indeed, the only alternative would be if the string from the right of $\dw{a}{b}(j)$ was connected to the node on the right of $\undertilde{D}$ via an alternating sequence of links on the left of $\undertilde{T}$ and the right of $\dw{a}{b}(j)$, followed by a different throughline in $\undertilde{T}$, but there are no links on the right side of $\dw{a}{b}(j)$, so this cannot occur. Similarly, given a string on the right of $\dw{a}{b}(j)$, if the other end of the corresponding string in $\undertilde{D}$ is connected to a node on the left of $\undertilde{D}$, then $\undertilde{T}$ must have a link connecting the given string on the right of $\dw{a}{b}(j)$ to the appropriate throughline of $\dw{a}{b}(j)$. If the other end of the corresponding string in $\undertilde{D}$ is connected to a boundary, then $\undertilde{T}$ must have the appropriate boundary link at this node, unless it is the leftmost top or bottom boundary link, in which case $\undertilde{T}$ has a link connecting this node to node $1$ or $n$ on the left of $\undertilde{T}$, respectively. Thus all strings connected to nodes on the left of $\undertilde{T}$ are uniquely determined by $\dw{a}{b}(j)$ and $\undertilde{D}$. The remaining strings are connected only to nodes on the right of $\undertilde{T}$, and perhaps the boundaries; these must match the links and boundary links on the right side of $\undertilde{D}$. Therefore, given $\dw{a}{b}(j)$ and $\undertilde{D}$, there is at most one diagram $\undertilde{T}$ such that both $\dw{a}{b}(j)\undertilde{T} = \undertilde{D}$, and concatenating $\dw{a}{b}(j)$ and $\undertilde{T}$ does not produce any loops or boundary arcs.
\end{proof}

\begin{theorem}\label{thm:diaggen}
The diagrammatic label algebra $\lab_n(X)$ is generated by $\did$, $e_i$, $\dfup{a}{b}$, $\dfdo{a}{b}$, $\dwup{a}{b}$ and $\dwdo{a}{b}$, where $i = 1, \dots, n-1$ and $\mathrm{a},\mathrm{b} \in X$.
\end{theorem}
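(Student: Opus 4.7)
The proof is essentially a direct combination of Propositions \ref{prop:evengen} and \ref{prop:oddgen}, so the plan is to show that every basis diagram of $\lab_n(X)$ can be expressed as a product of the listed generators, then invoke bilinearity. Since $\lab_n(X)$ is spanned by its basis of $\lab_n(X)$-diagrams, it suffices to write each basis diagram as such a product.

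First I would observe the dichotomy established earlier: every $\lab_n(X)$-diagram is either even or odd, since each of the $2n$ nodes carries exactly one string endpoint, forcing the total number of boundary links to be even. For an even basis diagram $\undertilde{T}$, Proposition \ref{prop:evengen} already writes $\undertilde{T}$ as a product of the generators $\de{i}$, $\dfup{a}{b}$, and $\dfdo{a}{b}$, all of which are in the asserted generating set.

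For an odd basis diagram $\undertilde{D}$, Proposition \ref{prop:oddgen} gives $\undertilde{D} = \dw{a}{b}(j)\undertilde{T}$ for some $j \in \{0,1,\dots,n\}$, some $\mathrm{a},\mathrm{b} \in X$, and some even diagram $\undertilde{T}$. By the definitions displayed in Proposition \ref{prop:oddgen}, the factor $\dw{a}{b}(j)$ is exactly one of $\dwup{a}{b}$ (when $j=0$), $\dwdo{a}{b}$ (when $j=n$), or the explicit product $\de{j}\de{j-1}\cdots\de{2}\de{1}\dwup{a}{b}$ (when $1 \leq j \leq n-1$); in every case $\dw{a}{b}(j)$ is already a product of generators from the asserted list. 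Applying Proposition \ref{prop:evengen} to the even factor $\undertilde{T}$ then expresses $\undertilde{D}$ as a product of the generators $\de{i}$, $\dfup{a}{b}$, $\dfdo{a}{b}$, $\dwup{a}{b}$, and $\dwdo{a}{b}$.

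This exhausts both cases, so every basis diagram lies in the subalgebra generated by $\did$, $\de{i}$, $\dfup{a}{b}$, $\dfdo{a}{b}$, $\dwup{a}{b}$, and $\dwdo{a}{b}$; extending bilinearly gives the theorem. There is no real obstacle here, as the hard combinatorial work has already been carried out in Propositions \ref{prop:evengen} and \ref{prop:oddgen}; the only mild subtlety worth pointing out is that $\did$ need not be derivable from the other generators (it may appear as an empty product or the $j$-th generator in particular cases), which is why it is included in the generating set.
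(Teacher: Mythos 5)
Your proposal is correct and takes exactly the same route as the paper, which simply cites Propositions \ref{prop:evengen} and \ref{prop:oddgen} together with the observation that every diagram is either even or odd; you have merely spelled out the details that the paper leaves implicit.
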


\begin{proof}
This follows from Propositions \ref{prop:evengen} and \ref{prop:oddgen}, since every diagram is either even or odd.
\end{proof}

\section{Algebraic presentation and homomorphism}\label{s:algrels}
In this section, we introduce generators and relations for an algebra $\A_n(X)$, and define a map $\phi$ from $\A_n(X)$ to the diagrammatic label algebra $\lab_n(X)$. In later sections, we will show that $\phi$ is an isomorphism.

Let $X$ be a set of labels and $n \in \N$, and let $\beta$, $\aup{a}{b}$, $\ddo{a}{b}$, $\glab{a}{b}$ be indeterminates, for each $\mathrm{a},\mathrm{b} \in X$. We define the \textit{algebraic label algebra} $\A_n(X)$ to be generated by the identity $\id$; $e_i$, $1 \leq i \leq n-1$; $\fup{a}{b}$, $\mathrm{a},\mathrm{b} \in X$; $\fdo{a}{b}$, $\mathrm{a},\mathrm{b} \in X$; $\wup{a}{b}$, $\mathrm{a},\mathrm{b} \in X$; and $\wdo{a}{b}$, $\mathrm{a},\mathrm{b} \in X$, subject to the following relations for all $\mathrm{a},\mathrm{b},\mathrm{c},\mathrm{d} \in X$.

\vspace{2.5mm}
\noindent \textbf{Generators that commute:}
\begin{align}
e_ie_j &= e_je_i, &&\abs{i-j} \geq 2, \tag{L1} \label{eq:eiej}\\
\fup{a}{b} e_j &= e_j \fup{a}{b}, && 2 \leq j \leq n-1, \tag{L2} \label{eq:fupej}\\
\fdo{a}{b} e_j &= e_j \fdo{a}{b}, && 1 \leq j \leq n-2, \tag{L3} \label{eq:fdoej}\\
\fup{a}{b} \fdo{c}{d} &= \fdo{c}{d} \fup{a}{b}, && n \geq 2. \tag{L4} \label{eq:fupfdo}
\end{align}

\noindent\textbf{Reduction of TL generators:}
\begin{align}
    e_ie_je_i &= e_i, &&\abs{i-j}=1. \tag{L5} \label{eq:TLreduce}
\end{align}

\noindent\textbf{Generators that almost commute:}
\begin{align}
e_j \wup{a}{b} &= \wup{a}{b} e_{j-1}, &&2 \leq j \leq n-1, \tag{L6} \label{eq:ejwup}\\
e_j \wdo{a}{b} &= \wdo{a}{b} e_{j+1}, && 1 \leq j \leq n-2, \tag{L7} \label{eq:ejwdo}\\
\fdo{a}{b} \wup{c}{d} &= \wup{c}{a} e_{n-1} \fdo{b}{d}, && n \geq 2, \tag{L8} \label{eq:fdowup}\\
\fup{a}{b} \wdo{c}{d} &= \wdo{a}{d} e_1 \fup{b}{c}, && n \geq 2, \tag{L9} \label{eq:fupwdo}\\
\wup{a}{b} \fup{c}{d} &= \fup{a}{c} e_1 \wup{d}{b}, && n \geq 2, \tag{L10} \label{eq:wupfup}\\
\wdo{a}{b} \fdo{c}{d} &= \fdo{b}{c} e_{n-1} \wdo{a}{d}, && n \geq 2.\tag{L11} \label{eq:wdofdo}
\end{align}

\noindent\textbf{Swapping $\wup{a}{b}$ and $\wdo{a}{b}$:}
\begin{align}
e_1 \wup{a}{b} &= e_1e_2 \dots e_{n-1} \wdo{a}{b}, && n \geq 2, \tag{L12} \label{eq:e1wup}\\
\wup{a}{b} e_{n-1} &= \wdo{a}{b} e_1e_2 \dots e_{n-1}, && n \geq 2, \tag{L13} \label{eq:wupen1}\\
e_{n-1} \wdo{a}{b} &= e_{n-1} e_{n-2} \dots e_1 \wup{a}{b}, && n \geq 2, \tag{L14} \label{eq:en1wdo}\\
\wdo{a}{b} e_1 &= \wup{a}{b}e_{n-1}e_{n-2} \dots e_1, && n \geq 2.\tag{L15} \label{eq:wdoe1}
\end{align}

\noindent\textbf{Reduction of odd generators:}
\begin{align}
\wup{a}{b} \wup{c}{d} &= \fup{a}{c}e_1e_2\dots e_{n-1} \fdo{b}{d}, \tag{L16} \label{eq:wupwup}\\
\wdo{a}{b}\wdo{c}{d} &= \fdo{b}{d} e_{n-1}e_{n-2} \dots e_1 \fup{a}{c}, \tag{L17} \label{eq:wdowdo}\\
\wdo{a}{b}e_1 \wup{c}{d} &= \fup{a}{c}\fdo{b}{d}, && n \geq 2.\tag{L18} \label{eq:wdoe1wup}
\end{align}

\noindent\textbf{Parameter $\beta$:}
\begin{align}
e_j^2 &= \beta e_j, && 1 \leq j \leq n-1.\tag{L19} \label{eq:beta}
\end{align}

\noindent\textbf{Parameter $\aup{a}{b}$:}
\begin{align}
\fup{c}{a} \fup{b}{d} &= \aup{a}{b} \fup{c}{d}, \tag{L20} \label{eq:fupfup}\\
e_1\fup{a}{b} e_1 &= \aup{a}{b} e_1, &&n \geq 2, \tag{L21} \label{eq:e1fupe1}\\
\fup{c}{a}\wup{b}{d} &= \aup{a}{b}\wup{c}{d}, \tag{L22}\label{eq:fupwup}\\
\wdo{a}{d}\fup{b}{c} &= \aup{a}{b} \wdo{c}{d}, \tag{L23}\label{eq:wdofup}\\
\wdo{a}{c} \wup{b}{d} &= \aup{a}{b} \fdo{c}{d}. \tag{L24} \label{eq:wdowup}
\end{align}

\noindent\textbf{Parameter $\ddo{a}{b}$:}
\begin{align}
\fdo{c}{a} \fdo{b}{d} &= \ddo{a}{b}\fdo{c}{d}, \tag{L25} \label{eq:fdofdo}\\
e_{n-1} \fdo{a}{b}e_{n-1} &= \ddo{a}{b} e_{n-1}, &&n \geq 2, \tag{L26} \label{eq:en1fdoen1}\\
\fdo{c}{a} \wdo{d}{b} &= \ddo{a}{b} \wdo{d}{c}, \tag{L27} \label{eq:fdowdo}\\
\wup{c}{a} \fdo{b}{d} &= \ddo{a}{b} \wup{c}{d}, \tag{L28} \label{eq:wupfdo}\\
\wup{c}{a} \wdo{d}{b} &= \ddo{a}{b} \fup{c}{d}.\tag{L29} \label{eq:wupwdo}
\end{align}

\noindent\textbf{Parameter $\glab{a}{b}$:}

\noindent\textit{For $n$ odd:}

\noindent Let
\begin{align*}
O &\coloneqq \prod_{k=1}^{\frac{n-1}{2}} e_{2k-1}, 
&E &\coloneqq \prod_{k=1}^{\frac{n-1}{2}} e_{2k}.
\end{align*}
Then we have
\begin{align}
\wup{c}{b} \,O \, \wup{a}{d}\, O &= \glab{a}{b}\,\wup{c}{d} \,O, \tag{L30}\label{eq:wupOwupO}\\[1mm] 
\fup{c}{a} \,E \,\wdo{d}{b} \,E &= \glab{a}{b} \,\fup{c}{d} \,E, \tag{L31}\label{eq:fupEwdoE}\\[1mm] 
\wdo{a}{d}\,E \,\wdo{c}{b} \,E &= \glab{a}{b} \,\wdo{c}{d}\, E ,\tag{L32}\label{eq:wdoEwdoE}\\[1mm] 
\fdo{c}{b} \,O \,\wup{a}{d} \,O &= \glab{a}{b} \,\fdo{c}{d} \,O. \tag{L33}\label{eq:fdoOwupO} 
\end{align}

\noindent\textit{For $n$ even:}

\noindent Let
\begin{align*}
    \Theta \coloneqq \prod_{k=1}^{n/2} e_{2k-1}.
\end{align*}
Then we have
\begin{align}
\Theta \wup{a}{b} \Theta = \glab{a}{b} \Theta. \tag{L34} \label{eq:ThetawupTheta}
\end{align}

\noindent\textit{For $n=1$ only:}
\begin{align}
\fup{c}{a} \fdo{b}{d} &= \glab{a}{b} \wup{c}{d}, \tag{L35} \label{eq:fupfdo1}\\
\fdo{d}{b} \fup{a}{c} &= \glab{a}{b} \wdo{c}{d}, \tag{L36} \label{eq:fdofup1}\\
\wup{c}{b} \fup{a}{d} &= \glab{a}{b} \fup{c}{d}, \tag{L37} \label{eq:wupfup1}\\
\wdo{a}{c} \fdo{b}{d} &= \glab{a}{b} \fdo{c}{d}. \tag{L38} \label{eq:wdofdo1}
\end{align}

\hrule

\vspace{5mm}

\noindent The following relations are implied by the defining relations above, and will be useful later. They are proven in Appendix \ref{app:redundant}.

\vspace{4mm}
\noindent\begin{minipage}[t][][t]{0.45\textwidth}
\noindent \textit{For all $n$:}
\begin{align}
\wup{a}{b}e_{n-1}\wdo{c}{d} &= \fup{a}{c}\fdo{b}{d}, && n \geq 2. \label{eq:wupen1wdo}
\end{align}

\vspace{3mm}
\noindent\textit{For $n$ odd:}
\begin{align}
\fup{a}{b}E &= E \fup{a}{b}, \\
\fdo{a}{b}O &= O\fdo{a}{b}, \\
\wup{a}{b}\, O &= E\, \wup{a}{b}, \label{eq:wupO} \\
\wdo{a}{b}\, E &= O\, \wdo{a}{b}, \label{eq:wdoE} \\[5mm]
e_1 \wup{a}{b}E &= \wdo{a}{b}E, &&n>1,\\
e_{n-1} \wdo{a}{b}O &= \wup{a}{b}O , &&n>1,\\
E\wdo{a}{b}e_1 &= \wup{a}{b} O, &&n>1,\\
O\wup{a}{b}e_{n-1} &= \wdo{a}{b}E, &&n>1,\\[5mm]
\fup{c}{a}\, E \, \fdo{b}{d} \, O &= \glab{a}{b}\, \wup{c}{d}\, O, \label{eq:fupEfdoO}\\[1mm]
\fdo{d}{b}\, O\, \fup{a}{c} \, E &= \glab{a}{b} \,\wdo{c}{d}\, E, \label{eq:fdoOfupE}\\[1mm]
\wup{c}{b}\, O\,  \fup{a}{d}\, E &= \glab{a}{b}\, \fup{c}{d}\, E, \label{eq:wupOfupE}\\[1mm]
\wdo{a}{c}\, E \, \fdo{b}{d}\, O &= \glab{a}{b}\, \fdo{c}{d}\, O. \label{eq:wdoEfdoO}
\end{align}
\end{minipage}\hspace{0.1\textwidth}\begin{minipage}[t][][t]{0.45\textwidth}
\noindent\textit{For $n$ even:}

\noindent Let
\vspace{-1mm}
\begin{align}
\Omega \coloneqq  \prod_{k=1}^{\frac{n}{2}-1} e_{2k}.
\end{align}
Then 
\vspace{-1mm}
\begin{align}
\Theta \wup{a}{b} &= \Theta \wdo{a}{b}, \label{eq:Thetawup} \\[1mm]
\wup{a}{b} \Theta &= \wdo{a}{b} \Theta, \label{eq:wupTheta} \\[1mm]
\wup{a}{b}\Theta \wup{c}{d} &= \fup{a}{c}\fdo{b}{d} \Omega, \label{eq:wupThetawup} \\[1mm]
e_1\wup{a}{b} \Omega &= \Theta \wup{a}{b}, \label{eq:e1wupOmega} \\[1mm]
e_{n-1}\wdo{a}{b} \Omega &= \Theta \wdo{a}{b}. \label{eq:en1wdoOmega}
\end{align}

\vspace{7mm}
\noindent\textit{For $n=1$:}
\vspace{-1mm}
\begin{align}
\fup{c}{a}\wdo{d}{b} &= \glab{a}{b}\,\fup{c}{d}, \label{eq:fupwdo1}\\[1mm]
\fdo{c}{b}\,\wup{a}{d} &= \glab{a}{b}\,\fdo{c}{d}, \label{eq:fdowup1}\\[1mm]
\wup{c}{b}\wup{a}{d} &= \glab{a}{b}\, \wup{c}{d}, \label{eq:wupwup1}\\[1mm]
\wdo{a}{d}\wdo{c}{b} &= \glab{a}{b}\, \wdo{c}{d}. \label{eq:wdowdo1}
\end{align}
\end{minipage}

\vspace{6mm}

We now introduce a map $\phi$ between the algebraic and diagrammatic label algebras. The goal for the remainder of the paper is to show that this is an isomorphism. Define a map $\phi: \A_n(X) \to \lab_n(X)$ by
\begin{align}
e_j &\mapsto \de{j}, &&1\leq j \leq n-1, \label{eq:phidefstart} \\
\fup{a}{b} &\mapsto \dfup{a}{b}, &&\mathrm{a},\mathrm{b} \in X, \\
\fdo{a}{b} &\mapsto \dfdo{a}{b}, &&\mathrm{a},\mathrm{b} \in X, \\
\wup{a}{b} &\mapsto \dwup{a}{b}, &&\mathrm{a},\mathrm{b} \in X, \\
\wdo{a}{b} &\mapsto \dwdo{a}{b}, &&\mathrm{a},\mathrm{b} \in X. \label{eq:phidefend}
\end{align}

\begin{proposition}\label{prop:welldef}
Equations \eqref{eq:phidefstart}--\eqref{eq:phidefend} define a homomorphism.
\end{proposition}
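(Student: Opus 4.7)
The plan is to verify directly that each of the defining relations (L1)--(L38) of $\A_n(X)$ is respected by $\phi$ at the diagrammatic level; since $\phi$ is specified on a generating set and $\A_n(X)$ is presented by these generators modulo these relations, this suffices for $\phi$ to extend to a well-defined algebra homomorphism. Concretely, for each relation $u=v$ in $\A_n(X)$, I will draw the concatenations $\phi(u)$ and $\phi(v)$ according to the multiplication rules of Section~\ref{ss:label}, read off any loops and boundary arcs that form, replace them by the appropriate scalars from Table~\ref{tab:labelparams}, and check that the resulting scalar multiples of $\lab_n(X)$-diagrams agree.

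The verifications split into several groups, handled in the same order as the relations are listed. The commutation relations \eqref{eq:eiej}--\eqref{eq:fupfdo} and the TL reduction \eqref{eq:TLreduce} are immediate from the non-overlapping positions of the strings introduced by $\de{i},\dfup{a}{b},\dfdo{a}{b}$, and reduce to the familiar TL checks when no boundaries are involved. The near-commutation relations \eqref{eq:ejwup}--\eqref{eq:wdofdo} are then straightforward: concatenating $\phi(\wup{a}{b})$ with $\phi(e_j)$ or $\phi(\fup{a}{b})$, $\phi(\fdo{a}{b})$ simply shifts the zigzag pattern characterising $\dwup{a}{b}$ up or down by one node, and I track the labels to confirm they match those prescribed on the right-hand side. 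The swapping relations \eqref{eq:e1wup}--\eqref{eq:wdoe1} are verified by drawing $\de{1}\de{2}\cdots\de{n-1}$ and $\de{n-1}\de{n-2}\cdots\de{1}$, which are the two ``cascade'' diagrams that reposition the single boundary link of $\dwup{a}{b}$ or $\dwdo{a}{b}$ from one boundary to the other, with all intermediate throughlines preserved.

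The reduction relations for odd generators \eqref{eq:wupwup}--\eqref{eq:wdoe1wup} and \eqref{eq:wupen1wdo} are verified by concatenating two ``zigzag'' diagrams: in $\phi(\wup{a}{b})\phi(\wup{c}{d})$, the right boundary link of the first factor meets the left boundary link of the second, forming no boundary arc but instead a pattern isomorphic to the diagram on the right-hand side. For the parameter relations \eqref{eq:beta}--\eqref{eq:wdowup} and \eqref{eq:fdofdo}--\eqref{eq:wupwdo}, in each case exactly one loop, top boundary arc, or bottom boundary arc is produced by the concatenation, and its endpoint labels are precisely $\mathrm{a}$ and $\mathrm{b}$ (by construction of the generators), yielding the scalar $\beta$, $\aup{a}{b}$, or $\ddo{a}{b}$ respectively, multiplied by a reduced diagram matching the right-hand side.

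The relations I expect to be most fiddly are those involving the words $O$, $E$, $\Theta$ for the $\glab{a}{b}$ parameter, namely \eqref{eq:wupOwupO}--\eqref{eq:fdoOwupO} and \eqref{eq:ThetawupTheta}, together with the $n=1$ relations \eqref{eq:fupfdo1}--\eqref{eq:wdofdo1}. Here one must observe that $\phi(O)$ and $\phi(E)$ are products of commuting simple TL diagrams that together sweep a single strand from one side of the diagram to the other, so that, for example, $\phi(O)\phi(\wup{a}{d})\phi(O)$ sandwiches the top boundary link of $\dwup{a}{d}$ in such a way that its concatenation with $\phi(\wup{c}{b})$ produces exactly one top-to-bottom boundary arc (with endpoint labels $\mathrm{a}$ and $\mathrm{b}$), contributing $\glab{a}{b}$, and leaves a diagram equal to $\dwup{c}{d}\phi(O)$. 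The analogous verifications for \eqref{eq:fupEwdoE}--\eqref{eq:fdoOwupO} and the even-$n$ case \eqref{eq:ThetawupTheta} follow the same pattern, and the $n=1$ case is reduced to inspection of a very small number of diagrams. Once all relations are checked, $\phi$ extends uniquely to an algebra homomorphism $\A_n(X)\to\lab_n(X)$, as required.
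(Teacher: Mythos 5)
Your proposal is correct and follows essentially the same route as the paper: the paper's proof likewise reduces the claim to checking that the diagrammatic images of the generators satisfy the defining relations of $\A_n(X)$, and simply asserts that this is easy to confirm diagrammatically. Your version just spells out the case-by-case verification in more detail than the paper chooses to.
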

\begin{proof}
It suffices to show that the diagrammatic images of the generators of $\A_n(X)$ satisfy the defining relations of $\A_n(X)$. This is easy to confirm using the diagrammatic relations of $\lab_n(X)$.
\end{proof}

\begin{proposition}\label{prop:surj}
The homomorphism $\phi:\A_n(X) \to \lab_n(X)$ is surjective.
\end{proposition}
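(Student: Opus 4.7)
The plan is to invoke Theorem \ref{thm:diaggen} directly. That theorem establishes that the diagrammatic label algebra $\lab_n(X)$ is generated as an algebra by the elements $\did$, $\de{i}$ for $1 \leq i \leq n-1$, $\dfup{a}{b}$, $\dfdo{a}{b}$, $\dwup{a}{b}$ and $\dwdo{a}{b}$, for $\mathrm{a},\mathrm{b} \in X$. By the definition of $\phi$ in equations \eqref{eq:phidefstart}--\eqref{eq:phidefend}, each of these diagrammatic generators lies in the image of $\phi$: specifically, $\phi(\id) = \did$, $\phi(e_i) = \de{i}$, $\phi(\fup{a}{b}) = \dfup{a}{b}$, $\phi(\fdo{a}{b}) = \dfdo{a}{b}$, $\phi(\wup{a}{b}) = \dwup{a}{b}$ and $\phi(\wdo{a}{b}) = \dwdo{a}{b}$.

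Since $\phi$ is a homomorphism by Proposition \ref{prop:welldef}, its image is a subalgebra of $\lab_n(X)$. This subalgebra contains every generator of $\lab_n(X)$, so it must be the whole of $\lab_n(X)$, giving surjectivity.

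There is really no obstacle here: all of the work was done in Section \ref{s:diagramgenerators}, where Propositions \ref{prop:evengen} and \ref{prop:oddgen} were used to build up Theorem \ref{thm:diaggen}. The content of this proposition is simply that the generating set on the diagrammatic side was chosen to match the generating set on the algebraic side under $\phi$. Thus the proof will consist of a single short paragraph citing Theorem \ref{thm:diaggen} and the definition of $\phi$.
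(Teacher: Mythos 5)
Your proposal is correct and follows exactly the same route as the paper: cite Theorem \ref{thm:diaggen} for the diagrammatic generating set and observe that each generator is the $\phi$-image of the corresponding algebraic generator, so the image of $\phi$, being a subalgebra, is all of $\lab_n(X)$. No differences worth noting.
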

\begin{proof}
Recall from Theorem \ref{thm:diaggen} that $\lab_n(X)$ is generated by $\did$, $\de{i}$, $\dfup{a}{b}$, $\dfdo{a}{b}$, $\dwup{a}{b}$ and $\dwdo{a}{b}$, where $i = 1, \dots, n-1$ and $\mathrm{a},\mathrm{b} \in X$, and observe that these diagrams are the images of the generators of $\A_n(X)$.
\end{proof}

Note that, for any word $\mathbf{u}$ in $\A_n(X)$, $\phi(\mathbf{u})$ is a scalar multiple of an $\lab_n(X)$-diagram. Indeed, if $\mathbf{u}$ is the word $s_1s_2 \dots s_k$, then $\phi(\mathbf{u}) = \phi(s_1)\phi(s_2)\dots \phi(s_k)$ is a product of diagrammatic generators, and any product of $\lab_n(X)$-diagrams is a scalar multiple of an $\lab_n(X)$-diagram. We use this result throughout the rest of the paper.

\section{Parity and reduced words}\label{s:ParityReduced}
In this section, we introduce the notion of parity for words in $\A_n(X)$, and define two different generalisations of the idea of a reduced word, namely, \textit{label-reduced} and \textit{even-reduced}. Each of these definitions is constructed to be equivalent to a diagrammatic property as follows. Each word in $\A_n(X)$ maps to a scalar multiple of a $\lab_n(X)$-diagram of the same parity. A word is label-reduced if and only if concatenating the corresponding diagrammatic generators produces no boundary arcs. Finally, even-reduced words correspond to even diagrams, expressed as minimal-length products of the even diagrammatic generators. Some of these results will not be established until after we prove that $\phi$ is an isomorphism, but this intuition informs our approach to that proof throughout the remainder of the paper.

We will refer to the generators $\fup{a}{b}$, $\fdo{a}{b}$, $\wup{a}{b}$ and $\wdo{a}{b}$ as \textit{label generators}. A word in $\A_n(X)$ is called \textit{label-reduced} if it is not equal to a scalar multiple of any word containing strictly fewer label generators. Note that a label-reduced word need not be reduced in the usual sense, that it is not equal to a scalar multiple of a strictly shorter word; indeed, the word $e_1e_2e_1$ is label-reduced, but not reduced. 

At this point, we can establish one direction of the claimed correspondence between label-reduced words and diagrammatic products without boundary arcs.

\begin{lemma}\label{lem:labelreducedbdylinks}
Let $\mathbf{u}$ be a word in $\A_n(X)$, so $\phi(\mathbf{u})$ is a scalar multiple of some $\lab_n(X)$-diagram $\undertilde{D}$. Then the number of boundary links in $\undertilde{D}$ is at most twice the number of label generators in $\mathbf{u}$, with equality occurring only if $\mathbf{u}$ is label-reduced.
\end{lemma}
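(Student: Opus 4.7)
The plan is to exploit the fact that boundary links in the image of $\phi$ come only from the label generators. Inspecting the diagrammatic generators defined in Section \ref{s:diagramgenerators}, each of $\dfup{a}{b}$, $\dfdo{a}{b}$, $\dwup{a}{b}$, $\dwdo{a}{b}$ carries exactly two boundary string endpoints, while $\did$ and the $\de{i}$ carry none. So if $\mathbf{u} = s_1 s_2 \cdots s_m$ contains exactly $k$ label generators among its factors, the concatenated picture $\phi(s_1)\phi(s_2)\cdots\phi(s_m)$, taken before any loops or boundary arcs are removed, has exactly $2k$ string endpoints on its two boundaries combined.

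Next I would analyse what the multiplication process does with these endpoints. Since concatenating diagrams neither adds nor removes boundary endpoints, and since loops contribute no boundary endpoints at all, every boundary endpoint of the concatenated picture is either one end of a boundary link in the resulting diagram $\undertilde{D}$, or one end of a boundary arc that is subsequently removed in exchange for a scalar. Letting $b$ denote the number of boundary arcs formed during the multiplication and $\ell$ the number of boundary links of $\undertilde{D}$, this bookkeeping will yield $\ell + 2b = 2k$, and in particular $\ell \leq 2k$.

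The ``only if'' clause of the lemma then follows immediately. To argue the contrapositive, I would suppose $\mathbf{u}$ is not label-reduced, so that by definition there exist a word $\mathbf{v}$ in $\A_n(X)$ with $k' < k$ label generators and a nonzero scalar $c$ such that $\mathbf{u} = c\mathbf{v}$; the scalar can be taken nonzero because $\beta$, $\aup{a}{b}$, $\ddo{a}{b}$, $\glab{a}{b}$ are indeterminates. Applying $\phi$ then gives $\phi(\mathbf{u}) = c \phi(\mathbf{v})$, so $\phi(\mathbf{v})$ is a nonzero scalar multiple of the same basis diagram $\undertilde{D}$, and applying the upper bound to $\mathbf{v}$ yields $\ell \leq 2k' < 2k$, contradicting $\ell = 2k$.

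The only real step that requires care is the counting identity $\ell + 2b = 2k$ in the second paragraph; this amounts to verifying that the simplification rules described in Section \ref{ss:label} alter the boundary endpoint count only by removing endpoints in pairs via boundary arcs. Given the explicit description of multiplication already in place, I expect this to be routine bookkeeping rather than a substantive obstacle.
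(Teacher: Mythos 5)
Your proposal is correct and follows essentially the same route as the paper's proof: count the $2k$ boundary endpoints contributed by the label generators, note that removing boundary arcs deletes these in pairs so $\ell\leq 2k$, and handle the equality clause by observing that a non-label-reduced $\mathbf{u}$ equals a nonzero scalar multiple of a word $\mathbf{v}$ with fewer label generators, to which the upper bound applies. The exact identity $\ell+2b=2k$ is a slight refinement of the paper's ``at most'' phrasing, but the substance is identical.
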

\begin{proof}
Observe that the diagrams $\phi(\fup{a}{b})$, $\phi(\fdo{a}{b})$, $\phi(\wup{a}{b})$ and $\phi(\wdo{a}{b})$ each have two boundary links, for any $\mathrm{a}, \mathrm{b} \in X$, while the diagrams $\phi(e_i)$ have no boundary links, for any $i=1, \dots, n-1$. Thus the number of boundary links appearing in the concatenation of the $\phi$-images of each generator in $\mathbf{u}$ is precisely twice the number of label generators in $\mathbf{u}$. Since $\undertilde{D}$ is obtained by removing boundary arcs and loops from this concatenation, it follows that the number of boundary links in $\undertilde{D}$ is at most twice the number of label generators in $\mathbf{u}$, and that equality occurs if and only if no boundary arcs are produced in the concatenation. Using the constructions in Propositions \ref{prop:evengen} and \ref{prop:oddgen}, we can write $\undertilde{D}$ as a product of diagrammatic generators where no boundary arcs are produced, so this minimum can be achieved.

We now want to show that equality occurs only if $\mathbf{u}$ is label-reduced. Suppose $\mathbf{u}$ is not label-reduced. Then there is a word $\mathbf{v}$ in $\A_n(X)$ such that $\mathbf{u}=a\, \mathbf{v}$ for some (nonzero) scalar $a$, and $\mathbf{v}$ contains strictly fewer label generators than $\mathbf{u}$. The concatenation of $\phi$-images of the generators of $\mathbf{v}$ thus contains strictly fewer boundary links than the concatenation for $\mathbf{u}$. Let $b$ be the scalar such that $\phi(\mathbf{u})=b\, \undertilde{D}$. Then we have $\phi(\mathbf{v})= \frac{b}{a}\,  \undertilde{D}$, so the number of boundary links in $\undertilde{D}$ is at most twice the number of label generators in $\mathbf{v}$, and thus strictly less than twice the number of label generators in $\mathbf{u}$.
\end{proof}

A word is called \textit{even} if it is equal to a scalar multiple of a label-reduced word containing no generators of the form $\wup{a}{b}$ or $\wdo{a}{b}$, and \textit{odd} otherwise. We will refer to $\wup{a}{b}$ and $\wdo{a}{b}$ as \textit{odd generators}, and to all other generators as \textit{even generators}. It is clear that the even generators are even words, but not that the odd generators are odd. In fact, we will show that $\A_n(X)$ and $\lab_n(X)$ are isomorphic without explicitly using that odd generators are odd, but for completeness, this is proven in Corollary \ref{cor:oddgensodd}.

We say a word is \textit{even-reduced} if it is label-reduced, it consists only of even generators, and it is not equal to a scalar multiple of any strictly shorter word in the even generators. It follows that all even-reduced words are even, and that every even word is equal to a scalar multiple of an even-reduced word. Similar to label-reduced words, even-reduced words need not be reduced in the usual sense. For example, in $\A_4(\{\mathrm{a},\mathrm{b},\mathrm{c},\mathrm{d}\})$, the word $\fup{a}{b}e_1e_2e_3\fdo{c}{d}$ is even-reduced, but equal to the shorter word $\wup{a}{c}\wup{b}{d}$, by \eqref{eq:wupwup}.

The following corollary establishes part of the relationship between even-reduced words in $\A_n(X)$ and even diagrams in $\lab_n(X)$.

\begin{corollary}\label{cor:evenredsurj}
Let $\undertilde{T}$ be an even diagram in $\lab_n(X)$. Then there exists an even-reduced word $T \in \A_n(X)$ such that $\phi(T)=\undertilde{T}$, and the number of boundary links in $\undertilde{T}$ is double the number of label generators in $T$.
\end{corollary}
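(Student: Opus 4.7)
The plan is to start with a word of the form given by Proposition \ref{prop:evengen}, shrink it as much as possible among words in the even generators of $\A_n(X)$, and use the observation that when $\phi(T) = \undertilde{T}$ with scalar exactly $1$, the defining parameters leave no room for cancellation.

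First, I would apply Proposition \ref{prop:evengen} to express $\undertilde{T}$ as a product of diagrammatic generators drawn from $\{\de{i}, \dfup{a}{b}, \dfdo{a}{b}\}$ with no loops or boundary arcs formed in the concatenation. Replacing each diagrammatic generator by its algebraic counterpart produces a word $T_0 \in \A_n(X)$ in the even generators with $\phi(T_0) = \undertilde{T}$ (coefficient exactly $1$, since no scalars enter from loops or boundary arcs).

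Next, I would take $T$ to be a shortest word in the even generators satisfying $\phi(T) = \undertilde{T}$; this exists because $T_0$ is a candidate. Since $\phi(T) = \undertilde{T}$ with scalar $1$, the concatenation of the $\phi$-images of the generators of $T$ must produce no loops or boundary arcs (each such feature would introduce a nontrivial scalar from the defining parameters). The reasoning in the proof of Lemma \ref{lem:labelreducedbdylinks} then shows that the number of boundary links in $\undertilde{T}$ equals twice the number of label generators in $T$, and Lemma \ref{lem:labelreducedbdylinks} itself guarantees that $T$ is label-reduced. By construction, $T$ consists only of even generators, so it only remains to verify the final condition in the definition of even-reduced.

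Suppose for contradiction that $T = cT'$ in $\A_n(X)$ for some scalar $c$ and some even-generator word $T'$ with $|T'| < |T|$. Applying $\phi$ yields $\undertilde{T} = c\phi(T')$, and writing $\phi(T') = s\undertilde{D}$ for some monomial $s$ and $\lab_n(X)$-diagram $\undertilde{D}$, we get $\undertilde{T} = cs\undertilde{D}$. Since the $\lab_n(X)$-diagrams form a basis, this forces $\undertilde{D} = \undertilde{T}$ and $cs = 1$; and because $c$ and $s$ are monomials with non-negative exponents in the indeterminate parameters $\beta$, $\aup{a}{b}$, $\ddo{a}{b}$, $\glab{a}{b}$, we must have $c = s = 1$. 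Therefore $\phi(T') = \undertilde{T}$, contradicting the minimality of $|T|$. The main subtlety is the forcing of $c = 1$ here, which rests on the parameters being indeterminates, so that the scalar ring contains no multiplicative inverses of $\beta$, $\aup{a}{b}$, $\ddo{a}{b}$, $\glab{a}{b}$; the remainder is a direct combination of Proposition \ref{prop:evengen}, Lemma \ref{lem:labelreducedbdylinks}, and the definitions of label-reduced and even-reduced.
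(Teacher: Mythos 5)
Your proposal is correct and follows essentially the same route as the paper: both start from Proposition \ref{prop:evengen}, use Lemma \ref{lem:labelreducedbdylinks} to obtain label-reducedness, and exploit the fact that the parameters are indeterminates to force every scalar to equal $1$; the only organizational difference is that you produce the even-reduced word by minimizing length among even-generator words mapping exactly to $\undertilde{T}$, whereas the paper invokes the earlier observation that every even word is a scalar multiple of an even-reduced word and then pins that scalar down to $1$. One small imprecision: the scalar $c$ in $T=cT'$ need not a priori be a monomial in the parameters, but since $cs=1$ holds in the polynomial ring and $s$ is a genuine product of indeterminate parameters, $s$ must be the empty product and hence $c=s=1$, so your conclusion stands.
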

\begin{proof}
Since $\undertilde{T}$ is an even diagram, from Proposition \ref{prop:evengen}, we can write $\undertilde{T}$ as a product of the even diagrammatic generators $\de{i}$, $i=1, \dots, n-1$; $\dfup{a}{b}$, $\mathrm{a},\mathrm{b} \in X$; and $\dfdo{a}{b}$, $\mathrm{a}, \mathrm{b} \in X$. Observe that these are the images under $\phi$ of the even generators $e_i$, $i=1, \dots, n-1$; $\fup{a}{b}$, $\mathrm{a},\mathrm{b} \in X$; and $\fdo{a}{b}$, $\mathrm{a}, \mathrm{b} \in X$; respectively. We can thus let $T'$ be the word in $\A_n(X)$ whose generators correspond to the diagrammatic generators in that product. Then $T'$ is a word in the even generators, and $\phi(T')=\undertilde{T}$. Moreover, recall that the construction from Proposition \ref{prop:evengen} does not produce any boundary arcs or loops. This means that the number of boundary links in $\undertilde{T}$ is precisely twice the number of label generators in $T'$. Hence, by Lemma \ref{lem:labelreducedbdylinks}, $T'$ is label-reduced. Since $T'$ is a label-reduced word in the even generators, it is even, so it is equal to a scalar multiple of an even-reduced word. Thus we have $T' = \mu\,  T$, for some scalar $\mu$ and even-reduced word $T$. Then $\undertilde{T}=\phi(T')=\mu\, \phi(T)$. Since the $\phi$-image of each word in the $\A_n(X)$ generators is a scalar multiple of a diagram, we have that $\phi(T)=\nu\,  \undertilde{S}$ for some scalar $\nu$ and $\lab_n(X)$-diagram $\undertilde{S}$. Hence $\undertilde{T}=\mu\nu\, \undertilde{S}$. Since $\undertilde{T}$ and $\undertilde{S}$ are both $\lab_n(X)$-diagrams, we must have $\mu = \nu = 1$, and thus $\phi(T) = \undertilde{T}$. We have that $T'$ is label-reduced, because it is even-reduced, and that $T$ is label-reduced and $T'=\mu\, T$; it follows that $T$ and $T'$ have the same number of label generators, and therefore the number of boundary links in $\undertilde{T}$ is indeed double the number of label generators in $T$.
\end{proof}

Our next goal is to show that each word in $\A_n(X)$ is equal to a scalar multiple of a label-reduced word containing at most one odd generator. Lemma \ref{lem:sWreduction} establishes some technical details, and then Proposition \ref{prop:wreduce} shows how products of even generators with an odd generator on each side can be reduced to scalar multiples of words containing at most one odd generator. Applying this repeatedly, we find the desired result in Corollary \ref{cor:watmost1}.

\begin{lemma}\label{lem:sWreduction}
Let $0\leq m \leq n-1$, and let $s \neq e_{m+1}$ be an even generator of $\A_n(X)$. If $m=n-1$, let $s \neq \fdo{c}{d}$ for any $\mathrm{c},\mathrm{d} \in X$.
\begin{enumerate}[label=(\roman*)]
\item The word $\mathbf{u}\coloneqq se_me_{m-1}\dots e_2e_1\wup{a}{b}$ is a scalar multiple of a word of the form $\mathbf{v}T$, where
\begin{itemize}
    \item $\mathbf{v}=\id$, or $\mathbf{v}=e_ie_{i-1}\dots e_2e_1 \wup{e}{f}$ for some $\mathrm{e},\mathrm{f}\in X$ and $0 \leq i \leq m$;
    \item $T$ is a word in the even generators; and
    \item the number of label generators in $\mathbf{v}T$ is less than or equal to the number of label generators in $\mathbf{u}$.
\end{itemize}
\item The word $\mathbf{u}'\coloneqq\wdo{a}{b}e_1e_2\dots e_{m-1}e_ms$ is a scalar multiple of a word of the form $T'\mathbf{v}'$, where
\begin{itemize}
    \item $\mathbf{v}'=\id$, or $\mathbf{v}'=\wdo{e}{f}e_1e_2\dots e_{i-1}e_i$ for some $\mathrm{e},\mathrm{f}\in X$ and $0 \leq i \leq m$;
    \item $T'$ is a word in the even generators; and
    \item the number of label generators in $T'\mathbf{v}'$ is less than or equal to the number of label generators in $\mathbf{u}'$.
\end{itemize}
\end{enumerate}
\end{lemma}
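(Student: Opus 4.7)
My plan is to prove (i) by an exhaustive case analysis on the even generator $s$, after which (ii) follows by an entirely parallel argument using the downward analogs of each relation. By hypothesis $s$ is one of $\id$ (trivially $\mathbf{u} = \mathbf{v}$ with $i = m$, $T = \id$), $e_j$ (with $1 \le j \le n-1$ and $j \ne m+1$), $\fup{c}{d}$, or $\fdo{c}{d}$, the last being excluded when $m = n-1$. In each non-trivial case I will explicitly produce $\mathbf{v}T$, after which the label-generator count is immediate to verify.

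For $s = e_j$ I split on the position of $j$ relative to $m$. If $j \ge m+2$, relation \eqref{eq:eiej} commutes $e_j$ through the entire block $e_m e_{m-1}\cdots e_1$, and \eqref{eq:ejwup} then moves it past $\wup{a}{b}$, giving $\mathbf{v} = e_m \cdots e_1 \wup{a}{b}$ (so $i = m$) and $T = e_{j-1}$. If $j = m$, one application of \eqref{eq:beta} absorbs the duplicate $e_m$ at a cost of $\beta$; if $j = m-1$, the subword $e_{m-1} e_m e_{m-1}$ collapses by \eqref{eq:TLreduce}. The most delicate sub-case is $j \le m-2$: I first commute $e_j$ rightward past $e_m, \dots, e_{j+2}$ via \eqref{eq:eiej}, apply \eqref{eq:TLreduce} to the exposed $e_j e_{j+1} e_j$, then commute the leftover prefix $e_m \cdots e_{j+2}$ past $e_j \cdots e_1$ (all index pairs differ by at least two) and past $\wup{a}{b}$ via \eqref{eq:ejwup}, arriving at $e_j e_{j-1} \cdots e_1 \wup{a}{b} \cdot e_{m-1} \cdots e_{j+1}$ with $i = j$.

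For $s = \fup{c}{d}$, the central observation is that reading \eqref{eq:wupfup} in reverse yields the identity $\fup{c}{d} e_1 \wup{a}{b} = \wup{c}{b} \fup{d}{a}$. When $m \ge 1$ I commute $\fup{c}{d}$ past $e_m, \dots, e_2$ using \eqref{eq:fupej}, apply this backward identity, and push the leftover $e_m, \dots, e_2$ past $\wup{c}{b}$ via \eqref{eq:ejwup}, producing $\wup{c}{b} \cdot e_{m-1} \cdots e_1 \fup{d}{a}$ with $i = 0$; the case $m = 0$ is direct from \eqref{eq:fupwup}. For $s = \fdo{c}{d}$ (so $m \le n-2$), relation \eqref{eq:fdoej} commutes $\fdo{c}{d}$ past every $e_k$ in the block, and \eqref{eq:fdowup} then rewrites $\fdo{c}{d} \wup{a}{b}$ as $\wup{a}{c} e_{n-1} \fdo{d}{b}$, giving $e_m \cdots e_1 \wup{a}{c} \cdot e_{n-1} \fdo{d}{b}$ with $i = m$.

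Part (ii) is proved by the dual case analysis with $\wup{}{}/\wdo{}{}$ and $\fup{}{}/\fdo{}{}$ interchanged, using \eqref{eq:ejwdo} in place of \eqref{eq:ejwup} and \eqref{eq:fdoej} in place of \eqref{eq:fupej} for the commutations, together with \eqref{eq:wdofdo} directly (for $s = \fdo{c}{d}$) and the backward reading $\wdo{a}{b} e_1 \fup{c}{d} = \fup{a}{c} \wdo{d}{b}$ of \eqref{eq:fupwdo} (for $s = \fup{c}{d}$). I expect the main obstacle to be the purely bookkeeping exercise of spotting the two reversed identities and carefully tracking index shifts in the $j \le m-2$ sub-case and its mirror; once these are in hand, everything else reduces to routine commutation and TL reduction.
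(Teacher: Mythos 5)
Your proof is correct and follows essentially the same route as the paper's: an exhaustive case analysis on the even generator $s$, using \eqref{eq:eiej}, \eqref{eq:TLreduce}, \eqref{eq:beta}, \eqref{eq:fupej}, \eqref{eq:fdoej}, \eqref{eq:ejwup} for the commutations and the key rewritings $\fup{c}{d}e_1\wup{a}{b}=\wup{c}{b}\fup{d}{a}$ (from \eqref{eq:wupfup}) and $\fdo{c}{d}\wup{a}{b}=\wup{a}{c}e_{n-1}\fdo{d}{b}$ (from \eqref{eq:fdowup}), arriving at the same normal forms in each case; the paper likewise dispatches (ii) as "analogous". The only cosmetic differences are that you split off $j=m-1$ as its own sub-case and fold $m=0$ into the general analysis rather than treating it first.
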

\begin{proof}
We will prove (i) in detail; (ii) is analogous. We first consider the case $m=0$. 
Applying the relations \eqref{eq:fupwup}, \eqref{eq:fdowup}, \eqref{eq:fdowup1} and \eqref{eq:ejwup} to $\mathbf{u}=s\wup{a}{b}$ for each possible $s\neq e_1$, we have
\begin{align}
\fup{c}{d}\wup{a}{b} &= \aup{d}{a}\wup{c}{b}, \\[2mm]
\fdo{c}{d}\wup{a}{b} &= \begin{cases}
    \wup{a}{c}e_{n-1}\fdo{d}{b}, &n\geq 2, \\[2mm]
    \glab{a}{d}\fdo{c}{b}, &n=1,
\end{cases}\\[2mm]
e_j\wup{a}{b} &= \wup{a}{b}e_{j-1}, \qquad 2 \leq j \leq n-1.
\end{align}
Thus the result holds for each possible $s\neq e_{m+1}$ in the case $m=0$.

Now consider $1 \leq m \leq n-1$. This implies $n\geq 2$. If $s=\fup{c}{d}$ for some $\mathrm{c},\mathrm{d} \in X$, then since $m\geq 1$, we have
\begin{align}
\fup{c}{d}e_me_{m-1}\dots e_2e_1 \wup{a}{b} &= e_me_{m-1}\dots e_2\fup{c}{d}e_1\wup{a}{b} &&\text{by \eqref{eq:fupej}} \\
&= e_me_{m-1}\dots e_2 \wup{c}{b}\fup{d}{a} &&\text{by \eqref{eq:wupfup}} \\
&= \wup{c}{b} e_{m-1}e_{m-2}e_1\fup{d}{a} &&\text{by \eqref{eq:ejwup}}.
\end{align}
If $s=\fdo{c}{d}$ and $m<n-1$, then $n\geq 3$ and we have
\begin{align}
\fdo{c}{d}e_me_{m-1} \dots e_1\wup{a}{b} 
&= e_me_{m-1}\dots e_1 \fdo{c}{d}\wup{a}{b} &&\text{by \eqref{eq:fdoej}} \\
&= e_me_{m-1}\dots e_1 \wup{a}{c}e_{n-1}\fdo{d}{b} &&\text{by \eqref{eq:fdowup}}.
\end{align}
The case $s=\fdo{c}{d}$ and $m=n-1$ is excluded in the statement of the Lemma.
If $s=e_j$, for $j\leq m-1$, then
\begin{align}
e_j e_m e_{m-1}\dots e_1\wup{a}{b}
&= e_m e_{m-1}\dots e_{j+2}e_je_{j+1}e_j e_{j-1} \dots e_1 \wup{a}{b} &&\text{by \eqref{eq:eiej}} \\
&= e_m e_{m-1} \dots e_{j+2}e_j e_{j-1} \dots e_1 \wup{a}{b} &&\text{by \eqref{eq:TLreduce}} \\
&= e_je_{j-1} \dots e_1 e_me_{m-1} \dots e_{j+2} \wup{a}{b} &&\text{by \eqref{eq:eiej}} \\
&= e_je_{j-1} \dots e_1 \wup{a}{b} e_{m-1}e_{m-2} \dots e_{j+1} &&\text{by \eqref{eq:ejwup}}.
\end{align}
If $s=e_m$, then
\begin{align}
e_me_m e_{m-1} \dots e_1 \wup{a}{b}
&= \beta e_me_{m-1} \dots e_1 \wup{a}{b} &&\text{by \eqref{eq:beta}}.
\end{align}
If $s=e_j$ for $j \geq m+2$, then
\begin{align}
e_je_me_{m-1}\dots e_1 \wup{a}{b}
&= e_me_{m-1}\dots e_1 e_j\wup{a}{b} &&\text{by \eqref{eq:eiej}} \\
&= e_me_{m-1} \dots e_1 \wup{a}{b} e_{j-1} &&\text{by \eqref{eq:ejwup}}.
\end{align}
In each of these cases we have shown that $\mathbf{u}$ is equal to a word of the desired form, so the result holds for $1\leq m \leq n-1$ as well.
\end{proof}

\begin{proposition} \label{prop:wreduce}
Let $\mathbf{u}=s_ks_{k-1} \dots s_1$, where $s_i$ is an even generator for each $i=1, \dots, k$. Then each of the words
\begin{align}
\wup{a}{b} \mathbf{u}\, \wup{c}{d},
&&\wup{a}{b}\mathbf{u}\, \wdo{c}{d},
&&\wdo{a}{b} \mathbf{u}\, \wup{c}{d},
&&\wdo{a}{b}\mathbf{u}\, \wdo{c}{d}
\end{align}
is equal to a scalar multiple of a word containing at most one odd generator, and no more label generators than the original word.
\end{proposition}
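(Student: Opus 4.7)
I will argue by induction on $k = |\mathbf{u}|$, handling the four products in parallel; I focus on $\wup{a}{b}\mathbf{u}\wup{c}{d}$ and sketch the main case, as the other three forms follow by closely analogous arguments using Lemma~\ref{lem:sWreduction}(ii) and the symmetric relations. The base case $k=0$ is a single application of the appropriate relation among \eqref{eq:wupwup}, \eqref{eq:wupwdo}, \eqref{eq:wdowup}, and \eqref{eq:wdowdo}, converting the two odd generators into at most two label generators with the label count preserved.

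\textbf{Inductive step, Case A ($i = k$).} Write $\mathbf{u} = s_k \dots s_1$ and let $i$ be the largest integer in $\{0,1,\dots,\min(k,n-1)\}$ with $s_j = e_j$ for every $1 \leq j \leq i$, so that $e_i \dots e_1 \wup{c}{d}$ is the maximal rightmost ``standard $\wup$-form''. When $i = k$, a short auxiliary induction on $k$ using \eqref{eq:e1wup} together with the cubic relation \eqref{eq:TLreduce} establishes the identity $e_k e_{k-1} \dots e_1 \wup{c}{d} = e_k e_{k+1} \dots e_{n-1} \wdo{c}{d}$ for $1 \leq k \leq n-1$; commuting $\wup{a}{b}$ rightward through $e_k, e_{k+1}, \dots, e_{n-2}$ via \eqref{eq:ejwup} leaves the factor $\wup{a}{b} e_{n-1} \wdo{c}{d}$, which collapses to $\fup{a}{c} \fdo{b}{d}$ by the derived identity \eqref{eq:wupen1wdo}, yielding a reduction with zero odd generators.

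\textbf{Inductive step, Case B ($i < k$).} Set $s = s_{i+1}$, an even generator distinct from $e_{i+1}$, so that $\wup{a}{b} \mathbf{u} \wup{c}{d} = \wup{a}{b}\, \mathbf{u}'' \cdot (s\, e_i \dots e_1 \wup{c}{d})$ with $\mathbf{u}'' = s_k \dots s_{i+2}$. Unless $(i,s) = (n-1, \fdo{e}{f})$, Lemma~\ref{lem:sWreduction}(i) rewrites the bracketed suffix as $\mu \mathbf{v} T$ where $T$ is even and either $\mathbf{v} = \id$ or $\mathbf{v} = e_j \dots e_1 \wup{e}{f}$ for some $0 \leq j \leq i$, with no increase in the label count. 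When $\mathbf{v} = \id$ the word becomes $\mu\, \wup{a}{b}\, \mathbf{u}'' T$, already containing only one odd generator; when $\mathbf{v} = e_j \dots e_1 \wup{e}{f}$, factor as $\mu\, (\wup{a}{b}\, \mathbf{u}''\, e_j \dots e_1 \wup{e}{f})\, T$, where the prefix is again of $\wup \cdot \text{even} \cdot \wup$ shape but its middle has length $(k-i-1) + j \leq k - 1$, strictly less than $k$, so the inductive hypothesis applies. For the excluded pair $(n-1, \fdo{e}{f})$, I compute $\fdo{e}{f} e_{n-1} \dots e_1 \wup{c}{d}$ by hand: the Case~A identity gives $e_{n-1} \dots e_1 \wup{c}{d} = e_{n-1} \wdo{c}{d}$, and then rearranging \eqref{eq:wdofdo} yields $\fdo{e}{f} e_{n-1} \wdo{c}{d} = \wdo{c}{e} \fdo{f}{d}$; the word becomes $\wup{a}{b}\, \mathbf{u}'' \wdo{c}{e}\, \fdo{f}{d}$, whose $\wup \cdot \text{even} \cdot \wdo$ prefix has strictly shorter middle and is handled by the inductive hypothesis applied to that form.

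\textbf{Expected difficulty.} The chief obstacle is the excluded case $(n-1, \fdo{e}{f})$, which lies outside Lemma~\ref{lem:sWreduction} and requires the bespoke calculation above linking $\fdo$ and $\wdo$ through \eqref{eq:wdofdo}; a secondary subtlety is that the induction must be on the middle length $k$ rather than the total word length, since the recursive call in the $\mathbf{v} \neq \id$ branch can enlarge the word via the trailing factor $T$ while always strictly decreasing $k$. Tracking the label-generator bound is straightforward once it is noted that Lemma~\ref{lem:sWreduction}'s guarantee handles the generic branch, while each direct calculation in Case~A and in the excluded case preserves the label count exactly.
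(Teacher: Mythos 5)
Your proposal follows essentially the same route as the paper's proof: induction on $k$, splitting on whether the maximal prefix $e_i\cdots e_1$ exhausts $\mathbf{u}$, invoking Lemma~\ref{lem:sWreduction}(i) in the generic branch, and treating the excluded pair $(n-1,\fdo{e}{f})$ by hand via \eqref{eq:en1wdo} and \eqref{eq:wdofdo}. Your Case~A is a mild streamlining: the paper splits into $k<n-1$ (commute the $e_i$ leftward with \eqref{eq:ejwup} and reduce to the $k=0$ case) and $k=n-1$ (use \eqref{eq:wdoe1} and \eqref{eq:wdoe1wup}), whereas you apply the flip identity $e_k\cdots e_1\wup{c}{d}=e_ke_{k+1}\cdots e_{n-1}\wdo{c}{d}$ uniformly and finish with \eqref{eq:wupen1wdo}; both are correct, and your variant buys a single unified computation for all $1\leq k\leq n-1$. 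Note, though, that for the $\wdo{a}{b}$-prefixed products the flip trick pushes the wrong way (relation \eqref{eq:ejwdo} moves $e$'s leftward past $\wdo{a}{b}$, not rightward), so "closely analogous" there really means the paper's route via \eqref{eq:ejwdo} and \eqref{eq:wdoe1wup}, landing on $e_{k-1}\cdots e_1\fup{a}{c}\fdo{b}{d}$; it would be worth saying this explicitly.

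There is one genuine, if small, hole: your treatment of the excluded pair silently assumes $n\geq 2$. For $n=1$ the pair $(n-1,\fdo{e}{f})=(0,\fdo{e}{f})$ does occur (indeed for $n=1$ there are no $e_i$ at all, so $i=0$ always and $s_1$ is a label generator), but both the identity $e_{n-1}\cdots e_1\wup{c}{d}=e_{n-1}\wdo{c}{d}$ and relation \eqref{eq:wdofdo} that you invoke are only available for $n\geq 2$. The paper closes this case with the $n=1$ relation \eqref{eq:fdowup1}, which gives $\fdo{e}{f}\wup{c}{d}=\glab{c}{f}\fdo{e}{d}$ and removes the odd generator outright (at the cost of one label generator, so label-reducedness is still respected). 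With that degenerate case patched, your argument is complete and matches the paper's in substance.
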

\begin{proof}
The main idea of this proof is to induct on $k$, reducing the number of generators in the word $\mathbf{u}$. If $k=0$, then by relations \eqref{eq:wupwup}, \eqref{eq:wupwdo}, \eqref{eq:wdowup} and \eqref{eq:wdowdo}, we have
\begin{align}
\wup{a}{b}\wup{c}{d} &= \fup{a}{c}e_1e_2 \dots e_{n-1} \fdo{b}{d}, \\
\wup{a}{b}\wdo{c}{d} &= \ddo{b}{d} \fup{a}{c}, \\
\wdo{a}{b}\wup{c}{d} &= \aup{a}{c} \fdo{b}{d}, \\
\wdo{a}{b}\wdo{c}{d} &= \fdo{b}{d}e_{n-1}e_{n-2}\dots e_1 \fup{a}{c}.
\end{align}

For the inductive step of the proof, we will treat the cases ending with $\wup{c}{d}$ in detail; the $\wdo{c}{d}$ cases are analogous. Fix $k> 0$. Assume that for each word $\mathbf{v}$ consisting of fewer than $k$ even generators, each word $\mathbf{v}' \in \left\lbrace \wup{e}{f}\, \mathbf{v}\, \wup{g}{h},\;
\wup{e}{f}\,\mathbf{v}\, \wdo{g}{h},\; 
\wdo{e}{f}\, \mathbf{v}\, \wup{g}{h},\; 
\wdo{e}{f}\,\mathbf{v}\, \wdo{g}{h}\right\rbrace$
is equal to a scalar multiple of a word containing at most one odd generator, and no more label generators than $\mathbf{v}'$.

Let $m$ be the largest integer such that $s_i=e_i$ for all $i \leq m$. Then $0 \leq m\leq k$, and also $m \leq n-1$. If $m<k$, then 
\begin{align}
s_{m+1}s_ms_{m-1}\dots s_1\wup{c}{d}
&=s_{m+1}e_me_{m-1}\dots e_1\wup{c}{d},
\end{align}
where $s_{m+1} \neq e_{m+1}$ is an even generator. First consider the case where $m\neq n-1$ or $s_{m+1} \neq \fdo{e}{f}$ for any $\mathrm{e},\mathrm{f} \in X$. Then by Lemma \ref{lem:sWreduction}(i), this is equal to a scalar multiple of a word of the form $\mathbf{w}T$, where $\mathbf{w}=\id$ or $\mathbf{w}=e_ie_{i-1}\dots e_1\wup{e}{f}$ for some $\mathrm{e},\mathrm{f} \in X$ and $0 \leq i \leq m$; $T$ is a word in the even generators; and the number of label generators in $\mathbf{w}T$ is less than or equal to the number of label generators in $s_{m+1}e_me_{m-1}\dots e_1\wup{c}{d}$. 

If $\mathbf{w}=\id$, then 
\begin{align}
\mathbf{u}\, \wup{c}{d}
&= s_ks_{k-1} \dots s_{m+2}s_{m+1}s_ms_{m-1} \dots s_1\wup{c}{d} \\
&= \mu\, s_ks_{k-1} \dots s_{m+2} T
\end{align}
for some scalar $\mu$. Hence both $\wup{a}{b}\,\mathbf{u}\,\wup{c}{d}$ and $\wdo{a}{b}\,\mathbf{u}\,\wup{c}{d}$ are equal to scalar multiples of words containing one odd generator ($\wup{a}{b}$ and $\wdo{a}{b}$, respectively), and no more label generators than $\wup{a}{b}\,\mathbf{u}\,\wup{c}{d}$ and $\wdo{a}{b}\,\mathbf{u}\,\wup{c}{d}$, respectively.

If $\mathbf{w}=e_ie_{i-1}\dots e_1 \wup{e}{f}$ for some $\mathrm{e},\mathrm{f}\in X$ and $0 \leq i \leq m$, then 
\begin{align}
\mathbf{u}\wup{c}{d}
&= s_ks_{k-1} \dots s_{m+2}s_{m+1}s_m s_{m-1} \dots s_1 \wup{c}{d} \\
&= \mu\, s_ks_{k-1} \dots s_{m+2} e_ie_{i-1} \dots e_1 \wup{e}{f} T
\end{align}
for some scalar $\mu$. The word $\mathbf{v}\coloneqq s_ks_{k-1} \dots s_{m+2}e_ie_{i-1} \dots e_1$ then consists of at most $k-1$ even generators, so by applying our inductive hypothesis, both
\begin{align}
\wup{a}{b}\, \mathbf{u}\, \wup{c}{d} &= \mu\, \wup{a}{b}\, \mathbf{v} \,\wup{e}{f} T
\end{align}
and
\begin{align}
\wdo{a}{b}\,\mathbf{u}\, \wup{c}{d} &= \mu\, \wdo{a}{b}\, \mathbf{v}\, \wup{e}{f} T
\end{align}
are equal to scalar multiples of words that contain at most one odd generator, and no more label generators than $\wup{a}{b}\,\mathbf{u}\, \wup{c}{d}$ and $\wdo{a}{b}\,\mathbf{u}\, \wup{c}{d}$, respectively. 

Now consider the case where $m=n-1$ and $s_{m+1}=\fdo{e}{f}$ for some $\mathrm{e},\mathrm{f}\in X$. Then for $n \geq 2$ we have
\begin{align}
\mathbf{u}\, \wup{c}{d}
&= s_ks_{k-1} \dots s_{m+2}s_{m+1}s_ms_{m-1} \dots e_1 \wup{c}{d} \\
&= s_ks_{k-1} \dots s_{n+1}\fdo{e}{f}e_{n-1}e_{n-2} \dots e_1\wup{c}{d} \\
&= s_ks_{k-1}\dots s_{n+1}\fdo{e}{f}e_{n-1}\wdo{c}{d} &&\text{by \eqref{eq:en1wdo}} \\
&= s_ks_{k-1}\dots s_{n+1}\wdo{c}{e}\fdo{f}{d} &&\text{by \eqref{eq:wdofdo}}
\end{align}
The word $\mathbf{v}\coloneqq s_k s_{k-1} \dots s_{n+1}$ then consists of $k-n$ even generators, so by applying our inductive hypothesis, both
\begin{align}
\wup{a}{b}\,\mathbf{u}\, \wup{c}{d} &= \wup{a}{b}\, \mathbf{v} \,\wdo{c}{e}\fdo{f}{d}
\end{align}
and
\begin{align}
\wdo{a}{b}\,\mathbf{u}\, \wup{c}{d} &= \wdo{a}{b}\, \mathbf{v}\, \wdo{c}{e}\fdo{f}{d}
\end{align}
are equal to scalar multiples of words that contain at most one odd generator, and no more label generators than $\wup{a}{b}\,\mathbf{u}\, \wup{c}{d}$ and $\wdo{a}{b}\,\mathbf{u}\, \wup{c}{d}$, respectively.

For $n=1$, with $m=n-1 = 0$ and $s_{m+1}=s_1 = \fdo{e}{f}$, we have
\begin{align}
\mathbf{u}\, \wup{c}{d} 
&= s_ks_{k-1} \dots s_{m+2}s_{m+1}s_{m}s_{m-1} \dots s_1 \wup{c}{d} \\
&= s_ks_{k-1} \dots s_2 \fdo{e}{f} \wup{c}{d} \\
&= \glab{c}{f}s_ks_{k-1}\dots s_2 \fdo{e}{d} &&\text{by \eqref{eq:fdowup1}}.
\end{align}
This contains no odd generators, and contains one less label generator than $\mathbf{u}\, \wup{c}{d}$. Hence each of the words $\wup{a}{b}\,  \mathbf{u}\, \wup{c}{d}$ and $\wdo{a}{b}\,\mathbf{u}\, \wup{c}{d}$ is equal to a scalar multiple of a word containing one odd generator, and one less label generator than $\wup{a}{b}\,  \mathbf{u}\, \wup{c}{d}$ and $\wdo{a}{b}\,\mathbf{u}\, \wup{c}{d}$ respectively. 

This completes the induction step for $m<k$.

If $m=k$, then $s_i=e_i$ for $i=1, \dots , k$, and thus $k\leq n-1$. Since $k>0$, this implies $n\geq 2$. If $k<n-1$, we have
\begin{align}
\wup{a}{b}\, \mathbf{u}\, \wup{c}{d}
&= \wup{a}{b} e_ke_{k-1} \dots e_1 \wup{c}{d} \\
&= e_{k+1}e_k \dots e_2 \wup{a}{b}\wup{c}{d} &&\text{by \eqref{eq:ejwup}},
\end{align}
which reduces to a $k=0$ case, and
\begin{align}
\wdo{a}{b}\, \mathbf{u} \, \wup{c}{d}
&= \wdo{a}{b} e_k e_{k-1} \dots e_2 e_1 \wup{c}{d} \\
&= e_{k-1}e_{k-2} \dots e_1 \wdo{a}{b} e_1 \wup{c}{d} &&\text{by \eqref{eq:ejwdo}} \\
&= e_{k-1}e_{k-2}\dots e_1 \fup{a}{c}\fdo{b}{d} &&\text{by \eqref{eq:wdoe1wup},}
\end{align}
which contains only even generators, and has the same number of label generators as $\wdo{a}{b}\, \mathbf{u}\, \wup{c}{d}$. If $k=n-1$, the latter still holds, while the former becomes
\begin{align}
\wup{a}{b} e_{n-1}e_{n-2} \dots e_1 \wup{c}{d}
&= \wdo{a}{b}e_1 \wup{c}{d} &&\text{by \eqref{eq:wdoe1}} \\
&= \fup{a}{c}\fdo{d}{b},
\end{align}
and this also has no extra label generators.
\end{proof}

\begin{corollary}\label{cor:watmost1}
Any word $\mathbf{u}$ in $\A_n(X)$ is equal to a scalar multiple of a label-reduced word containing at most one odd generator.
\end{corollary}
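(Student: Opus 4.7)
The plan is to combine Proposition \ref{prop:wreduce}, which collapses one pair of odd generators separated by even generators, with the minimality built into the definition of ``label-reduced''. The overall strategy is first to reduce the count of odd generators to at most one, using Proposition \ref{prop:wreduce} as the inductive step, and then to arrange label-reducedness on top of this by choosing a label-reduced representative \emph{before} doing the odd-generator reduction and exploiting the fact that the reduction cannot increase the number of label generators.

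The first intermediate step I would prove is: any word $\mathbf{u}$ is equal to a scalar multiple of a word containing at most one odd generator and no more label generators than $\mathbf{u}$. I would induct on the number $k$ of odd generators in $\mathbf{u}$. For $k \leq 1$ there is nothing to show. For $k \geq 2$, decompose
\[
\mathbf{u} = \mathbf{u}_0\, W_1\, \mathbf{u}_1\, W_2\, \mathbf{u}_2\, W_3 \cdots W_k\, \mathbf{u}_k,
\]
where each $W_i$ is an odd generator and each $\mathbf{u}_i$ is a (possibly empty) word in the even generators. Apply Proposition \ref{prop:wreduce} to the subword $W_1 \mathbf{u}_1 W_2$, replacing it by a scalar multiple of a word containing at most one odd generator and no more label generators than $W_1 \mathbf{u}_1 W_2$. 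Substituting back in gives a scalar multiple of $\mathbf{u}$ with at most $k-1$ odd generators and no more label generators, and the inductive hypothesis then finishes the step.

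To obtain label-reducedness as well, I would choose, among all words equal to a scalar multiple of $\mathbf{u}$, one $\mathbf{u}'$ with the minimum number of label generators; by definition such a $\mathbf{u}'$ exists and is label-reduced. Applying the intermediate step to $\mathbf{u}'$ yields a word $\mathbf{u}''$ with at most one odd generator, equal to a scalar multiple of $\mathbf{u}'$ (hence of $\mathbf{u}$), and with no more label generators than $\mathbf{u}'$. By the minimality of $\mathbf{u}'$, the word $\mathbf{u}''$ cannot contain strictly fewer label generators than $\mathbf{u}'$; hence it contains exactly the same number, and is itself label-reduced.

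The main subtlety is that applying Proposition \ref{prop:wreduce} to a word that happens to be label-reduced might in principle produce a word that has \emph{fewer} label generators (which would be a contradiction with our choice of $\mathbf{u}'$) or \emph{more} label generators (which would prevent us from controlling label-reducedness). The ``no more label generators'' clause in Proposition \ref{prop:wreduce} rules out the second, and the first is ruled out as soon as one fixes a minimal representative at the start; this is the only delicate point. Once this ordering of the two reductions is identified, the proof is essentially an application of the proposition together with a well-ordering argument, and requires no further calculation.
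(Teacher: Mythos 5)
Your proposal is correct and follows essentially the same route as the paper: pass first to a label-reduced representative, then collapse pairs of odd generators using Proposition \ref{prop:wreduce}, and observe that since the label-generator count cannot increase (and cannot decrease below the already-minimal count), the result remains label-reduced. The paper's proof is just a more compressed statement of exactly this argument.
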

\begin{proof}
It is clear that any word $\mathbf{u}$ in $\A_n(X)$ is equal to a scalar multiple of a label-reduced word $\mathbf{u}'$. Proposition \ref{prop:wreduce} can be applied repeatedly to pairs of consecutive odd generators in $\mathbf{u}'$ to yield a scalar multiple of a word $\mathbf{u}''$ with at most one odd generator, without increasing the number of label generators. This means that $\mathbf{u}''$ is also label-reduced, implying the result.
\end{proof}

In the next section, we extend this result to construct a useful spanning set of $\A_n(X)$.

\section{Spanning set}\label{s:WTform}
The goal of this section is to establish a convenient spanning set of words in the $\A_n(X)$ generators. Each even word in the set will correspond to an even $\lab_n(X)$-diagram expressed as a minimal-length product of even diagrammatic generators, while each odd word will be a product similar to the diagrammatic product $\dw{a}{b}(j)\undertilde{T}$ from Proposition \ref{prop:oddgen}. Lemma \ref{lem:wuponly} builds upon the results of the previous section to write each word as a scalar multiple of a word containing at most one odd generator, where the odd generator is now $\wup{a}{b}$ for some $\mathrm{a},\mathrm{b} \in X$, if possible. Proposition \ref{prop:WTintro} and Corollary \ref{cor:WTspanning} then establish the desired spanning set. 

\begin{lemma}\label{lem:wuponly}
Any word in $\A_n(X)$ is equal to a scalar multiple of $\wdo{a}{b}$, for some $\mathrm{a},\mathrm{b}\in X$, or equal to a scalar multiple of a label-reduced word containing at most one generator of the form $\wup{a}{b}$ and no generators of the form $\wdo{a}{b}$.
\end{lemma}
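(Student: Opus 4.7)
The plan is to start from Corollary~\ref{cor:watmost1}, which already reduces any word to a scalar multiple of a label-reduced word containing at most one odd generator. If that generator is absent or of the form $\wup{a}{b}$, the desired form is already achieved. Otherwise the word is $\mu L\wdo{a}{b}R$ for some scalar $\mu$ and words $L,R$ in the even generators, and the label-reducedness of $L\wdo{a}{b}R$ forces $L$ not to end in a generator of the form $\fdo{c}{d}$ (else \eqref{eq:fdowdo} would strictly decrease the label-generator count) and $R$ not to begin with one of the form $\fup{c}{d}$ (else \eqref{eq:wdofup} would do the same). If $L$ and $R$ are both empty, the word is literally $\mu\wdo{a}{b}$, placing it in the first alternative of the lemma. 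All remaining work consists of converting $\wdo{a}{b}$ to some $\wup{a'}{b'}$ without increasing the label-generator count, so that the converted word is again label-reduced and realises the second alternative.

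The main obstacle is that the two direct conversion relations \eqref{eq:en1wdo} and \eqref{eq:wdoe1} require $e_{n-1}$ immediately to the left of $\wdo{a}{b}$ or $e_1$ immediately to the right, whereas a subword such as $e_m\wdo{a}{b}$ with $2 \leq m \leq n-2$ has neither, and naive shuttling via \eqref{eq:ejwdo} simply exchanges $e_m\wdo{a}{b}$ for $\wdo{a}{b}e_{m+1}$ and never exposes a usable $e_{n-1}$ or $e_1$. I would overcome this by establishing the technical identity
\begin{align*}
e_m\,\wdo{a}{b} \ = \ e_m e_{m-1}\cdots e_1\,\wup{a}{b}\,e_{n-1}e_{n-2}\cdots e_{m+1}, \qquad 1 \leq m \leq n-1,\ n\geq 2,
\end{align*}
whose proof substitutes \eqref{eq:e1wup} on the right-hand side to obtain $e_m\cdots e_1 e_2 \cdots e_{n-1}\,\wdo{a}{b}\,e_{n-1}\cdots e_{m+1}$, collapses the left $e$-product down to $e_m e_{m+1}\cdots e_{n-1}$ by iterated use of $e_k e_{k-1}e_k = e_k$ from \eqref{eq:TLreduce}, migrates $\wdo{a}{b}$ leftward through the trailing $e$-block via \eqref{eq:ejwdo} to turn the right-hand side into $e_m e_{m+1}\cdots e_{n-1}\cdot e_{n-2}\cdots e_m\,\wdo{a}{b}$, and then collapses the palindromic sandwich $e_m e_{m+1}\cdots e_{n-1}\cdot e_{n-2}\cdots e_m$ back down to $e_m$ by further applications of \eqref{eq:TLreduce}. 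The boundary case $m=n-1$ directly recovers \eqref{eq:en1wdo}, which is a useful sanity check.

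Armed with this identity I would finish by case analysis on whichever side of $\wdo{a}{b}$ is nonempty. If $L = L'' e_m$, apply the identity inside the word; if $L = L''\fup{c}{d}$, first use \eqref{eq:fupwdo} to rewrite $\fup{c}{d}\wdo{a}{b}$ as $\wdo{c}{b}\,e_1\,\fup{d}{a}$ and then convert $\wdo{c}{b}\,e_1$ via \eqref{eq:wdoe1}. If $L$ is empty but $R$ is nonempty, a mirror analysis handles its leftmost generator: $e_1$ triggers \eqref{eq:wdoe1} at once; $e_j$ with $j \geq 2$ is shunted to the left of $\wdo{a}{b}$ by \eqref{eq:ejwdo}, reducing to the $L$-nonempty case with $m = j-1$; and $\fdo{c}{d}$ is treated by \eqref{eq:wdofdo} followed by \eqref{eq:en1wdo}. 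In every conversion a single $\wdo{a}{b}$ is replaced by a single $\wup{a'}{b'}$ with all $\fup{c}{d}$ and $\fdo{c}{d}$ generators preserved, so the new word has the same label-generator count as the original and is still label-reduced. Finally, when $n=1$ there are no $e_i$ generators at all, and relations \eqref{eq:fupwdo1}, \eqref{eq:fdowdo}, \eqref{eq:wdofup}, \eqref{eq:wdofdo1} force any label-reduced word containing a $\wdo{a}{b}$ to consist of exactly that one generator, landing it in the first alternative.
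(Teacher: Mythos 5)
Your proposal is correct and follows essentially the same route as the paper: starting from Corollary~\ref{cor:watmost1}, disposing of the isolated $\wdo{a}{b}$ case, ruling out adjacent $\fdo{c}{d}$ on the left and $\fup{c}{d}$ on the right by label-reducedness, and converting $\wdo{a}{b}$ to a $\wup{a'}{b'}$ via the identity $e_m\wdo{a}{b}=e_me_{m-1}\cdots e_1\wup{a}{b}e_{n-1}\cdots e_{m+1}$ together with \eqref{eq:wdoe1}, \eqref{eq:en1wdo}, \eqref{eq:fupwdo} and \eqref{eq:wdofdo}, preserving the label-generator count throughout. The paper derives the mirror identity for $\wdo{a}{b}e_j$ by expanding $e_j$ via \eqref{eq:TLreduce} rather than substituting \eqref{eq:e1wup} into the right-hand side, but this is only a cosmetic difference.
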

\begin{proof}
By Corollary \ref{cor:watmost1}, any word $\mathbf{u}$ in $\A_n(X)$ is equal to a scalar multiple of a label-reduced word $\mathbf{u}'$ containing at most one odd generator. If $\mathbf{u}'$ does not contain any generators of the form $\wdo{a}{b}$, or it contains only $\wdo{a}{b}$, then we are done.

Otherwise, $\mathbf{u}'$ contains precisely one generator of the form $\wdo{a}{b}$, and at least one other generator, which must be even. We now show that each product of $\wdo{a}{b}$ with an even generator is equal to (i) a word with the same number of label generators, where at most one is of the form $\wup{e}{f}$, and none are of the form $\wdo{e}{f}$, or (ii) a word with fewer label generators. In the former case, this means that making these substitutions into $\mathbf{u}'$ yields a label-reduced word containing at most one generator of the form $\wup{e}{f}$ and no generators of the form $\wdo{e}{f}$, as required, while the latter contradicts our assumption that $\mathbf{u}'$ is label-reduced, so cannot occur.

For $n\geq 2$, for each $j=1, \dots, n-1$, we have
\begin{align}
\wdo{a}{b}e_j &= \wdo{a}{b}e_je_{j-1} \dots e_2e_1e_2\dots e_{j-1}e_j &&\text{by \eqref{eq:TLreduce}} \\
&= e_{j-1}e_{j-2} \dots e_1 \wdo{a}{b} e_1e_2\dots e_{j-1}e_j &&\text{by \eqref{eq:ejwdo}} \\
&= e_{j-1}e_{j-2}\dots e_1 \wup{a}{b} e_{n-1}e_{n-2}\dots e_2e_1e_2\dots e_{j-1}e_j &&\text{by \eqref{eq:wdoe1}} \\
&= e_{j-1}e_{j-2}\dots e_1\wup{a}{b} e_{n-1}e_{n-2}\dots e_j &&\text{by \eqref{eq:TLreduce}.}
\end{align}
Note that this does hold for $j=1$, with $e_{j-1}e_{j-2}\dots e_1 = e_2\dots e_{j-1}e_j= \id$. 
For $n\geq 2$,
\begin{align}
\wdo{a}{b}\fdo{c}{d} &= \fdo{b}{c}e_{n-1}\wdo{a}{d} &&\text{by \eqref{eq:wdofdo}} \\
&= \fdo{b}{c}e_{n-1}e_{n-2}\dots e_2e_1\wup{a}{d} &&\text{by \eqref{eq:en1wdo},}
\end{align}
and for $n=1$,
\begin{align}
\wdo{a}{b}\fdo{c}{d} &= \glab{a}{c}\fdo{b}{d} &&\text{by \eqref{eq:wdofdo1}}.
\end{align}
We also have
\begin{align}
\wdo{a}{b}\fup{c}{d} = \aup{a}{c}\wdo{d}{b} &&\text{by \eqref{eq:wdofup}},
\end{align}
so if $\mathbf{u}'$ contains $\wdo{a}{b}\fup{c}{d}$, or $\wdo{a}{b}\fdo{c}{d}$ with $n=1$, then it is not label-reduced.

Analogously,
\begin{align}
e_j\wdo{a}{b} &= e_je_{j-1}\dots e_1 \wup{a}{b} e_{n-1}e_{n-2}\dots e_j e_{j+1}, \\[2mm]
\fup{c}{d}\wdo{a}{b} &= \begin{cases}
    \glab{d}{b}\fup{c}{a}, &n=1,\\[2mm]
    \wup{c}{b}e_{n-1}e_{n-2}\dots e_2e_1\fup{d}{a}, &n\geq 2,
\end{cases} \\[2mm]
\fdo{c}{d}\wdo{a}{b}&= \ddo{d}{b}\wdo{a}{c}.
\end{align}
\end{proof}

We define a \textit{subword} of a word $s_1s_2\dots s_k$ to be a word $s_is_{i+1} \dots s_{k-j-1}s_{k-j}$, where $0 \leq i,j \leq k$. That is, a subword can be obtained from a word by deleting its first $i$ generators, and its last $j$ generators. For example, if $\mathbf{u} \coloneqq s_1s_2s_3s_4s_5$, then $\id$, $s_2s_3$, $s_1$, $s_3s_4s_5$ and $\mathbf{u}$ itself are all subwords of $\mathbf{u}$, but $s_2s_4$ is not.
Note that this is stricter than the usual definition used for words in the symmetric group, for example.

\begin{proposition}\label{prop:WTintro}
Each word $\mathbf{u}$ in $\A_n(X)$ is equal to a scalar multiple of a label-reduced word $WT$, where $W$ is one of 
\begin{align}
\id, \\
\w{a}{b}(0) &\coloneqq  \wup{a}{b}, \\
\w{a}{b}(i) &\coloneqq  e_ie_{i-1} \dots e_2e_1 \wup{a}{b}, &&1 \leq i \leq  n-1, \\
\w{a}{b}(n) &\coloneqq  \wdo{a}{b},
\end{align}
for some $\mathrm{a},\mathrm{b}\in X$, and $T$ is an even-reduced word.
\end{proposition}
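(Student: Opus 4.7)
The plan is to first apply Lemma \ref{lem:wuponly} to write $\mathbf{u}$ as a scalar multiple of either (a) a single $\wdo{a}{b}$, or (b) a label-reduced word $\mathbf{u}'$ containing no $\wdo{c}{d}$ and at most one $\wup{a}{b}$. Case (a) is immediate: take $W=\w{a}{b}(n)$ and $T=\id$, the latter being vacuously even-reduced. If $\mathbf{u}'$ has no odd generators, it is a label-reduced even word, so equals a scalar multiple of an even-reduced word $T$; take $W=\id$. The remaining case is $\mathbf{u}'=L\wup{a}{b}R$, with $L$ and $R$ words in the even generators, and the main work is to push the single $\wup{a}{b}$ leftward through $L$ into one of the allowed $W$-positions.

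To do this, I would prove by induction on the length of $L$ that $L\wup{a}{b}$ equals a scalar multiple of a word $W L^{\prime}$, where $W$ has one of the allowed forms and $L^{\prime}$ is a word in the even generators. The base case $L=\id$ gives $W=\w{a}{b}(0)$. For the inductive step, write $L=sM$ with $s$ an even generator and apply the hypothesis to $M\wup{a}{b}\propto W^{\prime} L^{\prime\prime}$; the form of $W^{\prime}$ then dictates the handling of $s$. If $W^{\prime}=\id$, nothing new is needed. If $W^{\prime}=\w{a'}{b'}(j)$ for some $0\leq j\leq n-1$, then Lemma \ref{lem:sWreduction}(i) handles $s\,e_je_{j-1}\dots e_1\wup{a'}{b'}$ whenever its hypotheses hold; the two excluded cases are $s=e_{j+1}$ (where the tail simply extends to $\w{a'}{b'}(j+1)$) and $s=\fdo{c}{d}$ with $j=n-1$, for which, when $n\geq 2$, relations \eqref{eq:en1wdo} and \eqref{eq:wdofdo} yield $\fdo{c}{d}\,e_{n-1}\dots e_1\wup{a'}{b'}=\wdo{a'}{c}\fdo{d}{b'}$ so that $W=\w{a'}{c}(n)$, while for $n=1$, \eqref{eq:fdowup1} collapses the expression into a scalar times an even word. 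If $W^{\prime}=\wdo{a'}{b'}=\w{a'}{b'}(n)$, a short case analysis on $s$ using \eqref{eq:ejwdo}, \eqref{eq:en1wdo}, \eqref{eq:fupwdo} (or \eqref{eq:fupwdo1} for $n=1$), and \eqref{eq:fdowdo} either commutes $s$ past $\wdo{a'}{b'}$, preserving a $\wdo{}{}$-form for $W$, or (when $s=e_{n-1}$) rewrites $e_{n-1}\wdo{a'}{b'}$ as $e_{n-1}\dots e_1\wup{a'}{b'}$, landing in the previous form $\w{a'}{b'}(n-1)$.

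Once the inductive claim is established, $\mathbf{u}'\propto WL^{\prime}R$ with $L^{\prime}R$ a word in the even generators; reducing $L^{\prime}R$ to an even-reduced word $T$ yields $\mathbf{u}\propto WT$. If the resulting $WT$ is not label-reduced, then by Lemma \ref{lem:labelreducedbdylinks}-style reasoning it equals a scalar multiple of a word with strictly fewer label generators, so I would simply restart the procedure on that shorter word. Since the number of label generators is a nonnegative integer that strictly decreases at each iteration, this terminates with a label-reduced $WT$ of the desired form.

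The hardest part will be the case analysis in the inductive step, in particular the subcase $W^{\prime}=\wdo{a'}{b'}$, which is not directly covered by Lemma \ref{lem:sWreduction}, together with ensuring that the introduction of a $\wdo{}{}$ factor via the $\fdo{c}{d}$/$j=n-1$ transition does not spawn further cascading cases; this is why the case analysis must be closed under all possible choices of the next $s$, including the $n=1$ collapses.
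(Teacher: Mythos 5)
Your overall architecture matches the paper's: Lemma \ref{lem:wuponly} first, then an induction that pushes the lone $\wup{a}{b}$ into one of the allowed positions using Lemma \ref{lem:sWreduction}(i), with the exceptional case $\fdo{c}{d}\,e_{n-1}\cdots e_1\wup{a}{b}$ converted via \eqref{eq:en1wdo} and \eqref{eq:wdofdo}. Your reorganisation of the induction --- peeling one generator off the left of $L$ and maintaining a normal form $W'L''$, with a separate closed case analysis for $s\,\wdo{a'}{b'}$ --- is sound, and the relations you cite do close every case, including the $n=1$ collapses.

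The gap is in the final assembly. You write ``reducing $L'R$ to an even-reduced word $T$,'' but under the paper's definitions a word in the even generators is not automatically \emph{even}: evenness requires exhibiting a \emph{label-reduced} word in the even generators equal to a scalar multiple of it, and only even words are known to be scalar multiples of even-reduced words. Your inductive claim places no control on the number of label generators in $WL'$, so $L'R$ carries no certificate of label-reducedness at the point you use it, and the step fails as written. The restart loop does not repair this, for two reasons: it tests label-reducedness of $WT$ only \emph{after} $T$ has been (invalidly) formed, and its termination argument needs each pass to output a word with no more label generators than its input --- otherwise the ``strictly decreasing'' counter can be undone on the next pass. The fix is exactly the bookkeeping the paper threads through Lemma \ref{lem:sWreduction} and its own induction: strengthen your inductive claim to ``$L\wup{a}{b}$ is a scalar multiple of $WL'$ containing no more label generators than $L\wup{a}{b}$.'' Your case analysis already supports this (every rewrite you invoke preserves or decreases the count), and with it $WL'R$ inherits label-reducedness from $\mathbf{u}'$, its subword $L'R$ is then label-reduced hence even, and $T$ can be taken even-reduced with the same label count, making $WT$ label-reduced with no restart needed.
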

\begin{proof}
By Lemma \ref{lem:wuponly}, any word $\mathbf{u}$ in $\A_n(X)$ is equal to a scalar multiple of $\wdo{a}{b}$ for some $\mathrm{a},\mathrm{b} \in X$, or equal to a scalar multiple of a label-reduced word $\mathbf{u}'$ containing at most one generator of the form $\wup{a}{b}$ and no generators of the form $\wdo{a}{b}$. In the former case, we can take $W=\w{a}{b}(n)=\wdo{a}{b}$ and $T=\id$; indeed, $T$ is even-reduced, and by Lemma \ref{lem:labelreducedbdylinks}, the whole word $\wdo{a}{b}$ is label-reduced, because $\phi\left(\wdo{a}{b}\right) = \dwdo{a}{b}$ has two boundary links.

In the latter case, if $\mathbf{u}'$ does not contain a generator of the form $\wup{a}{b}$, then it is even, so it is equal to a scalar multiple of some even-reduced word $T$. We can then take $W=\id$, and so $WT=T$ is label-reduced as required, since even-reduced words are label-reduced.

Otherwise, $\mathbf{u}'$ contains one generator of the form $\wup{a}{b}$ and no generators of the form $\wdo{a}{b}$. It thus has the form $s_ks_{k-1}\dots s_2s_1\wup{a}{b}T'$, for some $\mathrm{a},\mathrm{b} \in X$, where each $s_i$ is an even generator and $T'$ is a word in the even generators. Note that $T'$ is label-reduced, because $\mathbf{u}'$ is label-reduced, and thus $T'$ is even.

We now induct on $k$, reducing the number of generators on the left of $\wup{a}{b}$.
If $k=0$, then $\mathbf{u}'=\wup{a}{b}T'$. Since $T'$ is even, it is equal to a scalar multiple of an even-reduced word $T$. As both $T$ and $T'$ are label-reduced, it follows that they contain the same number of label generators, so $\wup{a}{b} T$ is label-reduced. Taking $W=\w{a}{b}(0)=\wup{a}{b}$, we have that $\mathbf{u}$ is equal to a scalar multiple of $WT$, where $WT$ is label-reduced and $T$ is even-reduced.

For the inductive step of the proof, fix $k>0$. Assume that for any $i<k$, if $S$ and $S'$ are words in the even generators, where $S$ consists of $i$ generators and $\mathbf{v}\coloneqq S\wup{a}{b}S'$ is label-reduced, then $\mathbf{v}$ is equal to a scalar multiple of a label-reduced word of the form $W'S''$, where $W'$ is $\id$ or $\w{c}{d}(j)$ for some $j=0, \dots, n$ and $\mathrm{c}, \mathrm{d} \in X$, and $S''$ is an even-reduced word.

Let $m$ be the largest integer such that $s_i=e_i$ for all $i \leq m$. Then $0 \leq m \leq k$, and also $m \leq n-1$. If $m=k$, then
\begin{align}
\mathbf{u}' &= e_ke_{k-1}\dots e_2e_1\wup{a}{b} T'\\
&= \w{a}{b}(k)T'.
\end{align}
Since $T'$ is even, it is equal to a scalar multiple of some even-reduced word $T$. Recalling that $\mathbf{u}$ is a scalar multiple of $\mathbf{u}'$, if we take $W=\w{a}{b}(k)$, then $\mathbf{u}$ is a scalar multiple of $WT$. Since $T'$ is label-reduced and equal to a scalar multiple of $T$, which is even-reduced and therefore label-reduced, $T$ and $T'$ must have the same number of label generators. It follows that $\w{a}{b}(k)T$ is label-reduced, because $\w{a}{b}(k)T'$ was label-reduced. Hence the result holds for $m=k$.

If $m<k$, then 
\begin{align}
s_{m+1} s_m s_{m-1} \dots s_2s_1 \wup{a}{b} T' 
&=  s_{m+1} e_m e_{m-1} \dots e_2e_1 \wup{a}{b} T' ,
\end{align}
where $s_{m+1}\neq e_{m+1}$ is an even generator. First consider the case where $m\neq n-1$, or $s_{m+1} \neq \fdo{c}{d}$ for any $\mathrm{c},\mathrm{d}\in X$. Then by Lemma \ref{lem:sWreduction}(i), this is equal to a scalar multiple of a word of the form $\mathbf{w}T''$, where $\mathbf{w}=\id$ or $\mathbf{w}=e_ie_{i-1}\dots e_2e_1\wup{e}{f}$ for some $\mathrm{e},\mathrm{f} \in X$ and $0 \leq i \leq m$; $T''$ is a word in the even generators; and $\mathbf{w}T''$ contains no more label generators than $s_{m+1} s_m s_{m-1} \dots s_2s_1 \wup{a}{b}$. This means that
\begin{align}
\mathbf{u}' &= s_ks_{k-1} \dots s_{m+2}s_{m+1} s_m s_{m-1} \dots s_2s_1 \wup{a}{b} T'  \\
&= \mu\, s_ks_{k-1} \dots s_{m+2}\mathbf{w}T'' T',
\end{align}
for some scalar $\mu$, and the word $s_ks_{k-1} \dots s_{m+2}\mathbf{w}T'' T'$ is label-reduced.

If $\mathbf{w}=\id$, then 
\begin{align}
\mathbf{u}'
&= \mu\, s_ks_{k-1} \dots s_{m+2}T'' T',
\end{align}
is a label-reduced word in the even generators, so is even. This means it is equal to a scalar multiple of an even-reduced word $T$. Since $\mathbf{u}$ is equal to a scalar multiple of $\mathbf{u}'$, it follows that $\mathbf{u}$ is equal to a scalar multiple of $T$. Taking $W=\id$, we have that $\mathbf{u}$ is a scalar multiple of $WT = T$, where $T$ is even-reduced, and so $WT$ is label-reduced, as required.

If $\mathbf{w}=e_ie_{i-1}\dots e_2e_1\wup{e}{f}$ for some $\mathrm{e},\mathrm{f} \in X$ and $0 \leq i \leq m$, then
\begin{align}
\mathbf{u}'
&= \mu\, s_ks_{k-1} \dots s_{m+2}e_ie_{i-1}\dots e_2e_1\wup{e}{f}T'' T'.
\end{align}
Let $S=s_ks_{k-1} \dots s_{m+2}e_ie_{i-1}\dots e_2e_1$. Then $S$ is a word consisting of at most $k-1$ even generators, and $T''T'$ is also a word in the even generators. Recall that $\mathbf{w}T''$ contains no more label generators than $s_{m+1}s_m \dots s_2s_1 \wup{a}{b}$. This means that $S\wup{e}{f}T'' T'$ contains no more label generators than $\mathbf{u}'$. Since $\mathbf{u}'$ is equal to a scalar multiple of $S\wup{e}{f}T'' T'$, and $\mathbf{u}'$ is label-reduced, it follows that $S\wup{e}{f}T'' T'$ is label-reduced. Then by our inductive hypothesis, this is equal to a scalar multiple of a label-reduced word of the form $W'S''$, where $W'=\id$ or $W'=\w{c}{d}(j)$ for some $0\leq j \leq n$ and $\mathrm{c},\mathrm{d}\in X$, and $S''$ is even-reduced. Hence $\mathbf{u}$ is equal to a scalar multiple of $W'S''$, which has the desired form.

It remains to consider the case where $m=n-1$, and $s_{m+1}=\fdo{c}{d}$ for some $\mathrm{c},\mathrm{d}\in X$. Then $n \neq 1$, because for $n=1$, $\mathbf{u}'$ contains the subword $\fdo{c}{d}\wup{a}{b}$, which is not label-reduced when $n=1$. For $n\geq 2$, then, we have
\begin{align}
\mathbf{u}' &= s_ks_{k-1} \dots s_{m+2}s_{m+1}s_m \dots s_2s_1 \wup{a}{b} T' \\
&= s_ks_{k-1} \dots s_{m+2}\fdo{c}{d}e_{n-1} \dots e_2e_1 \wup{a}{b} T' \label{eq:uprime} \\
&= s_ks_{k-1} \dots s_{m+2}\fdo{c}{d}e_{n-1}\wdo{a}{b} T' &&\text{by \eqref{eq:en1wdo}} \\
&= s_ks_{k-1} \dots s_{m+2}\wdo{a}{c}\fdo{d}{b} T' &&\text{by \eqref{eq:wdofdo}}. \label{eq:uprimered}
\end{align}
We have that $\mathbf{u}'$ (given in \eqref{eq:uprime}) is label-reduced, and equal to \eqref{eq:uprimered}. Since \eqref{eq:uprime} and \eqref{eq:uprimered} have the same number of label generators, it follows that \eqref{eq:uprimered} is label-reduced.

Now, if $k=m+1$, then the expression \eqref{eq:uprimered} is precisely $\w{a}{c}(n)\fdo{d}{b}T'$.
As this word is label-reduced, its subword $\fdo{d}{b}T'$ is also label-reduced, and thus even, as it contains only even generators. This means $\fdo{d}{b}T'$ is equal to a scalar multiple of some even-reduced word $T$, and setting $W=\w{a}{c}(n)$ makes $\mathbf{u}$ a scalar multiple of a word $WT$ of the desired form. 

If $k>m+1$, consider the word $\mathbf{v}\coloneqq s_{m+2}\fdo{c}{d}e_{n-1} \dots e_2e_1 \wup{a}{b}$. This is a subword of $\mathbf{u}'$, so must be label-reduced. If $s_{m+2}=\fdo{e}{f}$ for some $\mathrm{e},\mathrm{f} \in X$, then
\begin{align}
\mathbf{v} &= \fdo{e}{f} \fdo{c}{d} e_{n-1} \dots e_2e_1 \wup{a}{b} \\
&= \ddo{f}{c} \fdo{e}{d} e_{n-1} \dots e_2e_1\wup{a}{b} &&\text{by \eqref{eq:fdofdo}},
\end{align}
contradicting the fact that $\mathbf{v}$ is label-reduced. If $s_{m+2}=e_{n-1}$, then 
\begin{align}
\mathbf{v}&= e_{n-1}\fdo{c}{d}e_{n-1} \dots e_2e_1 \wup{a}{b} \\
&= \ddo{c}{d} e_{n-1}\dots e_2e_1 \wup{a}{b} &&\text{by \eqref{eq:en1fdoen1}},
\end{align}
again contradicting the fact that $\mathbf{v}$ is label-reduced. If $s_{m+2}=e_j$, for $1 \leq j \leq n-3$, then
\begin{align}
\mathbf{v} &= e_j \fdo{c}{d} e_{n-1} \dots e_{j+2}e_{j+1}e_je_{j-1} \dots e_2e_1 \wup{a}{b} \\
&=  \fdo{c}{d} e_j e_{n-1} \dots e_{j+2}e_{j+1}e_je_{j-1} \dots e_2e_1 \wup{a}{b} &&\text{by \eqref{eq:fdoej}} \\
&= \fdo{c}{d}  e_{n-1} \dots e_{j+2}e_je_{j+1}e_je_{j-1} \dots e_2e_1 \wup{a}{b} &&\text{by \eqref{eq:eiej}} \\
&= \fdo{c}{d}  e_{n-1} \dots e_{j+2}e_je_{j-1} \dots e_2e_1 \wup{a}{b} &&\text{by \eqref{eq:TLreduce}}.
\end{align}
This has the same number of label generators as $\mathbf{v}$. Then
\begin{align}
\mathbf{u}' &= s_ks_{k-1} \dots s_{m+3}\fdo{c}{d}  e_{n-1} \dots e_{j+2}e_je_{j-1} \dots e_2e_1 \wup{a}{b}T',
\end{align}
where the right hand side is label-reduced, with $k-1$ even generators before $\wup{a}{b}$, and only even generators after $\wup{a}{b}$. We can thus apply our inductive hypothesis to the right hand side to find that $\mathbf{u}'$ and thus $\mathbf{u}$ is a scalar multiple of a word of the required form. If $s_{m+2}=e_{n-2}$, then 
\begin{align}
\mathbf{v} &= e_{n-2} \fdo{c}{d} e_{n-1} e_{n-2}e_{n-3} \dots e_2e_1 \wup{a}{b} \\
&= \fdo{c}{d}e_{n-2} e_{n-1} e_{n-2}e_{n-3} \dots e_2e_1 \wup{a}{b} &&\text{by \eqref{eq:fdoej}}\\
&= \fdo{c}{d} e_{n-2}e_{n-3} \dots e_2e_1 \wup{a}{b} &&\text{by \eqref{eq:TLreduce}}.
\end{align}
This word has the same number of label generators as $\mathbf{v}$, and has $n-1=m<m+2$ even generators before $\wup{a}{b}$. Similar to the $s_{m+2}=e_j$, $1\leq j\leq n-3$ case, we can substitute this back into $\mathbf{u}'$ and use our inductive hypothesis to find that $\mathbf{u}$ is equal to a scalar multiple of a word of the desired form. Finally, if $s_{m+2} = \fup{e}{f}$ for some $\mathrm{e}, \mathrm{f} \in X$, then
\begin{align}
\mathbf{v} &= \fup{e}{f}\fdo{c}{d}e_{n-1}\dots e_2e_1 \wup{a}{b} \\
&= \fdo{c}{d}e_{n-1}\dots e_2\fup{e}{f}e_1\wup{a}{b} &&\text{by \eqref{eq:fupfdo}, \eqref{eq:fupej}} \\
&= \fdo{c}{d}e_{n-1}\dots e_2\wup{e}{b}\fup{f}{a} &&\text{by \eqref{eq:wupfup}}.
\end{align}
The final expression has the same number of label generators as $\mathbf{v}$, and has $n-1$ even generators before $\wup{a}{b}$. As in the previous case, this means we can write $\mathbf{u}$ as a scalar multiple of a word of the desired form. This completes the induction with respect to $k$, thus completing the proof.
\end{proof}

We say a word is in \textit{$WT$ form} if it is a label-reduced product $WT$ where $T$ is even-reduced, and either $W=\id$ or $W=\w{a}{b}(j)$ for some $\mathrm{a},\mathrm{b} \in X$, with $j$ minimal. By minimality of $j$, we mean that, if $\w{a}{b}(j)T$ is in $WT$ form, and $\w{a}{b}(j)T=\mu \, \w{c}{d}(i)T'$ for some scalar $\mu$ and $\mathrm{c},\mathrm{d} \in X$, where $\w{c}{d}(i)T'$ is label-reduced and $T'$ is even-reduced, then $i\geq j$.

\begin{corollary}\label{cor:WTspanning}
Each word in $\A_n(X)$ is equal to a scalar multiple of a word in $WT$ form, and thus the set of all words in $WT$ form is a spanning set for $\A_n(X)$.
\end{corollary}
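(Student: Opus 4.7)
The plan is to deduce the corollary directly from Proposition \ref{prop:WTintro} by invoking the fact that the set of possible indices $j \in \{0, 1, \dots, n\}$ is finite, so any nonempty subset of it has a minimum. Proposition \ref{prop:WTintro} already provides, for any word $\mathbf{u} \in \A_n(X)$, a scalar $\mu$ and a label-reduced word $WT$ with $T$ even-reduced and $W \in \{\id\} \cup \{\w{a}{b}(j) : \mathrm{a}, \mathrm{b} \in X,\ 0 \leq j \leq n\}$ such that $\mathbf{u} = \mu WT$. The only content beyond Proposition \ref{prop:WTintro} that is needed for Corollary \ref{cor:WTspanning} is the minimality condition on $j$ when $W \neq \id$; this is what promotes a generic $WT$-decomposition into a word actually in $WT$ \emph{form}.

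First I would split into two cases. If some representation given by Proposition \ref{prop:WTintro} has $W = \id$, then that representation is already in $WT$ form and there is nothing more to do. Otherwise, let $J$ be the set of indices $j \in \{0, 1, \dots, n\}$ for which there exist $\mathrm{a}, \mathrm{b} \in X$, a scalar $\mu_0$, and an even-reduced word $T_0$ with $\mathbf{u} = \mu_0 \w{a}{b}(j) T_0$ and $\w{a}{b}(j)T_0$ label-reduced. Proposition \ref{prop:WTintro} forces $J$ to be nonempty, and as a nonempty subset of the finite set $\{0,1,\dots,n\}$ it admits a least element $j^*$. Fix a witnessing representation $\mathbf{u} = \mu^* \w{a^*}{b^*}(j^*) T^*$.

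To verify that $\w{a^*}{b^*}(j^*) T^*$ is in $WT$ form, I would check the minimality clause directly: if $\w{a^*}{b^*}(j^*) T^* = \lambda \w{c}{d}(i) T''$ for some scalar $\lambda$, labels $\mathrm{c},\mathrm{d} \in X$, index $i$, and even-reduced $T''$ with $\w{c}{d}(i) T''$ label-reduced, then $\mathbf{u} = \mu^* \lambda \w{c}{d}(i) T''$ exhibits $i$ as an element of $J$, so $i \geq j^*$ by the minimality of $j^*$. Hence $\w{a^*}{b^*}(j^*) T^*$ is in $WT$ form and $\mathbf{u}$ is a scalar multiple of it. The spanning statement follows immediately, because $\A_n(X)$ is spanned by the set of all its words, and each such word has just been shown to be a scalar multiple of a word in $WT$ form.

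I do not expect any real obstacle here; the corollary is essentially a repackaging of Proposition \ref{prop:WTintro} that uses well-ordering on $\{0,1,\dots,n\}$ to pick out a canonical choice of the odd prefix $W$. The only small point to be careful about is to phrase the minimality in terms of representations of $\mathbf{u}$ itself, rather than of the word $\w{a^*}{b^*}(j^*) T^*$, and then transfer between these two viewpoints via the scalar multipliers $\mu^*$ and $\lambda$, exactly as in the definition of $WT$ form given immediately before the corollary.
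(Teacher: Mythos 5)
Your proposal is correct and matches the paper's (one-line) proof, which simply cites Proposition \ref{prop:WTintro}; the well-ordering argument on $j \in \{0,1,\dots,n\}$ that you spell out, together with the transfer of the minimality condition between representations of $\mathbf{u}$ and of $\w{a^*}{b^*}(j^*)T^*$ via the nonzero scalars, is exactly the implicit content of that citation.
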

\begin{proof}
This follows from Proposition \ref{prop:WTintro}.
\end{proof}

Note, however, that the set of all words in $WT$ form is not linearly independent for $n\geq 2$; for example the words $\fup{a}{b}\fdo{c}{d}$ and $\fdo{c}{d}\fup{a}{b}$ are both in $WT$ form, and are equal by \eqref{eq:fupfdo}. 

Our goal will eventually be to show that each word in $WT$ form maps to a basis diagram in $\lab_n(X)$ under the homomorphism $\phi$, and then to show that if two words in $WT$ form map to the same $\lab_n(X)$-diagram, then they are equal in $\A_n(X)$. To do this, we will deduce some results by comparing our presentation of $\A_n(X)$ to the algebraic presentation of the symplectic blob algebra, which has been proven isomorphic to its diagrammatic presentation in \cite{SympBlobPres}. 

\section{Symplectic blob algebra}\label{s:sympblob}
In this section, we establish a relationship between the even generators of the algebraic label algebra $\A_n(X)$, and the generators of the symplectic blob algebra $\symp_n$. In \cite{SympBlobPres}, Green, Martin and Parker give an algebraic presentation of $\symp_n$, and show that this is isomorphic to the diagrammatic presentation from \cite{TowersI}. Loosely, the even generators of $\A_n(X)$ satisfy the the symplectic blob relations, up to labels and parameters, so certain results about the even generators of $\A_n(X)$ can be inferred from results for $\symp_n$; Proposition \ref{prop:sblabel} makes this precise. Ultimately, we use this to show in Proposition \ref{prop:evenwordinj} that if $\phi(\mathbf{u})=\phi(\mathbf{v})$ for even words $\mathbf{u}$ and $\mathbf{v}$, then $\mathbf{u}=\mathbf{v}$.

The \textit{symplectic blob algebra} $\symp_n(\beta;\alpha_1,\alpha_2,\delta_1,\delta_2,\kappa)$ is the associative unital algebra with identity $\id$, generated by $e_i$, $i=1, \dots , n-1$, $f_0$ and $f_n$, subject to the relations
\begin{align}
e_ie_j&=e_je_i, &&\abs{i-1}\geq 2, \tag{S1} \label{eq:sbeiej}\\
e_ie_je_i &= e_i, &&\abs{i-j}=1, \tag{S2} \label{eq:sbreduce}\\
e_j^2 &= \beta e_i, &&1\leq j \leq n-1,\tag{S3}\label{eq:sbbeta} \\
f_0e_j&=e_jf_0, &&2 \leq j \leq n-1, \tag{S4} \label{eq:sbf0ej} \\
e_1f_0e_1 &= \alpha_1e_1, &&n\geq 2, \tag{S5} \label{eq:sbe1f0e1}\\
f_0^2 &= \alpha_2 f_0, \tag{S6} \label{eq:sbf0f0}\\
f_ne_j &= e_jf_n, &&1\leq j\leq n-2, \tag{S7} \label{eq:sbfnej}\\
e_{n-1}f_ne_{n-1} &= \delta_1 f_n, &&n\geq 2,\tag{S8} \label{eq:sben1fnen1}\\
f_n^2 &= \delta_2 f_n, \tag{S9} \label{eq:sbfnfn}\\
f_0f_n &= f_nf_0, &&n\geq 2, \tag{S10} \label{eq:sbf0fn}\\
IJI &= \kappa I, \tag{S11} \label{eq:sbIJI}\\
JIJ &= \kappa J, \tag{S12} \label{eq:sbJIJ}
\end{align}
where
\begin{align}
I &= \begin{dcases}
    f_n \, \prod_{k=1}^{\frac{n-1}{2}} e_{2k-1}, &n \text{ odd,} \\[2mm]
    \, \prod_{k=1}^{\frac{n}{2}}e_{2k-1}, &n \text{ even},
\end{dcases} \\[4mm]
J &= \begin{dcases}
    f_0 \, \prod_{k=1}^{\frac{n-1}{2}} e_{2k}, &n \text{ odd}, \\[2mm]
    f_0f_n\, \prod_{k=1}^{\frac{n}{2}-1}e_{2k}, &n\text{ even}.
\end{dcases}
\end{align}

As the name suggests, the symplectic blob algebra can also be defined in terms of a diagrammatic presentation involving blobs. Basis diagrams of $\symp_n$, called $\symp_n$\textit{-diagrams}, contain strings that may be decorated with two kinds of blobs: top boundary blobs, drawn as filled circles, and bottom boundary blobs, drawn as unfilled circles. The decorated strings must be able to be deformed such that every blob touches its corresponding boundary without any strings intersecting, including self-intersection. For example,
\begin{align}
\begin{tikzpicture}[baseline={([yshift=-1mm]current bounding box.center)},scale=0.5]
{
\draw [very thick] (0,-4.5)--(0,0.5);
\draw [very thick] (3,-4.5)--(3,0.5);
\draw (0,0) arc (90:-90:1.5);
\draw (0,-1) arc (90:-90:0.5);
\draw (0,-4) to[out=0,in=180] (3,-2);
\draw (3,0) arc(90:270:0.5);
\draw (3,-3) arc(90:270:0.5);
\filldraw (1.5,-1.5) circle (0.12);
\filldraw[fill=white] (1.5,-3) circle (0.12);
}
\end{tikzpicture}\; ,
&&
\begin{tikzpicture}[baseline={([yshift=-1mm]current bounding box.center)},scale=0.5]
{
\draw [very thick] (0,-4.5)--(0,0.5);
\draw [very thick] (3,-4.5)--(3,0.5);
\draw (0,0) arc (90:-90:0.5);
\draw (0,-2) to[out=0,in=180] (3,0);
\draw (0,-3) arc(90:-90:0.5);
\draw (3,-1) arc(90:270:1.5);
\draw (3,-2) arc(90:270:0.5);
}
\end{tikzpicture}\; ,
&&
\begin{tikzpicture}[baseline={([yshift=-1mm]current bounding box.center)},scale=0.5]
{
\draw [very thick] (0,-4.5)--(0,0.5);
\draw [very thick] (3,-4.5)--(3,0.5);
\draw (0,0) to[out=0,in=180] (3,-4);
\draw (0,-1) ..controls (0.7,-1) and (1,-1.8).. (1,-2.5) ..controls (1,-3.2) and (0.7,-4).. (0,-4);
\draw (0,-2) arc(90:-90:0.5);
\draw (3,0) arc(90:270:0.5);
\draw (3,-2) arc(90:270:0.5);
\filldraw (2.5,-0.5) circle (0.12);
\filldraw (1.9,-3.2) circle (0.12);
\filldraw[fill=white] (1,-2.5) circle (0.12);
\filldraw[fill=white] (1.1,-0.8) circle (0.12);
}
\end{tikzpicture}\; ,
&&
\begin{tikzpicture}[baseline={([yshift=-1mm]current bounding box.center)},scale=0.5]
{
\draw [very thick] (0,-4.5)--(0,0.5);
\draw [very thick] (3,-4.5)--(3,0.5);
\draw (0,0) arc (90:-90:0.5);
\draw (0,-2) to[out=0,in=180] (3,0);
\draw (0,-3) to[out=0,in=180] (3,-1);
\draw (3,-2) arc(90:270:0.5);
\draw (0,-4)--(3,-4);
\filldraw (1.5,-1) circle (0.12);
}
\end{tikzpicture}
\end{align}
are all $\symp_5$-diagrams, as their strings can be deformed as in
\begin{align}
\begin{tikzpicture}[baseline={([yshift=-1mm]current bounding box.center)},scale=0.5]
{
\draw [very thick] (0,-4.5)--(0,0.5);
\draw [very thick] (3,-4.5)--(3,0.5);
\draw (0,0) to[out=0,in=-90] (1,0.5) to[out=-90,in=0] (0,-3);
\draw (0,-1) arc (90:-90:0.5);
\draw (0,-4) to[out=0,in=90] (1.5,-4.5) to[out=90,in=180] (3,-2);
\draw (3,0) arc(90:270:0.5);
\draw (3,-3) arc(90:270:0.5);
\filldraw (1,0.5) circle (0.12);
\filldraw[fill=white] (1.5,-4.5) circle (0.12);
}
\end{tikzpicture}\; ,
&&
\begin{tikzpicture}[baseline={([yshift=-1mm]current bounding box.center)},scale=0.5]
{
\draw [very thick] (0,-4.5)--(0,0.5);
\draw [very thick] (3,-4.5)--(3,0.5);
\draw (0,0) arc (90:-90:0.5);
\draw (0,-2) to[out=0,in=180] (3,0);
\draw (0,-3) arc(90:-90:0.5);
\draw (3,-1) arc(90:270:1.5);
\draw (3,-2) arc(90:270:0.5);
}
\end{tikzpicture}\; ,
&&
\begin{tikzpicture}[baseline={([yshift=-1mm]current bounding box.center)},scale=0.5]
{
\draw [very thick] (0,-4.5)--(0,0.5);
\draw [very thick] (3,-4.5)--(3,0.5);
\draw (0,0) ..controls (1,0) and (1.3,-3).. (1.3,-4.5) to[out=90,in=-90] (1.7,0.5) ..controls (1.7,-1) and (2,-4).. (3,-4);
\draw (0,-1) ..controls (1,-1) and (0.7,-3).. (0.7,-4.5) to[out=90,in=0] (0,-4);
\draw (0,-2) arc(90:-90:0.5);
\draw (3,0) to[out=180,in=270] (2.3,0.5) to[out=270,in=180] (3,-1);
\draw (3,-2) arc(90:270:0.5);
\filldraw (2.3,0.5) circle (0.12);
\filldraw (1.7,0.5) circle (0.12);
\filldraw[fill=white] (0.7,-4.5) circle (0.12);
\filldraw[fill=white] (1.3,-4.5) circle (0.12);
}
\end{tikzpicture}\; ,
&&
\begin{tikzpicture}[baseline={([yshift=-1mm]current bounding box.center)},scale=0.5]
{
\draw [very thick] (0,-4.5)--(0,0.5);
\draw [very thick] (3,-4.5)--(3,0.5);
\draw (0,0) arc (90:-90:0.5);
\draw (0,-2) to[out=0,in=-90] (1.5,0.5) to[out=270,in=180] (3,0);
\draw (0,-3) to[out=0,in=180] (3,-1);
\draw (3,-2) arc(90:270:0.5);
\draw (0,-4)--(3,-4);
\filldraw (1.5,0.5) circle (0.12);
\filldraw[fill opacity=0,draw opacity=0] (1,-4.5) circle (0.12);
}
\end{tikzpicture}\; .
\end{align}
However, no such deformation exists for
\begin{align}
\begin{tikzpicture}[baseline={([yshift=-1mm]current bounding box.center)},scale=0.5]
{
\draw [very thick] (0,-4.5)--(0,0.5);
\draw [very thick] (3,-4.5)--(3,0.5);
\draw (0,0) to[out=0,in=180] (3,-2);
\draw (0,-1) arc (90:-90:0.5);
\draw (0,-3) arc (90:-90:0.5);
\draw (3,0) arc(90:270:0.5);
\draw (3,-3) arc(90:270:0.5);
\filldraw (1.5,-1) circle (0.12);
\filldraw (0.5,-3.5) circle (0.12);
}
\end{tikzpicture}\; ,
&&
\begin{tikzpicture}[baseline={([yshift=-1mm]current bounding box.center)},scale=0.5]
{
\draw [very thick] (0,-4.5)--(0,0.5);
\draw [very thick] (3,-4.5)--(3,0.5);
\draw (0,0)--(3,0);
\draw (0,-1)--(3,-1);
\draw (0,-2) to[out=0,in=180] (3,-4);
\draw (0,-3) arc(90:-90:0.5);
\draw (3,-2) arc(90:270:0.5);
\filldraw[fill=white] (1.5,0) circle (0.12);
}
\end{tikzpicture}\; ,
\end{align}
so these are not $\symp_5$-diagrams, and similarly
\begin{align}
\begin{tikzpicture}[baseline={([yshift=-1mm]current bounding box.center)},scale=0.5]
{
\draw [very thick] (0,-3.5)--(0,0.5);
\draw [very thick] (3,-3.5)--(3,0.5);
\draw (0,0) arc (90:-90:1.5);
\draw (0,-1) arc (90:-90:0.5);
\draw (3,0) arc(90:270:0.5);
\draw (3,-2) arc(90:270:0.5);
\filldraw (-45:1.5)+(0,-1.5) circle (0.12);
\filldraw[fill=white] (45:1.5)+(0,-1.5) circle (0.12);
}
\end{tikzpicture}
\end{align}
is not an $\symp_4$-diagram.

Multiplication of $\symp_n$-diagrams is defined by concatenation, and certain diagrammatic features may be removed or simplified at the cost of a parameter, as summarised in Table \ref{tab:sympblobparams} below. Diagrams that satisfy the requirements listed so far and do not contain any of these features form the diagram basis for $\symp_n$.

As an example of multiplication in $\symp_6$, we have
\begin{align}
\begin{tikzpicture}[baseline={([yshift=-1mm]current bounding box.center)},scale=0.5]
{
\draw [very thick] (0,-5.5)--(0,0.5);
\draw [very thick] (3,-5.5)--(3,0.5);
\draw [very thick] (6,-5.5)--(6,0.5);
\draw (0,0) to[out=0,in=180] (3,-2);
\draw (0,-1) arc(90:-90:0.5);
\draw (0,-3)--(3,-3);
\draw (0,-4) arc(90:-90:0.5);
\draw (3,0) arc(90:270:0.5);
\draw (3,-4) arc(90:270:0.5);
\filldraw (1.5,-1) circle (0.12);
\filldraw (2.5,-0.5) circle (0.12);
\filldraw (5.5,-0.5) circle (0.12);
\filldraw[fill=white] (1.5,-3) circle (0.12);
\filldraw[fill=white] (2.5,-4.5) circle (0.12);
\draw (3,0) arc(90:-90:0.5);
\draw (3,-2) arc(90:-90:0.5);
\draw (3,-4)--(6,-4);
\draw (3,-5)--(6,-5);
\draw (6,0) arc(90:270:0.5);
\draw (6,-2) arc(90:270:0.5);
\filldraw (3.5,-2.5) circle (0.12);
\filldraw (4.5,-4) circle (0.12);
\filldraw[fill=white] (4.5,-5) circle (0.12);
}
\end{tikzpicture}
\; = \; \alpha_1 \alpha_2 \delta_2\; \; 
\begin{tikzpicture}[baseline={([yshift=-1mm]current bounding box.center)},scale=0.5]
{
\draw [very thick] (0,-5.5)--(0,0.5);
\draw [very thick] (3,-5.5)--(3,0.5);
\draw (0,0) arc(90:-90:1.5);
\draw (0,-1) arc(90:-90:0.5);
\draw (0,-4) arc(90:-90:0.5);
\draw (3,-4) arc(90:270:0.5);
\filldraw (0,-1.5)+(45:1.5) circle (0.12);
\filldraw (3,-4.5)+(150:0.5) circle (0.12);
\filldraw[fill=white] (0,-1.5)+(-45:1.5) circle (0.12);
\filldraw[fill=white] (3,-4.5)+(210:0.5) circle (0.12);
\draw (3,0) arc(90:270:0.5);
\draw (3,-2) arc(90:270:0.5);
\filldraw (2.5,-0.5) circle (0.12);
}
\end{tikzpicture}
\; = \; \kappa \alpha_1\alpha_2\delta_2\;\;
\begin{tikzpicture}[baseline={([yshift=-1mm]current bounding box.center)},scale=0.5]
{
\draw [very thick] (0,-5.5)--(0,0.5);
\draw [very thick] (3,-5.5)--(3,0.5);
\draw (0,0) to[out=0,in=180] (3,-4);
\draw (0,-3) to[out=0,in=180] (3,-5);
\draw (0,-1) arc(90:-90:0.5);
\draw (0,-4) arc(90:-90:0.5);
\filldraw (1.5,-2) circle (0.12);
\filldraw[fill=white] (1.5,-4) circle (0.12);
\draw (3,0) arc(90:270:0.5);
\draw (3,-2) arc(90:270:0.5);
\filldraw (2.5,-0.5) circle (0.12);
}
\end{tikzpicture}\; .
\end{align}
Note that the simplification rules for $n$ odd or $n$ even must not be applied in the other parity. The features in the rules for $n$ even cannot occur with $n$ odd, but with $n$ even, we have for example
\begin{align}
\begin{tikzpicture}[baseline={([yshift=-1mm]current bounding box.center)},scale=0.5]
{
\draw [very thick] (0,-1.5)--(0,0.5);
\draw [very thick] (3,-1.5)--(3,0.5);
\draw [very thick] (6,-1.5)--(6,0.5);
\draw (0,0) arc(90:-90:0.5);
\draw (3,0) arc(90:270:0.5);
\filldraw (3,-0.5)+(150:0.5) circle (0.12);
\filldraw[fill=white] (3,-0.5)+(210:0.5) circle (0.12);
\draw (3,0) arc(90:-90:0.5);
\draw (6,0) arc(90:270:0.5);
\filldraw (3.5,-0.5) circle (0.12);
}
\end{tikzpicture}
\; &=\; \alpha_2\; \; 
\begin{tikzpicture}[baseline={([yshift=-1mm]current bounding box.center)},scale=0.5]
{
\draw [very thick] (0,-1.5)--(0,0.5);
\draw [very thick] (3,-1.5)--(3,0.5);
\draw [very thick] (6,-1.5)--(6,0.5);
\draw (0,0) arc(90:-90:0.5);
\draw (3,0) arc(90:270:0.5);
\filldraw[fill=white] (3,-0.5)+(180:0.5) circle (0.12);
\draw (3,0) arc(90:-90:0.5);
\draw (6,0) arc(90:270:0.5);
\filldraw (3.5,-0.5) circle (0.12);
}
\end{tikzpicture} \;
=\; \kappa \alpha_2 \; \; 
\begin{tikzpicture}[baseline={([yshift=-1mm]current bounding box.center)},scale=0.5]
{
\draw [very thick] (0,-1.5)--(0,0.5);
\draw [very thick] (3,-1.5)--(3,0.5);
\draw (0,0) arc(90:-90:0.5);
\draw (3,0) arc(90:270:0.5);
}
\end{tikzpicture}\; .
\end{align}
Incorrectly applying the relations intended for odd $n$ here would give
\begin{align}
\begin{tikzpicture}[baseline={([yshift=-1mm]current bounding box.center)},scale=0.5]
{
\draw [very thick] (0,-1.5)--(0,0.5);
\draw [very thick] (3,-1.5)--(3,0.5);
\draw [very thick] (6,-1.5)--(6,0.5);
\draw (0,0) arc(90:-90:0.5);
\draw (3,0) arc(90:270:0.5);
\filldraw (3,-0.5)+(150:0.5) circle (0.12);
\filldraw[fill=white] (3,-0.5)+(210:0.5) circle (0.12);
\draw (3,0) arc(90:-90:0.5);
\draw (6,0) arc(90:270:0.5);
\filldraw (3.5,-0.5) circle (0.12);
}
\end{tikzpicture}
\; &=\; \kappa\; \; 
\begin{tikzpicture}[baseline={([yshift=-1mm]current bounding box.center)},scale=0.5]
{
\draw [very thick] (0,-1.5)--(0,0.5);
\draw [very thick] (3,-1.5)--(3,0.5);
\draw [very thick] (6,-1.5)--(6,0.5);
\draw (0,0) arc(90:-90:0.5);
\draw (3,0) arc(90:270:0.5);
\draw (3,0) arc(90:-90:0.5);
\draw (6,0) arc(90:270:0.5);
\filldraw (3.5,-0.5) circle (0.12);
}
\end{tikzpicture} \;
=\; \kappa \alpha_1 \; \; 
\begin{tikzpicture}[baseline={([yshift=-1mm]current bounding box.center)},scale=0.5]
{
\draw [very thick] (0,-1.5)--(0,0.5);
\draw [very thick] (3,-1.5)--(3,0.5);
\draw (0,0) arc(90:-90:0.5);
\draw (3,0) arc(90:270:0.5);
}
\end{tikzpicture}\; ,
\end{align}
implying $\alpha_1=\alpha_2$.

\begin{table}[t]
    \caption{Diagrammatic simplification rules for the symplectic blob algebra. The shaded regions in the last relation may be filled with any number of strings, though the strings in corresponding regions in the two diagrams must match. Strings in a shaded region may be decorated with blobs only if the region meets a boundary, and only with the type of blob corresponding to that boundary.}
    \label{tab:sympblobparams}
    \centering
\begin{tabularx}{\textwidth}{|X|}
\hline

For all $n \in \N$:
\begin{align*}
\begin{tikzpicture}[baseline={([yshift=-1mm]current bounding box.center)},scale=0.5]
{
\draw (0,0) circle (1);
\filldraw (1,0) circle (0.12);
}
\end{tikzpicture}
\;\;  \mapsto \; \; 
\alpha_1 \hspace{30mm}
\begin{tikzpicture}[baseline={([yshift=-1mm]current bounding box.center)},scale=0.5]
{
\draw (0,0)--(2,0);
\filldraw (0.75,0) circle (0.12);
\filldraw (1.25,0) circle (0.12);
}
\end{tikzpicture}
\;\;  \mapsto \; \; 
\alpha_2\; \; 
\begin{tikzpicture}[baseline={([yshift=-1mm]current bounding box.center)},scale=0.5]
{
\draw (0,0)--(2,0);
\filldraw (1,0) circle (0.12);
}
\end{tikzpicture}
\end{align*}
\begin{align*}
\begin{tikzpicture}[baseline={([yshift=-1mm]current bounding box.center)},scale=0.5]
{
\draw (0,0) circle (1);
\filldraw[fill=white] (1,0) circle (0.12);
}
\end{tikzpicture}
\;\;  \mapsto \; \; 
\delta_1 \hspace{30mm}
\begin{tikzpicture}[baseline={([yshift=-1mm]current bounding box.center)},scale=0.5]
{
\draw (0,0)--(2,0);
\filldraw[fill=white] (0.75,0) circle (0.12);
\filldraw[fill=white] (1.25,0) circle (0.12);
}
\end{tikzpicture}
\;\;  \mapsto \; \; 
\delta_2\; \; 
\begin{tikzpicture}[baseline={([yshift=-1mm]current bounding box.center)},scale=0.5]
{
\draw (0,0)--(2,0);
\filldraw[fill=white] (1,0) circle (0.12);
}
\end{tikzpicture}
\end{align*}
\begin{align*}
\begin{tikzpicture}[baseline={([yshift=-1mm]current bounding box.center)},scale=0.5]
{
\draw[white] (1,0) circle (0.12);
\draw (0,0) circle (1);
}
\end{tikzpicture}
\;\;  \mapsto \; \; 
\beta
\end{align*}
\\
\hline

For $n$ odd only:
\begin{align*}
\begin{tikzpicture}[baseline={([yshift=-1mm]current bounding box.center)},scale=0.5]
{
\draw (0,0)--(2,0);
\filldraw (0.5,0) circle (0.12);
\filldraw[fill=white] (1,0) circle (0.12);
\filldraw (1.5,0) circle (0.12);
}
\end{tikzpicture}
\;\;  \mapsto \; \; 
\kappa\; \; 
\begin{tikzpicture}[baseline={([yshift=-1mm]current bounding box.center)},scale=0.5]
{
\draw (0,0)--(2,0);
\filldraw (1,0) circle (0.12);
}
\end{tikzpicture}
\hspace{30mm}
\begin{tikzpicture}[baseline={([yshift=-1mm]current bounding box.center)},scale=0.5]
{
\draw (0,0)--(2,0);
\filldraw[fill=white] (0.5,0) circle (0.12);
\filldraw (1,0) circle (0.12);
\filldraw[fill=white] (1.5,0) circle (0.12);
}
\end{tikzpicture}
\;\;  \mapsto \; \; 
\kappa\; \; 
\begin{tikzpicture}[baseline={([yshift=-1mm]current bounding box.center)},scale=0.5]
{
\draw (0,0)--(2,0);
\filldraw[fill=white] (1,0) circle (0.12);
}
\end{tikzpicture}
\end{align*}
\\[-2mm]
\hline

For $n$ even only:
\begin{align*}
\begin{tikzpicture}[baseline={([yshift=-1mm]current bounding box.center)},scale=0.5]
{
\draw (0,0) circle (1);
\filldraw (-1,0) circle (0.12);
\filldraw[fill=white] (1,0) circle (0.12);
}
\end{tikzpicture}
\;\;  \mapsto \; \; 
\kappa
\hspace{30mm}
\begin{tikzpicture}[baseline={([yshift=-1mm]current bounding box.center)},scale=0.5]
{
\fill[fill=gray!40] (0,0) arc(-90:0:0.5)--(1.2,0.5) arc(0:-90:1.2 and 1.5) --cycle;
\fill[fill=gray!40] (3,0) arc(270:180:0.5)--(1.8,0.5) arc(180:270:1.2 and 1.5)--cycle;
\fill[gray!40] (0,-2) arc(90:-90:0.5)--cycle;
\fill[gray!40] (3,-2) arc(90:270:0.5)--cycle;
\fill[gray!40] (0,-5) arc (90:0:0.5)--(1.2,-5.5) arc(0:90:1.2 and 1.5)--cycle;
\fill[gray!40] (3,-5) arc (90:180:0.5)--(1.8,-5.5) arc(180:90:1.2 and 1.5)--cycle;
\draw (0,-1.5) arc(90:-90:1);
\draw (3,-1.5) arc(90:270:1);
\filldraw (30:1)+(0,-2.5) circle (0.12);
\filldraw[fill=white] (-30:1)+(0,-2.5) circle (0.12);
\filldraw (150:1)+(3,-2.5) circle (0.12);
\filldraw[fill=white] (210:1)+(3,-2.5) circle (0.12);
\draw[very thick] (0,0.5)--(0,-5.5);
\draw[very thick] (3,0.5)--(3,-5.5);
}
\end{tikzpicture}\; \;
\mapsto \;\; \kappa \;\; 
\begin{tikzpicture}[baseline={([yshift=-1mm]current bounding box.center)},scale=0.5]
{
\fill[fill=gray!40] (0,0) arc(-90:0:0.5)--(1.2,0.5) arc(0:-90:1.2 and 1.5) --cycle;
\fill[fill=gray!40] (3,0) arc(270:180:0.5)--(1.8,0.5) arc(180:270:1.2 and 1.5)--cycle;
\fill[gray!40] (0,-2) arc(90:-90:0.5)--cycle;
\fill[gray!40] (3,-2) arc(90:270:0.5)--cycle;
\fill[gray!40] (0,-5) arc (90:0:0.5)--(1.2,-5.5) arc(0:90:1.2 and 1.5)--cycle;
\fill[gray!40] (3,-5) arc (90:180:0.5)--(1.8,-5.5) arc(180:90:1.2 and 1.5)--cycle;
\draw (0,-1.5)--(3,-1.5);
\draw (0,-3.5)--(3,-3.5);
\filldraw (1.5,-1.5) circle (0.12);
\filldraw[fill=white] (1.5,-3.5) circle (0.12);
\draw[very thick] (0,0.5)--(0,-5.5);
\draw[very thick] (3,0.5)--(3,-5.5);
}
\end{tikzpicture}
\end{align*}
\\
\hline
\end{tabularx}
\end{table}

The isomorphism between the non-diagrammatic and diagrammatic presentations of $\symp_n$ is given by
\begin{align}
e_i &\mapsto \; \begin{tikzpicture}[baseline={([yshift=-1mm]current bounding box.center)},scale=0.5]
{
\draw [very thick](0,-5.5) -- (0,0.5);
\draw (0,0) -- (3,0);
\draw (1.5,-0.3) node[anchor=center]{$\vdots$};
\draw (0,-1) -- (3,-1);
\draw (0,-1) node[font=\scriptsize,anchor=east]{$i-1$};
\draw (0,-2) arc (90:-90:0.5);
\draw (3,-2) arc (90:270:0.5);
\draw (0,-2) node[font=\scriptsize,anchor=east]{$i$};
\draw (0,-3) node[font=\scriptsize,anchor=east]{$i+1$};
\draw (0,-4) -- (3,-4);
\draw (0,-4) node[font=\scriptsize,anchor=east]{$i+2$};
\draw (1.5,-4.3) node[anchor=center]{$\vdots$};
\draw (0,-5) -- (3,-5);
\draw (0,-5) node[font=\scriptsize,anchor=east]{$n$};
\draw [very thick](3,-5.5) -- (3,0.5);
\draw (0,0) node[font=\scriptsize,anchor=east]{$1$};
}
\end{tikzpicture}\; ,
&f_0&\mapsto\; \begin{tikzpicture}[baseline={([yshift=-1mm]current bounding box.center)},scale=0.5]
{
\draw [very thick] (0,-5.5)--(0,0.5);
\draw [very thick] (3,-5.5)--(3,0.5);
\draw (0,0)--(3,0);
\filldraw (1.5,0) circle (0.12);
\draw (0,-1)--(3,-1);
\draw (0,-5)--(3,-5);
\node at (1.5,-2.8) [anchor=center] {$\vdots$};
}
\end{tikzpicture}\; , 
&f_n &\mapsto \; \begin{tikzpicture}[baseline={([yshift=-1mm]current bounding box.center)},scale=0.5]
{
\draw [very thick] (0,-5.5)--(0,0.5);
\draw [very thick] (3,-5.5)--(3,0.5);
\draw (0,0)--(3,0) ;
\draw (0,-4)--(3,-4);
\draw (0,-5)--(3,-5);
\filldraw[fill=white] (1.5,-5) circle (0.12);
\node at (1.5,-1.8) [anchor=center] {$\vdots$};
}
\end{tikzpicture}\; .
\end{align}
A rigorous treatment of the isomorphism between the two presentations is given in \cite{SympBlobPres}.

We note that omitting the relations \eqref{eq:sbIJI} and \eqref{eq:sbJIJ} gives a presentation for the two-boundary TL algebra $\BBTL_n$. The basis diagrams of $\BBTL_n$ are usually drawn with strings connected to the boundaries, as seen in the introduction, instead of blobs. The diagrammatic presentations of the two algebras may be related by replacing each top or bottom boundary blob in an $\symp_n$-diagram by a pair of top or bottom boundary links; the $\symp_n$ relations \eqref{eq:sbIJI} and \eqref{eq:sbJIJ} then amount to removing pairs of adjacent top-to-bottom boundary arcs and replacing each pair by a factor of $\kappa$.

\begin{proposition}\label{prop:sblabel}
In the left-hand side of each symplectic blob relation, suppose we replace each occurrence of $f_0$ and $f_n$ with $\fup{a}{b}$ and $\fdo{a}{b}$, respectively, with different labels on every occurrence. Then we can replace each $f_0$ and $f_n$ in the right-hand side with $\fup{a}{b}$ and $\fdo{a}{b}$ respectively, with appropriately chosen labels, and replace each symplectic blob parameter with some product of $\A_n(X)$ parameters, such that the resulting relations hold in $\A_n(X)$. In this sense, the $\A_n(X)$ generators satisfy the symplectic blob relations, up to labels and parameters.
\end{proposition}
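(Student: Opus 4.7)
The strategy is to verify each of the symplectic blob relations \eqref{eq:sbeiej}--\eqref{eq:sbJIJ} in turn after the substitution $f_0 \mapsto \fup{a}{b}$ and $f_n \mapsto \fdo{a}{b}$ with distinct labels at every occurrence on the left-hand side. Relations \eqref{eq:sbeiej}--\eqref{eq:sbf0fn} translate essentially directly into labelled versions of the $\A_n(X)$ defining relations \eqref{eq:eiej}, \eqref{eq:TLreduce}, \eqref{eq:beta}, \eqref{eq:fupej}, \eqref{eq:e1fupe1}, \eqref{eq:fupfup}, \eqref{eq:fdoej}, \eqref{eq:en1fdoen1}, \eqref{eq:fdofdo} and \eqref{eq:fupfdo}; for example, \eqref{eq:sbe1f0e1} becomes $e_1\fup{a}{b}e_1 = \aup{a}{b}e_1$, which is \eqref{eq:e1fupe1} with $\alpha_1$ replaced by $\aup{a}{b}$, while \eqref{eq:sbf0f0} becomes $\fup{c}{a}\fup{b}{d} = \aup{a}{b}\fup{c}{d}$, matching \eqref{eq:fupfup}. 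In each such case, the labels on the right-hand side and the required parameter substitution can be read off directly from the relevant $\A_n(X)$ relation.

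The real content lies in the two braid-like relations \eqref{eq:sbIJI} and \eqref{eq:sbJIJ}, which must be handled by parity of $n$. For $n$ odd, the labelled $I$ and $J$ take the forms $\fdo{a}{b}\,O$ and $\fup{c}{d}\,E$ respectively. The plan for $IJI$ is to first apply \eqref{eq:fdoOfupE} to the leading subword $\fdo{a_1}{b_1}\,O\,\fup{c}{d}\,E$, producing a $\wdo{d}{a_1}\,E$ term together with a factor of $\glab{c}{b_1}$, and then to apply \eqref{eq:wdoEfdoO} to the resulting $\wdo{d}{a_1}\,E\,\fdo{a_2}{b_2}\,O$, which restores the $\fdo{}{}\,O = I$ form with labels $(a_1,b_2)$ and extracts a further factor $\glab{d}{a_2}$. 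This gives $\kappa \mapsto \glab{c}{b_1}\glab{d}{a_2}$. The relation $JIJ = \kappa J$ will be verified analogously using \eqref{eq:fupEfdoO} and \eqref{eq:wupOfupE}, and the case $n=1$ is the degeneration with $O = E = \id$, the same calculation going through verbatim using \eqref{eq:fdofup1} and \eqref{eq:wdofdo1} in place of their $n$-odd counterparts.

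For $n$ even, $I = \Theta$ has no $f$-generators and the labelled $J$ equals $\fup{c}{d}\fdo{a}{b}\,\Omega$. Here the key step is to use \eqref{eq:wupThetawup} to rewrite $J = \wup{c}{a}\,\Theta\,\wup{d}{b}$, reducing the computation to repeated application of \eqref{eq:ThetawupTheta}: two such applications give $IJI = \Theta\,\wup{c}{a}\,\Theta\,\wup{d}{b}\,\Theta = \glab{c}{a}\glab{d}{b}\,\Theta$, and for $JIJ$ the same trick applied to the two central subwords of the form $\Theta\,\wup{}{}\,\Theta$ in $\wup{c_1}{a_1}\,\Theta\,\wup{d_1}{b_1}\,\Theta\,\wup{c_2}{a_2}\,\Theta\,\wup{d_2}{b_2}$ extracts two $\glab{}{}$ factors, after which a final application of \eqref{eq:wupThetawup} reassembles the remainder into $\fup{c_1}{d_2}\fdo{a_1}{b_2}\,\Omega$, i.e.\ $J$ with labels $(c_1,d_2)$ and $(a_1,b_2)$. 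The main obstacle I anticipate is the bookkeeping of labels across multiple applications of relations with several free parameters; the individual algebraic steps are routine, but care is required to verify in each case that the right-hand side really does have the required form of a single $I$ or $J$ (with a consistent choice of labels on the remaining $f$-generators) times a product of $\A_n(X)$ parameters that stands in for the symplectic blob parameter.
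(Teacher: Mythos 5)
Your proposal is correct and follows essentially the same route as the paper: relations \eqref{eq:sbeiej}--\eqref{eq:sbf0fn} are matched directly to \eqref{eq:eiej}, \eqref{eq:TLreduce}, \eqref{eq:beta}, \eqref{eq:fupej}, \eqref{eq:e1fupe1}, \eqref{eq:fupfup}, \eqref{eq:fdoej}, \eqref{eq:en1fdoen1}, \eqref{eq:fdofdo}, \eqref{eq:fupfdo}, and the relations \eqref{eq:sbIJI}, \eqref{eq:sbJIJ} are verified for $n$ odd via \eqref{eq:fdoOfupE}, \eqref{eq:wdoEfdoO}, \eqref{eq:fupEfdoO}, \eqref{eq:wupOfupE} and for $n$ even via \eqref{eq:wupThetawup} and repeated use of \eqref{eq:ThetawupTheta}, exactly as in the paper. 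The label bookkeeping in the cases you work out explicitly agrees with the paper's computation.
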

\begin{proof}
The relations (\ref{eq:sbeiej}--\ref{eq:sbf0fn}) correspond directly to the relations \eqref{eq:eiej}, \eqref{eq:TLreduce}, \eqref{eq:beta}, \eqref{eq:fupej}, \eqref{eq:e1fupe1}, \eqref{eq:fupfup}, \eqref{eq:fdoej}, \eqref{eq:en1fdoen1}, \eqref{eq:fdofdo}, \eqref{eq:fupfdo}, respectively; namely,
\begin{align}
e_ie_j&=e_je_i, &&\abs{i-1}\geq 2, \\
e_ie_je_i &= e_i, &&\abs{i-j}=1, \\
e_j^2 &= \beta e_i, &&1\leq j \leq n-1, \\
\fup{a}{b}e_j&=e_j\fup{a}{b}, &&2 \leq j \leq n-1,  \\
e_1\fup{a}{b}e_1 &= \aup{a}{b}e_1, &&n\geq 2, \\
\fup{c}{a}\fup{b}{d} &= \aup{a}{b}\fup{c}{d}, \\
\fdo{a}{b}e_j &= e_j\fdo{a}{b}, &&1\leq j\leq n-2, \\
e_{n-1}\fdo{a}{b}e_{n-1} &= \ddo{a}{b} f_n, &&n\geq 2,\\
\fdo{c}{a}\fdo{b}{d} &= \ddo{a}{b} \fdo{c}{d},\\
\fup{a}{b}\fdo{c}{d} &= \fdo{c}{d}\fup{a}{b}, &&n\geq 2.
\end{align}
It remains to check \eqref{eq:sbIJI} and \eqref{eq:sbJIJ}.

For $n$ even, note that $I=\prod_{k=1}^{n/2}e_{2k-1}$ in $\symp_n$ corresponds to $\Theta=\prod_{k=1}^{n/2} e_{2k-1}$ in $\A_n(X)$, and recall from \eqref{eq:wupThetawup} that
\begin{align}
\wup{a}{b}\Theta \wup{c}{d} 
&= \fup{a}{c} \fdo{b}{d}\,  \Omega = \fup{a}{c}\fdo{b}{d} \prod_{k=1}^{\frac{n}{2}-1} e_{2k},
\end{align}
which corresponds to $J$. Hence the product corresponding to $IJI$ is
\begin{align}
\Theta \wup{a}{b}\Theta \wup{c}{d} \Theta 
&= \glab{a}{b}\glab{c}{d} \Theta &&\text{by \eqref{eq:ThetawupTheta}},
\end{align}
corresponding to $\kappa I$, up to parameters. Similarly, the product corresponding to $JIJ$ is
\begin{align}
\wup{a}{b} \Theta \wup{c}{d} \Theta \wup{e}{f}\Theta \wup{g}{h} = \glab{c}{d}\glab{e}{f}\wup{a}{b}\Theta \wup{g}{h},
\end{align}
corresponding to $\kappa J$, up to labels and parameters.

For $n$ odd, $I=f_n\prod_{k=1}^\frac{n-1}{2}e_{2k-1}$ corresponds to $\fdo{a}{b}O$, and $J=f_0\prod_{k=1}^\frac{n-1}{2} e_{2k}$ corresponds to $\fup{a}{b}E$, for any $\mathrm{a},\mathrm{b} \in X$. Then the product corresponding to $IJI$ is
\begin{align}
\fdo{a}{b}O \fup{c}{d}E \fdo{e}{f}O
&= \glab{c}{b}\,\wdo{d}{a}E \fdo{e}{f}O &&\text{by \eqref{eq:fdoOfupE}} \\
&= \glab{c}{b} \glab{d}{e} \fdo{a}{f} O&&\text{by \eqref{eq:wdoEfdoO}},
\end{align}
corresponding to $\kappa I$ up to labels and parameters. The product corresponding to $JIJ$ is
\begin{align}
\fup{a}{b}E \fdo{c}{d} O \fup{e}{f} E
&= \glab{b}{c} \wup{a}{d}\, O \fup{e}{f} E &&\text{by \eqref{eq:fupEfdoO}} \\
&= \glab{b}{c} \glab{e}{d} \fup{a}{f} E &&\text{by \eqref{eq:wupOfupE}},
\end{align}
corresponding to $\kappa J$ up to labels and parameters.
\end{proof}

A \textit{reduced monomial} in the symplectic blob algebra is a word that is not equal to a scalar multiple of a strictly shorter word in $\symp_n$, using the $\symp_n$ relations. From Proposition \ref{prop:sblabel}, it follows that, given any even-reduced word in $\A_n(X)$, if we replace each generator of the form $\fup{a}{b}$ and $\fdo{a}{b}$ with $f_0$ and $f_n$, respectively, the result is a reduced monomial in $\symp_n$.

It is known that reduced monomials in the algebraic presentation of $\symp_n$ map to basis diagrams in the diagrammatic presentation of $\symp_n$, with coefficient 1. That is, concatenating the diagrammatic generators corresponding to the generators in a reduced monomial does not produce any of the diagrammatic features that can be simplified using the rules from Table \ref{tab:sympblobparams}. It follows that the numbers of $f_0$ and $f_n$ generators in a reduced monomial are the same as the numbers of top and bottom blobs in the corresponding diagram, respectively.

Next, we give a technical result about reduced monomials and their corresponding $\symp_n$-diagrams. From here on, we will identify the algebraic generators of $\symp_n$ with their corresponding diagrammatic generators, as per the isomorphism from \cite{SympBlobPres}.

\begin{lemma}\label{lem:sbtopbot}
Let $D$ be a reduced monomial in $\symp_n$ whose corresponding $\symp_n$-diagram contains a string decorated with both a top blob and a bottom blob. Then $D$ is equal to a word $T$ in $\symp_n$ that contains $IJ$ or $JI$ as a subword, and contains the same number of copies of each of $f_0$ and $f_n$ as $D$. 
\end{lemma}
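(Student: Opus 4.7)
The plan is to exploit the isomorphism between the algebraic and diagrammatic presentations of $\symp_n$ established in \cite{SympBlobPres}, together with a careful diagrammatic factorisation of the $\symp_n$-diagram associated with $D$, in order to rewrite $D$ as an algebra word that contains $IJ$ or $JI$ as a consecutive subword.

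First, since $D$ is a reduced monomial, under the isomorphism of \cite{SympBlobPres} it maps to a basis $\symp_n$-diagram $\undertilde{D}$ with coefficient $1$, and the number of letters $f_0$ (resp.\ $f_n$) appearing in $D$ equals the number of top (resp.\ bottom) blobs in $\undertilde{D}$. Let $\gamma$ be a string in $\undertilde{D}$ decorated with both a top blob and a bottom blob. A short inspection of the diagrams of $IJ$ and $JI$ shows that each of these elements, when multiplied out, produces a basis $\symp_n$-diagram whose defining feature is exactly one string decorated with both blobs, with the additional strings forming a fixed pattern of nested caps on one side of the diagram. The shape of $\gamma$ in $\undertilde{D}$ then dictates whether we target $IJ$ or $JI$.

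Next, construct $\symp_n$-diagrams $\undertilde{D}_1$ and $\undertilde{D}_2$ such that the concatenation $\undertilde{D}_1 \cdot I \cdot J \cdot \undertilde{D}_2$ (or the $JI$ analogue) reproduces $\undertilde{D}$ exactly, triggering none of the simplification rules in Table \ref{tab:sympblobparams}. Intuitively, $\undertilde{D}_1$ encodes the part of $\undertilde{D}$ lying to one side of the path of $\gamma$, together with enough propagating throughlines that concatenation with the middle $IJ$-block reconstructs $\gamma$ and its zigzag accurately; $\undertilde{D}_2$ encodes the other side. Choose reduced monomial expressions $T_1$ and $T_2$ for $\undertilde{D}_1$ and $\undertilde{D}_2$, and set $T \coloneqq T_1 \cdot I \cdot J \cdot T_2$. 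By the isomorphism, $T = D$ in $\symp_n$, as both words map to $\undertilde{D}$ with coefficient $1$; by construction, $T$ contains $IJ$ as a consecutive subword; and because no simplifications occur during the concatenation, the counts of $f_0$ and $f_n$ in $T$ agree with those in $\undertilde{D}$, and hence with those in $D$.

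The main obstacle is the explicit construction of $\undertilde{D}_1$ and $\undertilde{D}_2$: one must verify that the concatenation avoids producing any loops, any doubled blobs on a single string (which would collapse via $f_0^2 = \alpha_2 f_0$ or $f_n^2 = \delta_2 f_n$), and any of the doubly-decorated configurations on the left-hand side of the $IJI = \kappa I$ or $JIJ = \kappa J$ simplification rules. This requires careful separate treatment of the $n$ even and $n$ odd cases, because $I$ and $J$ take different forms in each parity, and a case analysis on the position of $\gamma$ (a throughline versus a decorated link on one side) in $\undertilde{D}$. The payoff is a purely diagrammatic criterion ensuring that the resulting concatenation is already in reduced form on each side of the $IJ$ block, so the final algebraic identity $D = T$ follows immediately from the isomorphism.
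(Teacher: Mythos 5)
Your proposal follows essentially the same route as the paper: factor the diagram of $D$ as a concatenation $\undertilde{D}_1 \cdot J \cdot I \cdot \undertilde{D}_2$ (or its mirror image, yielding $IJ$) in which no simplification rules fire, replace the flanking diagrams by reduced monomials, and read off the blob counts. The one caveat is that the step you flag as "the main obstacle" --- actually exhibiting $\undertilde{D}_1$ and $\undertilde{D}_2$ and checking the concatenation is reduced --- is precisely the content of the paper's proof, and your sketch stops short of it. The observations that make it go through are worth recording: for $n$ odd, parity and planarity force the doubly-decorated string to be the \emph{unique} throughline of the diagram, so the diagram splits cleanly into a left half feeding the top blob of $J$ and a right half receiving the bottom blob of $I$; for $n$ even, a doubly-decorated string forces the absence of throughlines, so the decorated string has both ends on one side, and one routes the middle $JI$ block through an exposed link (or, failing that, the lowest exposed top-blobbed link or highest exposed bottom-blobbed link) on the opposite side. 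Without some such structural input it is not automatic that a factorisation through $JI$ exists at all, so this should not be treated as routine bookkeeping.
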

\begin{proof}
We first show that every diagram that contains a string decorated with both a top blob and a bottom blob can be expressed as a product containing $IJ$ or $JI$, where the numbers of top and bottom blobs in the product are the same as in the original diagram. We can then find a reduced monomial for each of the diagrams in the product, and take $T$ to be the product of these reduced monomials, giving the desired result.

For $n$ odd, it follows from parity and planarity considerations that the doubly-decorated string must be a throughline, and that it must be the only throughline in the diagram. We have
\begin{align}
\begin{tikzpicture}[baseline={([yshift=-3mm]current bounding box.center)},xscale=0.5,yscale=0.5]
{
\filldraw[gray!40] (0,0) arc(-90:0:0.5) --(1.3,0.5) to[out=-90,in=0] (0,-3.5) --cycle;
\filldraw[gray!40] (3,-1.5) ..controls (2.2,-1.5) and (2,-0.5).. (2,0.5) -- (2.5,0.5) arc (180:270:0.5) --cycle;
\draw (0,-4) to[out=0,in=180] (3,-2);
\filldraw[gray!40] (0,-4.5) to[out=0,in=90] (1,-6.5) -- (0.5,-6.5) arc(0:90:0.5) -- cycle;
\filldraw[gray!40] (3,-2.5) to[out=180,in=90] (1.7,-6.5) -- (2.5,-6.5) arc (180:90:0.5) -- cycle;
\draw [very thick](0,-6.5) -- (0,0.5);
\draw [very thick](3,-6.5)--(3,0.5);
\filldraw (1,-3.55) circle (0.12);
\filldraw[fill=white] (2,-2.45) circle (0.12);
}
\end{tikzpicture}
\; = \;
\begin{tikzpicture}[baseline={([yshift=-3mm]current bounding box.center)},xscale=0.5,yscale=0.5]
{
\filldraw[gray!40] (0,0) arc(-90:0:0.5) --(1.3,0.5) to[out=-90,in=0] (0,-3.5) --cycle;
\draw (0,-4) to[out=0,in=180] (3,-6);
\filldraw[gray!40] (0,-4.5) to[out=0,in=90] (1,-6.5) -- (0.5,-6.5) arc(0:90:0.5) -- cycle;
\draw (3,-5) arc(90:270:0.3 and 0.25);
\draw (3,-4) arc(90:270:0.3 and 0.25);
\draw (3,0) arc(90:270:0.3 and 0.25);
\draw (3,-1) arc(90:270:0.3 and 0.25);
\node at (2.7,-2.55) {$\vdots$};
\begin{scope}[shift={(3,0)}]
\draw (0,0)--(3,0);
\draw (0,-0.5) arc(90:-90:0.3 and 0.25);
\draw (3,-0.5) arc(90:270:0.3 and 0.25);
\draw (0,-1.5) arc(90:-90:0.3 and 0.25);
\draw (3,-1.5) arc(90:270:0.3 and 0.25);
\draw (0,-4.5) arc(90:-90:0.3 and 0.25);
\draw (3,-4.5) arc(90:270:0.3 and 0.25);
\draw (0,-5.5) arc(90:-90:0.3 and 0.25);
\draw (3,-5.5) arc(90:270:0.3 and 0.25);
\node at (0.3,-3.05) {$\vdots$};
\node at (2.7,-3.05) {$\vdots$};
\filldraw (1.5,0) circle (0.12);
\end{scope}
\begin{scope}[shift={(6,0)}]
\draw (3,-5) arc(90:270:0.3 and 0.25);
\draw (3,-4) arc(90:270:0.3 and 0.25);
\draw (3,0) arc(90:270:0.3 and 0.25);
\draw (3,-1) arc(90:270:0.3 and 0.25);
\draw (0,-5) arc(90:-90:0.3 and 0.25);
\draw (0,-4) arc(90:-90:0.3 and 0.25);
\draw (0,0) arc(90:-90:0.3 and 0.25);
\draw (0,-1) arc(90:-90:0.3 and 0.25);
\draw (0,-6)--(3,-6);
\node at (0.3,-2.55) {$\vdots$};
\node at (2.7,-2.55) {$\vdots$};
\filldraw[fill=white] (1.5,-6) circle (0.12);
\end{scope}
\begin{scope}[shift={(9,0)}]
\filldraw[gray!40] (3,-1.5) ..controls (2.2,-1.5) and (2,-0.5).. (2,0.5) -- (2.5,0.5) arc (180:270:0.5) --cycle;
\filldraw[gray!40] (3,-2.5) to[out=180,in=90] (1.7,-6.5) -- (2.5,-6.5) arc (180:90:0.5) -- cycle;
\draw (0,0) to[out=0,in=180] (3,-2);
\draw (0,-0.5) arc(90:-90:0.3 and 0.25);
\draw (0,-1.5) arc(90:-90:0.3 and 0.25);
\draw (0,-4.5) arc(90:-90:0.3 and 0.25);
\draw (0,-5.5) arc(90:-90:0.3 and 0.25);
\node at (0.3,-3.05) {$\vdots$};
\end{scope}
\draw [very thick](0,-6.5) -- (0,0.5);
\draw [very thick](3,-6.5)--(3,0.5);
\draw [very thick](6,-6.6)--(6,0.5);
\draw [very thick](9,-6.6)--(9,0.5);
\draw [very thick](12,-6.6)--(12,0.5);
}
\end{tikzpicture}\; ,
\end{align}
where the second and third diagrams are $J$ and $I$, respectively. The case where the top blob is on the right of the bottom blob is handled similarly; simply reflect the above across a vertical line.

For $n$ even, having a doubly-decorated string implies that there are no throughlines, and thus the doubly-decorated string has both ends attached to the same side of the diagram. The opposite side of the diagram must have some (possibly decorated) link \textit{exposed} to the side with the doubly-decorated string. That is, the link in question can be deformed to touch the side with the doubly-decorated string, without crossing any other strings. In the case with the doubly-decorated string on the left, and an undecorated exposed link on the right, we have 
\begin{align}
\begin{tikzpicture}[baseline={([yshift=-1mm]current bounding box.center)},scale=0.5]
{
\fill[fill=gray!40] (0,0) arc(-90:0:0.5)--(1.2,0.5) arc(0:-90:1.2 and 1.5) --cycle;
\fill[fill=gray!40] (3,0) arc(270:180:0.5)--(1.8,0.5) to[out=270,in=180] (3,-2)--cycle;
\fill[gray!40] (0,-2) arc(90:-90:0.5)--cycle;
\fill[gray!40] (3,-3) arc(90:270:0.5)--cycle;
\fill[gray!40] (0,-6) arc (90:0:0.5)--(1.2,-6.5) to[out=90,in=0] (0,-4)--cycle;
\fill[gray!40] (3,-6) arc (90:180:0.5)--(1.8,-6.5) arc(180:90:1.2 and 1.5)--cycle;
\draw (0,-1.5) arc(90:-90:1);
\draw (3,-2.5) arc(90:270:1);
\filldraw (0,-2.5)+(30:1) circle (0.12);
\filldraw[fill=white] (0,-2.5)+(-30:1) circle (0.12);
\draw[very thick] (0,0.5)--(0,-6.5);
\draw[very thick] (3,0.5)--(3,-6.5);
}
\end{tikzpicture}
\; =\;
\begin{tikzpicture}[baseline={([yshift=-1mm]current bounding box.center)},scale=0.5]
{
\fill[fill=gray!40] (0,0) arc(-90:0:0.5)--(1.2,0.5) arc(0:-90:1.2 and 1.5) --cycle;
\fill[gray!40] (0,-2) arc(90:-90:0.5)--cycle;
\fill[gray!40] (0,-6) arc (90:0:0.5)--(1.2,-6.5) to[out=90,in=0] (0,-4)--cycle;
\draw (0,-1.5) to[out=0,in=180] (3,0);
\draw (0,-3.5) to[out=0,in=180] (3,-0.5);
\draw (3,-1) arc(90:270:0.3 and 0.25);
\draw (3,-2) arc(90:270:0.3 and 0.25);
\draw (3,-4.5) arc(90:270:0.3 and 0.25);
\draw (3,-5.5) arc(90:270:0.3 and 0.25);
\node at (2.7,-3.3) {$\vdots$};
\begin{scope}[shift={(3,0)}]
\draw (0,0)--(3,0);
\draw (0,-6)--(3,-6);
\draw (0,-0.5) arc(90:-90:0.3 and 0.25);
\draw (0,-1.5) arc(90:-90:0.3 and 0.25);
\draw (0,-4) arc(90:-90:0.3 and 0.25);
\draw (0,-5) arc(90:-90:0.3 and 0.25);
\draw (3,-0.5) arc(90:270:0.3 and 0.25);
\draw (3,-1.5) arc(90:270:0.3 and 0.25);
\draw (3,-4) arc(90:270:0.3 and 0.25);
\draw (3,-5) arc(90:270:0.3 and 0.25);
\node at (0.3,-2.8) {$\vdots$};
\node at (2.7,-2.8) {$\vdots$};
\filldraw (1.5,0) circle (0.12);
\filldraw[fill=white] (1.5,-6) circle (0.12);
\end{scope}
\begin{scope}[shift={(6,0)}]
\draw (0,0) arc(90:-90:0.3 and 0.25);
\draw (0,-1) arc(90:-90:0.3 and 0.25);
\draw (0,-2) arc(90:-90:0.3 and 0.25);
\draw (0,-4.5) arc(90:-90:0.3 and 0.25);
\draw (0,-5.5) arc(90:-90:0.3 and 0.25);
\draw (3,0) arc(90:270:0.3 and 0.25);
\draw (3,-1) arc(90:270:0.3 and 0.25);
\draw (3,-2) arc(90:270:0.3 and 0.25);
\draw (3,-4.5) arc(90:270:0.3 and 0.25);
\draw (3,-5.5) arc(90:270:0.3 and 0.25);
\node at (0.3,-3.3) {$\vdots$};
\node at (2.7,-3.3) {$\vdots$};
\end{scope}
\begin{scope}[shift={(9,0)}]
\draw (0,0) to[out=0,in=180] (3,-2.5);
\draw (0,-6) to[out=0,in=180] (3,-4.5);
\draw (0,-0.5) arc(90:-90:0.3 and 0.25);
\draw (0,-1.5) arc(90:-90:0.3 and 0.25);
\draw (0,-4) arc(90:-90:0.3 and 0.25);
\draw (0,-5) arc(90:-90:0.3 and 0.25);
\node at (0.3,-2.8) {$\vdots$};
\fill[fill=gray!40] (3,0) arc(270:180:0.5)--(1.8,0.5) to[out=270,in=180] (3,-2)--cycle;
\fill[gray!40] (3,-3) arc(90:270:0.5)--cycle;
\fill[gray!40] (3,-6) arc (90:180:0.5)--(1.8,-6.5) arc(180:90:1.2 and 1.5)--cycle;
\end{scope}
\draw[very thick] (0,0.5)--(0,-6.5);
\draw[very thick] (3,0.5)--(3,-6.5);
\draw[very thick] (6,0.5)--(6,-6.5);
\draw[very thick] (9,0.5)--(9,-6.5);
\draw[very thick] (12,0.5)--(12,-6.5);
}
\end{tikzpicture}\; ,
\end{align}
where the second and third diagrams in the product are $J$ and $I$, respectively. If there is no undecorated exposed link on the right, one can similarly deform the lowest exposed link with a top blob, or the highest exposed link with a bottom blob, positioning the blob on the top or bottom throughline in the fourth diagram, respectively. The cases with the doubly-decorated string on the right are handled similarly by reflecting the above about a vertical line.
\end{proof}

The preceding results are used in Lemma \ref{lem:evendiag} to show that even-reduced words map to even diagrams, as indicated in Section \ref{s:ParityReduced}. Some further claims about parity from that section are deduced in Corollaries \ref{cor:evenwordtodiag} and \ref{cor:oddgensodd}.

\begin{lemma}\label{lem:evendiag}
Let $T$ be an even-reduced word in $\A_n(X)$. Then $\phi(T)$ is an even diagram in $\lab_n(X)$, with coefficient $1$, and the number of boundary links in $\phi(T)$ is double the number of label generators in $T$.
\end{lemma}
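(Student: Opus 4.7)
The plan is to strip labels from $T$ and reduce the claim to a known result about the symplectic blob algebra $\symp_n$. First I would define $\hat{T}$ to be the word in $\symp_n$ obtained from $T$ by replacing every $\fup{a}{b}$ with $f_0$ and every $\fdo{a}{b}$ with $f_n$. By Proposition \ref{prop:sblabel}, every symplectic blob relation lifts to a relation in $\A_n(X)$ among even generators (up to labels and parameters). Hence if $\hat{T}$ were not a reduced monomial in $\symp_n$, then applying a shortening $\symp_n$-relation and lifting back to $\A_n(X)$ would express $T$ as a scalar multiple of a strictly shorter word in the even generators, contradicting even-reducedness. So $\hat{T}$ must be a reduced monomial, and by the isomorphism established in \cite{SympBlobPres}, concatenating the $\symp_n$-diagrammatic generators corresponding to $\hat{T}$ yields a basis $\symp_n$-diagram $\hat{D}$ with coefficient $1$, with none of the simplifications from Table \ref{tab:sympblobparams} applied.

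Next I would translate from $\symp_n$ to $\lab_n(X)$. The $\lab_n(X)$-diagrammatic generators in the image of $\phi$ have the same underlying string structure as their $\symp_n$ counterparts, with each blob replaced by a pair of adjacent boundary links and with labels in place of blob colours. So boundary arcs in the $\lab_n(X)$ concatenation of $\phi(T)$ correspond to specific features of the $\symp_n$ concatenation of $\hat{T}$: an arc joining two adjacent boundary links on the top (respectively bottom) boundary corresponds to a strand carrying two adjacent top (respectively bottom) blobs, and a top-to-bottom boundary arc corresponds to a strand decorated with both a top and a bottom blob. The former would be simplified via $f_0^2 = \alpha_2 f_0$, $e_1 f_0 e_1 = \alpha_1 e_1$, or their $f_n$ analogs, contradicting $\hat{D}$'s basis-diagram status; hence no same-boundary arc can form.

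For top-to-bottom arcs, I would invoke Lemma \ref{lem:sbtopbot}: if $\hat{D}$ contains a doubly-decorated strand, then $\hat{T}$ equals a word in $\symp_n$ containing $IJ$ or $JI$ as a subword. Lifting this equality to $\A_n(X)$ via Proposition \ref{prop:sblabel} gives $T$ equal to a word containing, for some labels, the labeled analog of $IJ$ or $JI$ as a subword. The key check will be that these labeled analogs are not label-reduced: for $n$ odd, this follows directly from relations \eqref{eq:fupEfdoO} and \eqref{eq:fdoOfupE}, which reduce two label generators to one; for $n$ even, combining \eqref{eq:wupThetawup}, \eqref{eq:wupTheta} and \eqref{eq:ThetawupTheta} should express $\fup{p}{q}\fdo{r}{s}\Omega\Theta$ as $\glab{q}{s}\wdo{p}{r}\Theta$ and $\Theta\fup{p}{q}\fdo{r}{s}\Omega$ as $\glab{p}{r}\Theta\wup{q}{s}$, likewise reducing from two to one label generator. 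Since any word containing a non-label-reduced subword is itself non-label-reduced, $T$ would fail to be label-reduced, contradicting the fact that even-reduced words are label-reduced; hence no top-to-bottom arc can form either.

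Putting these pieces together, no boundary arcs or loops arise in the $\lab_n(X)$ concatenation, so $\phi(T)$ equals the resulting $\lab_n(X)$-diagram with coefficient $1$, the number of boundary links is exactly $2k$ where $k$ is the number of label generators in $T$, and $\phi(T)$ is even because each constituent diagrammatic generator contributes an even number of boundary links on each boundary. The hard part will be verifying the non-label-reducedness of the labeled analogs of $IJ$ and $JI$ for $n$ even, which requires carefully chaining several of the odd-generator relations.
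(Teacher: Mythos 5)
Your proposal is correct and follows essentially the same route as the paper: reduce to the symplectic blob algebra via Proposition \ref{prop:sblabel}, use Lemma \ref{lem:sbtopbot} to handle potential top-to-bottom boundary arcs by showing the labelled analogues of $IJ$ and $JI$ are not label-reduced (via \eqref{eq:fupEfdoO}, \eqref{eq:fdoOfupE} for $n$ odd and \eqref{eq:wupThetawup}, \eqref{eq:ThetawupTheta} for $n$ even), and rule out same-boundary arcs and loops using even-reducedness. The only difference is cosmetic ordering — you establish that the stripped word is a reduced monomial first, whereas the paper interleaves this with the arc analysis — and your labelled computations for $n$ even agree with the paper's up to the identification $\wup{a}{c}\Theta = \wdo{a}{c}\Theta$ from \eqref{eq:wupTheta}.
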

\begin{proof}
Let $T$ be the word $s_1s_2 \dots s_k$, where each $s_i$ is an even generator. It is clear from the definition of $\phi$ that $\phi(T)$ is a scalar multiple of a basis diagram of $\lab_n(X)$. 

Consider the symplectic blob generators $\sigma(s_i)$ corresponding to each generator $s_i$, where
\begin{align}
\sigma(e_i) &= e_i = \; \begin{tikzpicture}[baseline={([yshift=-1mm]current bounding box.center)},scale=0.5]
{
\draw [very thick](0,-5.5) -- (0,0.5);
\draw (0,0) -- (3,0);
\draw (1.5,-0.3) node[anchor=center]{$\vdots$};
\draw (0,-1) -- (3,-1);
\draw (0,-1) node[font=\scriptsize,anchor=east]{$i-1$};
\draw (0,-2) arc (90:-90:0.5);
\draw (3,-2) arc (90:270:0.5);
\draw (0,-2) node[font=\scriptsize,anchor=east]{$i$};
\draw (0,-3) node[font=\scriptsize,anchor=east]{$i+1$};
\draw (0,-4) -- (3,-4);
\draw (0,-4) node[font=\scriptsize,anchor=east]{$i+2$};
\draw (1.5,-4.3) node[anchor=center]{$\vdots$};
\draw (0,-5) -- (3,-5);
\draw (0,-5) node[font=\scriptsize,anchor=east]{$n$};
\draw [very thick](3,-5.5) -- (3,0.5);
\draw (0,0) node[font=\scriptsize,anchor=east]{$1$};
}
\end{tikzpicture}\; , &
\sigma(\fup{a}{b}) &= f_0 = \; \begin{tikzpicture}[baseline={([yshift=-1mm]current bounding box.center)},scale=0.5]
{
\draw [very thick] (0,-5.5)--(0,0.5);
\draw [very thick] (3,-5.5)--(3,0.5);
\draw (0,0)--(3,0);
\filldraw (1.5,0) circle (0.12);
\draw (0,-1)--(3,-1);
\draw (0,-5)--(3,-5);
\node at (1.5,-2.8) [anchor=center] {$\vdots$};
}
\end{tikzpicture}\; ,  &
\sigma\left(\fdo{a}{b}\right) &= f_n = \; \begin{tikzpicture}[baseline={([yshift=-1mm]current bounding box.center)},scale=0.5]
{
\draw [very thick] (0,-5.5)--(0,0.5);
\draw [very thick] (3,-5.5)--(3,0.5);
\draw (0,0)--(3,0) ;
\draw (0,-4)--(3,-4);
\draw (0,-5)--(3,-5);
\filldraw[fill=white] (1.5,-5) circle (0.12);
\node at (1.5,-1.8) [anchor=center] {$\vdots$};
}
\end{tikzpicture}
\end{align}
for each $\mathrm{a},\mathrm{b}\in X$. We can identify these algebraic generators with their diagrammatic counterparts because the algebraic and diagrammatic presentations of $\symp_n$ are known to be isomorphic. Comparing these to the corresponding images under $\phi$, namely
\begin{align}
\phi(e_i) &= \; \begin{tikzpicture}[baseline={([yshift=-1mm]current bounding box.center)},scale=0.5]
{
\draw [very thick](0,-5.5) -- (0,0.5);
\draw (0,0) -- (3,0);
\draw (1.5,-0.3) node[anchor=center]{$\vdots$};
\draw (0,-1) -- (3,-1);
\draw (0,-1) node[font=\scriptsize,anchor=east]{$i-1$};
\draw (0,-2) arc (90:-90:0.5);
\draw (3,-2) arc (90:270:0.5);
\draw (0,-2) node[font=\scriptsize,anchor=east]{$i$};
\draw (0,-3) node[font=\scriptsize,anchor=east]{$i+1$};
\draw (0,-4) -- (3,-4);
\draw (0,-4) node[font=\scriptsize,anchor=east]{$i+2$};
\draw (1.5,-4.3) node[anchor=center]{$\vdots$};
\draw (0,-5) -- (3,-5);
\draw (0,-5) node[font=\scriptsize,anchor=east]{$n$};
\draw [very thick](3,-5.5) -- (3,0.5);
\draw (0,0) node[font=\scriptsize,anchor=east]{$1$};
\node at (2.5,0.4) [anchor=south] {\scriptsize \vphantom{b}};
\node at (2.5,-6.3) [anchor=south] {\scriptsize \vphantom{b}};
}
\end{tikzpicture}\; , &
\phi(\fup{a}{b}) &=  \; \begin{tikzpicture}[baseline={([yshift=-1mm]current bounding box.center)},scale=0.5]
{
\draw [very thick] (0,-5.5)--(0,0.5);
\draw [very thick] (3,-5.5)--(3,0.5);
\draw (0,0) arc(-90:0:0.5);
\draw (3,0) arc(270:180:0.5);
\node at (0.5,0.4) [anchor=south] {\scriptsize a};
\node at (2.5,0.4) [anchor=south] {\scriptsize b};
\node at (2.5,-6.3) [anchor=south] {\scriptsize \vphantom{b}};
\draw (0,-1)--(3,-1);
\draw (0,-5)--(3,-5);
\node at (1.5,-2.8) [anchor=center] {$\vdots$};
}
\end{tikzpicture}\; ,  &
\phi\left(\fdo{a}{b}\right) &=  \; \begin{tikzpicture}[baseline={([yshift=-1mm]current bounding box.center)},scale=0.5]
{
\draw [very thick] (0,-5.5)--(0,0.5);
\draw [very thick] (3,-5.5)--(3,0.5);
\draw (0,0)--(3,0) ;
\draw (0,-4)--(3,-4);
\draw (0,-5) arc(90:0:0.5);
\draw (3,-5) arc(90:180:0.5);
\node at (0.5,-6.3) [anchor=south] {\scriptsize a};
\node at (2.5,-6.3) [anchor=south] {\scriptsize b};
\node at (2.5,0.4) [anchor=south] {\scriptsize \vphantom{b}};
\node at (1.5,-1.8) [anchor=center] {$\vdots$};
}
\end{tikzpicture}\; ,
\end{align}
we note that the only difference between the $\phi(s_i)$ and $\sigma(s_i)$ diagrams is that top (bottom) blobs from the $\sigma(s_i)$ diagrams are replaced by pairs of labelled top (bottom) boundary links in $\phi(s_i)$.

We now want to show that the concatenation of the $\phi(s_i)$ diagrams does not produce any top-to-bottom boundary arcs. Indeed, suppose for sake of contradiction that this does produce a top-to-bottom boundary arc in some position. Then the concatenation of the $\sigma(s_i)$ diagrams must contain a doubly-decorated string in the same position, since a blob on a string in a $\sigma(s_i)$ diagram corresponds to a pair of boundary links in the same position in a $\phi(s_i)$ diagram. By Lemma \ref{lem:sbtopbot}, this means the word $S\coloneqq\sigma(s_1)\sigma(s_2)\dots \sigma(s_k)$ is equal to another word $S'$ in $\symp_n$ that contains $IJ$ or $JI$ as a subword, and has the same number of copies of each of $f_0$ and $f_n$ as $S$. This means there is a finite sequence of symplectic blob relations that can be applied to $S$ to get $S'$. By Proposition \ref{prop:sblabel}, each of these relations has an equivalent in $\A_n(X)$ that can be applied to $T$ to get a scalar multiple of a word $T'$ in $\A_n(X)$, where $T'$ contains the same numbers of generators of the forms $\fup{a}{b}$ and $\fdo{a}{b}$, $\mathrm{a}, \mathrm{b} \in X$, as $T$, and has the equivalent of $IJ$ or $JI$ as a subword. That is, for $n$ odd, $T'$ contains one of
\begin{align}
\fdo{a}{b}\, O\, \fup{c}{d}\, E, && \fup{a}{b}\, E\, \fdo{c}{d}\, O
\end{align}
as a subword, for some $\mathrm{a}, \mathrm{b}, \mathrm{c}, \mathrm{d} \in X$; for $n$ even, $T'$ contains one of
\begin{align}
\Theta \fup{a}{b} \fdo{c}{d} \Omega , &&
\fup{a}{b} \fdo{c}{d} \Omega \, \Theta
\end{align}
as a subword, for some $\mathrm{a}, \mathrm{b}, \mathrm{c}, \mathrm{d} \in X$. Here we are using the words corresponding to $I$ and $J$, as discussed in Proposition \ref{prop:sblabel}. We then have
\begin{align}
\fdo{a}{b}\, O\, \fup{c}{d}\, E &= \glab{c}{b}\,  \wdo{d}{a} \, E &&\text{by \eqref{eq:fdoOfupE}}, \\
\fup{a}{b}\, E\, \fdo{c}{d}\, O &= \glab{b}{c}\, \wup{a}{d}\, O &&\text{by \eqref{eq:fupEfdoO}},
\end{align}
for $n$ odd, and 
\begin{align}
\Theta \fup{a}{b} \fdo{c}{d} \, \Omega &= \Theta\,  \wup{a}{c}\,  \Theta\,  \wup{b}{d} &&\text{by \eqref{eq:wupThetawup}} \\
&= \glab{a}{c} \, \Theta \, \wup{b}{d} &&\text{by \eqref{eq:ThetawupTheta}}, \\
\fup{a}{b} \fdo{c}{d} \, \Omega\,  \Theta &=
\wup{a}{c}\,  \Theta\,  \wup{b}{d} \, \Theta &&\text{by \eqref{eq:wupThetawup}} \\
&= \glab{b}{d}\, \wup{a}{c}\, \Theta &&\text{by \eqref{eq:ThetawupTheta}},
\end{align}
for $n$ even. Each of these contains only one label generator, when the original subword contained two. This contradicts our assumption that $T$ was label-reduced, since $T$ is a scalar multiple of $T'$, $T$ and $T'$ have the same number of label generators, and $T'$ is equal to a word containing strictly fewer label generators. It follows that the concatenation of the diagrams $\phi(s_i)$ cannot contain any top-to-bottom boundary arcs. 

Since each of the diagrams $\phi(s_i)$ is even, it follows that the product $\phi(s_1) \phi(s_2) \dots \phi(s_k)$ is a scalar multiple of an even diagram. This is because the only way for this product to be a scalar multiple of an odd diagram would be if the concatenation contained an odd (and thus nonzero) number of top-to-bottom boundary arcs.

Moreover, the concatenation of $\phi(s_i)$ diagrams cannot contain any other boundary arcs or loops. Indeed, each top (bottom) boundary arc in the concatenation of $\phi(s_i)$ diagrams corresponds to a pair of top (bottom) blobs on a string, or a loop decorated with a top (bottom) blob, in the concatenation of $\sigma(s_i)$ diagrams. Each loop similarly corresponds to an undecorated loop. These diagrammatic features can be simplified in $\symp_n$, meaning $S$ is not a reduced monomial, so there exists a sequence of $\symp_n$ relations that can be applied to the word $S$ to get a scalar multiple of a shorter word $S'$. Applying the equivalent $\A_n(X)$ relations to $T$ gives a scalar multiple of a strictly shorter word in the even generators, contradicting our assumption that $T$ is even-reduced.

We have now shown that the concatenation of $\phi(s_i)$ diagrams cannot contain any boundary arcs or loops. It follows that the product $\phi(s_1)\phi(s_2) \dots \phi(s_k)$ is this concatenated diagram, with no factors of any of the parameters, as desired. This also means that each boundary link from each $\phi(s_i)$ remains in the diagram $\phi(T)$; there are two of these for each $\phi(\fup{a}{b})$ or $\fdo{a}{b}$, and none for each $\phi(e_j)$. Hence the number of boundary links in $\phi(T)$ is two times the number of label generators in $T$.
\end{proof}

\begin{corollary}\label{cor:evenwordtodiag}
If $\mathbf{u}$ is an even word in $\A_n(X)$, then $\phi(\mathbf{u})$ is a scalar multiple of an even diagram.
\end{corollary}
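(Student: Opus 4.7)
The plan is to reduce this immediately to Lemma \ref{lem:evendiag} via the definitional observation that every even word is a scalar multiple of an even-reduced word. Since $\phi$ is a homomorphism, passing to such a representative commutes with applying $\phi$, and then the lemma takes care of the rest.

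First, I would unpack the definition: a word $\mathbf{u}$ is called even if it equals a scalar multiple of a label-reduced word in the even generators. As noted in Section \ref{s:ParityReduced}, any such label-reduced even-generator word is itself a scalar multiple of an even-reduced word, because repeatedly applying the shortening moves in the even generators terminates and cannot change the (already minimal) number of label generators. Consequently we may write $\mathbf{u}=\mu\,T$ for some scalar $\mu\in\C$ and some even-reduced word $T$.

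Next, I would invoke Lemma \ref{lem:evendiag}, which tells us that $\phi(T)$ is equal (with coefficient $1$) to an even $\lab_n(X)$-diagram. Applying the homomorphism $\phi$ to the equality $\mathbf{u}=\mu\,T$ yields
\begin{align*}
\phi(\mathbf{u}) = \mu\,\phi(T),
\end{align*}
which is therefore a scalar multiple of an even diagram, as required. There is no real obstacle in this argument; all of the technical work was already absorbed into Lemma \ref{lem:evendiag} (in particular, the comparison with the symplectic blob algebra via Proposition \ref{prop:sblabel} and Lemma \ref{lem:sbtopbot}), so this corollary is essentially a bookkeeping consequence once the definitions of "even word" and "even-reduced word" are lined up.
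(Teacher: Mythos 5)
Your proof is correct and follows the same route as the paper: write $\mathbf{u}=\mu\,T$ with $T$ even-reduced (as the paper notes in Section \ref{s:ParityReduced}, every even word is a scalar multiple of an even-reduced word), apply $\phi$, and invoke Lemma \ref{lem:evendiag}. No issues.
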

\begin{proof}
If $\mathbf{u}$ is even, then it is equal to $\mu\, T$ for some scalar $\mu$ and even-reduced word $T$. Then $\phi(\mathbf{u})= \phi(\mu\, T) = \mu \, \phi(T)$, and $\phi(T)$ is an even diagram by Lemma \ref{lem:evendiag}.
\end{proof}

\begin{corollary}\label{cor:oddgensodd}
For any $\mathrm{a},\mathrm{b}\in X$ and $0 \leq j \leq n$, the word $\w{a}{b}(j) \in \A_n(X)$, as defined in Proposition \ref{prop:WTintro}, is odd. In particular, $\w{a}{b}(0)=\wup{a}{b}$ and $\w{a}{b}(n) = \wdo{a}{b}$ are odd.
\end{corollary}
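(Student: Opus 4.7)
The plan is to argue by contrapositive, using Corollary \ref{cor:evenwordtodiag}: since even words in $\A_n(X)$ map under $\phi$ to scalar multiples of even diagrams, it suffices to show that $\phi(\w{a}{b}(j))$ is a nonzero scalar multiple of an odd diagram for each $j$. An odd diagram is a basis element of $\lab_n(X)$ distinct from every even diagram, so it cannot equal a scalar multiple of an even diagram, ruling out $\w{a}{b}(j)$ being even.

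First I would handle $j=0$ and $j=n$, which are immediate: by definition of $\phi$, we have $\phi(\w{a}{b}(0)) = \dwup{a}{b}$ and $\phi(\w{a}{b}(n)) = \dwdo{a}{b}$, and by inspection both of these diagrams have exactly one top boundary link and one bottom boundary link, hence are odd.

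For $1 \leq i \leq n-1$, I would compute
\begin{align*}
\phi(\w{a}{b}(i)) = \de{i}\de{i-1}\cdots\de{1}\dwup{a}{b}
\end{align*}
and verify that this concatenation produces precisely the diagram $\dw{a}{b}(i)$ displayed in Proposition \ref{prop:oddgen}, with no loops or boundary arcs. The verification is straightforward: $\dwup{a}{b}$ has a top boundary link at node $1$ on the left and a bottom boundary link at node $n$ on the right, with throughlines elsewhere; the generators $\de{1}, \de{2}, \dots, \de{i}$ are TL generators that touch no boundary, so each successive multiplication by $\de{j}$ on the left moves the top boundary link down by one node, from position $1$ ultimately to position $i+1$, while no boundary arcs or loops can appear (the top link has no node on the left to form an arc with until a throughline gets bent). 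The result $\dw{a}{b}(i)$ has exactly one top boundary link (on the right) and one bottom boundary link (on the right), so it is odd.

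Having shown $\phi(\w{a}{b}(j))$ equals an odd diagram (with coefficient $1$) for every $0 \leq j \leq n$, the proof concludes: if $\w{a}{b}(j)$ were even, then by Corollary \ref{cor:evenwordtodiag} its image $\phi(\w{a}{b}(j))$ would be a scalar multiple of an even diagram, contradicting its being equal to an odd basis diagram. The main (mild) obstacle is simply justifying the diagrammatic computation for the intermediate $\w{a}{b}(i)$; this could alternatively be inferred more abstractly by invoking Lemma \ref{lem:labelreducedbdylinks}, noting that $\w{a}{b}(j)$ contains exactly one label generator and is label-reduced (any reduction would produce a scalar multiple of a word with no label generators, whose image would have no boundary links, whereas any concatenation here visibly retains two boundary links), but the direct diagrammatic check is more transparent.
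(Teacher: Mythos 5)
Your proposal is correct and is essentially the paper's own argument: the paper likewise notes that $\phi(\w{a}{b}(j)) = \dw{a}{b}(j)$ is an odd diagram and derives a contradiction with Corollary \ref{cor:evenwordtodiag}, with the diagrammatic identification already supplied by the definition of $\dw{a}{b}(j)$ in Proposition \ref{prop:oddgen}. One small inaccuracy in your intermediate description: under successive left-multiplication by $\de{1}, \dots, \de{i}$ the top boundary link stays at node $1$ on the right throughout, and it is the simple link on the left that descends to position $i$; this does not affect the conclusion that the product is the odd diagram $\dw{a}{b}(i)$ with no loops or boundary arcs.
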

\begin{proof}
If $\w{a}{b}(j)$ is even, then by Corollary \ref{cor:evenwordtodiag}, $\phi(\w{a}{b}(j))$ must be a scalar multiple of an even diagram. However, $\phi(\w{a}{b}(j))=\dw{a}{b}(j)$, as defined in Proposition \ref{prop:oddgen}, and $\dw{a}{b}(j)$ is an odd diagram for all $0 \leq j\leq n$. This is a contradiction, and thus $\w{a}{b}(j)$ is odd for all $0 \leq j \leq n$.
\end{proof}

We now show that $\phi$ is injective when restricted to even-reduced words, in Lemma \ref{lem:evenredinj}, and deduce the same result for even words in Proposition \ref{prop:evenwordinj}.
\begin{lemma}\label{lem:evenredinj}
Let $S$ and $T$ be even-reduced words in $\A_n(X)$. If $\phi(S) = \phi(T)$, then $S=T$ in $\A_n(X)$.
\end{lemma}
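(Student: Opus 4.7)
The plan is to leverage the algebraic--diagrammatic isomorphism for the symplectic blob algebra $\symp_n$ from \cite{SympBlobPres}, bridged to $\A_n(X)$ via Proposition \ref{prop:sblabel}. Define a label-stripping map $\sigma$ from even words in $\A_n(X)$ to $\symp_n$ by $e_i \mapsto e_i$, $\fup{a}{b} \mapsto f_0$ and $\fdo{a}{b} \mapsto f_n$, with parameters identified as in Proposition \ref{prop:sblabel}. First I would note that if $T$ is even-reduced in $\A_n(X)$, then $\sigma(T)$ must be a reduced monomial in $\symp_n$: any $\symp_n$-relation expressing $\sigma(T)$ as a scalar multiple of a strictly shorter word would, by Proposition \ref{prop:sblabel}, lift to an $\A_n(X)$-relation producing a strictly shorter word equal to a scalar multiple of $T$, contradicting even-reducedness.

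Next, by Lemma \ref{lem:evendiag}, $\phi(S)$ and $\phi(T)$ are $\lab_n(X)$-diagrams with coefficient $1$, containing exactly one pair of boundary links per label generator in the respective word. Replacing each such pair of boundary links in $\phi(T)$ by a single boundary blob (top or bottom, according to the boundary) produces precisely the $\symp_n$-diagram image of $\sigma(T)$, and similarly for $S$. Since $\phi(S) = \phi(T)$, the resulting $\symp_n$-diagrams coincide, and the algebraic--diagrammatic isomorphism for $\symp_n$ yields $\sigma(S) = \sigma(T)$ in $\symp_n$. There is therefore a finite sequence of $\symp_n$-relations transforming $\sigma(S)$ into $\sigma(T)$.

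Using Proposition \ref{prop:sblabel}, I would lift each relation in this sequence to an $\A_n(X)$-relation with labels dictated by the current word. Applying these lifted relations in turn rewrites $S$ as a word $S'$ in $\A_n(X)$ with $S = S'$ (since each lifted relation holds in $\A_n(X)$) and $\sigma(S') = \sigma(T)$. In particular $S'$ and $T$ share the same \emph{shape} --- the same sequence of generator types --- differing at most in the labels on the $\fup{}{}$ and $\fdo{}{}$ generators.

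The main obstacle is the final step: showing that these labels must agree, so $S' = T$ as words. Note that $S'$ is also even-reduced, since $\sigma(S') = \sigma(T)$ is reduced in $\symp_n$ and $\sigma$ preserves word length. Hence Lemma \ref{lem:evendiag} applies to both $S'$ and $T$: the concatenation of generator-diagrams produces no loops or boundary arcs, so each boundary link contributed by each label generator survives in $\phi(S')$ or $\phi(T)$ with its label intact. Since $\phi(S') = \phi(T)$ and the shapes of $S'$ and $T$ agree, I would track which boundary links of the common diagram are contributed by the $i$th label generator in each word (the shape and the no-boundary-arc property make this identification well-defined) and match them generator-by-generator, forcing the labels of the $i$th label generator of $S'$ to equal those of the $i$th label generator of $T$. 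This concludes $S' = T$ and hence $S = T$ in $\A_n(X)$.
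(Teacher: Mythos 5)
Your proposal follows essentially the same route as the paper's proof: strip labels to pass to $\symp_n$, use the known algebraic--diagrammatic isomorphism there (via Proposition \ref{prop:sblabel}) to conclude that $S$ and $T$ have the same underlying sequence of generator types, and then recover the labels from the common diagram by reading them off the boundary. The one genuine imprecision is the claim that applying the lifted relations rewrites $S$ as a word $S'$ with $S=S'$ ``since each lifted relation holds in $\A_n(X)$.'' Several of the lifted relations have the form $(\text{word}) = (\text{parameter})\cdot(\text{word})$, for instance \eqref{eq:fupfup} and \eqref{eq:e1fupe1}, and because the lifted parameters carry labels (e.g.\ $\aup{a}{b}$ versus $\aup{c}{d}$), parameters introduced and later removed along the sequence need not cancel even though their unlabelled $\symp_n$ counterparts do. What the lifting actually yields is $S=\mu\, S'$ for some Laurent monomial $\mu$ in the parameters, and since the parameters are indeterminates this does not give $S=S'$ without a further argument. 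The fix is short and uses tools you already invoke: $S'$ is even-reduced, so by Lemma \ref{lem:evendiag} both $\phi(S)$ and $\phi(S')$ are diagrams with coefficient $1$, which forces $\mu=1$. With that step added, your argument coincides with the paper's.
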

\begin{proof}
Recall that even-reduced words consist only of even generators; we can thus let the words $S$ and $T$ be $s_1s_2\dots s_k$ and $t_1t_2\dots t_l$ respectively, where each $s_i$ and $t_i$ is an even generator. From Lemma \ref{lem:evendiag}, $\phi(S)=\phi(T)$ is an even diagram in $\lab_n(X)$, given by the concatenation of the diagrams $\phi(s_i)$, and also given by the concatenation of the diagrams $\phi(t_i)$. 

Consider the symplectic blob diagrams $\sigma(s_i)$ and $\sigma(t_i)$ corresponding to each generator $s_i$ and $t_i$, with $\sigma$ defined as in Lemma \ref{lem:evendiag}. Since $\phi(s_1)\phi(s_2)\dots \phi(s_k) = \phi(t_1)\phi(t_2)\dots \phi(t_l)$, and each of the corresponding concatenations produces no loops or boundary arcs, it follows diagrammatically that $\sigma(s_1)\sigma(s_2)\dots \sigma(s_k) = \sigma(t_1)\sigma(t_2)\dots \sigma(t_l)$ in $\symp_n$. This means there exists a sequence of $\symp_n$ relations that can be applied to the word $\sigma(s_1)\sigma(s_2)\dots \sigma(s_k)$ to get the word $\sigma(t_1)\sigma(t_2) \dots \sigma(t_l)$.
By Proposition \ref{prop:sblabel}, applying the corresponding $\A_n(X)$ relations to $s_1s_2 \dots s_k$ then gives $t_1t_2 \dots t_l$, up to labels and parameters. That is, we have
\begin{align}
s_1s_2 \dots s_k &= \mu \, t_1't_2'\dots t_l', \label{eq:stprime}
\end{align}
where $\mu$ is a product of parameters (possibly including negative powers), and for each $m=1, 2, \dots, l$, 
\begin{itemize}
    \item If $t_m = e_i$, then $t_m' = e_i$, and
    \item If $t_m= \fup{a}{b}$ or $t_m=\fdo{a}{b}$ for some $\mathrm{a}, \mathrm{b} \in X$, then $t_m' = \fup{c}{d}$ or $t_m'=\fdo{c}{d}$ for some $\mathrm{c},\mathrm{d}\in X$, respectively.
\end{itemize}

Now consider the $\A_n(X)$ relations containing label generators, and note that all of these relations hold for any choice of labels, as long as each label matches the corresponding label on the other side of the relation. It follows that the property of being even-reduced cannot depend on the labels appearing in a word. Thus, since $t_1t_2 \dots t_l$ is even-reduced, it follows that $t_1't_2' \dots t_l'$ is also even-reduced. 
Then by Lemma \ref{lem:evendiag}, $\phi(t_1't_2'\dots t_l')$ is an even diagram with coefficient $1$. But $\phi(s_1s_2\dots s_k)$ is also an even diagram with coefficient $1$, and
\begin{align}
\phi(s_1s_2 \dots s_k) = \phi(\mu \, t_1't_2'\dots t_l') = \mu \, \phi(t_1't_2'\dots t_l'),
\end{align}
so we must have $\mu =1$, and thus
\begin{align}
    s_1s_2 \dots s_k = t_1't_2' \dots t_l'.
\end{align}

We know that $t_1't_2'\dots t_l'$ and $t_1t_2\dots t_l$ are the same sequence of generators, except possibly with different labels; we now want to show that they have the same labels. As $\phi(s_1s_2\dots s_k)$ is equal to the diagram obtained by simply concatenating the diagrams $\phi(s_i)$, without removing any boundary arcs, it follows that the labels on the top and bottom of the diagram are the superscript and subscript labels in the word $s_1s_2\dots s_k$, respectively, read left-to-right. Since $\phi(t_1t_2\dots t_l)$ and $\phi(t_1't_2'\dots t_l')$ are both equal to the diagram $\phi(s_1s_2\dots s_k)$, and $t_1t_2 \dots t_l$ and $t_1't_2'\dots t_l'$ are both even-reduced, they must also have the same superscript and subscript labels as $s_1s_2\dots s_k$. Hence $t_1t_2 \dots t_l$ and $t_1't_2'\dots t_l'$ are actually the same word; that is, for each $i$, $t_i$ and $t_i'$ are the same generator of $\A_n(X)$. It follows that $s_1s_2 \dots s_k = t_1t_2 \dots t_l$, as desired.
\end{proof}

\begin{proposition}\label{prop:evenwordinj}
Let $\mathbf{u}$ and $\mathbf{v}$ be even words in $\A_n(X)$. If $\phi(\mathbf{u})=\phi(\mathbf{v})$, then $\mathbf{u}=\mathbf{v}$ in $\A_n(X)$.
\end{proposition}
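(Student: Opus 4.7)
The proof of Proposition \ref{prop:evenwordinj} will follow almost directly from the two key results already in hand, namely Lemma \ref{lem:evendiag} (even-reduced words map to basis diagrams with coefficient $1$) and Lemma \ref{lem:evenredinj} (the restriction of $\phi$ to even-reduced words is injective). The plan is to reduce an arbitrary even word to its even-reduced form via the definition of ``even'' word, then transport the equality $\phi(\mathbf{u}) = \phi(\mathbf{v})$ through these lemmas.

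Concretely, since $\mathbf{u}$ and $\mathbf{v}$ are even, by definition there exist nonzero scalars $\mu, \nu$ and even-reduced words $S, T \in \A_n(X)$ such that $\mathbf{u} = \mu S$ and $\mathbf{v} = \nu T$ in $\A_n(X)$. Applying $\phi$ gives $\mu \, \phi(S) = \phi(\mathbf{u}) = \phi(\mathbf{v}) = \nu \, \phi(T)$. By Lemma \ref{lem:evendiag}, each of $\phi(S)$ and $\phi(T)$ is a single basis diagram of $\lab_n(X)$ (with coefficient precisely $1$), and since the basis diagrams of $\lab_n(X)$ are linearly independent over $\C$, the equation $\mu \, \phi(S) = \nu \, \phi(T)$ with $\mu, \nu \neq 0$ forces $\phi(S) = \phi(T)$ and $\mu = \nu$.

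At that point, Lemma \ref{lem:evenredinj} applied to the even-reduced words $S$ and $T$ immediately yields $S = T$ in $\A_n(X)$, and hence $\mathbf{u} = \mu S = \nu T = \mathbf{v}$.

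The only potentially subtle point is the argument that $\mu$ and $\nu$ can be taken to be nonzero: this is where the convention that the parameters $\beta, \aup{a}{b}, \ddo{a}{b}, \glab{a}{b}$ are indeterminates over $\C$ is used, so that any scalar produced by applying the defining relations of $\A_n(X)$ is a nonzero element of the field of rational functions in those parameters. This is a standing convention of the paper rather than a real obstacle, so the proof will be only a few lines long, essentially a bookkeeping consequence of the two lemmas from Section \ref{s:sympblob}.
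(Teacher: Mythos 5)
Your proposal is correct and follows essentially the same argument as the paper's own proof: decompose each even word as a nonzero scalar multiple of an even-reduced word, use Lemma \ref{lem:evendiag} to conclude the images are basis diagrams with coefficient $1$ so that the scalars match and the diagrams agree, then apply Lemma \ref{lem:evenredinj}. No gaps.
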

\begin{proof}
Since $\mathbf{u}$ and $\mathbf{v}$ are even words, they are equal to nonzero scalar multiples of even-reduced words, and thus $\mathbf{u} = \mu\, S$ and $\mathbf{v}=\nu \, T$ for some nonzero scalars $\mu$ and $\nu$, and even-reduced words $S$ and $T$. Then from $\phi(\mathbf{u})=\phi(\mathbf{v})$, it follows that $\mu\, \phi(S)=\nu\, \phi(T)$. From Lemma \ref{lem:evendiag}, $\phi(S)$ and $\phi(T)$ are even diagrams, so this implies both $\mu=\nu$ and $\phi(S)=\phi(T)$, as $\mu$ and $\nu$ are nonzero. Lemma \ref{lem:evenredinj} then implies $S=T$, and thus $\mathbf{u}=\mu\, S = \nu\, T = \mathbf{v}$.
\end{proof}

\section{Left descent sets}\label{s:leftdescentsets}
In this section, we provide some restrictions on the possible words $T$ for which $\w{a}{b}(j)T$ is in $WT$ form, for each $j$. 

For an even-reduced word $T$ in $\A_n(X)$, we define the \textit{left descent set} $\mathcal{L}(T)$ of $T$ to be the set of all $\A_n(X)$ generators $s$ such that $T=sT'$, where $sT'$ is an even-reduced word. That is, $\mathcal{L}(T)$ is the set containing the leftmost generator of each even-reduced word equal to $T$.

Note that this definition of left descent set differs from the one in \cite{SympBlobPres} for words in $\symp_n$, but serves the same purpose, at least for our even-reduced words in $\A_n(X)$. Their definition works for any word in $\symp_n$, but only considers those related by the commutation relations 
\begin{align}
e_ie_j&=e_je_i, &&\abs{i-j}\geq 2, \\
f_0e_j&=e_jf_0, &&2 \leq j \leq n-1, \\
f_ne_j&=e_jf_n, && 1 \leq j \leq n-2, \\
f_0f_n &=  f_nf_0, && n\geq 2,
\end{align}
instead of using the full set of $\symp_n$ relations. One might thus wish to define the left descent set of any word in $\A_n(X)$, but this would need to be much more cumbersome to accommodate the generators $\wup{a}{b}$ and $\wdo{a}{b}$, and we really only need the concept for even-reduced words. The analogous definition of a right descent set is also unnecessary for our purposes.

\begin{proposition}\label{prop:leftdescent}
Let $\w{a}{b}(j)T$ be a word in $WT$ form, for some $\mathrm{a},\mathrm{b} \in X$ and $0 \leq j \leq n$.
\begin{enumerate}[label=(\roman*)]
    \item If $0 \leq j \leq n-1$, then $\fdo{e}{f} \notin \mathcal{L}(T)$ for any $\mathrm{e},\mathrm{f} \in X$.
    \item If $1 \leq j \leq n$, then $\fup{e}{f} \notin \mathcal{L}(T)$ for any $\mathrm{e},\mathrm{f} \in X$.
    \item For all $0\leq j \leq n$, if $i<j$, then $e_i \notin \mathcal{L}(T)$.
    \end{enumerate}
\end{proposition}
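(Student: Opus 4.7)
The plan is to proceed by contradiction in each part: assume the forbidden generator $s$ lies in $\mathcal{L}(T)$, write $T = sT'$ with $sT'$ even-reduced, and deduce a contradiction. For (i) and (ii) I will use a single relation to absorb $s$ into the $\wup{}{}$ or $\wdo{}{}$ already appearing in $\w{a}{b}(j)$, violating label-reducedness. For (iii) the generator $e_i$ is not itself a label generator, so a direct drop in label count is unavailable; instead I will rewrite $\w{a}{b}(j)T$ as a scalar multiple of $\w{a}{b}(i-1)T''$ with $i-1<j$, violating minimality of $j$.

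Part (i) is short: since $0 \leq j \leq n-1$ the word $\w{a}{b}(j)$ ends in $\wup{a}{b}$, and relation \eqref{eq:wupfdo} applied at the rightmost position gives $\w{a}{b}(j)\fdo{e}{f} = \ddo{b}{e}\,\w{a}{f}(j)$, so $\w{a}{b}(j)T$ has a representative with strictly fewer label generators. Part (ii) is similar: for $j = n$, relation \eqref{eq:wdofup} yields $\wdo{a}{b}\fup{e}{f} = \aup{a}{e}\wdo{f}{b}$; for $1 \leq j \leq n-1$, I will apply \eqref{eq:wupfup} and then \eqref{eq:e1fupe1} at position $1$ to collapse $e_1\fup{a}{e}e_1$ into $\aup{a}{e}e_1$, obtaining $\w{a}{b}(j)\fup{e}{f} = \aup{a}{e}\,\w{f}{b}(j)$, again a one-label reduction.

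The core of the argument is (iii). The plan is to establish the uniform identity
\[
\w{a}{b}(j)\,e_i \;=\; \w{a}{b}(i-1)\,e_{j-1}e_{j-2}\cdots e_i,\qquad 1 \leq i < j \leq n,
\]
and then deduce the result. For $j = n$ this is exactly the reduction $\wdo{a}{b}\,e_i = e_{i-1}\cdots e_1\,\wup{a}{b}\,e_{n-1}\cdots e_i$ derived inside the proof of Lemma \ref{lem:wuponly} using \eqref{eq:TLreduce}, \eqref{eq:ejwdo}, and \eqref{eq:wdoe1}. For $1 \leq j \leq n-1$ I will apply \eqref{eq:ejwup} to rewrite $\wup{a}{b}e_i$ as $e_{i+1}\wup{a}{b}$, commute $e_{i+1}$ leftward past $e_1,\ldots,e_{i-1}$, apply $e_{i+1}e_ie_{i+1}=e_{i+1}$ from \eqref{eq:TLreduce}, separate the two resulting commuting subchains $e_j\cdots e_{i+1}$ and $e_{i-1}\cdots e_1$, and finally push $\wup{a}{b}$ back leftward through the high-index chain via \eqref{eq:ejwup} to land on the right-hand side. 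Granting the identity, we write $\w{a}{b}(j)T = \w{a}{b}(i-1)(e_{j-1}\cdots e_i T')$ and even-reduce the (even) parenthesized factor to $\mu\,T''$; since label-reducedness of $\w{a}{b}(j)T$ forces $\w{a}{b}(i-1)T''$ to have the same (minimal) label-generator count, Lemma \ref{lem:labelreducedbdylinks} implies the latter is label-reduced, yielding a $WT$-form expression with index $i-1<j$, contradicting minimality. The main obstacle is the uniform derivation of this identity: the $j=n$ case must invoke the boundary relation \eqref{eq:wdoe1}, which is not available for $j<n$, so although the two cases produce the same final formula, they require genuinely different chains of relations.
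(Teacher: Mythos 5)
Your proposal is correct and follows essentially the same route as the paper's proof: parts (i) and (ii) via the same one-step label-count reductions using \eqref{eq:wupfdo}, \eqref{eq:wdofup}, \eqref{eq:wupfup} and \eqref{eq:e1fupe1}, and part (iii) via exactly the identity $\w{a}{b}(j)e_i = \w{a}{b}(i-1)e_{j-1}\cdots e_i$ (derived by the same chains of relations, with the $j=n$ case handled separately through \eqref{eq:wdoe1}) followed by even-reducing the trailing factor and contradicting minimality of $j$. The only quibble is your appeal to Lemma \ref{lem:labelreducedbdylinks} to conclude that $\w{a}{b}(i-1)T''$ is label-reduced; that lemma is not needed (and not directly applicable here, since at this stage one does not yet know the boundary-link count of the image diagram equals twice the label count), as the conclusion follows immediately from the definition of label-reduced once the two words are equal up to a scalar and have the same number of label generators.
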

\begin{proof}
We will prove each part of this lemma by contradiction. Recall from Proposition \ref{prop:WTintro} that
\begin{align}
\w{a}{b}(0) &\coloneqq  \wup{a}{b}, \\
\w{a}{b}(i) &\coloneqq  e_ie_{i-1} \dots e_2e_1 \wup{a}{b}, &&1 \leq i \leq n-1, \\
\w{a}{b}(n) &\coloneqq  \fdo{c}{d}e_{n-1}e_{n-2}\dots e_2e_1\wup{a}{b}.
\end{align}

\vspace{4mm}
\noindent\textbf{Part (i):} Since $0 \leq j \leq n-1$, the rightmost generator of $\w{a}{b}(j)$ is $\wup{a}{b}$. If $\fdo{e}{f} \in \mathcal{L}(T)$, then $T=\fdo{e}{f}T'$, where $\fdo{e}{f}T'$ is an even-reduced word. Recall that even-reduced words are label-reduced, by definition. Then since $T$ and $\fdo{e}{f}T'$ are equal, and both label-reduced, it follows that they contain the same number of label generators, so $T'$ contains one less label generator than $T$. We then have
\begin{align}
\w{a}{b}(j)T
&= e_je_{j-1} \dots e_1 \wup{a}{b} T &&\text{by def. of $\w{a}{b}(j)$} \\
&= e_je_{j-1} \dots e_1 \wup{a}{b} \fdo{e}{f} T' &&\text{since $T=\fdo{e}{f}T'$} \\
&= \ddo{b}{e} e_je_{j-1} \dots e_1 \wup{a}{f} T' &&\text{by \eqref{eq:wupfdo}}.
\end{align}
We note that the final expression is a scalar multiple of a word that contains one less label generator than $\w{a}{b}(j)T$, but is equal to $\w{a}{b}(j)T$. This is a contradiction as $\w{a}{b}(j)T$ is in $WT$ form, and thus label-reduced.

\vspace{4mm}
\noindent\textbf{Part (ii):} If $\fup{e}{f} \in \mathcal{L}(T)$, then $T=\fup{e}{f}T'$, where $\fup{e}{f}T'$ is an even-reduced word. As in (i), $T'$ contains one less label generator than $T$.

First consider $1 \leq j \leq n-1$, so $\w{a}{b}(j) = e_je_{j-1}\dots e_1\wup{a}{b}$. We then have $n\geq 2$, since $e_1$ appears in each of these words. Then we have
\begin{align}
\w{a}{b}(j)T
&= e_je_{j-1} \dots e_1 \wup{a}{b} T&&\text{by def. of $\w{a}{b}(j)$} \\
&= e_je_{j-1} \dots e_1 \wup{a}{b} \fup{e}{f} T' &&\text{since $T=\fup{e}{f}T'$} \\
&= e_je_{j-1} \dots e_1 \fup{a}{e} e_1 \wup{f}{b} T' &&\text{by \eqref{eq:wupfup}} \\
&= \aup{a}{e} e_je_{j-1} \dots e_1 \wup{f}{b} T' &&\text{by \eqref{eq:e1fupe1}},
\end{align}
and the last expression is a scalar multiple of a word containing one less label generator than $\w{a}{b}(j)T$. This is a contradiction as $\w{a}{b}(j)T$ is label-reduced. Hence $\fup{e}{f}\notin \mathcal{L}(T)$ for $1\leq j \leq n-1$.

For $j=n$, we have $\w{a}{b}(j)=\w{a}{b}(n)=\wdo{a}{b}$. Then
\begin{align}
\w{a}{b}(j)T
&= \wdo{a}{b}T &&\text{by def. of $\w{a}{b}(n)$} \\
&= \wdo{a}{b}\fup{e}{f} T' &&\text{since $T=\fup{e}{f}T'$} \\
&= \aup{a}{e} \wdo{f}{b} T' &&\text{by \eqref{eq:wdofup}},
\end{align}
where the last expression is again a scalar multiple of a word containing one less label generator than $\w{a}{b}(j)T$, contradicting the fact that $\w{a}{b}(j)T$ is label-reduced. Hence $\fup{e}{f}\notin \mathcal{L}(T)$ for $j=n$.

\vspace{4mm}
\noindent\textbf{Part (iii):} If $e_i \in \mathcal{L}(T)$, then $T=e_i T'$, where $e_i T'$ is even-reduced. Since $T$ and $e_iT'$ are both even-reduced, and thus label-reduced, they must contain the same number of label generators. This means $T$ and $T'$ have the same number of label generators. 

For $j=0$ and $j=1$, $e_i$ with $i<j$ does not exist. For $2 \leq j \leq n-1$, and $i<j$, we have
\begin{align}
\w{a}{b}(j)T
&= e_je_{j-1} \dots e_{i+1}e_ie_{i-1}e_{i-2} \dots e_1 \wup{a}{b} T&&\text{by def. of $\w{a}{b}(j)$} \\
&= e_je_{j-1} \dots e_{i+1}e_ie_{i-1}e_{i-2} \dots e_1 \wup{a}{b} e_i T'&&\text{since $T=e_iT'$} \\
&= e_je_{j-1} \dots e_{i+1}e_ie_{i-1}e_{i-2} \dots e_1 e_{i+1}\wup{a}{b} T' &&\text{by \eqref{eq:ejwup}, since $i<n-1$} \\
&= e_je_{j-1} \dots e_{i+1}e_ie_{i+1}e_{i-1}e_{i-2} \dots e_1 \wup{a}{b} T' &&\text{by \eqref{eq:eiej}} \\
&= e_je_{j-1} \dots e_{i+1}e_{i-1}e_{i-2} \dots e_1 \wup{a}{b} T' &&\text{by \eqref{eq:TLreduce}}  \\
&= (e_{i-1}e_{i-2} \dots e_1) (e_je_{j-1} \dots e_{i+1})\wup{a}{b} T' &&\text{by \eqref{eq:eiej}} \\
&= (e_{i-1}e_{i-2} \dots e_1) \wup{a}{b} (e_{j-1}e_{j-2} \dots e_{i}) T' &&\text{by \eqref{eq:ejwup}}.
\end{align}
Note that the final expression is equal to $\w{a}{b}(j)T$, and contains the same number of label generators, so it is label-reduced. This means the subword $(e_{j-1}e_{j-2} \dots e_{i+1}e_{i}) T'$ must also be label-reduced. Since this subword is label-reduced and contains only even generators, it is even, and thus equal to a scalar multiple (say $\mu$) of some even-reduced word $T''$ containing the same number of label generators. But then 
\begin{align}
\w{a}{b}(j)T = \mu (e_{i-1} \dots e_1) \wup{a}{b} T'' = \mu \w{a}{b}(i-1)T'',
\end{align}
where $i-1<j$, and $T''$ is even-reduced. Since $T''$ contains the same number of label generators as $(e_{j-1}e_{j-2} \dots e_{i+1}e_{i}) T'$, and $(e_{i-1} \dots e_1) \wup{a}{b} (e_{j-1}e_{j-2} \dots e_{i+1}e_{i}) T'$ is label-reduced, it follows that $\w{a}{b}(i-1)T''$ is label-reduced. This is a contradiction as words $\w{a}{b}(j)T$ in $WT$ form must have $j$ minimal; that is, they cannot be equal to a scalar multiple of some other label-reduced word $\w{c}{d}(k)T''$ with $k<j$ and $T''$ even-reduced. Hence $e_i \notin \mathcal{L}(T)$ for any $i<j$. This proves (iii) for $0 \leq j \leq n-1$.

For $j=n$, $\w{a}{b}(n)=\wdo{a}{b}$. If $n=1$ then there are no $e_i$ with $i<j=n$, so the result holds. Hence take $n\geq 2$. Then, as seen in the proof of Lemma \ref{lem:wuponly}, 
\begin{align}
\wdo{a}{b}e_i &= e_{i-1}e_{i-2} \dots e_1 \wup{a}{b} e_{n-1} e_{n-2} \dots e_i.
\end{align}
It follows that
\begin{align}
\w{a}{b}(n)T
&= \wdo{a}{b} e_iT' \\
&= e_{i-1}e_{i-2} \dots e_1 \wup{a}{b} e_{n-1} e_{n-2} \dots e_iT'.
\end{align}
Since $T$ and $T'$ have the same number of label generators, the final expression has the same number of label generators as $\w{a}{b}(n)T$, and is thus label-reduced, since they are equal. Then $e_{n-1}e_{n-2} \dots e_i T'$ is a label-reduced word in the even generators, so it is even, and thus equal to a scalar multiple of some even-reduced word $T''$. Thus,
\begin{align}
\w{a}{b}(n)T &= \mu \, \w{a}{b}(i-1)T''
\end{align}
for some scalar $\mu$. Since $\w{a}{b}(n)T$ and $\w{a}{b}(i-1)T''$ each have the same number of label generators, it follows that $\w{a}{b}(i-1)T''$ is label-reduced. This contradicts the minimality of $j=n$ in $\w{a}{b}(n)T$, as $i-1<j$. Hence $e_i \notin \mathcal{L}(T)$ for any $i <j=n$. This completes the proof of (iii).
\end{proof}

\begin{lemma}\label{lem:simplelinklds}
Let $T$ be an even-reduced word such that $\phi(T)$ has a simple link at $i$, for some $i=1, \dots, n-1$. Then the left descent set of $T$ contains $e_i$.
\end{lemma}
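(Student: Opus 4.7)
The plan is to induct on the length (number of generators) of $T$. The base case is when $T$ has length one: since $\phi(T)$ has a simple link at $i$ on the left, and the only length-one even generators whose diagram has any simple link on the left are the $e_k$'s (while $\phi(\fup{a}{b})$ and $\phi(\fdo{a}{b})$ have only boundary links on the left), $T$ must equal $e_i$, so $e_i \in \mathcal{L}(T)$ trivially.

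For the inductive step, I would write $T = sT''$ where $s$ is the leftmost generator of $T$. First observe that $T''$ is itself even-reduced: if $T''$ were equal to a scalar multiple of a shorter even word, or one with fewer label generators, then prepending $s$ would contradict the even-reduced property of $T$. Then Lemma \ref{lem:evendiag} applied to $T$ ensures $\phi(T) = \phi(s)\phi(T'')$ exactly, with no loops or boundary arcs produced in the concatenation. I would then split into cases based on $s$. If $s = e_i$, we are done immediately. If $s = e_j$ with $j \neq i$, I would argue that $|i-j| \geq 2$ must hold, because if $j \in \{i-1, i+1\}$ then one of the left nodes $i, i+1$ is already absorbed into the simple link of $\phi(e_j)$ at positions $(j,j+1)$, which would prevent $\phi(T)$ from having a simple link at $(i,i+1)$ on the left, contradicting the hypothesis. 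Given $|i-j| \geq 2$, the left nodes $i, i+1$ of $\phi(s)$ are throughlines, so the simple link at $i$ on the left of $\phi(T)$ must originate from a simple link at $i$ on the left of $\phi(T'')$. If $s = \fup{a}{b}$, the left node $1$ of $\phi(s)$ is a top boundary link, so $i = 1$ is impossible, and for $i \geq 2$ the simple link is again inherited from $\phi(T'')$. The case $s = \fdo{a}{b}$ is symmetric, with node $n$ on the bottom boundary excluding $i = n-1$.

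In each nontrivial case, the inductive hypothesis yields $e_i \in \mathcal{L}(T'')$, so $T'' = e_i T'''$ with $e_i T'''$ even-reduced. I would then use the appropriate commutation relation — \eqref{eq:eiej} when $s = e_j$, \eqref{eq:fupej} when $s = \fup{a}{b}$, or \eqref{eq:fdoej} when $s = \fdo{a}{b}$ — all of which apply precisely in the index range that survives the case analysis above, to rewrite $T = s e_i T''' = e_i s T'''$. Since $e_i s T'''$ is equal to $T$ as an element of $\A_n(X)$ and has the same total length and the same number of label generators as $T$, it inherits the even-reduced property from $T$, exhibiting $e_i \in \mathcal{L}(T)$.

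The hardest step will be the diagrammatic case analysis for $s = e_j$ with $\lvert i-j\rvert = 1$: one must carefully verify that this configuration really cannot arise, by tracking exactly which strings of $\phi(e_j)$ and $\phi(T'')$ are joined at the seam of the concatenation and noting that node $i$ or $i+1$ on the left of $\phi(T)$ is already committed to the simple link provided by $\phi(e_j)$. Once this is settled, the remaining manipulations are routine commutations that visibly preserve length and label count, so the even-reduced property is trivially inherited.
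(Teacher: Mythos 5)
There is a genuine gap in the case $s=e_j$ with $\lvert i-j\rvert\geq 2$. You assert that, because nodes $i$ and $i+1$ of $\phi(e_j)$ carry throughlines, the simple link at $i$ on the left of $\phi(T)$ ``must originate from a simple link at $i$ on the left of $\phi(T'')$''. This does not follow: the diagram $\phi(e_j)$ also has a simple link at $j$ on its \emph{right} side, and the composite string can thread through it. Concretely, if $\phi(T'')$ has the two nested left links $\{i,j+1\}$ and $\{i+1,j\}$ (which is planar when $j\geq i+2$), then concatenating $\phi(e_j)$ with $\phi(T'')$ joins them through the right-hand simple link of $\phi(e_j)$ into a single string from left-$i$ to left-$(i+1)$ of $\phi(T)$, i.e.\ a simple link at $i$, even though $\phi(T'')$ has no simple link at $i$. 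For instance, with $n=4$, $T''=e_2e_1e_3$ and $s=e_3$: $\phi(e_2e_1e_3)$ has left links $\{1,4\}$ and $\{2,3\}$, and $\phi(e_3e_2e_1e_3)$ has a simple link at $1$ on the left, while $\phi(e_2e_1e_3)$ does not. In that example $e_3e_2e_1e_3=e_3e_1$ is not even-reduced, so the lemma is not contradicted; but your argument never uses even-reducedness of $T$ at this point, so it cannot distinguish this situation. To close the gap you would have to show that the nested configuration $\{i,j+1\},\{i+1,j\}$ on the left of $\phi(T'')$ forces $e_jT''$ to fail to be even-reduced, and that argument (an induction through the chain of nested links) is essentially of the same depth as the lemma itself. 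Incidentally, the case you flag as hardest, $\lvert i-j\rvert=1$, is the easy one: node $i$ or $i+1$ is already used by the simple link of $\phi(e_j)$, giving an immediate contradiction.

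The remaining structure of your proposal is sound: $T''$ is indeed even-reduced, Lemma \ref{lem:evendiag} gives the clean concatenation, the cases $s=\fup{a}{b}$ and $s=\fdo{a}{b}$ genuinely do force a simple link at $i$ in $\phi(T'')$ (the only non-throughline on the right of $\phi(\fup{a}{b})$ is a boundary link, which cannot lie on a string ending at a node), and the final commutation step via \eqref{eq:eiej}, \eqref{eq:fupej}, \eqref{eq:fdoej} correctly preserves length and label count. Note that the paper sidesteps the problematic case entirely by transporting the question to the symplectic blob algebra via $\sigma$ and citing \cite[Lemma 6.2]{SympBlobPres}, where the corresponding combinatorial fact about reduced monomials is already established; if you want a self-contained diagrammatic proof, the nested-link case is precisely the point where the real work lies.
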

\begin{proof}
Let $T$ be the word $s_1s_2 \dots s_k$, where each $s_j$ is an even generator. Since $T$ is even-reduced, by Lemma \ref{lem:evendiag}, $\phi(T)$ is an even diagram with coefficient $1$. In particular, this diagram is equal to the concatenation of the diagrams $\phi(s_j)$, and this concatenation does not produce any loops or boundary arcs. The corresponding concatenation of symplectic blob diagrams $\sigma(s_j)$, defined in Lemma \ref{lem:evendiag}, must then contain none of the features from Table \ref{tab:sympblobparams} that can be simplified, meaning $\sigma(s_1)\sigma(s_2)\dots \sigma(s_k)$ is a reduced monomial in $\symp_n$. Moreover, the diagram $\sigma(s_1)\sigma(s_2)\dots \sigma(s_k)$ has an undecorated simple link at $i$ on the left. By \cite[Lemma 6.2]{SympBlobPres}, noting the slightly different definition of left descent set in that paper, there exists a sequence of the commutation relations \eqref{eq:sbeiej}, \eqref{eq:sbf0ej}, \eqref{eq:sbfnej} and \eqref{eq:sbfnfn}, that can be applied to $\sigma(s_1)\sigma(s_2)\dots \sigma(s_k)$, to get a word of the same length beginning with $e_i$. Applying the analogous $\A_n(X)$ relations to $s_1s_2\dots s_k$ yields a word $S$ of the same length, beginning with $e_i$, consisting only of even generators. Then $S$ is even-reduced, because $T$ is even-reduced, and $S$ is equal to $T$ and has the same length. Since $S$ and $T$ are even-reduced, and $S$ begins with $e_i$, we have $e_i \in \mathcal{L}(T)$, as desired.
\end{proof}

\section{Injectivity}\label{s:labeliso}
In this section, we establish the injectivity of the map $\phi: \A_n(X)\to \lab_n(X)$, and thus show that the algebraic and diagrammatic label algebras are isomorphic. 

We first want to show that words in $WT$ form map to basis diagrams of $\lab_n(X)$ under $\phi$. This is the content of Proposition \ref{prop:WTdiag}. For $W=\id$, this follows from Lemma \ref{lem:evendiag}, so it remains to show that the products $\phi(\w{a}{b}(j))\phi(T)$ do not produce any loops or boundary arcs. It is impossible for such products to produce any loops, as the diagrams $\phi(\w{a}{b}(j))=\dw{a}{b}(j)$ have no links on their right sides. The proof for boundary arcs is much more involved, and relies on Lemmas \ref{lem:topbdylinkat1}--\ref{lem:evenlonglink} to cover the cases listed in Figure \ref{fig:WTbdyarccases}. Each of these lemmas assumes that the diagram $\phi(T)$ contains a particular boundary link or link, and finds another word that is equal to $T$ and contains the same number of label generators. This word enables us to show in Proposition \ref{prop:WTdiag} that for such $T$, if $\phi(\w{a}{b}(j))\phi(T)$ produces a boundary arc, then $\w{a}{b}(j)T$ is not label-reduced, and thus not in $WT$ form.

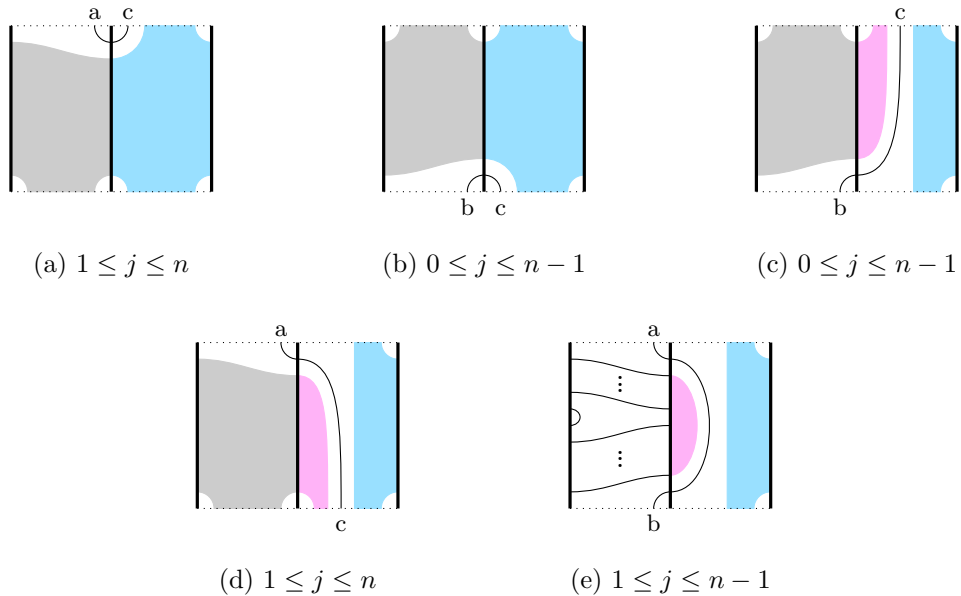
\begin{figure}[H]
\centering
\begin{subfigure}[b]{0.3\textwidth}
\begin{align*}
\begin{tikzpicture}[baseline={([yshift=-1mm]current bounding box.center)},scale=0.44]
{
\filldraw[gray!40] (0,0) to[out=0,in=180] (3,-0.5)--(3,-4) arc(90:180:0.5)--(0.5,-4.5) arc(0:90:0.5) --cycle;
\filldraw[blue!30!cyan!40] (3,-0.5) arc(-90:0:1) --(5.5,0.5) arc(180:270:0.5) --(6,-4) arc(90:180:0.5) --(3.5,-4.5) arc(0:90:0.5)--cycle;
\draw [very thick](0,-4.5) -- (0,0.5);
\draw [very thick](3,-4.5)--(3,0.5);
\draw [very thick](6,-4.5)--(6,0.5);
\draw (2.5,0.5) arc(-180:0:0.5);
\draw [dotted] (0,0.5)--(6,0.5);
\draw[dotted] (0,-4.5)--(6,-4.5);
\node at (2.5,0.4) [anchor=south] {\footnotesize $\mathrm{a}$};
\node at (3.5,0.4) [anchor=south] {\footnotesize $\mathrm{c}$};
\node at (2.5,-5.5) [anchor=south] {\footnotesize $\vphantom{\mathrm{b}}$};
}
\end{tikzpicture}
\end{align*}
\vspace{-3mm}
\caption{$1\leq j \leq n$}
\end{subfigure}
\begin{subfigure}[b]{0.3\textwidth}
\begin{align*}
\begin{tikzpicture}[baseline={([yshift=-1mm]current bounding box.center)},scale=0.44]
{
\filldraw[gray!40] (0,0) arc(-90:0:0.5) --(2.5,0.5) arc(180:270:0.5)--(3,-3.5) to[out=180,in=0] (0,-4)--cycle;
\filldraw[blue!30!cyan!40] (3,0) arc(-90:0:0.5) --(5.5,0.5) arc(180:270:0.5) --(6,-4) arc(90:180:0.5) --(4,-4.5) arc(0:90:1)--cycle;
\draw [very thick](0,-4.5) -- (0,0.5);
\draw [very thick](3,-4.5)--(3,0.5);
\draw [very thick](6,-4.5)--(6,0.5);
\draw (2.5,-4.5) arc(180:0:0.5);
\draw [dotted] (0,0.5)--(6,0.5);
\draw[dotted] (0,-4.5)--(6,-4.5);
\node at (2.5,0.4) [anchor=south] {\footnotesize $\vphantom{\mathrm{ac}}$};
\node at (2.5,-5.5) [anchor=south] {\footnotesize $\vphantom{\mathrm{b}}\mathrm{b}$};
\node at (3.5,-5.5) [anchor=south] {\footnotesize $\vphantom{\mathrm{b}}\mathrm{c}$};
}
\end{tikzpicture}
\end{align*}
\vspace{-3mm}
\caption{$0 \leq j \leq n-1$}
\end{subfigure}
\begin{subfigure}[b]{0.3\textwidth}
\begin{align*}
\begin{tikzpicture}[baseline={([yshift=-1mm]current bounding box.center)},scale=0.44]
{
\filldraw[gray!40] (0,0) arc(-90:0:0.5) --(2.5,0.5) arc(180:270:0.5)--(3,-3.5) to[out=180,in=0] (0,-4)--cycle;
\filldraw[blue!30!cyan!40] (6,-4) arc(90:180:0.5) --(4.7,-4.5)--(4.7,0.5)--(5.5,0.5) arc(180:270:0.5)--cycle;
\filldraw[red!10!magenta!30] (3,0) arc(-90:0:0.5) --(3.9,0.5) ..controls (3.9,-2) and (3.9,-3.5).. (3,-3.5)--cycle;
\draw [very thick](0,-4.5) -- (0,0.5);
\draw [very thick](3,-4.5)--(3,0.5);
\draw [very thick](6,-4.5)--(6,0.5);
\draw (3,-4) arc(90:180:0.5);
\draw (3,-4) ..controls (4.3,-4) and (4.3,-2).. (4.3,0.5);
\draw [dotted] (0,0.5)--(6,0.5);
\draw[dotted] (0,-4.5)--(6,-4.5);
\node at (4.3,0.4) [anchor=south] {\footnotesize $\vphantom{\mathrm{a}}\mathrm{c}$};
\node at (2.5,-5.5) [anchor=south] {\footnotesize $\vphantom{\mathrm{b}}\mathrm{b}$};
}
\end{tikzpicture}
\end{align*}
\vspace{-3mm}
\caption{$0\leq j \leq n-1$}
\end{subfigure}
\begin{subfigure}[b]{0.3\textwidth}
\begin{align*}
\begin{tikzpicture}[baseline={([yshift=-1mm]current bounding box.center)},scale=0.44]
{
\filldraw[gray!40] (0,0) to[out=0,in=180] (3,-0.5)--(3,-4) arc(90:180:0.5)--(0.5,-4.5) arc(0:90:0.5) --cycle;
\filldraw[blue!30!cyan!40] (6,-4) arc(90:180:0.5) --(4.7,-4.5)--(4.7,0.5)--(5.5,0.5) arc(180:270:0.5)--cycle;
\filldraw[red!10!magenta!30] (3,-4) arc(90:0:0.5) --(3.9,-4.5) ..controls (3.9,-2) and (3.9,-0.5).. (3,-0.5)--cycle;
\draw [very thick](0,-4.5) -- (0,0.5);
\draw [very thick](3,-4.5)--(3,0.5);
\draw [very thick](6,-4.5)--(6,0.5);
\draw (3,0) arc(270:180:0.5);
\draw (3,0) ..controls (4.3,0) and (4.3,-2).. (4.3,-4.5);
\draw [dotted] (0,0.5)--(6,0.5);
\draw[dotted] (0,-4.5)--(6,-4.5);
\node at (2.5,0.4) [anchor=south] {\footnotesize $\mathrm{a}\vphantom{\mathrm{c}}$};
\node at (4.3,-5.5) [anchor=south] {\footnotesize $\vphantom{\mathrm{b}}\mathrm{c}$};
}
\end{tikzpicture}
\end{align*}
\vspace{-3mm}
\caption{$1 \leq j \leq n$}
\end{subfigure}
\begin{subfigure}[b]{0.3\textwidth}
\begin{align*}
\begin{tikzpicture}[baseline={([yshift=-1mm]current bounding box.center)},scale=0.44]
{
\filldraw[blue!30!cyan!40] (6,-4) arc(90:180:0.5) --(4.7,-4.5)--(4.7,0.5)--(5.5,0.5) arc(180:270:0.5)--cycle;
\filldraw[red!10!magenta!30] (3,-0.5) arc(90:-90:0.8 and 1.5) --cycle;
\draw (0,0) to[out=0,in=180] (3,-0.5);
\draw (0,-4) to[out=0,in=180] (3,-3.5);
\draw (0,-1.5) arc(90:-90:0.3 and 0.25);
\draw (0,-1) to[out=0,in=180] (3,-1.5);
\draw (0,-2.5) to[out=0,in=180] (3,-2);
\draw [very thick](0,-4.5) -- (0,0.5);
\draw [very thick](3,-4.5)--(3,0.5);
\draw [very thick](6,-4.5)--(6,0.5);
\draw (3,0) arc(270:180:0.5);
\draw (3,-4) arc(90:180:0.5);
\draw (3,0) to[out=0,in=0] (3,-4);
\draw [dotted] (0,0.5)--(6,0.5);
\draw[dotted] (0,-4.5)--(6,-4.5);
\node at (1.5,-0.55) [anchor=center] {.};
\node at (1.5,-0.75) [anchor=center] {.};
\node at (1.5,-0.95) [anchor=center] {.};
\node at (1.5,-2.8) [anchor=center] {.};
\node at (1.5,-3) [anchor=center] {.};
\node at (1.5,-3.2) [anchor=center] {.};
\node at (2.5,0.4) [anchor=south] {\footnotesize $\mathrm{a}\vphantom{\mathrm{c}}$};
\node at (2.5,-5.5) [anchor=south] {\footnotesize $\mathrm{b}$};
}
\end{tikzpicture}
\end{align*}
\vspace{-3mm}
\caption{$1 \leq j \leq n-1$}
\end{subfigure}
\caption{The five cases considered in Proposition \ref{prop:WTdiag}, where concatenating $\phi(\w{a}{b}(j))$ and $\phi(T)$ produces a boundary arc.}
    \label{fig:WTbdyarccases}
\end{figure}

\pagebreak
\begin{lemma}[Cases (a), (b)]\label{lem:topbdylinkat1}
Let $T$ be an even-reduced word in $\A_n(X)$.
\begin{enumerate}[label=(\roman*)]
    \item If $\phi(T)$ has a top boundary link at $1$ on the left, then $T=\fup{a}{b}T'$ for some $\mathrm{a},\mathrm{b}\in X$, where $T'$ is even-reduced and contains one less label generator than $T$.
    \item If $\phi(T)$ has a bottom boundary link at $n$ on the left, then $T=\fdo{a}{b}T'$ for some $\mathrm{a},\mathrm{b}\in X$, where $T'$ is even-reduced and contains one less label generator than $T$.
\end{enumerate}
\end{lemma}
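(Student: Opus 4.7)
The plan is to prove part~(i) by directly constructing $T'$ via its diagrammatic image; part~(ii) follows by the same argument applied at node~$n$ on the left with respect to the bottom boundary, using $\fdo{a}{b}$ in place of $\fup{a}{b}$. Since $T$ is even-reduced, Lemma~\ref{lem:evendiag} guarantees that $\phi(T)$ is an even $\lab_n(X)$-diagram with coefficient~$1$, whose number of boundary links is exactly twice the number of label generators in $T$.

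I would let $\mathrm{a}$ denote the label of the given top boundary link at node~$1$ on the left of $\phi(T)$, and observe that because $\phi(T)$ is even, there must be a second top boundary link immediately to the right of the first along the top boundary; let $\mathrm{b}$ be its label and $\nu$ the node to which it is attached. Next, I would construct a new diagram $\undertilde{D'}$ from $\phi(T)$ by deleting these two top boundary links and joining their former node-endpoints with a single new string routed just below the segment of the top boundary between the two deleted link endpoints. Planarity of $\undertilde{D'}$ follows from the fact that no other string of $\phi(T)$ can occupy that region, since by choice of $\nu$ no top boundary link endpoint lies strictly between the two deleted ones. The resulting $\undertilde{D'}$ is then an $\lab_n(X)$-diagram with two fewer boundary links than $\phi(T)$, hence still even.

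Corollary~\ref{cor:evenredsurj} then supplies an even-reduced word $T' \in \A_n(X)$ with $\phi(T') = \undertilde{D'}$ and exactly one fewer label generator than $T$. I would verify directly from the construction that the concatenation $\dfup{a}{b}\cdot \undertilde{D'}$ produces no loops or boundary arcs and reproduces $\phi(T)$ on the nose: the boundary link of $\dfup{a}{b}$ labelled $\mathrm{b}$ joins with the new string added to $\undertilde{D'}$ to recreate the second boundary link of $\phi(T)$ at $\nu$, the throughlines of $\dfup{a}{b}$ pass the remaining strings of $\undertilde{D'}$ through unchanged, and since $\dfup{a}{b}$ has no links on its right side, no loop can form. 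Thus $\phi(\fup{a}{b}\, T') = \dfup{a}{b}\cdot \phi(T') = \phi(T)$. Since both $\fup{a}{b}\, T'$ and $T$ are even words, Proposition~\ref{prop:evenwordinj} then forces $T = \fup{a}{b}\, T'$ in $\A_n(X)$.

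The main obstacle is the planar bookkeeping: verifying both that the new string in $\undertilde{D'}$ can be drawn without crossings, so that $\undertilde{D'}$ is genuinely an $\lab_n(X)$-diagram, and that the concatenation $\dfup{a}{b}\cdot \undertilde{D'}$ introduces no spurious scalar factors. Once this geometric step is secured, the algebraic machinery already developed---namely Corollary~\ref{cor:evenredsurj} and Proposition~\ref{prop:evenwordinj}---delivers the desired factorisation. Part~(ii) follows from the analogous construction at node~$n$ on the left using the bottom boundary in place of the top.
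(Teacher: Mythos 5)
Your proposal is correct and follows essentially the same route as the paper: peel off the two leftmost top boundary links of $\phi(T)$, replace them by a single string to obtain an even diagram $\undertilde{T}'$ with $\phi(T)=\dfup{a}{b}\cdot\undertilde{T}'$, invoke Corollary \ref{cor:evenredsurj} to realise $\undertilde{T}'$ as $\phi(T')$ for an even-reduced $T'$ with the right label-generator count, and conclude $T=\fup{a}{b}T'$ via Proposition \ref{prop:evenwordinj}. The only difference is cosmetic: the paper splits into two explicit pictures according to whether the second-leftmost top boundary link attaches to the left or right side of the diagram, whereas you describe the same rerouting construction uniformly.
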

\begin{proof}
We prove (i); (ii) is proven analogously by reflecting the diagrams about a horizontal line. We thus let $T$ be an even-reduced word in $\A_n(X)$ such that $\phi(T)$ has a top boundary link at $1$ on the left. As $T$ is even-reduced, we have from Lemma \ref{lem:evendiag} that $\phi(T)$ is an even diagram. This means $\phi(T)$ has at least one more top boundary link, in addition to the one at $1$ on the left. 

Consider the second-leftmost top boundary link. If this comes from the left side of the diagram, we have
\begin{align}
\phi(T) &= \; \begin{tikzpicture}[baseline={([yshift=-3mm]current bounding box.center)},xscale=0.5,yscale=0.5]
{
\draw (0,0) arc(-90:0:0.5);
\filldraw[blue!30!cyan!40] (0,-0.5) arc(90:-90:1 and 1.5) --cycle;
\draw (0,-4) to[out=0,in=-90] (1.5,0.5);
\filldraw[red!10!magenta!30] (0,-4.5) to[out=0,in=-90] (2,0.5) -- (2.5,0.5) arc (180:270:0.5) -- (3,-6) arc(90:180:0.5)--(0.5,-6.5) arc(0:90:0.5) -- cycle;
\node at (0.5,0.4) [anchor=south] {\footnotesize $\mathrm{a}$};
\node at (1.5,0.4) [anchor=south] {\footnotesize $\mathrm{b}$};
\draw[dotted] (0,0.5)--(3,0.5);
\draw[dotted] (0,-6.5)--(3,-6.5);
\draw [very thick](0,-6.5) -- (0,0.5);
\draw [very thick](3,-6.5)--(3,0.5);
}
\end{tikzpicture} 
\; = \; 
\begin{tikzpicture}[baseline={([yshift=-3mm]current bounding box.center)},xscale=0.5,yscale=0.5]
{
\draw (0,0) arc(-90:0:0.5);
\draw (3,0) arc(270:180:0.5);
\draw (0,-0.5)--(3,-0.5);
\draw (0,-3.5)--(3,-3.5);
\draw (0,-4)--(3,-4);
\draw (0,-4.5)--(3,-4.5);
\draw (0,-6)--(3,-6);
\node at (0.5,0.4) [anchor=south] {\footnotesize $\mathrm{a}$};
\node at (2.5,0.4) [anchor=south] {\footnotesize $\mathrm{b}$};
\node at (1.5,-1.8) {$\vdots$};
\node at (1.5,-5.05) {$\vdots$};
\begin{scope}[shift={(3,0)}]
\filldraw[blue!30!cyan!40] (0,-0.5) arc(90:-90:1 and 1.5) --cycle;
\draw (0,0) arc(90:-90:1.5 and 2);
\filldraw[red!10!magenta!30] (0,-4.5) to[out=0,in=-90] (2,0.5) -- (2.5,0.5) arc (180:270:0.5) -- (3,-6) arc(90:180:0.5)--(0.5,-6.5) arc(0:90:0.5) -- cycle;
\end{scope}
\draw[dotted] (0,0.5)--(6,0.5);
\draw[dotted] (0,-6.5)--(6,-6.5);
\draw [very thick](0,-6.5) -- (0,0.5);
\draw [very thick](3,-6.5)--(3,0.5);
\draw[very thick] (6,-6.5)--(6,0.5);
}
\end{tikzpicture}\; .
\end{align}
If the second-leftmost top boundary link comes from the right side of the diagram, then we have
\begin{align}
\phi(T) &= \; \begin{tikzpicture}[baseline={([yshift=-3mm]current bounding box.center)},xscale=0.5,yscale=0.5]
{
\draw (0,0) arc(-90:0:0.5);
\filldraw[blue!30!cyan!40] (3,0) arc(270:180:0.5) --(1.5,0.5) to[out=270,in=180] (3,-3) --cycle;
\draw (3,-3.5) to[out=180,in=270] (1,0.5);
\filldraw[red!10!magenta!30] (0,-0.5) ..controls (0.8,0) and (0.7,-4).. (3,-4) -- (3,-6) arc (90:180:0.5) -- (0.5,-6.5) arc(0:90:0.5) -- cycle;
\node at (0.5,0.4) [anchor=south] {\footnotesize $\mathrm{a}$};
\node at (1,0.4) [anchor=south] {\footnotesize $\mathrm{b}$};
\draw[dotted] (0,0.5)--(3,0.5);
\draw[dotted] (0,-6.5)--(3,-6.5);
\draw [very thick](0,-6.5) -- (0,0.5);
\draw [very thick](3,-6.5)--(3,0.5);
}
\end{tikzpicture} 
\; = \; 
\begin{tikzpicture}[baseline={([yshift=-3mm]current bounding box.center)},xscale=0.5,yscale=0.5]
{
\draw (0,0) arc(-90:0:0.5);
\draw (3,0) arc(270:180:0.5);
\draw (0,-0.5)--(3,-0.5);
\draw (0,-6)--(3,-6);
\node at (0.5,0.4) [anchor=south] {\footnotesize $\mathrm{a}$};
\node at (2.5,0.4) [anchor=south] {\footnotesize $\mathrm{b}$};
\node at (1.5,-3.05) {$\vdots$};
\begin{scope}[shift={(3,0)}]
\filldraw[blue!30!cyan!40] (3,0) arc(270:180:0.5) --(1.5,0.5) to[out=270,in=180] (3,-3) --cycle;
\draw (3,-3.5) ..controls (0.9,-3.5) and (1.4,0).. (0,0);
\filldraw[red!10!magenta!30] (0,-0.5) ..controls (0.8,0) and (0.7,-4).. (3,-4) -- (3,-6) arc (90:180:0.5) -- (0.5,-6.5) arc(0:90:0.5) -- cycle;
\end{scope}
\draw[dotted] (0,0.5)--(6,0.5);
\draw[dotted] (0,-6.5)--(6,-6.5);
\draw [very thick](0,-6.5) -- (0,0.5);
\draw [very thick](3,-6.5)--(3,0.5);
\draw [very thick](6,-6.5)--(6,0.5);
}
\end{tikzpicture} \; .
\end{align}
Let the second diagram in the appropriate product above be $\undertilde{T}'$. Note that this diagram is even. Hence from Corollary \ref{cor:evenredsurj}, there is an even-reduced word $T'$ such that $\phi(T')=\undertilde{T}'$. Since concatenating $\phi(\fup{a}{b})$ and $\undertilde{T}'$ produces no boundary arcs, the number of boundary links in $\phi(\fup{a}{b}T')= \phi(T)$ is the sum of the number of boundary links in each of $\phi(\fup{a}{b})$ and $\undertilde{T}'$. From Lemma \ref{lem:evendiag}, the number of boundary links in $\undertilde{T}'$ is twice the number of label generators in $T'$, so the number of boundary links in $\phi(T)$ is this, plus the two from $\phi(\fup{a}{b})$.
As $T$ is even-reduced, we also have that the number of boundary links in $\phi(T)$ is twice the number of label generators in $T$, from Lemma \ref{lem:evendiag}. Hence $T$ contains one more label generator than $T'$.
Finally, by Proposition \ref{prop:evenwordinj}, $\phi(\fup{a}{b}T')=\phi(T)$ implies $T=\fup{a}{b}T'$.
\end{proof}

Recall that, for $n$ odd, 
\begin{align}
O&\coloneqq \prod_{k=1}^\frac{n-1}{2} e_{2k-1}, &E&\coloneqq \prod_{k=1}^\frac{n-1}{2} e_{2k}.
\end{align}

\pagebreak
\begin{lemma}[Cases (c), (d); $n$ odd]\label{lem:oddlongbdylink}
Let $n$ be odd, and let $T$ be an even-reduced word in $\A_n(X)$.
\begin{enumerate}[label=(\roman*)]
    \item If $\phi(T)$ has a top boundary link at $n$ on the left, then $T=LO\fup{a}{b}ET'$, for some $\mathrm{a},\mathrm{b} \in X$, where
    \begin{itemize}
        \item $L$ is a word in the generators $e_i$, $1\leq i\leq n-3$, and $\fup{c}{d}$, $\mathrm{c}, \mathrm{d} \in X$;
        \item $T'$ is a word in the even generators; and
        \item $L$ and $T'$ together contain one less label generator than $T$.
    \end{itemize}
    \item If $\phi(T)$ has a bottom boundary link at $1$ on the left, then $T=LE\fdo{a}{b}OT'$, for some $\mathrm{a},\mathrm{b} \in X$, where
    \begin{itemize}
        \item $L$ is a word in the generators $e_i$, $3\leq i\leq n-1$, and $\fdo{c}{d}$, $\mathrm{c}, \mathrm{d} \in X$;
        \item $T'$ is a word in the even generators; and
        \item $L$ and $T'$ together contain one less label generator than $T$.
    \end{itemize}
\end{enumerate}
\end{lemma}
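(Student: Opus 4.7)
The plan is to prove part (i) by following the strategy of Lemma \ref{lem:topbdylinkat1}: decompose the diagram $\phi(T)$ explicitly as a product of three even $\lab_n(X)$-diagrams whose middle factor is $\phi(O\fup{a}{b}E)$, use the construction from Proposition \ref{prop:evengen} to lift the outer two factors to words in the required generators, and then invoke Proposition \ref{prop:evenwordinj} to translate the diagrammatic equality back to $\A_n(X)$. Part (ii) will follow from (i) by reflecting every diagram about a horizontal line, which exchanges $\fup{c}{d}$ with $\fdo{c}{d}$, $O$ with $E$, and the roles of the two boundaries.

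The first preparatory step is to verify by direct concatenation that, for $n$ odd, $\phi(O\fup{a}{b}E)$ is the specific $\lab_n(X)$-diagram with simple links at $(2k-1,2k)$ on the left and at $(2k,2k+1)$ on the right for $1\leq k\leq (n-1)/2$, a top boundary link at node $1$ on the right labelled $b$, and a single long top boundary link labelled $a$ that emerges from node $n$ on the left and zigzags through the alternating simple links of $\phi(O)$ and $\phi(E)$ to reach the top boundary. Crucially, this concatenation produces no boundary arcs or loops.

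Given that $\phi(T)$ has a top boundary link at node $n$ on the left (with label $a$, say), this link traces a zigzag path through $\phi(T)$ that can be matched to that of $\phi(O\fup{a}{b}E)$. Cutting $\phi(T)$ along this path yields a factorisation $\phi(T)=\undertilde{L}\cdot\phi(O\fup{a}{b}E)\cdot\undertilde{T}''$, where $\undertilde{L}$ is an even $\lab_n(X)$-diagram whose nodes $n-1$ and $n$ on each side are throughlines and which has no bottom boundary links, and $\undertilde{T}''$ is an even $\lab_n(X)$-diagram whose leftmost top node carries a boundary link labelled $b$. This concatenation produces no loops or boundary arcs. Since $\undertilde{L}$ is nontrivial only in its top $n-2$ rows, applying the construction of Proposition \ref{prop:evengen} to $\undertilde{L}$ yields a word $L$ using only generators $e_i$, $1\leq i\leq n-3$, and $\fup{c}{d}$, with $\phi(L)=\undertilde{L}$; applying the same construction to $\undertilde{T}''$ yields a word $T'$ in even generators with $\phi(T')=\undertilde{T}''$. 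Then $\phi(LO\fup{a}{b}ET')=\phi(T)$, and since both sides are even words, Proposition \ref{prop:evenwordinj} gives $T=LO\fup{a}{b}ET'$ in $\A_n(X)$. Counting boundary links via Lemma \ref{lem:evendiag} establishes that $L$ and $T'$ together contain exactly one fewer label generator than $T$.

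The main obstacle will be rigorously justifying the structural claims on the factorisation, and in particular the restriction on the generator set of $L$. The exclusion of $\fdo{c}{d}$ is immediate from the absence of bottom boundary links in $\undertilde{L}$, and the exclusion of $e_{n-1}$ is forced by the requirement that the long boundary link in $\phi(T)$ exits at node $n$ on the left, which demands a throughline at node $n$ on both sides of $\undertilde{L}$. The subtle case is $e_{n-2}$: if $\undertilde{L}$ carried a simple link at $(n-2,n-1)$ on its right side, it would pair with the matching simple link $(n-2,n-1)$ on the left of $\phi(O\fup{a}{b}E)$ to form a loop in the concatenation, contradicting the no-loop property of the factorisation. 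Together these three exclusions confine the Proposition \ref{prop:evengen} construction to the allowed generator set, completing the argument.
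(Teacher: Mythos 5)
Your overall strategy is the same as the paper's: factor $\phi(T)$ diagrammatically with $\phi(O\fup{a}{b}E)$ as the middle piece, lift the outer factors to words via Proposition \ref{prop:evengen}, transfer the diagram equality back to $\A_n(X)$ via Proposition \ref{prop:evenwordinj}, and obtain the label-generator count from Lemma \ref{lem:evendiag}; the paper merely splits your middle factor into $\phi(O)$ and $\phi(\fup{a}{b}E)$, and treats $n=1$ separately (which you should too, since your zigzag description presupposes $n\geq 3$). However, there is a genuine inconsistency in your factorisation. You correctly observe that $\phi(O\fup{a}{b}E)$ has a top boundary link labelled $\mathrm{b}$ at node $1$ on its right side, but you then assert that $\undertilde{T}''$ carries a boundary link labelled $\mathrm{b}$ at node $1$ on its left. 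Concatenating those two diagrams would join these boundary links into a top-to-top boundary arc, contradicting your claim that the concatenation produces no boundary arcs (and introducing a spurious factor of $\aup{b}{b}$). The $\mathrm{b}$-labelled boundary link of $\phi(T)$ must be contributed entirely by the middle factor; node $1$ on the left of $\undertilde{T}''$ must instead carry the \emph{continuation} of that string --- a throughline or link taking it to wherever the corresponding string of $\phi(T)$ terminates.

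Related to this, you never establish where the label $\mathrm{b}$ comes from, i.e.\ that $\phi(T)$ actually has a second top boundary link for the $\mathrm{b}$-endpoint of $\fup{a}{b}$ to account for. This needs an argument: since $\phi(T)$ is even, and planarity (node $n$ being the bottommost left node) forces every other top boundary link from the left side to land to the left of the $\mathrm{a}$-endpoint, the number of top boundary links attached to the right side of $\phi(T)$ is odd, hence nonzero, and the leftmost of them sits immediately to the right of the $\mathrm{a}$-endpoint on the top boundary; its label is the required $\mathrm{b}$. This is also where the paper's case split (on whether node $n-1$ on the left of $\phi(T)$ is itself a top boundary link or a link) enters, since the shape of $\undertilde{L}$ differs between the two cases; your single abstract ``cut along the zigzag'' glosses over that verification. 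With the misplaced $\mathrm{b}$-link corrected and the existence of the second boundary link supplied, the remainder of your argument --- the throughlines at $n-1$ and $n$ confining $L$ to the stated generating set, and the boundary-link count --- goes through as in the paper.
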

\begin{proof}
We prove (i); (ii) is proven analogously by reflecting the diagrams about a horizontal line. For $n=1$, note that $T=\fup{a}{b}$, and $O=E=\id$, so the result holds with $L=T'=\id$. Hereafter assume $n\geq 3$. 

Since $T$ is even-reduced, $\phi(T)$ is an even diagram, by Lemma \ref{lem:evendiag}. Due to parity and planarity considerations, having a top boundary link at $n$ on the left implies that the number of top boundary links on the left side of the diagram is odd, and thus the number of top boundary links on the right side of the diagram is also odd. It follows that there is at least one top boundary link coming from the right side of $\phi(T)$; we will draw the leftmost of these explicitly in the following diagrams. We now consider two cases, according to whether the string at $n-1$ on the left is a top boundary link, or a link.

In the first case, we have
\begin{align}
\phi(T) &= \; \begin{tikzpicture}[baseline={([yshift=-3mm]current bounding box.center)},xscale=0.5,yscale=0.5]
{
\draw (0,-6) ..controls (2.2,-6) and (1.5,-2.5).. (1.5,0.5);
\draw (0,-5.5) ..controls (1.8,-5.5) and (1.1,-2).. (1.1,0.5);
\draw (3,-2) to[out=180,in=270] (1.9,0.5);
\filldraw[red!10!magenta!30] (0,0) arc(-90:0:0.4 and 0.5)--(0.7,0.5) ..controls (0.7,-2) and (1.3,-5).. (0,-5) --cycle;
\filldraw[blue!30!cyan!40] (3,0) arc(270:180:0.4 and 0.5)--(2.3,0.5) to[out=270,in=180] (3,-1.5) --cycle;
\filldraw[orange!40] (3,-2.5) ..controls (1.7,-2.5) and (1.8,-5).. (1.8,-6.5)--(2.5,-6.5) arc(180:90:0.5) --cycle;
\node at (1.1,0.4) [anchor=south] {\footnotesize $\mathrm{c}$};
\node at (1.5,0.4) [anchor=south] {\footnotesize $\mathrm{a}$};
\node at (1.9,0.4) [anchor=south] {\footnotesize $\mathrm{b}$};
\draw[dotted] (0,0.5)--(3,0.5);
\draw[dotted] (0,-6.5)--(3,-6.5);
\draw [very thick](0,-6.5) -- (0,0.5);
\draw [very thick](3,-6.5)--(3,0.5);
}
\end{tikzpicture} 
\; = \;
\begin{tikzpicture}[baseline={([yshift=-3mm]current bounding box.center)},xscale=0.5,yscale=0.5]
{
\filldraw[red!10!magenta!30] (0,0) arc(-90:0:0.4 and 0.5)--(0.7,0.5) ..controls (0.7,-2) and (1.3,-5).. (0,-5) --cycle;
\draw (0,-6)--(3,-6);
\draw (0,-5.5)--(3,-5.5);
\draw (3,0) arc(270:180:0.5);
\node at (2.5,0.4) [anchor=south] {\footnotesize $\mathrm{c}$};
\draw (3,-0.5) arc(90:270:0.3 and 0.25);
\draw (3,-1.5) arc(90:270:0.3 and 0.25);
\draw (3,-3.5) arc(90:270:0.3 and 0.25);
\draw (3,-4.5) arc(90:270:0.3 and 0.25);
\node at (2.7,-2.55) {$\vdots$};
\begin{scope}[shift={(3,0)}]
\draw (0,0) arc(90:-90:0.3 and 0.25);
\draw (0,-1) arc(90:-90:0.3 and 0.25);
\draw (0,-4) arc(90:-90:0.3 and 0.25);
\draw (0,-5) arc(90:-90:0.3 and 0.25);
\draw (3,0) arc(90:270:0.3 and 0.25);
\draw (3,-1) arc(90:270:0.3 and 0.25);
\draw (3,-4) arc(90:270:0.3 and 0.25);
\draw (3,-5) arc(90:270:0.3 and 0.25);
\draw (0,-6)--(3,-6);
\node at (0.3,-2.55) {$\vdots$};
\node at (2.7,-2.55) {$\vdots$};
\end{scope}
\begin{scope}[shift={(6,0)}]
\draw (0,0) arc(-90:0:0.5);
\draw (3,0) arc(270:180:0.5);
\draw (0,-0.5) arc(90:-90:0.3 and 0.25);
\draw (3,-0.5) arc(90:270:0.3 and 0.25);
\draw (0,-1.5) arc(90:-90:0.3 and 0.25);
\draw (3,-1.5) arc(90:270:0.3 and 0.25);
\draw (0,-5.5) arc(90:-90:0.3 and 0.25);
\draw (3,-5.5) arc(90:270:0.3 and 0.25);
\draw (0,-4.5) arc(90:-90:0.3 and 0.25);
\draw (3,-4.5) arc(90:270:0.3 and 0.25);
\node at (0.5,0.4) [anchor=south] {\footnotesize $\mathrm{a}$};
\node at (2.5,0.4) [anchor=south] {\footnotesize $\mathrm{b}$};
\node at (0.3,-3.05) {$\vdots$};
\node at (2.7,-3.05) {$\vdots$};
\end{scope}
\begin{scope}[shift={(9,0)}]
\filldraw[blue!30!cyan!40] (3,0) arc(270:180:0.4 and 0.5)--(2.3,0.5) to[out=270,in=180] (3,-1.5) --cycle;
\filldraw[orange!40] (3,-2.5) ..controls (1.7,-2.5) and (1.8,-5).. (1.8,-6.5)--(2.5,-6.5) arc(180:90:0.5) --cycle;
\node at (0.3,-2.55) {$\vdots$};
\draw (0,-6) to[out=0,in=180] (3,-2);
\draw (0,-5) arc(90:-90:0.3 and 0.25);
\draw (0,-4) arc(90:-90:0.3 and 0.25);
\draw (0,0) arc(90:-90:0.3 and 0.25);
\draw (0,-1) arc(90:-90:0.3 and 0.25);
\end{scope}
\draw[dotted] (0,0.5)--(12,0.5);
\draw[dotted] (0,-6.5)--(12,-6.5);
\draw [very thick](0,-6.5) -- (0,0.5);
\draw [very thick](3,-6.5)--(3,0.5);
\draw [very thick](6,-6.5)--(6,0.5);
\draw [very thick](9,-6.5)--(9,0.5);
\draw [very thick](12,-6.5)--(12,0.5);
}
\end{tikzpicture} \; .
\end{align}
In the second case,
\begin{align}
\phi(T) &= \; \begin{tikzpicture}[baseline={([yshift=-3mm]current bounding box.center)},xscale=0.5,yscale=0.5]
{
\draw (0,-6) ..controls (2,-6) and (1.3,-2.5).. (1.3,0.5);
\draw (0,-3) arc(90:-90:1 and 1.25);
\draw (3,-2) to[out=180,in=270] (1.75,0.5);
\filldraw[red!10!magenta!30] (0,0) arc(-90:0:0.4 and 0.5)--(0.9,0.5) ..controls (0.9,-1) and (1.3,-2.5).. (0,-2.5) --cycle;
\filldraw[blue!30!cyan!40] (3,0) arc(270:180:0.4 and 0.5)--(2.2,0.5) to[out=270,in=180] (3,-1.5) --cycle;
\filldraw[orange!40] (3,-2.5) ..controls (1.7,-2.5) and (1.8,-5).. (1.8,-6.5)--(2.5,-6.5) arc(180:90:0.5) --cycle;
\filldraw[green!90!blue!30] (0,-3.5) arc(90:-90:0.7 and 0.75)--cycle;
\node at (1.3,0.4) [anchor=south] {\footnotesize $\mathrm{a}$};
\node at (1.75,0.4) [anchor=south] {\footnotesize $\mathrm{b}$};
\draw[dotted] (0,0.5)--(3,0.5);
\draw[dotted] (0,-6.5)--(3,-6.5);
\draw [very thick](0,-6.5) -- (0,0.5);
\draw [very thick](3,-6.5)--(3,0.5);
}
\end{tikzpicture} 
\; = \;
\begin{tikzpicture}[baseline={([yshift=-3mm]current bounding box.center)},xscale=0.5,yscale=0.5]
{
\filldraw[red!10!magenta!30] (0,0) arc(-90:0:0.4 and 0.5)--(0.9,0.5) ..controls (0.9,-1) and (1.3,-2.5).. (0,-2.5) --cycle;
\draw (0,-6)--(3,-6);
\draw (0,-3) to[out=0,in=180] (3,0);
\draw (0,-5.5)--(3,-5.5);
\filldraw[green!90!blue!30] (0,-3.5) arc(90:-90:0.7 and 0.75)--cycle;
\draw (3,-0.5) arc(90:270:0.3 and 0.25);
\draw (3,-1.5) arc(90:270:0.3 and 0.25);
\draw (3,-3.5) arc(90:270:0.3 and 0.25);
\draw (3,-4.5) arc(90:270:0.3 and 0.25);
\node at (2.7,-2.55) {$\vdots$};
\begin{scope}[shift={(3,0)}]
\draw (0,0) arc(90:-90:0.3 and 0.25);
\draw (0,-1) arc(90:-90:0.3 and 0.25);
\draw (0,-4) arc(90:-90:0.3 and 0.25);
\draw (0,-5) arc(90:-90:0.3 and 0.25);
\draw (3,0) arc(90:270:0.3 and 0.25);
\draw (3,-1) arc(90:270:0.3 and 0.25);
\draw (3,-4) arc(90:270:0.3 and 0.25);
\draw (3,-5) arc(90:270:0.3 and 0.25);
\draw (0,-6)--(3,-6);
\node at (0.3,-2.55) {$\vdots$};
\node at (2.7,-2.55) {$\vdots$};
\end{scope}
\begin{scope}[shift={(6,0)}]
\draw (0,0) arc(-90:0:0.5);
\draw (3,0) arc(270:180:0.5);
\draw (0,-0.5) arc(90:-90:0.3 and 0.25);
\draw (3,-0.5) arc(90:270:0.3 and 0.25);
\draw (0,-1.5) arc(90:-90:0.3 and 0.25);
\draw (3,-1.5) arc(90:270:0.3 and 0.25);
\draw (0,-5.5) arc(90:-90:0.3 and 0.25);
\draw (3,-5.5) arc(90:270:0.3 and 0.25);
\draw (0,-4.5) arc(90:-90:0.3 and 0.25);
\draw (3,-4.5) arc(90:270:0.3 and 0.25);
\node at (0.5,0.4) [anchor=south] {\footnotesize $\mathrm{a}$};
\node at (2.5,0.4) [anchor=south] {\footnotesize $\mathrm{b}$};
\node at (0.3,-3.05) {$\vdots$};
\node at (2.7,-3.05) {$\vdots$};
\end{scope}
\begin{scope}[shift={(9,0)}]
\draw (3,-2) to[out=180,in=0] (0,-6);
\filldraw[blue!30!cyan!40] (3,0) arc(270:180:0.4 and 0.5)--(2.2,0.5) to[out=270,in=180] (3,-1.5) --cycle;
\filldraw[orange!40] (3,-2.5) ..controls (1.7,-2.5) and (1.8,-5).. (1.8,-6.5)--(2.5,-6.5) arc(180:90:0.5) --cycle;
\draw (0,-5) arc(90:-90:0.3 and 0.25);
\draw (0,-4) arc(90:-90:0.3 and 0.25);
\draw (0,0) arc(90:-90:0.3 and 0.25);
\draw (0,-1) arc(90:-90:0.3 and 0.25);
\node at (0.3,-2.55) {$\vdots$};
\end{scope}
\draw[dotted] (0,0.5)--(12,0.5);
\draw[dotted] (0,-6.5)--(12,-6.5);
\draw [very thick](0,-6.5) -- (0,0.5);
\draw [very thick](3,-6.5)--(3,0.5);
\draw [very thick](6,-6.5)--(6,0.5);
\draw [very thick](9,-6.5)--(9,0.5);
\draw [very thick](12,-6.5)--(12,0.5);
}
\end{tikzpicture}\; .
\end{align}
In each of these products, the first diagram has throughlines connecting $n-1$ and $n$ on the left, to $n-1$ and $n$ on the right, respectively, and thus it is generated by $\de{i}$, $1\leq i \leq n-3$ and $\dfup{e}{f}$, $\mathrm{e}, \mathrm{f} \in X$. One can deduce this from the construction in Proposition \ref{prop:evengen}. Hence the first diagram is $\phi(L)$ for some word $L$ in the generators $e_i$, $1\leq i \leq n-3$ and $\fup{e}{f}$, $\mathrm{e}, \mathrm{f} \in X$. The construction from Proposition \ref{prop:evengen} does not produce any arcs, so the number of boundary links in $\phi(L)$ is double the number of label generators in $L$. The second diagram is $\phi(O)$, and the third diagram is $\phi(\fup{a}{b}E)$; these have zero and two boundary links, respectively, which is double the number of label generators in $O$ and $\fup{a}{b}E$, respectively. The fourth diagram is even, and is therefore equal to $\phi(T')$ for some even-reduced word $T'$, by Corollary \ref{cor:evenredsurj}. From Corollary \ref{cor:evenredsurj}, we also have that the number of boundary links in $\phi(T')$ is equal to double the number of label generators in $T'$. 

Thus, $\phi(T)=\phi(LO\fup{a}{b}ET')$. Since the above products of diagrams produce no boundary arcs, we have that the number of boundary arcs in $\phi(T)=\phi(LO\fup{a}{b}ET')$ is the total number of boundary links in $\phi(L)$, $\phi(O)$, $\phi(\fup{a}{b}E)$ and $\phi(T')$. In each case this is exactly double the number of label generators in the argument of $\phi$, so the number of boundary links in $\phi(T)=\phi(LO\fup{a}{b}ET')$ is double the number of label generators in $LO\fup{a}{b}ET'$. Hence $LO\fup{a}{b}ET'$ is label-reduced, by Lemma \ref{lem:labelreducedbdylinks}. Since this is a label-reduced word in the even generators, it follows that it is even. As $T$ is also even, and $\phi(T)=\phi(LO\fup{a}{b}ET')$, we have $T=LO\fup{a}{b}ET'$ by Proposition \ref{prop:evenwordinj}. Then since $T=LO\fup{a}{b}ET'$, and both $T$ and $LO\fup{a}{b}ET'$ are label-reduced, $T$ and $LO\fup{a}{b}ET'$ have the same number of label generators. As $O$ and $E$ each contain no label generators, and $\fup{a}{b}$ is a label generator, it follows that $L$ and $T'$ together contain one less label generator than $T$. This proves (i).
\end{proof}

Recall that, for $n$ even,
\begin{align}
\Theta &\coloneqq  \prod_{k=1}^{\frac{n}{2}} e_{2k-1}, &\Omega &\coloneqq  \prod_{k=1}^{\frac{n}{2}-1} e_{2k}.
\end{align}

\begin{lemma}[Cases (c), (d); $n$ even]\label{lem:evenlongbdylink}
Let $n$ be even, and let $T$ be an even-reduced word in $\A_n(X)$.
\begin{enumerate}[label=(\roman*)]
    \item If $\phi(T)$ has a top boundary link at $n$ on the left, then $T=L\fup{a}{b} \Omega \Theta T'$, for some $\mathrm{a},\mathrm{b} \in X$, where
    \begin{itemize}
        \item $L$ is a word in the generators $e_i$, $1\leq i \leq n-2$, and $\fup{c}{d}$, $\mathrm{c},\mathrm{d} \in X$;
        \item $T'$ is a word in the even generators; and
        \item $L$ and $T'$ together contain one less label generator than $T$.
    \end{itemize}
    \item If $\phi(T)$ has a bottom boundary link at $1$ on the left, then $T=L\fdo{a}{b} \Omega \Theta T'$, for some $\mathrm{a},\mathrm{b} \in X$, where
    \begin{itemize}
        \item $L$ is a word in the generators $e_i$, $2\leq i \leq n-1$, and $\fdo{c}{d}$, $\mathrm{c},\mathrm{d} \in X$;
        \item $T'$ is a word in the even generators; and
        \item $L$ and $T'$ together contain one less label generator than $T$.
    \end{itemize}
\end{enumerate}
\end{lemma}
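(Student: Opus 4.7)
The proof of (i) follows the pattern of Lemma \ref{lem:oddlongbdylink}; part (ii) will reduce to part (i) by reflecting all diagrams about a horizontal line. Since $T$ is even-reduced, Lemma \ref{lem:evendiag} yields that $\phi(T)$ is an even diagram with coefficient $1$, whose boundary link count is twice the number of label generators in $T$. Having a top boundary link at $n$ on the left forces, by parity and planarity of the even diagram $\phi(T)$, at least one further top boundary link elsewhere in $\phi(T)$.

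First I would compute $\phi(\fup{a}{b}\Omega\Theta)$ directly. Since $\Omega\Theta$ is a TL product whose image pairs position $1$ with position $n$ and positions $(2k,2k+1)$ for $1\le k\le n/2-1$ on the left, with simple links $(1,2),(3,4),\ldots,(n-1,n)$ on the right, composing with $\phi(\fup{a}{b})$ converts the $1$-$n$ arc on the left into two top boundary links at positions $1$ (label $\mathrm{a}$) and $n$ (label $\mathrm{b}$). Hence $\phi(\fup{a}{b}\Omega\Theta)$ has top boundary links at $1$ and $n$ on the left, simple links $(2,3),(4,5),\ldots,(n-2,n-1)$ on the left, and simple links $(1,2),(3,4),\ldots,(n-1,n)$ on the right, with two boundary links and one label generator total.

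Next I would split into cases based on the string at $n-1$ on the left of $\phi(T)$---either it is a top boundary link (Case 1), or it is a link (Case 2)---and in each case draw $\phi(T)$ as a product of three diagrams analogous to the four-piece decomposition of Lemma \ref{lem:oddlongbdylink}: a first diagram $\phi(L)$, then $\phi(\fup{a}{b}\Omega\Theta)$, then an even diagram on the right. The first diagram has no bottom boundary links and a throughline at node $n$ on its right (ensuring label $\mathrm{b}$ will reach left $n$ after multiplication), so by Proposition \ref{prop:evengen} it equals $\phi(L)$ for a word $L$ in $e_i$ with $1\le i\le n-2$ and $\fup{c}{d}$'s, with boundary link count twice the number of label generators in $L$. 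The last diagram is even, so by Corollary \ref{cor:evenredsurj} it equals $\phi(T')$ for an even-reduced $T'$ with matching boundary link count.

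Since the pictorial decomposition produces no boundary arcs, the boundary link count of $\phi(T) = \phi(L\fup{a}{b}\Omega\Theta T')$ equals twice the total number of label generators in $L\fup{a}{b}\Omega\Theta T'$. Comparing with Lemma \ref{lem:evendiag} applied to $T$ and invoking Lemma \ref{lem:labelreducedbdylinks}, we conclude that $L\fup{a}{b}\Omega\Theta T'$ is label-reduced, hence even (since it uses only even generators). As both $T$ and $L\fup{a}{b}\Omega\Theta T'$ are even with the same $\phi$-image, Proposition \ref{prop:evenwordinj} gives $T=L\fup{a}{b}\Omega\Theta T'$, and the label generator count follows. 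The main obstacle is Case 2, where the link at node $n-1$ on the left must be routed through the fixed pairing structure on the left of $\phi(\fup{a}{b}\Omega\Theta)$ without producing boundary arcs; I expect this will be resolved in essentially the same way as the second diagram in Lemma \ref{lem:oddlongbdylink}, with minor re-indexing to accommodate the $\Omega\Theta$ pattern rather than the $O$-$E$ pattern.
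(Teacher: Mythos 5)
Your overall skeleton is right --- the three-factor decomposition $\phi(L)\cdot\phi(\fup{a}{b}\Omega\Theta)\cdot\phi(T')$, your computation of $\phi(\fup{a}{b}\Omega\Theta)$, the characterisation of $\phi(L)$, and the closing boundary-link count feeding into Proposition \ref{prop:evenwordinj} all match the paper (which merely writes the middle factor as $\phi(\fup{a}{b}\Omega)\cdot\phi(\Theta)$). But your case analysis is aimed at the wrong feature, and as a result the genuinely delicate step is left unaddressed. For $n$ even, parity and planarity force the ``partner'' top boundary link of the one at $n$ to lie on the \emph{left} side of $\phi(T)$: the nodes $1,\dots,n-1$ on the left can only carry links among themselves or top boundary links, and since $n-1$ is odd there is an odd number of the latter, hence at least one. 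Everything strictly between that partner and node $n$ is then a self-contained block of links, so the left side has a uniform structure that fits inside $\phi(L)$ in one stroke: whether node $n-1$ on the left carries a top boundary link or a link only decides whether that block is empty, and in neither case does it interact with the pairing pattern of $\phi(\fup{a}{b}\Omega\Theta)$. Your ``main obstacle'' in Case 2 is therefore a non-issue; the split on node $n-1$ is the one needed in Lemma \ref{lem:oddlongbdylink}, where $\phi(L)$ has a forced throughline at $n-1$ feeding the middle zigzag, and it does no work here.

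The real work for $n$ even is on the right. The right-hand face of $\phi(\fup{a}{b}\Omega\Theta)$ consists entirely of the links $(1,2),(3,4),\dots,(n-1,n)$, so when you multiply by $\phi(T')$ the zigzag forcibly joins nodes $1$ and $n$ on the left of $\phi(T')$ into a single string. You must therefore build $\phi(T')$ by taking the whole right-hand configuration of $\phi(T)$ (no string of which can cross the node-$n$ boundary link, so it is disconnected from the left half), choosing a string of it that is exposed towards the left, and cutting that string into two ends placed at nodes $1$ and $n$ of $\phi(T')$, with links $(2,3),\dots,(n-2,n-1)$ filling the remaining left-hand nodes. Verifying that this can always be done without creating loops or boundary arcs is exactly the paper's three-way case analysis (exposed link, exposed top boundary link, exposed bottom boundary link on the right of $\phi(T)$). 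Your proposal never constructs this last factor --- ``an even diagram on the right'' is not yet exhibited --- so the decomposition, and hence the appeal to Corollary \ref{cor:evenredsurj} and the no-boundary-arc count, is not justified as written.
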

\begin{proof}
We prove (i); (ii) is proven analogously by reflecting the diagrams about a horizontal line. We thus let $T$ be an even-reduced word in $\A_n(X)$ such that $\phi(T)$ has a top boundary link at $n$ on the left. As $T$ is even-reduced, we have from Lemma \ref{lem:evendiag} that $\phi(T)$ is an even diagram. From parity and planarity considerations, given the top boundary link at $n$ on the left, $\phi(T)$ must have an even number of top boundary links coming from its left side. Hence there is at least one more top boundary link coming from the left, in addition to the one at $n$; we will draw the rightmost of these explicitly in the following diagrams.

The right side of the diagram must have at least one string exposed to the left; we consider three cases, using an exposed link, and the leftmost top or bottom boundary link.

In the first case, with an exposed link on the right, we have
\begin{align}
\phi(T) &= \; \begin{tikzpicture}[baseline={([yshift=-3mm]current bounding box.center)},xscale=0.5,yscale=0.5]
{
\draw (0,-6) ..controls (2,-6) and (1.75,-2).. (1.75,0.5);
\draw (0,-3) to[out=0,in=-90] (1.3,0.5);
\draw (3,-2) arc(90:270:1 and 1.25);
\filldraw[red!10!magenta!30] (0,0) arc(-90:0:0.4 and 0.5)--(0.9,0.5) ..controls (0.9,-1) and (0.9,-2.5).. (0,-2.5) --cycle;
\filldraw[blue!30!cyan!40] (3,0) arc(270:180:0.4 and 0.5)--(2.2,0.5) to[out=270,in=180] (3,-1.5) --cycle;
\filldraw[orange!40] (3,-5) ..controls (2,-5) and (2,-5.9).. (2,-6.5)--(2.6,-6.5) arc(180:90:0.4 and 0.5) --cycle;
\filldraw[green!90!blue!30] (0,-3.5) arc(90:-90:0.8 and 1)--cycle;
\filldraw[gray!40] (3,-2.5) arc(90:270:0.65 and 0.75)--cycle;
\node at (1.3,0.4) [anchor=south] {\footnotesize $\mathrm{a}$};
\node at (1.75,0.4) [anchor=south] {\footnotesize $\mathrm{b}$};
\draw[dotted] (0,0.5)--(3,0.5);
\draw[dotted] (0,-6.5)--(3,-6.5);
\draw [very thick](0,-6.5) -- (0,0.5);
\draw [very thick](3,-6.5)--(3,0.5);
}
\end{tikzpicture} 
\; = \; 
\begin{tikzpicture}[baseline={([yshift=-3mm]current bounding box.center)},xscale=0.5,yscale=0.5]
{
\draw (0,-6)--(3,-6);
\filldraw[red!10!magenta!30] (0,0) arc(-90:0:0.4 and 0.5)--(0.9,0.5) ..controls (0.9,-1) and (0.9,-2.5).. (0,-2.5) --cycle;
\filldraw[green!90!blue!30] (0,-3.5) arc(90:-90:0.8 and 1)--cycle;
\draw (0,-3) to[out=0,in=180] (3,-5.5);
\draw (3,0) arc(90:270:0.3 and 0.25);
\draw (3,-1) arc(90:270:0.3 and 0.25);
\draw (3,-3.5) arc(90:270:0.3 and 0.25);
\draw (3,-4.5) arc(90:270:0.3 and 0.25);
\node at (2.7,-2.3) {$\vdots$};
\begin{scope}[shift={(3,0)}]
\draw (0,-6)--(3,-6);
\draw (0,0) arc(-90:0:0.5);
\draw (3,0) arc(270:180:0.5);
\draw (0,-0.5) arc(90:-90:0.3 and 0.25);
\draw (0,-1.5) arc(90:-90:0.3 and 0.25);
\draw (0,-4) arc(90:-90:0.3 and 0.25);
\draw (0,-5) arc(90:-90:0.3 and 0.25);
\draw (3,-0.5) arc(90:270:0.3 and 0.25);
\draw (3,-1.5) arc(90:270:0.3 and 0.25);
\draw (3,-4) arc(90:270:0.3 and 0.25);
\draw (3,-5) arc(90:270:0.3 and 0.25);
\node at (0.5,0.4) [anchor=south] {\footnotesize $\mathrm{a}$};
\node at (2.5,0.4) [anchor=south] {\footnotesize $\mathrm{b}$};
\node at (0.3,-2.8) {$\vdots$};
\node at (2.7,-2.8) {$\vdots$};
\end{scope}
\begin{scope}[shift={(6,0)}]
\draw (0,0) arc(90:-90:0.3 and 0.25);
\draw (0,-1) arc(90:-90:0.3 and 0.25);
\draw (0,-4.5) arc(90:-90:0.3 and 0.25);
\draw (0,-5.5) arc(90:-90:0.3 and 0.25);
\draw (3,0) arc(90:270:0.3 and 0.25);
\draw (3,-1) arc(90:270:0.3 and 0.25);
\draw (3,-4.5) arc(90:270:0.3 and 0.25);
\draw (3,-5.5) arc(90:270:0.3 and 0.25);
\node at (0.3,-2.8) {$\vdots$};
\node at (2.7,-2.8) {$\vdots$};
\end{scope}
\begin{scope}[shift={(9,0)}]
\filldraw[blue!30!cyan!40] (3,0) arc(270:180:0.4 and 0.5)--(2.2,0.5) to[out=270,in=180] (3,-1.5) --cycle;
\filldraw[orange!40] (3,-5) ..controls (2,-5) and (2,-5.9).. (2,-6.5)--(2.6,-6.5) arc(180:90:0.4 and 0.5) --cycle;
\filldraw[gray!40] (3,-2.5) arc(90:270:0.65 and 0.75)--cycle;
\draw (3,-2) to[out=180,in=0] (0,0);
\draw (3,-4.5) to[out=180,in=0] (0,-6);
\draw (0,-0.5) arc(90:-90:0.3 and 0.25);
\draw (0,-1.5) arc(90:-90:0.3 and 0.25);
\draw (0,-4) arc(90:-90:0.3 and 0.25);
\draw (0,-5) arc(90:-90:0.3 and 0.25);
\node at (0.3,-2.8) {$\vdots$};
\end{scope}
\draw[dotted] (0,0.5)--(12,0.5);
\draw[dotted] (0,-6.5)--(12,-6.5);
\draw [very thick](0,-6.5) -- (0,0.5);
\draw [very thick](3,-6.5)--(3,0.5);
\draw [very thick](6,-6.5)--(6,0.5);
\draw [very thick](9,-6.5)--(9,0.5);
\draw [very thick](12,-6.5)--(12,0.5);
}
\end{tikzpicture} \; .
\end{align}
In the second case, with a top boundary link on the right, we have
\begin{align}
\phi(T) &= \; \begin{tikzpicture}[baseline={([yshift=-3mm]current bounding box.center)},xscale=0.5,yscale=0.5]
{
\draw (0,-6) ..controls (1.7,-6) and (1.5,-2).. (1.5,0.5);
\draw (0,-3) to[out=0,in=-90] (1.1,0.5);
\draw (3,-2) to[out=180,in=270] (1.9,0.5);
\filldraw[red!10!magenta!30] (0,0) arc(-90:0:0.4 and 0.5)--(0.7,0.5) ..controls (0.7,-1) and (0.9,-2.5).. (0,-2.5) --cycle;
\filldraw[green!90!blue!30] (0,-3.5) arc(90:-90:0.8 and 1)--cycle;
\filldraw[blue!30!cyan!40] (3,0) arc(270:180:0.4 and 0.5)--(2.3,0.5) to[out=270,in=180] (3,-1.5) --cycle;
\filldraw[orange!40] (3,-2.5) ..controls (1.7,-2.5) and (1.8,-5).. (1.8,-6.5)--(2.5,-6.5) arc(180:90:0.5) --cycle;
\node at (1.1,0.4) [anchor=south] {\footnotesize $\mathrm{a}$};
\node at (1.5,0.4) [anchor=south] {\footnotesize $\mathrm{b}$};
\node at (1.9,0.4) [anchor=south] {\footnotesize $\mathrm{c}$};
\draw[dotted] (0,0.5)--(3,0.5);
\draw[dotted] (0,-6.5)--(3,-6.5);
\draw [very thick](0,-6.5) -- (0,0.5);
\draw [very thick](3,-6.5)--(3,0.5);
}
\end{tikzpicture} 
\; = \; 
\begin{tikzpicture}[baseline={([yshift=-3mm]current bounding box.center)},xscale=0.5,yscale=0.5]
{
\draw (0,-6)--(3,-6);
\filldraw[red!10!magenta!30] (0,0) arc(-90:0:0.4 and 0.5)--(0.7,0.5) ..controls (0.7,-1) and (0.9,-2.5).. (0,-2.5) --cycle;
\filldraw[green!90!blue!30] (0,-3.5) arc(90:-90:0.8 and 1)--cycle;
\draw (0,-3) to[out=0,in=180] (3,-5.5);
\draw (3,0) arc(90:270:0.3 and 0.25);
\draw (3,-1) arc(90:270:0.3 and 0.25);
\draw (3,-3.5) arc(90:270:0.3 and 0.25);
\draw (3,-4.5) arc(90:270:0.3 and 0.25);
\node at (2.7,-2.3) {$\vdots$};
\begin{scope}[shift={(3,0)}]
\draw (0,-6)--(3,-6);
\draw (0,0) arc(-90:0:0.5);
\draw (3,0) arc(270:180:0.5);
\draw (0,-0.5) arc(90:-90:0.3 and 0.25);
\draw (0,-1.5) arc(90:-90:0.3 and 0.25);
\draw (0,-4) arc(90:-90:0.3 and 0.25);
\draw (0,-5) arc(90:-90:0.3 and 0.25);
\draw (3,-0.5) arc(90:270:0.3 and 0.25);
\draw (3,-1.5) arc(90:270:0.3 and 0.25);
\draw (3,-4) arc(90:270:0.3 and 0.25);
\draw (3,-5) arc(90:270:0.3 and 0.25);
\node at (0.5,0.4) [anchor=south] {\footnotesize $\mathrm{a}$};
\node at (2.5,0.4) [anchor=south] {\footnotesize $\mathrm{b}$};
\node at (0.3,-2.8) {$\vdots$};
\node at (2.7,-2.8) {$\vdots$};
\end{scope}
\begin{scope}[shift={(6,0)}]
\draw (0,0) arc(90:-90:0.3 and 0.25);
\draw (0,-1) arc(90:-90:0.3 and 0.25);
\draw (0,-4.5) arc(90:-90:0.3 and 0.25);
\draw (0,-5.5) arc(90:-90:0.3 and 0.25);
\draw (3,0) arc(90:270:0.3 and 0.25);
\draw (3,-1) arc(90:270:0.3 and 0.25);
\draw (3,-4.5) arc(90:270:0.3 and 0.25);
\draw (3,-5.5) arc(90:270:0.3 and 0.25);
\node at (0.3,-2.8) {$\vdots$};
\node at (2.7,-2.8) {$\vdots$};
\end{scope}
\begin{scope}[shift={(9,0)}]
\filldraw[blue!30!cyan!40] (3,0) arc(270:180:0.4 and 0.5)--(2.3,0.5) to[out=270,in=180] (3,-1.5) --cycle;
\filldraw[orange!40] (3,-2.5) ..controls (1.7,-2.5) and (1.8,-5).. (1.8,-6.5)--(2.5,-6.5) arc(180:90:0.5) --cycle;
\draw (3,-2) to[out=180,in=0] (0,-6);
\draw (0,0) arc(-90:0:0.5);
\draw (0,-0.5) arc(90:-90:0.3 and 0.25);
\draw (0,-1.5) arc(90:-90:0.3 and 0.25);
\draw (0,-4) arc(90:-90:0.3 and 0.25);
\draw (0,-5) arc(90:-90:0.3 and 0.25);
\node at (0.3,-2.8) {$\vdots$};
\node at (0.5,0.4) [anchor=south] {\footnotesize $\mathrm{c}$};
\end{scope}
\draw[dotted] (0,0.5)--(12,0.5);
\draw[dotted] (0,-6.5)--(12,-6.5);
\draw [very thick](0,-6.5) -- (0,0.5);
\draw [very thick](3,-6.5)--(3,0.5);
\draw [very thick](6,-6.5)--(6,0.5);
\draw [very thick](9,-6.5)--(9,0.5);
\draw [very thick](12,-6.5)--(12,0.5);
}
\end{tikzpicture} \; .
\end{align}
In the third case, with a bottom boundary link on the right, we have
\begin{align}
\phi(T) &= \; \begin{tikzpicture}[baseline={([yshift=-2mm]current bounding box.center)},xscale=0.5,yscale=0.5]
{
\draw (0,-6) ..controls (1.7,-6) and (1.5,-1).. (1.5,0.5);
\draw (0,-3) to[out=0,in=-90] (1.1,0.5);
\draw (3,-2) ..controls (1.5,-2) and (1.5,-5).. (1.5,-6.5);
\filldraw[red!10!magenta!30] (0,0) arc(-90:0:0.4 and 0.5)--(0.7,0.5) ..controls (0.7,-1) and (0.9,-2.5).. (0,-2.5) --cycle;
\filldraw[green!90!blue!30] (0,-3.5) arc(90:-90:0.8 and 1)--cycle;
\filldraw[blue!30!cyan!40] (3,0) arc(270:180:0.4 and 0.5)--(2,0.5) to[out=270,in=180] (3,-1.5) --cycle;
\filldraw[orange!40] (3,-2.5) ..controls (1.9,-2.5) and (1.9,-5).. (1.9,-6.5)--(2.5,-6.5) arc(180:90:0.5) --cycle;
\node at (1.1,0.4) [anchor=south] {\footnotesize $\mathrm{a}$};
\node at (1.5,0.4) [anchor=south] {\footnotesize $\mathrm{b}$};
\node at (1.5,-7.3) [anchor=south] {\footnotesize $\mathrm{c}$};
\draw[dotted] (0,0.5)--(3,0.5);
\draw[dotted] (0,-6.5)--(3,-6.5);
\draw [very thick](0,-6.5) -- (0,0.5);
\draw [very thick](3,-6.5)--(3,0.5);
}
\end{tikzpicture} 
\; = \; 
\begin{tikzpicture}[baseline={([yshift=-2mm]current bounding box.center)},xscale=0.5,yscale=0.5]
{
\draw (0,-6)--(3,-6);
\filldraw[red!10!magenta!30] (0,0) arc(-90:0:0.4 and 0.5)--(0.7,0.5) ..controls (0.7,-1) and (0.9,-2.5).. (0,-2.5) --cycle;
\filldraw[green!90!blue!30] (0,-3.5) arc(90:-90:0.8 and 1)--cycle;
\draw (0,-3) to[out=0,in=180] (3,-5.5);
\draw (3,0) arc(90:270:0.3 and 0.25);
\draw (3,-1) arc(90:270:0.3 and 0.25);
\draw (3,-3.5) arc(90:270:0.3 and 0.25);
\draw (3,-4.5) arc(90:270:0.3 and 0.25);
\node at (2.7,-2.3) {$\vdots$};
\begin{scope}[shift={(3,0)}]
\draw (0,-6)--(3,-6);
\draw (0,0) arc(-90:0:0.5);
\draw (3,0) arc(270:180:0.5);
\draw (0,-0.5) arc(90:-90:0.3 and 0.25);
\draw (0,-1.5) arc(90:-90:0.3 and 0.25);
\draw (0,-4) arc(90:-90:0.3 and 0.25);
\draw (0,-5) arc(90:-90:0.3 and 0.25);
\draw (3,-0.5) arc(90:270:0.3 and 0.25);
\draw (3,-1.5) arc(90:270:0.3 and 0.25);
\draw (3,-4) arc(90:270:0.3 and 0.25);
\draw (3,-5) arc(90:270:0.3 and 0.25);
\node at (0.5,0.4) [anchor=south] {\footnotesize $\mathrm{a}$};
\node at (2.5,0.4) [anchor=south] {\footnotesize $\mathrm{b}$};
\node at (0.3,-2.8) {$\vdots$};
\node at (2.7,-2.8) {$\vdots$};
\end{scope}
\begin{scope}[shift={(6,0)}]
\draw (0,0) arc(90:-90:0.3 and 0.25);
\draw (0,-1) arc(90:-90:0.3 and 0.25);
\draw (0,-4.5) arc(90:-90:0.3 and 0.25);
\draw (0,-5.5) arc(90:-90:0.3 and 0.25);
\draw (3,0) arc(90:270:0.3 and 0.25);
\draw (3,-1) arc(90:270:0.3 and 0.25);
\draw (3,-4.5) arc(90:270:0.3 and 0.25);
\draw (3,-5.5) arc(90:270:0.3 and 0.25);
\node at (0.3,-2.8) {$\vdots$};
\node at (2.7,-2.8) {$\vdots$};
\end{scope}
\begin{scope}[shift={(9,0)}]
\filldraw[blue!30!cyan!40] (3,0) arc(270:180:0.4 and 0.5)--(2,0.5) to[out=270,in=180] (3,-1.5) --cycle;
\filldraw[orange!40] (3,-2.5) ..controls (1.9,-2.5) and (1.9,-5).. (1.9,-6.5)--(2.5,-6.5) arc(180:90:0.5) --cycle;
\draw (3,-2) to[out=180,in=0] (0,0);
\draw (0,-6) arc(90:0:0.5);
\draw (0,-0.5) arc(90:-90:0.3 and 0.25);
\draw (0,-1.5) arc(90:-90:0.3 and 0.25);
\draw (0,-4) arc(90:-90:0.3 and 0.25);
\draw (0,-5) arc(90:-90:0.3 and 0.25);
\node at (0.3,-2.8) {$\vdots$};
\node at (0.5,-7.3) [anchor=south] {\footnotesize $\mathrm{c}$};
\end{scope}
\draw[dotted] (0,0.5)--(12,0.5);
\draw[dotted] (0,-6.5)--(12,-6.5);
\draw [very thick](0,-6.5) -- (0,0.5);
\draw [very thick](3,-6.5)--(3,0.5);
\draw [very thick](6,-6.5)--(6,0.5);
\draw [very thick](9,-6.5)--(9,0.5);
\draw [very thick](12,-6.5)--(12,0.5);
}
\end{tikzpicture} \; .
\end{align}
In each of these products, the first diagram is even, and has a string connecting $n$ on the left to $n$ on the right. Using the construction from Proposition \ref{prop:evengen}, then, this is equal to a product of the diagrammatic generators $\de{i}$, $1\leq i \leq n-2$, and $\dfup{e}{f}$, $\mathrm{e}, \mathrm{f} \in X$, and this product does not produce any boundary arcs. Since these diagrammatic generators are the $\phi$-images of $e_i$, $1\leq i \leq n-2$, and $\fup{e}{f}$, $\mathrm{e}, \mathrm{f} \in X$, respectively, it follows that the first diagram is $\phi(L)$, where $L$ is the corresponding word in these $\A_n(X)$ generators. Moreover, because the diagrammatic product produces no boundary arcs, the number of boundary links in $\phi(L)$ is precisely double the number of label generators in $L$. The second diagram is $\phi(\fup{a}{b} \Omega)$, and the third diagram is $\phi(\Theta)$; these diagrams contain two and zero boundary links, respectively, which is double the number of label generators in each of these words. The fourth diagram is also even, and thus by Corollary \ref{cor:evenredsurj}, it is equal $\phi(T')$ for some even-reduced word $T'$. Also from Corollary \ref{cor:evenredsurj}, the number of boundary links in $\phi(T')$ is double the number of label generators in $T'$.

Thus, $\phi(T)=\phi(L\fup{a}{b}\Omega\Theta T')$. Since the above products of diagrams produce no boundary arcs, we have that the number of boundary links in $\phi(T)=\phi(L\fup{a}{b}\Omega\Theta T')$ is the total number of boundary links in $\phi(L)$, $\phi(\fup{a}{b}\Omega)$, $\phi(\Theta)$ and $\phi(T')$. In each case this is exactly double the number of label generators in the argument of $\phi$, so the number of boundary links in $\phi(T)=\phi(L\fup{a}{b}\Omega\Theta T')$ is double the number of label generators in $L\fup{a}{b}\Omega\Theta T'$. Hence $L\fup{a}{b} \Omega\Theta T'$ is label-reduced, by Lemma \ref{lem:labelreducedbdylinks}. Since this is a label-reduced word in the even generators, it follows that it is even. As $T$ is also even, and $\phi(T)=\phi(L\fup{a}{b}\Omega\Theta T')$, we have $T=L\fup{a}{b}\Omega \Theta T'$ by Proposition \ref{prop:evenwordinj}. Then since $T=L\fup{a}{b}\Omega \Theta T'$, and both $T$ and $L\fup{a}{b}\Omega\Theta T'$ are label-reduced, $T$ and $L\fup{a}{b}\Omega\Theta T'$ have the same number of label generators. As $\Omega$ and $\Theta$ each contain no label generators, and $\fup{a}{b}$ is a label generator, it follows that $L$ and $T'$ together contain one less label generator than $T$. This proves (i).
\end{proof}

\begin{lemma}[Case (e)]\label{lem:evenlonglink}
Let $T$ be an even-reduced word in $\A_n(X)$ such that $\phi(T)$ has a link connecting node $1$ to node $n$ on the left side of the diagram. Then $T=L\Omega\Theta T'$, where 
\begin{itemize}
    \item $L$ is a word in the generators $e_i$, $2\leq i \leq n-2$;
    \item $T'$ is a word in the even generators; and 
    \item $T'$ contains the same number of label generators as $T$.
\end{itemize}
\end{lemma}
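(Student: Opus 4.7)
The strategy parallels Lemma \ref{lem:evenlongbdylink}: I would exhibit $\phi(T)$ as an explicit product of four diagrams whose algebraic preimages are $L$, $\Omega$, $\Theta$, $T'$, verify that no loops or boundary arcs are produced in the concatenation, and then invoke Proposition \ref{prop:evenwordinj} to transport the identity to $\A_n(X)$.

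First, Lemma \ref{lem:evendiag} gives that $\phi(T)$ is an even diagram with coefficient $1$ and exactly $2\cdot(\text{number of label generators of }T)$ boundary links. Planarity then forces the nodes $2,\ldots,n-1$ on the left of $\phi(T)$ to form a non-crossing matching $M$ among themselves, since any throughline or boundary link from these nodes would have to cross the long link $1$-$n$; this already requires $n$ to be even, which is the implicit setting needed to define $\Omega$ and $\Theta$.

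The key fact driving the decomposition is that $\phi(\Omega\Theta)$ has left side consisting of the long link $1$-$n$ together with the simple matching $\{(2,3),(4,5),\ldots,(n-2,n-1)\}$, and right side equal to the simple matching $\{(1,2),(3,4),\ldots,(n-1,n)\}$. I would therefore draw $\phi(T)$ as a concatenation $\undertilde{L}\,\phi(\Omega)\,\phi(\Theta)\,\undertilde{T'}$, where $\undertilde{L}$ carries throughlines at nodes $1$ and $n$ on both sides, so that the link $1$-$n$ in $\phi(\Omega\Theta)$ is transported to the left of the product, and carries a combination of short links and throughlines at nodes $2,\ldots,n-1$ arranged so that, after routing through the simple matching on the left of $\phi(\Omega\Theta)$, the matching $M$ is reproduced on the left of the product without any loops being formed at the middle interface. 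The fourth diagram $\undertilde{T'}$ captures the remainder of $\phi(T)$, including all boundary links. Since $\undertilde{L}$ is a TL-type diagram with throughlines at $1$ and $n$ and no boundary links, it lies in the image of the subalgebra of $\A_n(X)$ generated by $\{e_i : 2\leq i\leq n-2\}$ (cf.\ the construction from Proposition \ref{prop:evengen} applied in the no-boundary-link case), so $\undertilde{L}=\phi(L)$ for some such word $L$; and since $\undertilde{T'}$ is an even diagram, Corollary \ref{cor:evenredsurj} produces an even-reduced word $T'$ with $\phi(T')=\undertilde{T'}$.

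With the decomposition in hand, the argument is formal: because no loops or boundary arcs arise, the total boundary-link count is preserved across the concatenation, and since $L$, $\Omega$, $\Theta$ contain no label generators, $T'$ must have the same number of label generators as $T$. Both $T$ and $L\Omega\Theta T'$ are then even words with the same image under $\phi$, so Proposition \ref{prop:evenwordinj} yields $T=L\Omega\Theta T'$ in $\A_n(X)$. The principal obstacle is constructing $\undertilde{L}$ for an arbitrary non-crossing matching $M$: a naive perfect matching on $\undertilde{L}$'s right at $\{2,\ldots,n-1\}$ would inevitably form cycles against the simple matching on $\phi(\Omega\Theta)$'s left, so $\undertilde{L}$ must instead carry a mixture of throughlines and short links whose routing through $\phi(\Omega\Theta)$ assembles the long links of $M$ without producing loops. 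As a sanity check, for $n=6$ and $M=\{(2,5),(3,4)\}$ one verifies directly that $L=e_3$ gives $\phi(L\Omega\Theta)$ with left side $\{(1,6),(2,5),(3,4)\}$ and no loops formed.
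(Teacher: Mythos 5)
Your overall strategy is the same as the paper's: write $\phi(T)$ as a concatenation $\phi(L)\,\phi(\Omega)\,\phi(\Theta)\,\phi(T')$ with no loops or boundary arcs, count boundary links, and invoke Proposition \ref{prop:evenwordinj}. Your preliminary observations (that $n$ must be even, and that planarity forces nodes $2,\dots,n-1$ on the left of $\phi(T)$ into a non-crossing matching $M$) are correct. But the proof has a genuine gap at exactly the point you flag as ``the principal obstacle'': you never construct $\undertilde{L}$ in general, and a single $n=6$ example does not substitute for the construction. The missing idea is that only \emph{one} link of $M$ needs to be realised through the middle: choose an outermost link $(p,q)$ of $M$, put all of $M\setminus\{(p,q)\}$ directly on the left of $\undertilde{L}$, and give $\undertilde{L}$ throughlines from $p$ and $q$ to right nodes $2$ and $n-1$, with simple links at $(3,4),(5,6),\dots,(n-3,n-2)$ filling the rest of its right side. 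Against the matching $(2,3),(4,5),\dots,(n-2,n-1)$ on the left of $\phi(\Omega)$ this produces a single zigzag path joining $p$ to $q$ and consuming every interface node, so no loops form; outermostness of $(p,q)$ is what makes $\undertilde{L}$ planar. (You also need the $n=2$ case separately, where $M$ is empty and no such link exists.)

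There is a second, unflagged gap at the other interface. Since $\phi(\Theta)$ has no throughlines and its right side is the full simple matching $(1,2),(3,4),\dots,(n-1,n)$, you cannot simply let $\undertilde{T'}$ ``capture the remainder of $\phi(T)$'': an arbitrary left side for $\undertilde{T'}$ will close loops against that matching (for instance, if its left side were the same matching you would get $n/2$ loops). The left side of $\undertilde{T'}$ must be throughlines at $1$ and $n$ together with simple links at $(2,3),\dots,(n-2,n-1)$, forming a single path from its node $1$ to its node $n$; and those two endpoints must be attached to a feature of the right side of $\phi(T)$ that is exposed to the left --- which forces a case split according to whether that exposed feature is a link, a top boundary link, or a bottom boundary link (this is precisely the three-case analysis in the paper's proof). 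Without both constructions the claimed equality $\phi(T)=\phi(L\Omega\Theta T')$, and hence the loop/boundary-arc-free concatenation on which the label-generator count depends, is not established.
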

\begin{proof}
Since $\phi(T)$ has a link connecting node $1$ to node $n$ on the same side of the diagram, $n$ must be even. As $T$ is even-reduced, we have from Lemma \ref{lem:evendiag} that $\phi(T)$ is an even diagram. If $n=2$, then $\phi(T)$ is one of $\de{1}$, $\de{1}\dfup{a}{b}$, or $\de{1}\dfdo{a}{b}$, for some $\mathrm{a},\mathrm{b} \in X$. These diagrams are $\phi(e_1)$, $\phi(e_1 \fup{a}{b})$ and $\phi(e_1\fdo{a}{b})$, respectively. Taking $L=\id$, and $T'=\id$, $\fup{a}{b}$ or $\fdo{a}{b}$ in these cases respectively, the three bullet points are satisfied. Since the words $e_1$, $e_1 \fup{a}{b}$ and $e_1\fdo{a}{b}$ are all even, and $\phi(T)=\phi(L\Omega\Theta T')$, we have $T=L\Omega\Theta T'$ by Lemma \ref{prop:evenwordinj}. Hereafter, take $n \geq 4$.

As $\phi(T)$ has a link from $1$ to $n$ on the left, it is not $\did$, so must have at least one link or boundary link on its right side, and at least one of these must be exposed to the left side of the diagram. We now consider three cases, according to whether $\phi(T)$ has a link, top boundary link or bottom boundary link on its right side, that is exposed to the left side.

In the first case, with an exposed link on the right, we have
\begin{align}
\phi(T) &= \; \begin{tikzpicture}[baseline={([yshift=-1mm]current bounding box.center)},xscale=0.5,yscale=0.5]
{
\draw (0,0) ..controls (2,0) and (2,-6).. (0,-6);
\draw (3,-2) arc(90:270:1 and 1.25);
\filldraw[red!10!magenta!30] (0,-0.5) arc(90:-90:0.5)--cycle;
\filldraw[blue!30!cyan!40] (3,0) arc(270:180:0.4 and 0.5)--(2.2,0.5) to[out=270,in=180] (3,-1.5) --cycle;
\filldraw[yellow!90!orange!80] (0,-2.5) arc(90:-90:0.5)--cycle;
\draw (0,-2) arc(90:-90:1);
\filldraw[orange!40] (3,-5) ..controls (2,-5) and (2,-5.9).. (2,-6.5)--(2.6,-6.5) arc(180:90:0.4 and 0.5) --cycle;
\filldraw[green!90!blue!30] (0,-4.5) arc(90:-90:0.5)--cycle;
\filldraw[gray!40] (3,-2.5) arc(90:270:0.65 and 0.75)--cycle;
\draw[dotted] (0,0.5)--(3,0.5);
\draw[dotted] (0,-6.5)--(3,-6.5);
\draw [very thick](0,-6.5) -- (0,0.5);
\draw [very thick](3,-6.5)--(3,0.5);
}
\end{tikzpicture} 
\; = \; 
\begin{tikzpicture}[baseline={([yshift=-1mm]current bounding box.center)},xscale=0.5,yscale=0.5]
{
\draw (0,0)--(3,0);
\draw (0,-6)--(3,-6);
\filldraw[red!10!magenta!30] (0,-0.5) arc(90:-90:0.5)--cycle;
\filldraw[yellow!90!orange!80] (0,-2.5) arc(90:-90:0.5)--cycle;
\draw (0,-2) to[out=0,in=180] (3,-0.5);
\draw (0,-4) to[out=0,in=180] (3,-5.5);
\filldraw[green!90!blue!30] (0,-4.5) arc(90:-90:0.5)--cycle;
\draw (3,-1) arc(90:270:0.3 and 0.25);
\draw (3,-3.5) arc(90:270:0.3 and 0.25);
\draw (3,-4.5) arc(90:270:0.3 and 0.25);
\node at (2.7,-2.3) {$\vdots$};
\begin{scope}[shift={(3,0)}]
\draw (0,-6)--(3,-6);
\draw (0,0)--(3,0);
\draw (0,-0.5) arc(90:-90:0.3 and 0.25);
\draw (0,-1.5) arc(90:-90:0.3 and 0.25);
\draw (0,-4) arc(90:-90:0.3 and 0.25);
\draw (0,-5) arc(90:-90:0.3 and 0.25);
\draw (3,-0.5) arc(90:270:0.3 and 0.25);
\draw (3,-1.5) arc(90:270:0.3 and 0.25);
\draw (3,-4) arc(90:270:0.3 and 0.25);
\draw (3,-5) arc(90:270:0.3 and 0.25);
\node at (0.3,-2.8) {$\vdots$};
\node at (2.7,-2.8) {$\vdots$};
\end{scope}
\begin{scope}[shift={(6,0)}]
\draw (0,0) arc(90:-90:0.3 and 0.25);
\draw (0,-1) arc(90:-90:0.3 and 0.25);
\draw (0,-4.5) arc(90:-90:0.3 and 0.25);
\draw (0,-5.5) arc(90:-90:0.3 and 0.25);
\draw (3,0) arc(90:270:0.3 and 0.25);
\draw (3,-1) arc(90:270:0.3 and 0.25);
\draw (3,-4.5) arc(90:270:0.3 and 0.25);
\draw (3,-5.5) arc(90:270:0.3 and 0.25);
\node at (0.3,-2.8) {$\vdots$};
\node at (2.7,-2.8) {$\vdots$};
\end{scope}
\begin{scope}[shift={(9,0)}]
\filldraw[blue!30!cyan!40] (3,0) arc(270:180:0.4 and 0.5)--(2.2,0.5) to[out=270,in=180] (3,-1.5) --cycle;
\filldraw[orange!40] (3,-5) ..controls (2,-5) and (2,-5.9).. (2,-6.5)--(2.6,-6.5) arc(180:90:0.4 and 0.5) --cycle;
\filldraw[gray!40] (3,-2.5) arc(90:270:0.65 and 0.75)--cycle;
\draw (3,-2) to[out=180,in=0] (0,0);
\draw (3,-4.5) to[out=180,in=0] (0,-6);
\draw (0,-0.5) arc(90:-90:0.3 and 0.25);
\draw (0,-1.5) arc(90:-90:0.3 and 0.25);
\draw (0,-4) arc(90:-90:0.3 and 0.25);
\draw (0,-5) arc(90:-90:0.3 and 0.25);
\node at (0.3,-2.8) {$\vdots$};
\end{scope}
\draw[dotted] (0,0.5)--(12,0.5);
\draw[dotted] (0,-6.5)--(12,-6.5);
\draw [very thick](0,-6.5) -- (0,0.5);
\draw [very thick](3,-6.5)--(3,0.5);
\draw [very thick](6,-6.5)--(6,0.5);
\draw [very thick](9,-6.5)--(9,0.5);
\draw [very thick](12,-6.5)--(12,0.5);
}
\end{tikzpicture} \; .
\end{align}
In the second case, with an exposed top boundary link on the right,
\begin{align}
\phi(T) &= \; \begin{tikzpicture}[baseline={([yshift=-3mm]current bounding box.center)},xscale=0.5,yscale=0.5]
{
\draw (0,0) ..controls (2,0) and (2,-6).. (0,-6);
\filldraw[red!10!magenta!30] (0,-0.5) arc(90:-90:0.5)--cycle;
\filldraw[yellow!90!orange!80] (0,-2.5) arc(90:-90:0.5)--cycle;
\draw (0,-2) arc(90:-90:1);
\filldraw[green!90!blue!30] (0,-4.5) arc(90:-90:0.5)--cycle;
\filldraw[gray!40] (3,-2.5) arc(90:270:0.65 and 0.75)--cycle;
\draw (3,-2) to[out=180,in=270] (1.9,0.5);
\filldraw[blue!30!cyan!40] (3,0) arc(270:180:0.4 and 0.5)--(2.3,0.5) to[out=270,in=180] (3,-1.5) --cycle;
\filldraw[orange!40] (3,-2.5) ..controls (1.7,-2.5) and (1.8,-5).. (1.8,-6.5)--(2.5,-6.5) arc(180:90:0.5) --cycle;
\node at (1.9,0.4) [anchor=south] {\footnotesize $\mathrm{a}$};
\draw[dotted] (0,0.5)--(3,0.5);
\draw[dotted] (0,-6.5)--(3,-6.5);
\draw [very thick](0,-6.5) -- (0,0.5);
\draw [very thick](3,-6.5)--(3,0.5);
}
\end{tikzpicture} 
\; = \; 
\begin{tikzpicture}[baseline={([yshift=-3mm]current bounding box.center)},xscale=0.5,yscale=0.5]
{
\draw (0,0)--(3,0);
\draw (0,-6)--(3,-6);
\filldraw[red!10!magenta!30] (0,-0.5) arc(90:-90:0.5)--cycle;
\filldraw[yellow!90!orange!80] (0,-2.5) arc(90:-90:0.5)--cycle;
\draw (0,-2) to[out=0,in=180] (3,-0.5);
\draw (0,-4) to[out=0,in=180] (3,-5.5);
\filldraw[green!90!blue!30] (0,-4.5) arc(90:-90:0.5)--cycle;
\draw (3,-1) arc(90:270:0.3 and 0.25);
\draw (3,-3.5) arc(90:270:0.3 and 0.25);
\draw (3,-4.5) arc(90:270:0.3 and 0.25);
\node at (2.7,-2.3) {$\vdots$};
\begin{scope}[shift={(3,0)}]
\draw (0,-6)--(3,-6);
\draw (0,0)--(3,0);
\draw (0,-0.5) arc(90:-90:0.3 and 0.25);
\draw (0,-1.5) arc(90:-90:0.3 and 0.25);
\draw (0,-4) arc(90:-90:0.3 and 0.25);
\draw (0,-5) arc(90:-90:0.3 and 0.25);
\draw (3,-0.5) arc(90:270:0.3 and 0.25);
\draw (3,-1.5) arc(90:270:0.3 and 0.25);
\draw (3,-4) arc(90:270:0.3 and 0.25);
\draw (3,-5) arc(90:270:0.3 and 0.25);
\node at (0.3,-2.8) {$\vdots$};
\node at (2.7,-2.8) {$\vdots$};
\end{scope}
\begin{scope}[shift={(6,0)}]
\draw (0,0) arc(90:-90:0.3 and 0.25);
\draw (0,-1) arc(90:-90:0.3 and 0.25);
\draw (0,-4.5) arc(90:-90:0.3 and 0.25);
\draw (0,-5.5) arc(90:-90:0.3 and 0.25);
\draw (3,0) arc(90:270:0.3 and 0.25);
\draw (3,-1) arc(90:270:0.3 and 0.25);
\draw (3,-4.5) arc(90:270:0.3 and 0.25);
\draw (3,-5.5) arc(90:270:0.3 and 0.25);
\node at (0.3,-2.8) {$\vdots$};
\node at (2.7,-2.8) {$\vdots$};
\end{scope}
\begin{scope}[shift={(9,0)}]
\draw (0,0) arc(-90:0:0.5);
\filldraw[blue!30!cyan!40] (3,0) arc(270:180:0.4 and 0.5)--(2.3,0.5) to[out=270,in=180] (3,-1.5) --cycle;
\filldraw[orange!40] (3,-2.5) ..controls (1.7,-2.5) and (1.8,-5).. (1.8,-6.5)--(2.5,-6.5) arc(180:90:0.5) --cycle;
\draw (3,-2) to[out=180,in=0] (0,-6);
\draw (0,-0.5) arc(90:-90:0.3 and 0.25);
\draw (0,-1.5) arc(90:-90:0.3 and 0.25);
\draw (0,-4) arc(90:-90:0.3 and 0.25);
\draw (0,-5) arc(90:-90:0.3 and 0.25);
\node at (0.3,-2.8) {$\vdots$};
\node at (0.5,0.4) [anchor=south] {\footnotesize $\mathrm{a}$};
\end{scope}
\draw[dotted] (0,0.5)--(12,0.5);
\draw[dotted] (0,-6.5)--(12,-6.5);
\draw [very thick](0,-6.5) -- (0,0.5);
\draw [very thick](3,-6.5)--(3,0.5);
\draw [very thick](6,-6.5)--(6,0.5);
\draw [very thick](9,-6.5)--(9,0.5);
\draw [very thick](12,-6.5)--(12,0.5);
}
\end{tikzpicture} \; .
\end{align}
In the third case, with an exposed bottom boundary link on the right,
\begin{align}
\phi(T) &= \; \begin{tikzpicture}[baseline={([yshift=1mm]current bounding box.center)},xscale=0.5,yscale=0.5]
{
\draw (0,0) ..controls (2,0) and (2,-6).. (0,-6);
\filldraw[red!10!magenta!30] (0,-0.5) arc(90:-90:0.5)--cycle;
\filldraw[yellow!90!orange!80] (0,-2.5) arc(90:-90:0.5)--cycle;
\draw (0,-2) arc(90:-90:1);
\filldraw[green!90!blue!30] (0,-4.5) arc(90:-90:0.5)--cycle;
\filldraw[blue!30!cyan!40] (3,0) arc(270:180:0.5)--(2,0.5) ..controls (2,-1) and (2,-2.5).. (3,-2.5) --cycle;
\filldraw[orange!40] (3,-3.5) ..controls (2,-3.5) and (2,-5).. (2,-6.5)--(2.5,-6.5) arc(180:90:0.5) --cycle;
\draw (3,-3) ..controls (1.6,-3) and (1.6,-5).. (1.6,-6.5);
\node at (1.6,-7.3) [anchor=south] {\footnotesize $\mathrm{a}$};
\draw[dotted] (0,0.5)--(3,0.5);
\draw[dotted] (0,-6.5)--(3,-6.5);
\draw [very thick](0,-6.5) -- (0,0.5);
\draw [very thick](3,-6.5)--(3,0.5);
}
\end{tikzpicture} 
\; = \; 
\begin{tikzpicture}[baseline={([yshift=1mm]current bounding box.center)},xscale=0.5,yscale=0.5]
{
\draw (0,0)--(3,0);
\draw (0,-6)--(3,-6);
\filldraw[red!10!magenta!30] (0,-0.5) arc(90:-90:0.5)--cycle;
\filldraw[yellow!90!orange!80] (0,-2.5) arc(90:-90:0.5)--cycle;
\draw (0,-2) to[out=0,in=180] (3,-0.5);
\draw (0,-4) to[out=0,in=180] (3,-5.5);
\filldraw[green!90!blue!30] (0,-4.5) arc(90:-90:0.5)--cycle;
\draw (3,-1) arc(90:270:0.3 and 0.25);
\draw (3,-3.5) arc(90:270:0.3 and 0.25);
\draw (3,-4.5) arc(90:270:0.3 and 0.25);
\node at (2.7,-2.3) {$\vdots$};
\begin{scope}[shift={(3,0)}]
\draw (0,-6)--(3,-6);
\draw (0,0)--(3,0);
\draw (0,-0.5) arc(90:-90:0.3 and 0.25);
\draw (0,-1.5) arc(90:-90:0.3 and 0.25);
\draw (0,-4) arc(90:-90:0.3 and 0.25);
\draw (0,-5) arc(90:-90:0.3 and 0.25);
\draw (3,-0.5) arc(90:270:0.3 and 0.25);
\draw (3,-1.5) arc(90:270:0.3 and 0.25);
\draw (3,-4) arc(90:270:0.3 and 0.25);
\draw (3,-5) arc(90:270:0.3 and 0.25);
\node at (0.3,-2.8) {$\vdots$};
\node at (2.7,-2.8) {$\vdots$};
\end{scope}
\begin{scope}[shift={(6,0)}]
\draw (0,0) arc(90:-90:0.3 and 0.25);
\draw (0,-1) arc(90:-90:0.3 and 0.25);
\draw (0,-4.5) arc(90:-90:0.3 and 0.25);
\draw (0,-5.5) arc(90:-90:0.3 and 0.25);
\draw (3,0) arc(90:270:0.3 and 0.25);
\draw (3,-1) arc(90:270:0.3 and 0.25);
\draw (3,-4.5) arc(90:270:0.3 and 0.25);
\draw (3,-5.5) arc(90:270:0.3 and 0.25);
\node at (0.3,-2.8) {$\vdots$};
\node at (2.7,-2.8) {$\vdots$};
\end{scope}
\begin{scope}[shift={(9,0)}]
\draw (0,-6) arc(90:0:0.5);
\filldraw[blue!30!cyan!40] (3,0) arc(270:180:0.5)--(2,0.5) ..controls (2,-1) and (2,-2.5).. (3,-2.5) --cycle;
\filldraw[orange!40] (3,-3.5) ..controls (2,-3.5) and (2,-5).. (2,-6.5)--(2.5,-6.5) arc(180:90:0.5) --cycle;
\draw (3,-3) to[out=180,in=0] (0,0);
\draw (0,-0.5) arc(90:-90:0.3 and 0.25);
\draw (0,-1.5) arc(90:-90:0.3 and 0.25);
\draw (0,-4) arc(90:-90:0.3 and 0.25);
\draw (0,-5) arc(90:-90:0.3 and 0.25);
\node at (0.3,-2.8) {$\vdots$};
\node at (0.5,-7.3) [anchor=south] {\footnotesize $\mathrm{a}$};
\end{scope}
\draw[dotted] (0,0.5)--(12,0.5);
\draw[dotted] (0,-6.5)--(12,-6.5);
\draw [very thick](0,-6.5) -- (0,0.5);
\draw [very thick](3,-6.5)--(3,0.5);
\draw [very thick](6,-6.5)--(6,0.5);
\draw [very thick](9,-6.5)--(9,0.5);
\draw [very thick](12,-6.5)--(12,0.5);
}
\end{tikzpicture} \; .
\end{align}
In each of these products, the first diagram is even, and has strings connecting $1$ and $n$ on the left to $1$ and $n$ on the right, respectively. It follows that this is equal to a product of the diagrammatic generators $\de{i}$, $2 \leq i \leq n-2$, and this product does not produce any boundary arcs. Hence the first diagram is equal to $\phi(L)$, where $L$ is the corresponding word in the generators $e_i$, $2\leq i \leq n-2$. Note that $L$ contains no label generators, and $\phi(L)$ has no boundary links. The second and third diagrams are $\phi(\Omega)$ and $\phi(\Theta)$, respectively; observe that these diagrams have no boundary links, and recall that $\Omega$ and $\Theta$ contain no label generators. The fourth diagram is even, so by Corollary \ref{cor:evenredsurj}, it is equal to $\phi(T')$ for some even-reduced word $T'$, where the number of boundary links in $\phi(T')$ is twice the number of label generators in $T'$. Note that each of these diagram products produces no boundary arcs, so the number of boundary links in $\phi(T)$ is the sum of the number of boundary links in each of the diagrams $\phi(L)$, $\phi(\Omega)$, $\phi(\Theta)$ and $\phi(T')$, which is twice the number of label generators in each argument of $\phi$. Hence the number of boundary links in $\phi(L\Omega\Theta T')$ is twice the number of label generators in $L\Omega \Theta T'$, so $L\Omega\Theta T'$ is label-reduced. Since $L\Omega\Theta T'$ is a label-reduced word in the even generators, it is even. As $T$ is also even, and $\phi(L\Omega\Theta T') = \phi(T)$, we have from Proposition \ref{prop:evenwordinj} that $L\Omega\Theta T' = T$.

Moreover, the number of boundary links in $\phi(T)$ is the same as the number of boundary links in $\phi(T')$, which is twice the number of label generators in $T'$. However, since $T$ is even-reduced, we have from Lemma \ref{lem:evendiag} that the number of boundary links in $\phi(T)$ is twice the number of label generators in $T$. Therefore, $T$ and $T'$ have the same number of label generators, as required.
\end{proof}

\begin{proposition}\label{prop:WTdiag}
Let $\mathbf{u}=WT$ be in $WT$ form. Then $\phi(\mathbf{u})$ is an $\lab_n(X)$-diagram, with coefficient $1$. Moreover, if $W=\id$, then $\phi(\mathbf{u})$ is an even diagram; otherwise, $\phi(\mathbf{u})$ is an odd diagram.
\end{proposition}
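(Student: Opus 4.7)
Split on $W$. When $W=\id$, the word $\mathbf{u}=T$ is even-reduced, and Lemma \ref{lem:evendiag} directly yields that $\phi(T)$ is an even $\lab_n(X)$-diagram with coefficient $1$, handling the first assertion and the parity claim simultaneously.

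For $W=\w{a}{b}(j)$, the plan is to write $\phi(\mathbf{u})=\phi(W)\phi(T)=\dw{a}{b}(j)\,\phi(T)$ and show that the concatenation produces neither loops nor boundary arcs; then $\phi(\mathbf{u})$ coincides with that concatenated diagram (coefficient $1$), and parity follows because $\dw{a}{b}(j)$ is odd by Corollary \ref{cor:oddgensodd} while $\phi(T)$ is even by Lemma \ref{lem:evendiag}. Ruling out loops is immediate from the diagrams in Proposition \ref{prop:oddgen}: every string meeting the right side of $\dw{a}{b}(j)$ is a throughline or a boundary link, never a link, so the concatenation has no way to close a loop.

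Boundary arcs are the substantive part. Inspection of which boundary links of $\dw{a}{b}(j)$ and $\phi(T)$ could possibly join shows that the only configurations that can produce a boundary arc are the five shown in Figure \ref{fig:WTbdyarccases}. Each will be ruled out by contradiction: assume the arc forms, invoke the relevant auxiliary lemma to extract a normal form for $T$, and apply the defining relations of $\A_n(X)$ to rewrite $WT$ as a scalar multiple of a word containing either strictly fewer label generators (contradicting label-reducedness of $WT$) or of the form $\w{c}{d}(i)T''$ with $i<j$ and $T''$ even-reduced (contradicting minimality of $j$). Concretely, cases (a) and (b) invoke Lemma \ref{lem:topbdylinkat1} to write $T=\fup{c}{d}T'$ or $T=\fdo{c}{d}T'$, and one of \eqref{eq:wupfup}+\eqref{eq:e1fupe1}, \eqref{eq:wdofup}, or \eqref{eq:wupfdo} drops one label generator; cases (c) and (d) use Lemma \ref{lem:oddlongbdylink} or \ref{lem:evenlongbdylink} (according to the parity of $n$) to expose a long-boundary-link factor in $T$ that can be absorbed into the trailing $\wup$ or $\wdo$ of $W$ via a $\gamma$-relation such as \eqref{eq:fupEfdoO}, \eqref{eq:fdoOfupE}, \eqref{eq:ThetawupTheta}, or \eqref{eq:wupThetawup}, each of which collapses two label generators into one plus a scalar $\glab{}{}$; case (e), which can occur only for $n$ even, uses Lemma \ref{lem:evenlonglink} to write $T=L\Omega\Theta T'$, and then \eqref{eq:e1wupOmega} together with \eqref{eq:ThetawupTheta} combines the $e_1\wup{a}{b}$ at the right of $W$ with $\Omega\Theta$ to produce $\glab{a}{b}$ and eliminate $\wup{a}{b}$.

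The main obstacle will be the bookkeeping in cases (c), (d), (e): one must verify that the sequence of rewrites genuinely reduces the number of label generators in $WT$ rather than merely shuffling them, which requires tracking how the $L$, $O$, $E$, $\Omega$, $\Theta$ factors in the normal form for $T$ interact with the leading $e_j\cdots e_1$ of $W$ before the $\gamma$-relation is applied. Once all five cases are excluded, the concatenation has no boundary arcs and no loops, so $\phi(\mathbf{u})$ is a single $\lab_n(X)$-diagram with coefficient $1$, which is odd by the parity count above.
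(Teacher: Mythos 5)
Your proposal follows essentially the same route as the paper's proof: the $W=\id$ case via Lemma \ref{lem:evendiag}, the no-loops observation from the absence of links on the right of $\dw{a}{b}(j)$, and the exclusion of boundary arcs via the same five cases, the same auxiliary lemmas (\ref{lem:topbdylinkat1}--\ref{lem:evenlonglink}), and the same contradiction with label-reducedness obtained by collapsing two label generators into one with a $\gamma$- or $\alpha$/$\delta$-relation. The bookkeeping obstacle you flag is exactly what the paper resolves by commuting the extracted factor $L$ past the odd generator (e.g.\ $\wup{a}{b}L = L'\wup{g}{h}$) before applying the reduction relation, so your plan is sound.
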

\begin{proof}
If $W=\id$, then $\mathbf{u}=T$ is an even-reduced word, and thus $\phi(\mathbf{u})$ is an even $\lab_n(X)$-diagram with coefficient $1$, by Lemma \ref{lem:evendiag}. Hence let $W=\w{a}{b}(j)$.

From our definitions of $\w{a}{b}(j)$ and $\dw{a}{b}(j)$ in Propositions \ref{prop:WTintro} and \ref{prop:oddgen}, it follows that $\phi(\w{a}{b}(j)) = \dw{a}{b}(j)$, and this is an odd diagram with coefficient $1$. Since $T$ is even-reduced, from Lemma \ref{lem:evendiag}, $\phi(T)$ is an even diagram with coefficient $1$. Hence it suffices to show that concatenating the diagrams $\dw{a}{b}(j)$ and $\phi(T)$ produces no loops or boundary arcs, because then their product is found by simply concatenating the diagrams, and the concatenation gives an odd $\lab_n(X)$-diagram.

Note that none of the diagrams $\dw{a}{b}(j)$ have any links on the right side, so it is impossible for the concatenation of $\dw{a}{b}(j)$ and $\phi(T)$ to produce any loops. 

We now show that the concatenation of $\phi(\w{a}{b}(j))$ and $\phi(T)$ does not produce any boundary arcs. Since $\w{a}{b}(j) T$ is in $WT$ form, it must be label-reduced. We now consider the ways in which concatenating $\phi(\w{a}{b}(j))$ and $\phi(T)$ could produce a boundary arc, given that all of the strings connected to the right side of $\phi(\w{a}{b}(j))$ are throughlines, except for a top boundary link at $1$, if $1\leq j \leq n$, and a bottom boundary link at $n$, if $0 \leq j \leq n-1$. There are five cases to consider; these were listed earlier in Figure \ref{fig:WTbdyarccases}. In each case, we find that $\w{a}{b}(j) T$ is equal to some expression containing fewer label generators, thus contradicting the fact that it is label-reduced.

\vspace{4mm}
\noindent\textbf{Case (a):} Take $1 \leq j \leq n$, so that $\phi(\w{a}{b}(j))$ has a top boundary link at $1$ on the right. Observe that, for $1\leq j \leq n-1$,
\begin{align}
\w{a}{b}(j) &= e_je_{j-1}\dots e_2e_1\wup{a}{b} \\
&= e_je_{j-1} \dots e_2e_1e_2\dots e_{j-1}e_je_{j+1} \dots e_{n-2}e_{n-1} \wdo{a}{b} &&\text{by \eqref{eq:e1wup}}\\
&= e_je_{j+1} \dots e_{n-2}e_{n-1} \wdo{a}{b} &&\text{by \eqref{eq:TLreduce}}, \label{eq:wflip}
\end{align}
so in fact $\w{a}{b}(j) = e_je_{j+1} \dots e_{n-2}e_{n-1} \wdo{a}{b}$ for all $1 \leq j \leq n$, and the expression on the right contains one label generator, as does $\w{a}{b}(j)$.

As $\phi(T)$ has a top boundary link at $1$ on the left, we have from Lemma \ref{lem:topbdylinkat1}(i) that $T=\fup{c}{d}T'$ for some $\mathrm{c},\mathrm{d}\in X$, and some word $T'$ in the even generators that contains one less label generator than $T$. Then we have
\begin{align}
\w{a}{b}(j)T
&= e_je_{j+1} \dots e_{n-2}e_{n-1} \wdo{a}{b} \fup{c}{d}T' \\
&= \aup{a}{c} e_je_{j+1} \dots e_{n-1}e_{n-1} \wdo{d}{b}T' &&\text{by \eqref{eq:wdofup}},
\end{align}
where the final expression contains one less label generator than $\w{a}{b}(j)T$, contradicting the fact that $\w{a}{b}(j)T$ is label-reduced.

\vspace{4mm}
\noindent\textbf{Case (b):} Take $0 \leq j \leq n-1$, so that $\phi(\w{a}{b}(j))$ has a bottom boundary link at $n$ on the right. Since $\phi(T)$ has a bottom boundary link at $n$ on the left, we have from Lemma \ref{lem:topbdylinkat1}(ii) that $T=\fdo{c}{d}T'$ for some $\mathrm{c},\mathrm{d}\in X$, and some word $T'$ in the even generators that contains one less label generator than $T$. Then we have
\begin{align}
\w{a}{b}(j)T
&= e_je_{j-1} \dots e_2e_1 \wup{a}{b}\fdo{c}{d}T' \\
&= \ddo{b}{c} e_je_{j-1}\dots e_2e_1 \wup{a}{d} T' &&\text{by \eqref{eq:wupfdo}},
\end{align}
where the final expression contains one less label generator than $\w{a}{b}(j)T$, contradicting the fact that $\w{a}{b}(j)T$ is label-reduced.

\vspace{4mm}
\noindent\textbf{Case (c):} Take $0 \leq j \leq n-1$, so that $\phi(\w{a}{b}(j))$ has a bottom boundary link at $n$ on the right. First consider $n$ odd. Since $\phi(T)$ has a top boundary link at $n$ on the left, we have from Lemma \ref{lem:oddlongbdylink}(i) that $T=LO\fup{c}{d}ET'$, where $L$ is a word in the generators $e_i$, $1\leq i \leq n-3$, and $\fup{e}{f}$, $\mathrm{e},\mathrm{f} \in X$; $T'$ is a word in the even generators; and $L$ and $T'$ together contain one less label generator than $T$. If $n=1$, then $\w{a}{b}(j) = \wup{a}{b}$ and $L=O=E=\id$, so $T=\fup{c}{d}T'$, and thus
\begin{align}
    \w{a}{b}(j)T &= \wup{a}{b}\fup{c}{d}T' = \glab{c}{b}\fup{a}{d}T' &&\text{by \eqref{eq:wupfup1}}.
\end{align}
The final expression contains one less label generator than $\w{a}{b}(j)T$, contradicting the fact that $\w{a}{b}(j)T$ is label-reduced.

If $n>1$, then from \eqref{eq:ejwup}, we have
\begin{align}
\wup{a}{b}e_i &= e_{i+1}\wup{a}{b}, && 1 \leq i \leq n-2,
\end{align}
and from \eqref{eq:wupfup},
\begin{align}
\wup{a}{b}\fup{e}{f} &= 
\fup{a}{e} e_1 \wup{f}{b},
\end{align}
for each $\mathrm{e},\mathrm{f} \in X$. Using these relations, we have that $\wup{a}{b}L=L'\wup{g}{h}$ for some $\mathrm{g},\mathrm{h} \in X$, where $L'$ is a word in the generators $e_i$, $1 \leq i \leq n-1$, and $\fup{e}{f}$, $\mathrm{e},\mathrm{f} \in X$, and $L'$ has the same number of label generators as $L$. It follows that $L'$ and $T'$ together contain one less label generator than $T$. We then have
\begin{align}
\w{a}{b}(j)T
&= e_je_{j-1}\dots e_2e_1 \wup{a}{b}LO\fup{c}{d}ET' \\
&= e_je_{j-1}\dots e_2e_1 L' \wup{g}{h} O\fup{c}{d}ET' \\
&= \glab{c}{h} e_je_{j-1}\dots e_2e_1 L' \fup{g}{d}ET' &&\text{by \eqref{eq:wupOfupE}}.
\end{align}
The final expression contains one less label generator than $\w{a}{b}(j)T$, giving the desired contradiction.

Now consider $n$ even. Since $\phi(T)$ has a top boundary link at $n$ on the left, we have from Lemma \ref{lem:evenlongbdylink}(i) that $T=L\fup{c}{d}\Omega\Theta T'$, where $L$ is a word in the generators $e_i$, $1\leq i \leq n-2$, and $\fup{e}{f}$, $\mathrm{e},\mathrm{f} \in X$; $T'$ is a word in the even generators; and $L$ and $T'$ together contain one less label generator than $T$. As for $n$ odd, we have $\wup{a}{b}L=L'\wup{g}{h}$ for some $\mathrm{g},\mathrm{h} \in X$, where $L'$ is a word in the generators $e_i$, $1 \leq i \leq n-1$, and $\fup{e}{f}$, $\mathrm{e},\mathrm{f} \in X$, and $L'$ has the same number of label generators as $L$. Again, $L'$ and $T'$ together contain one less label generator than $T$. We then have
\begin{align}
\w{a}{b}(j)T
&= e_je_{j-1}\dots e_2e_1 \wup{a}{b}L\fup{c}{d}\Omega\Theta T' \\
&= e_je_{j-1}\dots e_2e_1 L' \wup{g}{h}\fup{c}{d}\Omega\Theta T' \\
&= e_je_{j-1}\dots e_2e_1 L' \fup{g}{c} e_1 \wup{d}{h} \Omega\Theta T' &&\text{by \eqref{eq:wupfup}} \\
&= e_je_{j-1}\dots e_2e_1 L' \fup{g}{c} \Theta \wup{d}{h} \Theta T' &&\text{by \eqref{eq:e1wupOmega}} \\
&= \glab{d}{h} e_je_{j-1}\dots e_2e_1 L' \fup{g}{c} \Theta T' &&\text{by \eqref{eq:ThetawupTheta}}.
\end{align}
Since $\Theta$ contains no label generators, the final expression contains one less label generator than $\w{a}{b}(j)T$, giving the desired contradiction.

\vspace{4mm}
\noindent\textbf{Case (d):} Take $1 \leq j \leq n$, so that $\phi(\w{a}{b}(j))$ has a top boundary link at $1$ on the right. First consider $n$ odd. Since $\phi(T)$ has a bottom boundary link at $1$ on the left, we have from Lemma \ref{lem:oddlongbdylink}(ii) that $T=LE\fdo{c}{d}OT'$, where $L$ is a word in the generators $e_i$, $3\leq i \leq n-1$, and $\fdo{e}{f}$, $\mathrm{e},\mathrm{f} \in X$; $T'$ is a word in the even generators; and $L$ and $T'$ together contain one less label generator than $T$. If $n=1$, then $\w{a}{b}(j)=\wdo{a}{b}$ and $L=E=O=\id$, so $T=\fdo{c}{d}T'$, and thus
\begin{align}
\w{a}{b}(j)T &= \wdo{a}{b} \fdo{c}{d}T' = \glab{a}{c}\fdo{b}{d} T' &&\text{by \eqref{eq:wdofdo1}}.
\end{align}
The final expression contains one less label generator than $\w{a}{b}(j)T$, contradicting the fact that $\w{a}{b}(j)T$ is label-reduced.

If $n>1$, then from \eqref{eq:ejwdo}, we have
\begin{align}
\wdo{a}{b}e_i &= e_{i-1}\wdo{a}{b}, &&2 \leq i \leq n-1,
\end{align}
and from \eqref{eq:wdofdo},
\begin{align}
\wdo{a}{b}\fdo{e}{f} &= 
\fdo{b}{e} e_{n-1} \wdo{a}{f},
\end{align}
for each $\mathrm{e},\mathrm{f} \in X$. Using these relations, we have that $\wdo{a}{b}L=L'\wdo{g}{h}$ for some $\mathrm{g},\mathrm{h} \in X$, where $L'$ is a word in the generators $e_i$, $1 \leq i \leq n-1$, and $\fdo{e}{f}$, $\mathrm{e},\mathrm{f} \in X$, and $L'$ has the same number of label generators as $L$. It follows that $L'$ and $T'$ together contain one less label generator than $T$. We then have
\begin{align}
\w{a}{b}(j)T
&= e_je_{j+1} \dots e_{n-2}e_{n-1} \wdo{a}{b}LE\fdo{c}{d}OT' &&\text{using \eqref{eq:wflip}}\\
&= e_je_{j+1} \dots e_{n-2}e_{n-1} L' \wdo{g}{h}E\fdo{c}{d}OT' \\
&= \glab{g}{c}e_je_{j+1} \dots e_{n-2}e_{n-1} L' \fdo{h}{d}OT' &&\text{by \eqref{eq:wdoEfdoO}}.
\end{align}
Recalling that $O$ contains no label generators, the final expression contains one less label generator than $\w{a}{b}(j)T$, giving the desired contradiction.

Now consider $n$ even. Since $\phi(T)$ has a bottom boundary link at $1$ on the left, we have from Lemma \ref{lem:evenlongbdylink}(ii) that $T=L\fdo{c}{d}\Omega\Theta T'$, where $L$ is a word in the generators $e_i$, $2\leq i \leq n-1$, and $\fdo{e}{f}$, $\mathrm{e},\mathrm{f} \in X$; $T'$ is a word in the even generators; and $L$ and $T'$ together contain one less label generator than $T$. As for $n$ odd, we have $\wdo{a}{b}L=L'\wdo{g}{h}$ for some $\mathrm{g},\mathrm{h} \in X$, where $L'$ is a word in the generators $e_i$, $1 \leq i \leq n-1$, and $\fdo{e}{f}$, $\mathrm{e},\mathrm{f} \in X$, and the number of label generators in $L'$ is equal to the number of label generators in $L$. Again, $L'$ and $T'$ together contain one less label generator than $T$. We then have
\begin{align}
\w{a}{b}(j)T
&= e_je_{j+1} \dots e_{n-2}e_{n-1} \wdo{a}{b}L\fdo{c}{d}\Omega\Theta T' &&\text{using \eqref{eq:wflip}}\\
&= e_je_{j+1} \dots e_{n-2}e_{n-1} L' \wdo{g}{h} \fdo{c}{d}\Omega\Theta T' \\
&= e_je_{j+1} \dots e_{n-2}e_{n-1} L' \fdo{h}{c}e_{n-1}\wdo{g}{d}\Omega\Theta T' &&\text{by \eqref{eq:wdofdo}} \\
&= e_je_{j+1} \dots e_{n-2}e_{n-1} L' \fdo{h}{c}\Theta \wdo{g}{d}\Theta T' &&\text{by \eqref{eq:en1wdoOmega}} \\
&= \glab{g}{d} e_je_{j+1} \dots e_{n-2}e_{n-1} L' \fdo{h}{c}\Theta T' &&\text{by \eqref{eq:Thetawup} and \eqref{eq:ThetawupTheta}}.
\end{align}
Recalling that $\Theta$ contains no label generators, the final expression contains one less label generator than $\w{a}{b}(j)T$, giving the desired contradiction.

\vspace{4mm}
\noindent\textbf{Case (e):} Take $1 \leq j \leq n-1$, so that $\w{a}{b}(j)$ has a top boundary link at $1$ and a bottom boundary link at $n$ on the right. Observe that $\phi(T)$ has a link from $1$ to $n$ on the left, and that this can only occur for $n$ even. It thus follows from Lemma \ref{lem:evenlonglink} that $T=L\Omega\Theta T'$, where $L$ is a word in the generators $e_i$, $2\leq i \leq n-2$, and $T'$ is a word in the even generators that contains the same number of label generators as $T$. From \eqref{eq:ejwup}, we have that $\wup{a}{b}e_i = e_{i+1}\wup{a}{b}$ for all $1 \leq i \leq n-2$. It follows that $\wup{a}{b}L = L'\wup{a}{b}$, where $L'$ is a word in the generators $e_i$, $3 \leq i \leq n-1$. Then we have
\begin{align}
\w{a}{b}(j)T
&= e_je_{j-1} \dots e_2e_1 \wup{a}{b} L \Omega \Theta T' \\
&= e_je_{j-1} \dots e_2e_1 L' \wup{a}{b} \Omega \Theta T' \\
&= e_je_{j-1} \dots e_2 L' e_1 \wup{a}{b} \Omega \Theta T' &&\text{by \eqref{eq:eiej}} \\
&= e_je_{j-1} \dots e_2 L' \Theta \wup{a}{b} \Theta T' &&\text{by \eqref{eq:e1wupOmega}} \\
&= \glab{a}{b} e_je_{j-1} \dots e_2 L' \Theta T' &&\text{by \eqref{eq:ThetawupTheta}}.
\end{align}
Since $L'$ and $\Theta$ contain no label generators, and $T'$ contains the same number of label generators as $T$, the final expression contains one less label generator than $\w{a}{b}(j)T$, giving the desired contradiction.
\end{proof}

We can now show that, if two words in $WT$ form map to the same $\lab_n(X)$-diagram, then they must be equal in $\A_n(X)$.

\begin{proposition}\label{prop:WTinj}
Let $W_1T_1$ and $W_2T_2$ be words in $WT$ form. If $\phi(W_1T_1) = \phi(W_2T_2)$, then $W_1T_1=W_2T_2$ in $\A_n(X)$.
\end{proposition}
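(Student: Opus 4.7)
My plan is to split on the parity of the common diagram $\undertilde{D} \coloneqq \phi(W_1T_1) = \phi(W_2T_2)$. By Proposition \ref{prop:WTdiag}, $\undertilde{D}$ is even if and only if $W_i = \id$, so the two decompositions must agree on whether $W_i$ is trivial. In the even case, $W_1 = W_2 = \id$ and the $T_i$ are even-reduced words with the same image under $\phi$; Lemma \ref{lem:evenredinj} then gives $T_1 = T_2$, finishing this case immediately.

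The substantive work is the odd case, where $W_i = \w{a_i}{b_i}(j_i)$. I plan to show $\phi(W_1) = \phi(W_2)$ in $\lab_n(X)$; once this is established, the uniqueness clause of Proposition \ref{prop:oddgen} forces $\phi(T_1) = \phi(T_2)$, and Proposition \ref{prop:evenwordinj} delivers $T_1 = T_2$, hence $W_1T_1 = W_2T_2$. For the labels, I would track the unique top and bottom boundary links of $\dw{a_i}{b_i}(j_i)$ through the concatenation with $\phi(T_i)$: since Proposition \ref{prop:WTdiag} guarantees that no boundary arcs are formed, the labels $\mathrm{a}_i$ and $\mathrm{b}_i$ appear in $\undertilde{D}$ as the labels of the leftmost top and bottom boundary links, respectively. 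As $\undertilde{D}$ is fixed, this forces $\mathrm{a}_1 = \mathrm{a}_2$ and $\mathrm{b}_1 = \mathrm{b}_2$.

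The remaining and hardest step is to prove $j_1 = j_2$, and I anticipate this being the main obstacle. The strategy is to assume WLOG $j_1 < j_2$ and extract a contradiction by reading the left-hand side of $\undertilde{D}$ against both decompositions: $\phi(W_1)$ contributes either a top boundary link at node $1$, a simple link at $j_1$, or a bottom boundary link at $n$, and similarly for $\phi(W_2)$. Combining Proposition \ref{prop:leftdescent} with Lemma \ref{lem:simplelinklds} (and its converse, which is an immediate consequence of Lemma \ref{lem:evendiag}), the WT-form conditions translate into diagrammatic restrictions on $\phi(T_i)$: no simple link on the left of $\phi(T_i)$ at position $i < j_i$, no top boundary link at node $1$ of $\phi(T_i)$ when $j_i \geq 1$ (via Lemma \ref{lem:topbdylinkat1}(i)), and symmetrically for bottom links when $j_i \leq n - 1$. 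Sub-cases like $j_1 \geq 1$ with $j_2 = j_1 + 1$ collide directly (a node cannot be both an endpoint of a simple link and of a boundary link), while the delicate situations (e.g.\ $j_1 = 0$ with $j_2 \geq 2$, where the descent-set constraints on $T_1$ are vacuous) require pushing further: I would use the near-commutation relations \eqref{eq:ejwup} and \eqref{eq:e1wup} to rewrite $\w{a}{b}(j_2)T_2$ as a scalar multiple of a label-reduced word of the form $\w{c}{d}(i)T'$ with $i < j_2$, $T'$ even-reduced, directly contradicting the minimality built into the WT form. Careful bookkeeping of label generators via Lemma \ref{lem:labelreducedbdylinks} should ensure these rewrites preserve label-reducedness.
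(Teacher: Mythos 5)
Your proposal is correct and follows essentially the same route as the paper's proof: split on the parity of the common diagram, match the labels via the leftmost top and bottom boundary links, force $j_1=j_2$ by reading the left side of the diagram against both decompositions and contradicting Proposition \ref{prop:leftdescent}(iii) through Lemma \ref{lem:simplelinklds}, and finish with the uniqueness clause of Proposition \ref{prop:oddgen} together with injectivity on even(-reduced) words. The extra machinery you mention for the ``delicate'' subcases (Lemma \ref{lem:topbdylinkat1} and an explicit rewriting to lower $j_2$) is redundant, since once the forced simple link in $\phi(T_2)$ at a position below $j_2$ is established, the descent-set restriction already yields the contradiction, exactly as in the paper.
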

\begin{proof}
From Proposition \ref{prop:WTdiag}, we have $\phi(W_1T_1)=\phi(W_2T_2)=\undertilde{D}$ for some $\lab_n(X)$-diagram $\undertilde{D}$. If $\undertilde{D}$ is even, then Proposition \ref{prop:WTdiag} also tells us that $W_1=W_2=\id$. Then $\phi(W_1T_1)=\phi(W_2T_2)$ implies $\phi(T_1)=\phi(T_2)$, and $T_1$ and $T_2$ are both even-reduced, so by Lemma \ref{lem:evenredinj}, $T_1=T_2$. Thus, if $\undertilde{D}$ is even, we have $W_1T_1=W_2T_2$, as required.

Hence suppose $\undertilde{D}$ is odd. Then from Proposition \ref{prop:WTdiag}, $W_1=\w{a}{b}(j_1)$ and $W_2=\w{c}{d}(j_2)$ for some $\mathrm{a},\mathrm{b},\mathrm{c},\mathrm{d} \in X$ and $0 \leq j_1,j_2 \leq n$. From Proposition \ref{prop:WTdiag} we also have that the concatenation of $\phi(W_1)$ with $\phi(T_1)$, and of $\phi(W_2)$ with $\phi(T_2)$, produces no boundary arcs. Hence the leftmost top and bottom boundary links in $\phi(W_1T_1)=\undertilde{D}$ are labelled $\mathrm{a}$ and $\mathrm{b}$, respectively, and those of $\phi(W_2T_2)=\undertilde{D}$ are labelled $\mathrm{c}$ and $\mathrm{d}$, respectively. Thus $\mathrm{a}=\mathrm{c}$ and $\mathrm{b}=\mathrm{d}$, so $W_2 = \w{a}{b}(j_2)$.

Now suppose $j_1 \neq j_2$. Without loss of generality, take $j_1<j_2$. We will consider the possible values of $j_1$, and arrive at a contradiction in each case, implying $j_1=j_2$. 

First consider $j_1=0$. Then $\phi(\w{a}{b}(j_1))$ has a top boundary link at $1$ on the left, so $\undertilde{D}=\phi(\w{a}{b}(j_1)T_1)=\phi(\w{a}{b}(j_2)T_2)$ must also. If $j_2=1$, then $\phi(\w{a}{b}(j_2))$ has a simple link at $1$ on the left, if $n\geq 2$; or a bottom boundary link at $1$ on the left, if $n=1$; so $\undertilde{D}$ must also. Either way, this contradicts the fact that $\undertilde{D}$ has a top boundary link at $1$ on the left. Hence $j_2>1$.
This means that $\phi(\w{a}{b}(j_2))$ must have a throughline connecting node $1$ on the left to node $2$ on the right, and a top boundary link at $1$ on the right. For $\undertilde{D}$ to have a top boundary link at $1$ on the left, the string at node $1$ on the left of $\phi(T_2)$ must eventually connect to node $2$ on the left of $\phi(T_2)$. This means the string at node $1$ on the left of $\phi(T_2)$ must connect to some alternating sequence of links on the right of $\phi(\w{a}{b}(j_2))$ and the left of $\phi(T_2)$, ending at node $2$ on the left of $\phi(T_2)$. As $\phi(\w{a}{b}(j_2))$ does not have any links on its right side, it follows that $\phi(T_2)$ must have a simple link at $1$ on the left. That is, for $2 \leq j_2 \leq n-1$,
\begin{align}
\phi(\w{a}{b}(j_2))\phi(T_2) = \; \begin{tikzpicture}[baseline={([yshift=-1mm]current bounding box.center)},xscale=0.5,yscale=0.5]
{
\draw (0,0) to[out=0,in=180] (3,-0.5);
\draw (0,-0.5) to[out=0,in=180] (3,-1);
\draw (0,-2) to[out=0,in=180] (3,-2.5);
\draw (0,-2.5) arc (90:-90:0.3 and 0.25);
\draw (0,-3.5) to[out=0,in=180] (3,-3);
\draw (0,-6) to[out=0,in=180] (3,-5.5);
\draw (3,0) arc (270:180:0.5);
\draw (3,-6) arc (90:180:0.5);
\node at (2.5,0.4) [anchor=south] {\footnotesize $\mathrm{a}$};
\node at (2.5,-6.4) [anchor=north] {\footnotesize $\mathrm{b}$};
\node at (0,-2.5) [anchor=east] {\scriptsize $j_2$};
\node at (1.5,-1.3) [anchor=center] {$\vdots$};
\node at (1.5,-4.3) [anchor=center] {$\vdots$};
\begin{scope}[shift={(3,0)}]
\draw (0,0) arc(90:-90:0.3 and 0.25);
\filldraw[gray!40] (0,-1) to[out=0,in=-90] (1,0.5)--(2.5,0.5) arc(180:270:0.5) --(3,-6) arc(90:180:0.5) --(0.5,-6.5) arc(0:90:0.5)--cycle;
\end{scope}
\draw[dotted] (0,0.5)--(6,0.5);
\draw[dotted] (0,-6.5)--(6,-6.5);
\draw [very thick](0,-6.5) -- (0,0.5);
\draw [very thick](3,-6.5)--(3,0.5);
\draw [very thick](6,-6.5)--(6,0.5);
}
\end{tikzpicture}\; ,
\end{align}
and for $j_2=n$,
\begin{align}
\phi(\w{a}{b}(j_2))\phi(T_2) = \; \begin{tikzpicture}[baseline={([yshift=-1mm]current bounding box.center)},xscale=0.5,yscale=0.5]
{
\draw (0,0) to[out=0,in=180] (3,-0.5);
\draw (0,-0.5) to[out=0,in=180] (3,-1);
\draw (0,-5.5) to[out=0,in=180] (3,-6);
\draw (0,-6) arc (90:0:0.5);
\draw (3,0) arc (270:180:0.5);
\node at (2.5,0.4) [anchor=south] {\footnotesize $\mathrm{a}$};
\node at (0.5,-6.4) [anchor=north] {\footnotesize $\mathrm{b}$};
\node at (1.5,-3.05) [anchor=center] {$\vdots$};
\begin{scope}[shift={(3,0)}]
\draw (0,0) arc(90:-90:0.3 and 0.25);
\filldraw[gray!40] (0,-1) to[out=0,in=-90] (1,0.5)--(2.5,0.5) arc(180:270:0.5) --(3,-6) arc(90:180:0.5) --(0.5,-6.5) arc(0:90:0.5)--cycle;
\end{scope}
\draw[dotted] (0,0.5)--(6,0.5);
\draw[dotted] (0,-6.5)--(6,-6.5);
\draw [very thick](0,-6.5) -- (0,0.5);
\draw [very thick](3,-6.5)--(3,0.5);
\draw [very thick](6,-6.5)--(6,0.5);
}
\end{tikzpicture}\; .
\end{align}

By Lemma \ref{lem:simplelinklds}, then, the left descent set of $T_2$ must contain $e_1$. From Lemma \ref{prop:leftdescent}, however, $e_i \notin \mathcal{L}(T_2)$ for any $i<j_2$, since $\w{a}{b}(j_2)T_2$ is in $WT$ form. This is a contradiction, as $1<j_2$. It follows that $j_1 \neq 0$.

Now consider $1 \leq j_1 \leq n-1$. Then $\phi(\w{a}{b}(j_1))$ has a simple link at $j_1$ on the left, so $\undertilde{D}$ has a simple link at $j_1$ on the left; this link connects $j_1$ to $j_1+1$. If $j_2\leq n-1$, then $\phi(\w{a}{b}(j_2))$ has a simple link at $j_2$ on the left, so $\undertilde{D}$ also has a simple link at $j_2$ on the left. If $j_2=n$, then $\phi(\w{a}{b}(j_2))$ has a bottom boundary link at $j_2=n$ on the left, so $\undertilde{D}$ has a bottom boundary link at $n$ on the left. If $j_2=j_1+1$, each of these gives a contradiction, so $j_2>j_1+1$. Note that $\phi(\w{a}{b}(j_2))$ has throughlines connecting $i$ on the left to $i+1$ on the right, for each $1 \leq i \leq j_2-1$. Since $j_2>j_1+1$, this means there are throughlines in $\phi(\w{a}{b}(j_2))$ connecting $j_1$ and $j_1+1$ on the left, to $j_1+1$ and $j_1+2$ on the right, respectively. As there are no links on the right side of $\phi(\w{a}{b}(j_2))$, the throughlines at $j_1+1$ and $j_1+2$ on the right of $\phi(\w{a}{b}(j_2))$ must be connected to each other directly by a simple link at $j_1+1$ on the left side of $\phi(T_2)$. That is, for $j_2<n$,
\begin{align}
\phi(\w{a}{b}(j_2))\phi(T_2) =\;  \begin{tikzpicture}[baseline={([yshift=-1mm]current bounding box.center)},xscale=0.5,yscale=0.5]
{
\draw (0,0) to[out=0,in=180] (3,-0.5);
\draw (3,-6) arc(90:180:0.5);
\draw (0,-1) to[out=0,in=180] (3,-1.5);
\draw (0,-1.5) to[out=0,in=180] (3,-2);
\draw (0,-2) to[out=0,in=180] (3,-2.5);
\draw (0,-2.5) to[out=0,in=180] (3,-3);
\draw (0,-3.5) to[out=0,in=180] (3,-4);
\draw (0,-4) arc (90:-90:0.3 and 0.25);
\draw (0,-5) to[out=0,in=180] (3,-4.5);
\draw (0,-6) to[out=0,in=180] (3,-5.5);
\draw (3,0) arc (270:180:0.5);
\node at (2.5,0.4) [anchor=south] {\footnotesize $\mathrm{a}$};
\node at (2.5,-6.4) [anchor=north] {\footnotesize $\mathrm{b}$};
\node at (1.5,-0.55) [anchor=center] {.};
\node at (1.5,-0.75) [anchor=center] {.};
\node at (1.5,-0.95) [anchor=center] {.};
\node at (1.5,-3.05) [anchor=center] {.};
\node at (1.5,-3.25) [anchor=center] {.};
\node at (1.5,-3.45) [anchor=center] {.};
\node at (1.5,-5.05) [anchor=center] {.};
\node at (1.5,-5.25) [anchor=center] {.};
\node at (1.5,-5.45) [anchor=center] {.};
\node at (0,-1.5) [anchor=east] {\scriptsize $j_1$};
\node at (0,-4) [anchor=east] {\scriptsize $j_2$};
\begin{scope}[shift={(3,0)}]
\draw (0,-2) arc (90:-90:0.3 and 0.25);
\filldraw[gray!40] (0,0) arc(-90:0:0.5)--(2.5,0.5) arc(180:270:0.5) --(3,-6) arc(90:180:0.5) --(0.5,-6.5) arc(0:90:0.5) -- (0,-3) arc(-90:90:0.75)--cycle;
\end{scope}
\draw[dotted] (0,0.5)--(6,0.5);
\draw[dotted] (0,-6.5)--(6,-6.5);
\draw [very thick](0,-6.5) -- (0,0.5);
\draw [very thick](3,-6.5)--(3,0.5);
\draw [very thick](6,-6.5)--(6,0.5);
}
\end{tikzpicture}\; ,
\end{align}
and for $j_2=n$,
\begin{align}
\phi(\w{a}{b}(j_2))\phi(T_2) =\;  \begin{tikzpicture}[baseline={([yshift=-1mm]current bounding box.center)},xscale=0.5,yscale=0.5]
{
\draw (0,0) to[out=0,in=180] (3,-0.5);
\draw (0,-1) to[out=0,in=180] (3,-1.5);
\draw (0,-1.5) to[out=0,in=180] (3,-2);
\draw (0,-2) to[out=0,in=180] (3,-2.5);
\draw (0,-2.5) to[out=0,in=180] (3,-3);
\draw (0,-5.5) to[out=0,in=180] (3,-6);
\node at (1.5,-0.55) [anchor=center] {.};
\node at (1.5,-0.75) [anchor=center] {.};
\node at (1.5,-0.95) [anchor=center] {.};
\node at (1.5,-4.05) [anchor=center] {.};
\node at (1.5,-4.25) [anchor=center] {.};
\node at (1.5,-4.45) [anchor=center] {.};
\node at (0,-1.5) [anchor=east] {\scriptsize $j_1$};
\draw (0,-6) arc(90:0:0.5);
\draw (3,0) arc (270:180:0.5);
\node at (2.5,0.4) [anchor=south] {\footnotesize $\mathrm{a}$};
\node at (0.5,-6.4) [anchor=north] {\footnotesize $\mathrm{b}$};
\begin{scope}[shift={(3,0)}]
\draw (0,-2) arc (90:-90:0.3 and 0.25);
\filldraw[gray!40] (0,0) arc(-90:0:0.5)--(2.5,0.5) arc(180:270:0.5) --(3,-6) arc(90:180:0.5) --(0.5,-6.5) arc(0:90:0.5) -- (0,-3) arc(-90:90:0.75)--cycle;
\end{scope}
\draw[dotted] (0,0.5)--(6,0.5);
\draw[dotted] (0,-6.5)--(6,-6.5);
\draw [very thick](0,-6.5) -- (0,0.5);
\draw [very thick](3,-6.5)--(3,0.5);
\draw [very thick](6,-6.5)--(6,0.5);
}
\end{tikzpicture}\; .
\end{align}
Then $\mathcal{L}(T_2)$ contains $e_{j_1+1}$, by Lemma \ref{lem:simplelinklds}. However, $j_1+1<j_2$, and by Proposition \ref{prop:leftdescent}, $e_i \notin \mathcal{L}(T_2)$ for any $i < j_2$, as $\w{a}{b}(j_2)T_2$ is in $WT$ form. This is a contradiction, so we cannot have $1 \leq j_1 \leq n-1$.

Finally, if $j_1=n$, then $j_2>j_1$ contradicts $0 \leq j_2 \leq n$. 

Therefore, if $\w{a}{b}(j_1)T_1$ and $\w{a}{b}(j_2)T_2$ are in $WT$ form, and $\phi(\w{a}{b}(j_1)T_1) = \phi(\w{a}{b}(j_2)T_2)$, then $j_1=j_2$. 

Let $j\coloneqq j_1=j_2$. Then $W_1=W_2= \w{a}{b}(j)$, so $\phi(W_1)=\phi(W_2)=\dw{a}{b}(j)$. Hence $\undertilde{D} = \phi(W_1)\phi(T_1) = \dw{a}{b}(j)\phi(T_1)$, and $\undertilde{D} = \phi(W_2)\phi(T_2) = \dw{a}{b}(j) \phi(T_2)$. From Proposition \ref{prop:oddgen}, then, $\phi(T_1)=\phi(T_2)$. As $T_1$ and $T_2$ are even-reduced, this implies $T_1=T_2$, by Lemma \ref{lem:evenredinj}. Therefore, $W_1T_1=W_2T_2$, as required.
\end{proof}

We can now deduce the main result of this paper, that $\A_n(X)$ and $\lab_n(X)$ are isomorphic.

\begin{theorem}\label{thm:labeliso}
The algebraic and diagrammatic label algebras $\A_n(X)$ and $\lab_n(X)$ are isomorphic.
\end{theorem}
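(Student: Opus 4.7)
The plan is to deduce injectivity of $\phi$ from the results already assembled; combined with surjectivity from Proposition \ref{prop:surj}, this will immediately yield the isomorphism. Since $\phi$ is a well-defined homomorphism by Proposition \ref{prop:welldef} and surjective by Proposition \ref{prop:surj}, it remains only to verify that $\ker \phi = 0$.

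To this end, I would take an arbitrary element $x \in \ker \phi$ and, appealing to Corollary \ref{cor:WTspanning}, write $x = \sum_i \mu_i \mathbf{u}_i$ as a finite linear combination of words $\mathbf{u}_i$ in $WT$ form, with scalars $\mu_i \in \C$. By grouping together any $\mathbf{u}_i$ that coincide in $\A_n(X)$ and discarding the resulting duplicates, I may assume without loss of generality that the $\mathbf{u}_i$ are pairwise distinct as elements of $\A_n(X)$.

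Applying Proposition \ref{prop:WTdiag}, each $\phi(\mathbf{u}_i)$ is a basis diagram of $\lab_n(X)$ with coefficient $1$. By Proposition \ref{prop:WTinj}, two $WT$-form words whose images under $\phi$ agree must already be equal in $\A_n(X)$; since I arranged that the $\mathbf{u}_i$ are pairwise distinct in $\A_n(X)$, the diagrams $\phi(\mathbf{u}_i)$ are pairwise distinct basis elements of $\lab_n(X)$. Hence the equation $0 = \phi(x) = \sum_i \mu_i \phi(\mathbf{u}_i)$ expresses the zero vector as a linear combination of distinct basis diagrams, forcing $\mu_i = 0$ for every $i$ and thus $x = 0$. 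Therefore $\phi$ is injective, and combined with surjectivity, it is the desired isomorphism.

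No substantive obstacle remains at this stage: the real work has been done in the three key results just invoked, namely, that $WT$-form words span $\A_n(X)$ (Corollary \ref{cor:WTspanning}), that each such word is sent to a basis diagram with coefficient exactly $1$ (Proposition \ref{prop:WTdiag}), and that this assignment is injective on representatives of $\A_n(X)$-equivalence classes of $WT$-form words (Proposition \ref{prop:WTinj}). The only mild care required here is the remark about discarding duplicates, which is what allows Proposition \ref{prop:WTinj} to be upgraded to linear independence of the relevant images.
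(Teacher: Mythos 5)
Your proposal is correct and follows essentially the same route as the paper: surjectivity from Propositions \ref{prop:welldef} and \ref{prop:surj}, then injectivity deduced from the $WT$-form spanning set (Corollary \ref{cor:WTspanning}) together with Propositions \ref{prop:WTdiag} and \ref{prop:WTinj}. Your explicit kernel argument merely spells out the linear-algebra step that the paper leaves implicit in the phrase ``it follows that $\phi$ is injective.''
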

\begin{proof}
From Propositions \ref{prop:welldef} and \ref{prop:surj}, the map $\phi:\A_n(X) \to \lab_n(X)$ is a surjective homomorphism. From Corollary \ref{cor:WTspanning}, the set of all words in $WT$ form is a spanning set for $\A_n(X)$, and from Proposition \ref{prop:WTinj}, if two words in $WT$ form map to the same element of $\lab_n(X)$, then they are equal; it follows that $\phi$ is injective. Therefore $\phi$ is an isomorphism, and $\A_n(X) \cong \lab_n(X)$.
\end{proof}

We conclude this section with some corollaries that establish the outstanding claims from the beginning of Section \ref{s:ParityReduced}.

\begin{corollary}
A word $\mathbf{u}$ in $\A_n(X)$ is label-reduced if and only if concatenating the corresponding diagrammatic generators produces no boundary arcs.
\end{corollary}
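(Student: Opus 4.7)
The plan is to prove both directions of this biconditional by combining Lemma \ref{lem:labelreducedbdylinks} with the newly established main isomorphism (equivalently, with Proposition \ref{prop:WTdiag} and Corollary \ref{cor:WTspanning}). The key observation, already implicit in the proof of Lemma \ref{lem:labelreducedbdylinks}, is that for any word $\mathbf{u}$ with $\phi(\mathbf{u})$ a scalar multiple of an $\lab_n(X)$-diagram $\undertilde{D}$, the number of boundary links in $\undertilde{D}$ equals twice the number of label generators in $\mathbf{u}$ precisely when the concatenation of the corresponding diagrammatic generators produces no boundary arcs.

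For the $(\Leftarrow)$ direction, I would argue directly: if the concatenation produces no boundary arcs, then by the observation above, the number of boundary links in $\undertilde{D}$ equals twice the number of label generators in $\mathbf{u}$. Lemma \ref{lem:labelreducedbdylinks} states that such equality can hold only if $\mathbf{u}$ is label-reduced, giving the result immediately.

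For the $(\Rightarrow)$ direction, I would prove the contrapositive: assume the concatenation produces at least one boundary arc, and deduce that $\mathbf{u}$ is not label-reduced. Writing $\phi(\mathbf{u}) = \nu\, \undertilde{D}$ for some scalar $\nu$ and some $\lab_n(X)$-diagram $\undertilde{D}$, the assumption forces the number of boundary links in $\undertilde{D}$ to be strictly less than twice the number of label generators in $\mathbf{u}$. By Corollary \ref{cor:WTspanning}, there exists a word $WT$ in $WT$ form and a scalar $\mu$ with $\mathbf{u} = \mu\, WT$. By Proposition \ref{prop:WTdiag}, $\phi(WT)$ is equal to an $\lab_n(X)$-diagram with coefficient $1$, which must coincide with $\undertilde{D}$ since $\phi(\mathbf{u}) = \mu\, \phi(WT)$ is also a scalar multiple of $\undertilde{D}$; moreover, the proof of Proposition \ref{prop:WTdiag} shows that the concatenation of the diagrammatic generators for $WT$ produces no boundary arcs, so the number of label generators in $WT$ equals half the number of boundary links in $\undertilde{D}$, hence is strictly less than the number of label generators in $\mathbf{u}$. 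Thus $\mathbf{u} = \mu\, WT$ exhibits $\mathbf{u}$ as a scalar multiple of a word with strictly fewer label generators, so $\mathbf{u}$ is not label-reduced.

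There is no real obstacle here beyond the machinery already built: the corollary is essentially a packaging statement, harvesting the $WT$-form spanning argument and Proposition \ref{prop:WTdiag} to upgrade the one-sided implication in Lemma \ref{lem:labelreducedbdylinks} to a genuine equivalence. The only minor subtlety is confirming that $\phi(WT) = \undertilde{D}$ as diagrams (not merely up to scalar), but this is immediate because both are basis elements of $\lab_n(X)$ and both appear as the underlying diagram of $\phi(\mathbf{u})$ up to nonzero scalars.
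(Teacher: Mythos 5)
Your proof is correct, and the backwards direction is identical to the paper's. For the forwards direction you take a genuinely different (contrapositive) route: you produce a boundary-arc-free representative of $\undertilde{D}$ from the \emph{algebraic} side, via Corollary \ref{cor:WTspanning} ($\mathbf{u}=\mu\,WT$) together with Proposition \ref{prop:WTdiag} (which guarantees $\phi(WT)=\undertilde{D}$ with coefficient $1$, hence no boundary arcs in its concatenation), and then compare label-generator counts. The paper instead produces such a representative from the \emph{diagrammatic} side, using the constructions of Propositions \ref{prop:evengen} and \ref{prop:oddgen} to write $\undertilde{D}$ as a boundary-arc-free product of diagrammatic generators, and then invokes the injectivity of $\phi$ (Theorem \ref{thm:labeliso}) to identify the corresponding word $\mathbf{v}$ with $\mu\,\mathbf{u}$ before comparing counts. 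The two arguments are dual in this sense; yours has the mild advantage of not needing the isomorphism theorem at all (only the spanning result and Proposition \ref{prop:WTdiag}, both of which precede it), while the paper's is slightly shorter because it can lean on injectivity once the isomorphism is in hand. One small point worth making explicit in your write-up: the fact that the generator-by-generator concatenation for $WT$ produces no boundary arcs is not quite the literal statement of Proposition \ref{prop:WTdiag} (which asserts coefficient $1$), but it does follow immediately since all parameters are indeterminates, or alternatively by combining Lemma \ref{lem:evendiag} for $T$ with the observation that $\dw{a}{b}(j)$ retains both boundary links of $\dwup{a}{b}$.
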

\begin{proof}
We have that $\phi(\mathbf{u}) = \mu\, \undertilde{D}$ for some scalar $\mu$ and $\lab_n(X)$-diagram $\undertilde{D}$. The backwards implication follows from Lemma \ref{lem:labelreducedbdylinks}, since the number of boundary links in $\undertilde{D}$ must be exactly twice the number of label generators in $\mathbf{u}$, if no boundary arcs are formed in the concatenation. 

For the forwards implication, assume $\mathbf{u}$ is label-reduced. Using Proposition \ref{prop:evengen} or \ref{prop:oddgen}, we can write $\undertilde{D}$ as a product of the diagrammatic generators of $\lab_n(X)$ that produces no boundary arcs, so the number of boundary links in $\undertilde{D}$ is twice the total number of copies of $\dfup{a}{b}$, $\dfdo{a}{b}$, $\dwup{a}{b}$ and $\dwdo{a}{b}$ in the product, over all $\mathrm{a},\mathrm{b} \in X$. Let $\mathbf{v}$ be the corresponding word in the $\A_n(X)$ generators, so $\phi(\mathbf{v})=\undertilde{D}$, and the number of boundary links in $\undertilde{D}$ is twice the number of label generators in $\mathbf{v}$. Then since $\phi$ is an isomorphism, $\mathbf{u}= \mu \, \mathbf{v}$. Since $\mathbf{u}$ is label-reduced, $\mathbf{v}$ has at least as many label generators as $\mathbf{u}$. Thus the number of boundary links in $\undertilde{D}$ is at least twice the number of label generators in $\mathbf{u}$. Since the number of boundary links in $\undertilde{D}$ cannot be more than twice the number of label generators in $\mathbf{u}$, these numbers are equal. Hence concatenating the $\phi$-images of the generators of $\mathbf{u}$ produces no boundary arcs, as required.
\end{proof}

\begin{corollary}
Each word in $\A_n(X)$ maps to a scalar multiple of an $\lab_n(X)$-diagram of the same parity.
\end{corollary}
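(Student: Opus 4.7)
The plan is to split into the two cases based on the parity of the input word $\mathbf{u}$, and combine previously established results. The even case is essentially immediate from Corollary \ref{cor:evenwordtodiag}: if $\mathbf{u}$ is even, then $\phi(\mathbf{u})$ is by definition a scalar multiple of an even diagram. So the substance lies in the odd case.

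Suppose $\mathbf{u}$ is odd. First I would invoke Corollary \ref{cor:WTspanning} to write $\mathbf{u} = \mu\, WT$ for some scalar $\mu$ and some word $WT$ in $WT$ form. The key observation is that $W$ cannot be $\id$: if it were, then $\mathbf{u}$ would be a scalar multiple of the even-reduced word $T$, which consists only of even generators, and hence $\mathbf{u}$ would be even by definition, contradicting our assumption. Thus $W = \w{a}{b}(j)$ for some $\mathrm{a},\mathrm{b}\in X$ and $0 \leq j \leq n$.

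With $W \neq \id$, Proposition \ref{prop:WTdiag} guarantees that $\phi(WT)$ is an odd $\lab_n(X)$-diagram with coefficient $1$. Therefore
\begin{align*}
\phi(\mathbf{u}) = \mu\, \phi(WT)
\end{align*}
is a scalar multiple of an odd diagram, as required.

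There is no real obstacle here, since every ingredient has been established already; the only subtle point is verifying that $W = \id$ forces $\mathbf{u}$ to be even, which follows directly from the definition of an even word and does not require appealing to the isomorphism $\phi$ or to Corollary \ref{cor:oddgensodd}.
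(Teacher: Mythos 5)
Your proof is correct, and for the odd case it takes a genuinely different route from the paper's. You decompose the word via the $WT$-form spanning result (Corollary \ref{cor:WTspanning}), rule out $W=\id$ directly from the definition of an even word, and then read off the parity of the image from Proposition \ref{prop:WTdiag}. The paper instead argues by contradiction on the parity of the image diagram: if $\phi(\mathbf{u})$ were a scalar multiple of an even diagram $\undertilde{D}$, then Corollary \ref{cor:evenredsurj} produces an even-reduced word $T$ with $\phi(T)=\undertilde{D}$, and injectivity of $\phi$ (Theorem \ref{thm:labeliso}) forces $\mathbf{u}$ to equal a scalar multiple of $T$, hence to be even. The two arguments are of comparable length; the real difference is in logical dependencies. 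Your version never invokes the isomorphism theorem---only the spanning set and Proposition \ref{prop:WTdiag}---so it is available as soon as those are proved, whereas the paper's version leans on injectivity and must come after Theorem \ref{thm:labeliso}. Your key observation that $W=\id$ forces $\mathbf{u}$ to be even is sound: an even-reduced $T$ is by definition a label-reduced word in the even generators, which is precisely what the definition of an even word requires, so no appeal to $\phi$ or to Corollary \ref{cor:oddgensodd} is needed there.
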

\begin{proof}
We have already shown in Corollary \ref{cor:evenwordtodiag} that each even word in $\A_n(X)$ maps to a scalar multiple of an even $\lab_n(X)$-diagram. Thus let $\mathbf{u}$ be an odd word in $\A_n(X)$. We know that $\phi(\mathbf{u})= \mu \, \undertilde{D}$ for some scalar $\mu$ and some $\lab_n(X)$-diagram $\undertilde{D}$. Suppose for sake of contradiction that $\undertilde{D}$ is even. Then by Corollary \ref{cor:evenredsurj}, there exists some even-reduced word $T$ in $\A_n(X)$ such that $\phi(T)=\undertilde{D}$. Then $\phi(\mathbf{u}) = \phi(\mu\, T)$, so $\mathbf{u} = \mu \, T$, since $\phi$ is an isomorphism. Since $\mathbf{u}$ is a scalar multiple of the even-reduced word $T$, $\mathbf{u}$ is even, contradicting our assumption that $\mathbf{u}$ was odd. Hence each odd word in $\A_n(X)$ maps to a scalar multiple of an odd diagram in $\lab_n(X)$.
\end{proof}

\appendix

\section{Additional relations}\label{app:redundant}
These relations follow from the defining relations of $\A_n(X)$ in Section \ref{s:algrels}.
\begin{lemma}
For $n\geq 2$,
\begin{align}
\wup{a}{b}e_{n-1}\wdo{c}{d} &= \fup{a}{c}\fdo{b}{d}. \label{eq:wupen1wdoapp}
\end{align}
\end{lemma}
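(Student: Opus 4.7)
The plan is to reduce $\wup{a}{b}e_{n-1}\wdo{c}{d}$ to the form $\wdo{a}{b}e_1\wup{c}{d}$, at which point relation \eqref{eq:wdoe1wup} immediately gives $\fup{a}{c}\fdo{b}{d}$.

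First I would apply \eqref{eq:wupen1} to rewrite $\wup{a}{b}e_{n-1}$ as $\wdo{a}{b}e_1e_2\cdots e_{n-1}$, so that the left-hand side becomes
\begin{align*}
\wup{a}{b}e_{n-1}\wdo{c}{d} \;=\; \wdo{a}{b}\,e_1e_2\cdots e_{n-1}\,\wdo{c}{d}.
\end{align*}
Next I would apply \eqref{eq:en1wdo} to rewrite $e_{n-1}\wdo{c}{d}$ as $e_{n-1}e_{n-2}\cdots e_1\wup{c}{d}$, yielding
\begin{align*}
\wdo{a}{b}\,e_1e_2\cdots e_{n-2}e_{n-1}e_{n-2}\cdots e_2 e_1\,\wup{c}{d}.
\end{align*}

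The middle block $e_1e_2\cdots e_{n-1}e_{n-2}\cdots e_1$ now telescopes by repeated use of \eqref{eq:TLreduce} (the relation $e_ke_{k\pm 1}e_k=e_k$): starting from the innermost triple $e_{n-2}e_{n-1}e_{n-2}=e_{n-2}$ and working outward, each stage replaces $e_ke_{k+1}e_k$ with $e_k$, and after $n-2$ such contractions the middle block collapses to the single generator $e_1$. The expression thus reduces to $\wdo{a}{b}\,e_1\,\wup{c}{d}$, and a final application of \eqref{eq:wdoe1wup} gives $\fup{a}{c}\fdo{b}{d}$, as required.

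The argument is a short direct computation using only the listed defining relations; there is no real obstacle. The only point requiring a little care is checking that the telescoping in the middle terminates at $e_1$ and not at some other generator, but this is immediate from the symmetric pattern $e_1e_2\cdots e_{n-1}e_{n-2}\cdots e_1$ and the hypothesis $n\geq 2$.
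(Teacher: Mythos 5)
Your proof is correct and follows essentially the same route as the paper's: apply \eqref{eq:wupen1}, then \eqref{eq:en1wdo}, telescope the palindromic block $e_1e_2\cdots e_{n-1}\cdots e_2e_1$ down to $e_1$ via repeated use of \eqref{eq:TLreduce}, and finish with \eqref{eq:wdoe1wup}. No gaps.
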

\begin{proof}
We have
\begin{align}
\wup{a}{b}e_{n-1}\wdo{c}{d} &= \wdo{a}{b}e_1e_2\dots e_{n-2}e_{n-1}\wdo{c}{d} &&\text{by \eqref{eq:wupen1}} \\
&= \wdo{a}{b} e_1e_2 \dots e_{n-2}e_{n-1} e_{n-2} \dots e_2e_1 \wup{c}{d} &&\text{by \eqref{eq:en1wdo}} \\
&= \wdo{a}{b} e_1 \wup{c}{d} &&\text{by \eqref{eq:TLreduce}, repeatedly} \\
&= \fup{a}{c}\fdo{b}{d} &&\text{by \eqref{eq:wdoe1wup}.}
\end{align}
\end{proof}

\begin{lemma}
For $n$ odd, we have
\begin{align}
\fup{a}{b}E &= E \fup{a}{b}, \label{eq:fupEapp}\\
\fdo{a}{b}O &= O\fdo{a}{b}. \label{eq:fdoOapp}
\end{align}
\end{lemma}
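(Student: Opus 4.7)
The plan is to prove both identities by straightforward iterated commutation, using the defining relations \eqref{eq:fupej} and \eqref{eq:fdoej}. The only substantive content is a parity/range check on the indices of the generators $e_j$ appearing in the products $E$ and $O$, which is where oddness of $n$ is used.

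For \eqref{eq:fupEapp}, I would expand $E = e_2 e_4 \cdots e_{n-1}$, noting that the indices $2k$ for $k = 1, \dots, \tfrac{n-1}{2}$ run over even integers in $\{2, 4, \dots, n-1\}$ since $n$ is odd. Every such index lies in the range $2 \leq j \leq n-1$ required by \eqref{eq:fupej}, so I can commute $\fup{a}{b}$ past each factor one at a time:
\begin{align*}
\fup{a}{b} E = \fup{a}{b} e_2 e_4 \cdots e_{n-1} = e_2 \fup{a}{b} e_4 \cdots e_{n-1} = \cdots = e_2 e_4 \cdots e_{n-1} \fup{a}{b} = E \fup{a}{b}.
\end{align*}

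For \eqref{eq:fdoOapp}, the same argument applies with $O = e_1 e_3 \cdots e_{n-2}$: the indices $2k-1$ for $k = 1, \dots, \tfrac{n-1}{2}$ run over odd integers in $\{1, 3, \dots, n-2\}$, all of which lie in the range $1 \leq j \leq n-2$ required by \eqref{eq:fdoej}. Commuting $\fdo{a}{b}$ past each factor in turn yields $\fdo{a}{b} O = O \fdo{a}{b}$.

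There is no real obstacle here; the only thing to verify carefully is that the extremal indices $n-1$ in $E$ and $1$ in $O$ do not fall outside the allowable ranges of \eqref{eq:fupej} and \eqref{eq:fdoej} respectively, which is immediate. The oddness of $n$ is essential: it guarantees that $n-1$ is even (so it appears in $E$ rather than $O$) and $1$ is odd (so it appears in $O$ rather than $E$), avoiding the forbidden boundary indices $j=1$ for $\fup{a}{b}$ and $j=n-1$ for $\fdo{a}{b}$.
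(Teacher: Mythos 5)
Your proof is correct and is exactly the argument the paper intends; the paper's own proof just says the result "follows from the definitions of $E$ and $O$, and \eqref{eq:fupej} and \eqref{eq:fdoej}", and you have spelled out the same index-range check. Nothing to add.
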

\begin{proof}
This follows from the definitions of $E$ and $O$, and \eqref{eq:fupej} and \eqref{eq:fdoej}.
\end{proof}

\begin{lemma}
For $n$ odd, we have
\begin{align}
\wup{a}{b}\, O &= E\, \wup{a}{b}, \label{eq:wupOapp} \\
\wdo{a}{b}\, E &= O\, \wdo{a}{b}. \label{eq:wdoEapp}
\end{align}
\end{lemma}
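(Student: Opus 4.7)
The plan is to prove both identities by repeatedly applying the ``almost commutation'' relations \eqref{eq:ejwup} and \eqref{eq:ejwdo}, which in rewritten form say that $\wup{a}{b} e_i = e_{i+1} \wup{a}{b}$ for $1 \leq i \leq n-2$, and $\wdo{a}{b} e_i = e_{i-1} \wdo{a}{b}$ for $2 \leq i \leq n-1$. These allow $\wup{a}{b}$ to be moved rightward past any $e_i$ (with $i \leq n-2$) at the cost of incrementing the index, and dually $\wdo{a}{b}$ to be moved rightward past any $e_i$ (with $i \geq 2$) at the cost of decrementing the index. Since $n$ is odd, the factors of $O = e_1 e_3 \cdots e_{n-2}$ have indices at most $n-2$, and the factors of $E = e_2 e_4 \cdots e_{n-1}$ have indices at least $2$, so each relevant pass is legal.

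For the first identity, I would start from $\wup{a}{b}\, O = \wup{a}{b}\, e_1 e_3 \cdots e_{n-2}$ and push $\wup{a}{b}$ rightward past the factors of $O$ one at a time using \eqref{eq:ejwup}:
\begin{align*}
\wup{a}{b}\, e_1 e_3 \cdots e_{n-2}
&= e_2\, \wup{a}{b}\, e_3 e_5 \cdots e_{n-2} \\
&= e_2 e_4\, \wup{a}{b}\, e_5 \cdots e_{n-2} \\
&= \cdots \\
&= e_2 e_4 \cdots e_{n-1}\, \wup{a}{b}
 = E\, \wup{a}{b}.
\end{align*}
For the second identity, start from $\wdo{a}{b}\, E = \wdo{a}{b}\, e_2 e_4 \cdots e_{n-1}$ and analogously push $\wdo{a}{b}$ rightward using \eqref{eq:ejwdo}, each application decrementing the index by one:
\begin{align*}
\wdo{a}{b}\, e_2 e_4 \cdots e_{n-1}
&= e_1\, \wdo{a}{b}\, e_4 e_6 \cdots e_{n-1} \\
&= e_1 e_3\, \wdo{a}{b}\, e_6 \cdots e_{n-1} \\
&= \cdots \\
&= e_1 e_3 \cdots e_{n-2}\, \wdo{a}{b}
 = O\, \wdo{a}{b}.
\end{align*}

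There is really no obstacle here; the only thing to notice is that the indices stay within the allowed range for \eqref{eq:ejwup} and \eqref{eq:ejwdo} at every step (which uses that $n$ is odd, so the largest index in $O$ is $n-2 \leq n-2$ and the smallest in $E$ is $2 \geq 2$), and that the generators within $O$ (respectively $E$) commute among themselves by \eqref{eq:eiej} since their indices differ by at least $2$, so the order of the remaining $e_{2k-1}$ (respectively $e_{2k}$) factors on the right of $\wup{a}{b}$ (respectively $\wdo{a}{b}$) is immaterial.
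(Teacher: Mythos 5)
Your proof is correct and is exactly the argument the paper intends: its own proof of this lemma is the one-line remark that the identities follow from the definitions of $O$ and $E$ together with \eqref{eq:ejwup} and \eqref{eq:ejwdo}, and you have simply written out the iterated application of those relations with the index bookkeeping made explicit. No issues.
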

\begin{proof}
This follows from the definitions of $E$ and $O$, and \eqref{eq:ejwup} and \eqref{eq:ejwdo}.
\end{proof}

\begin{lemma}
For odd $n>1$, we have
\begin{align}
e_1 \wup{a}{b}E = O\wup{a}{b}e_{n-1} &= \wdo{a}{b}E, \label{eq:e1wupEapp} \\
E\wdo{a}{b}e_1 = e_{n-1} \wdo{a}{b}O &= \wup{a}{b}O . \label{eq:Ewdowe1app}
\end{align}
\end{lemma}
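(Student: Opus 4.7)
The strategy is to reduce all four equalities to the single purely-$e_i$ identity that, for odd $n$,
\begin{align*}
e_1 e_2 \cdots e_{n-1}\, O \;=\; O. \tag{$\ast$}
\end{align*}
Applying the order-reversing anti-automorphism to $(\ast)$, and noting that $O$ (being a product of mutually commuting generators) is fixed under reversal, yields $O\, e_{n-1} e_{n-2} \cdots e_1 = O$; the formal index swap $i \mapsto n-i$ converts both into the analogous statements with $E$ in place of $O$. I would prove $(\ast)$ by absorbing the factors of $O = e_1 e_3 \cdots e_{n-2}$ into the cap $e_1 e_2 \cdots e_{n-1}$ one at a time: a direct induction on $k \in \{1, \ldots, (n-1)/2\}$ shows
\begin{align*}
e_1 e_2 \cdots e_{n-1} \cdot e_1 e_3 \cdots e_{2k-1} \;=\; e_1 e_3 \cdots e_{2k-1}\; e_{2k+1} e_{2k+2} \cdots e_{n-1},
\end{align*}
the inductive step being: right-multiply by $e_{2k+1}$, commute it leftwards through $e_{2k+3}, \ldots, e_{n-1}$ via \eqref{eq:eiej}, and simplify $e_{2k+1} e_{2k+2} e_{2k+1}$ to $e_{2k+1}$ via \eqref{eq:TLreduce}. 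At $k = (n-1)/2$ the trailing product is empty, leaving $O$.

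Given $(\ast)$ and its three variants, the first chain is a short rewrite. For $O \wup{a}{b} e_{n-1} = e_1 \wup{a}{b} E$, I would use \eqref{eq:ejwup} to push each of $e_3, e_5, \ldots, e_{n-2}$ in $O$ past $\wup{a}{b}$, each step dropping the index by one, to obtain $O \wup{a}{b} = e_1 \wup{a}{b}\, e_2 e_4 \cdots e_{n-3}$; the right multiplication by $e_{n-1}$ then completes $E$. For $e_1 \wup{a}{b} E = \wdo{a}{b} E$, apply \eqref{eq:e1wup} and then \eqref{eq:wdoE} to rewrite the left-hand side as $e_1 e_2 \cdots e_{n-1}\, O\, \wdo{a}{b}$; invoking $(\ast)$ collapses this to $O\, \wdo{a}{b}$, which equals $\wdo{a}{b} E$ by a second use of \eqref{eq:wdoE}.

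The second chain is the mirror image. For $e_{n-1} \wdo{a}{b} O = \wup{a}{b} O$, apply \eqref{eq:en1wdo} and then \eqref{eq:wupO} to rewrite the left-hand side as $e_{n-1} e_{n-2} \cdots e_1\, E\, \wup{a}{b}$; the $E$-analogue of $(\ast)$, namely $e_{n-1} e_{n-2} \cdots e_1\, E = E$, collapses this to $E\, \wup{a}{b} = \wup{a}{b} O$. For $E \wdo{a}{b} e_1 = \wup{a}{b} O$, apply \eqref{eq:wdoe1} followed by \eqref{eq:wupO} to rewrite it as $\wup{a}{b}\, O\, e_{n-1} e_{n-2} \cdots e_1$; the reversed form of $(\ast)$ then yields $\wup{a}{b} O$. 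The only real obstacle is $(\ast)$ itself; the induction above appears to be the cleanest way to manage the interleaving of $O$'s factors with the cap $e_1 e_2 \cdots e_{n-1}$.
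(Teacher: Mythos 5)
Your proposal is correct and follows essentially the same route as the paper's proof: convert between $\wup{a}{b}$ and $\wdo{a}{b}$ via \eqref{eq:e1wup}, \eqref{eq:en1wdo} and \eqref{eq:wdoe1}, move $O$ and $E$ across the odd generators with \eqref{eq:wupO} and \eqref{eq:wdoE}, push the factors of $O$ through $\wup{a}{b}$ with \eqref{eq:ejwup}, and reduce the Temperley--Lieb word $e_1e_2\cdots e_{n-1}\,O$ back to $O$. The only (cosmetic) difference is that the paper performs this last reduction in one step by commuting into the regrouped form $(e_1e_2e_1)(e_3e_4e_3)\cdots(e_{n-2}e_{n-1}e_{n-2})$ and applying \eqref{eq:TLreduce}, whereas you absorb the factors of $O$ one at a time by induction and obtain the mirrored variants by reversal and the index swap $i\mapsto n-i$ rather than by the paper's ``proven analogously''.
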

\begin{proof}
We have
\begin{align}
e_1 \wup{a}{b}E &= (e_1 e_2 \dots e_{n-2}e_{n-1}) \wdo{a}{b} E &&\text{by \eqref{eq:e1wup}} \\
&= (e_1 e_2 \dots e_{n-2}e_{n-1}) O \wdo{a}{b} &&\text{by \eqref{eq:wdoEapp}} \\
&= (e_1 e_2 \dots e_{n-2}e_{n-1}) (e_1e_3 \dots e_{n-4}e_{n-2})\wdo{a}{b} &&\text{by definition of $O$} \\
&= (e_1e_2e_1)(e_3e_4e_3) \dots (e_{n-2}e_{n-1}e_{n-2}) \wdo{a}{b} &&\text{by \eqref{eq:eiej}, repeatedly} \\
&= e_1e_3 \dots e_{n-2} \wdo{a}{b} &&\text{by \eqref{eq:TLreduce}} \\
&= O \wdo{a}{b} &&\text{by definition of $O$} \\
&= \wdo{a}{b} E &&\text{by \eqref{eq:wdoEapp}}.
\end{align}
We also have
\begin{align}
O\wup{a}{b}e_{n-1} &= e_1e_3 \dots e_{n-4}e_{n-2}\wup{a}{b}e_{n-1} &&\text{by definition of O} \\
&= e_1 \wup{a}{b}e_2 e_4 \dots e_{n-5}e_{n-3}e_{n-1} &&\text{by \eqref{eq:ejwup}} \\
&= e_1 \wup{a}{b} E &&\text{by definition of }E.
\end{align}
This proves \eqref{eq:e1wupEapp}; \eqref{eq:Ewdowe1app} is proven analogously.
\end{proof}

\begin{lemma}
For $n$ odd, we have
\begin{align}
\fup{c}{a}\, E \, \fdo{b}{d} \, O &= \glab{a}{b}\, \wup{c}{d}\, O, \label{eq:fupEfdoOapp}\\[1mm]
\fdo{d}{b}\, O\, \fup{a}{c} \, E &= \glab{a}{b} \,\wdo{c}{d}\, E, \label{eq:fdoOfupEapp}\\[1mm]
\wup{c}{b}\, O\,  \fup{a}{d}\, E &= \glab{a}{b}\, \fup{c}{d}\, E, \label{eq:wupOfupEapp}\\[1mm]
\wdo{a}{c}\, E \, \fdo{b}{d}\, O &= \glab{a}{b}\, \fdo{c}{d}\, O. \label{eq:wdoEfdoOapp}
\end{align}
\end{lemma}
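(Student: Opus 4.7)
The plan is to prove each of the four identities by reducing it to one of the core relations \eqref{eq:wupOwupO}--\eqref{eq:fdoOwupO} via a short chain of manipulations, using only the auxiliary identities already established earlier in Appendix \ref{app:redundant}: the commutations $\fup{a}{b}E = E\fup{a}{b}$ and $\fdo{a}{b}O = O\fdo{a}{b}$ (equations \eqref{eq:fupEapp} and \eqref{eq:fdoOapp}), the ``swap'' identities $\wup{a}{b}O = E\wup{a}{b}$ and $\wdo{a}{b}E = O\wdo{a}{b}$ (equations \eqref{eq:wupOapp} and \eqref{eq:wdoEapp}), the collapses $e_1\wup{a}{b}E = \wdo{a}{b}E$ and $e_{n-1}\wdo{a}{b}O = \wup{a}{b}O$ (equations \eqref{eq:e1wupEapp} and \eqref{eq:Ewdowe1app}), together with the two bridging identities \eqref{eq:wupen1wdoapp} and \eqref{eq:wdoe1wup}, which convert a product of the form $\fup{}{}\fdo{}{}$ into a product $\wup{}{}e_{n-1}\wdo{}{}$ or $\wdo{}{}e_1\wup{}{}$, respectively.

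For \eqref{eq:fupEfdoOapp}, the plan is to first commute $\fup{c}{a}$ past $E$ via \eqref{eq:fupEapp}, then apply \eqref{eq:wupen1wdoapp} to replace $\fup{c}{a}\fdo{b}{d}$ by $\wup{c}{b}e_{n-1}\wdo{a}{d}$, collapse $e_{n-1}\wdo{a}{d}O$ to $\wup{a}{d}O$ via \eqref{eq:Ewdowe1app}, rewrite the resulting $E\wup{c}{b}$ as $\wup{c}{b}O$ via \eqref{eq:wupOapp}, and finish with \eqref{eq:wupOwupO}. For \eqref{eq:wdoEfdoOapp} the procedure is the mirror: swap $\wdo{a}{c}E$ to $O\wdo{a}{c}$ via \eqref{eq:wdoEapp}, use \eqref{eq:wdofdo} to turn $\wdo{a}{c}\fdo{b}{d}$ into $\fdo{c}{b}e_{n-1}\wdo{a}{d}$, push $\fdo{c}{b}$ past $O$ via \eqref{eq:fdoOapp}, collapse $e_{n-1}\wdo{a}{d}O$ to $\wup{a}{d}O$, and invoke \eqref{eq:fdoOwupO}.

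The two remaining identities are handled analogously, using the ``$e_1$'' versions of the collapse identities in place of the ``$e_{n-1}$'' ones. For \eqref{eq:wupOfupEapp} I plan to rewrite $\wup{c}{b}O$ as $E\wup{c}{b}$ via \eqref{eq:wupOapp}, apply \eqref{eq:wupfup} to turn $\wup{c}{b}\fup{a}{d}$ into $\fup{c}{a}e_1\wup{d}{b}$, commute $\fup{c}{a}$ out through $E$ via \eqref{eq:fupEapp}, collapse $e_1\wup{d}{b}E$ to $\wdo{d}{b}E$ via \eqref{eq:e1wupEapp}, and close with \eqref{eq:fupEwdoE}. For \eqref{eq:fdoOfupEapp} I will commute $\fdo{d}{b}$ past $O$ and past $\fup{a}{c}$ using \eqref{eq:fdoOapp} and \eqref{eq:fupfdo}, apply \eqref{eq:wdoe1wup} to rewrite $\fup{a}{c}\fdo{d}{b}$ as $\wdo{a}{d}e_1\wup{c}{b}$, turn $O\wdo{a}{d}$ into $\wdo{a}{d}E$ via \eqref{eq:wdoEapp}, collapse $e_1\wup{c}{b}E$ to $\wdo{c}{b}E$ via \eqref{eq:e1wupEapp}, and finish with \eqref{eq:wdoEwdoE}. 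I do not anticipate any substantive obstacle beyond careful bookkeeping of the superscript/subscript labels; in each case the manipulation is a single commutation-conversion-collapse cycle ending with the $\glab$-producing relation appropriate to the target's right-hand side.
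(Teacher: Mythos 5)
Your $n>1$ derivations are correct and follow essentially the same route as the paper: the same toolkit of commutations \eqref{eq:fupEapp}, \eqref{eq:fdoOapp}, swaps \eqref{eq:wupOapp}, \eqref{eq:wdoEapp}, collapses \eqref{eq:e1wupEapp}, \eqref{eq:Ewdowe1app}, and a final appeal to one of \eqref{eq:wupOwupO}--\eqref{eq:fdoOwupO}. Your chains for \eqref{eq:wupOfupEapp} and \eqref{eq:wdoEfdoOapp} coincide with the paper's step for step; for \eqref{eq:fupEfdoOapp} and \eqref{eq:fdoOfupEapp} you use the opposite bridging identity (\eqref{eq:wupen1wdoapp} where the paper uses \eqref{eq:wdoe1wup}, and vice versa), but both routes land on the same penultimate expression ($\wup{c}{b}O\wup{a}{d}O$ and $\wdo{a}{d}E\wdo{c}{b}E$ respectively), and I have checked that the label bookkeeping is right throughout.

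The one genuine gap is the case $n=1$, which is odd and therefore within the scope of the lemma, but for which every one of your chains breaks down: the relations \eqref{eq:wupen1wdoapp}, \eqref{eq:wdoe1wup}, \eqref{eq:wupfup}, \eqref{eq:wdofdo}, \eqref{eq:e1wupEapp} and \eqref{eq:Ewdowe1app} all require $n\geq 2$ (there is no $e_1$ or $e_{n-1}$ when $n=1$), so the manipulations you describe cannot even be started. The paper treats $n=1$ separately in each of the four identities. The fix is short --- for $n=1$ one has $O=E=\id$, so the four identities reduce verbatim to the defining relations \eqref{eq:fupfdo1}, \eqref{eq:fdofup1}, \eqref{eq:wupfup1} and \eqref{eq:wdofdo1} --- but it must be said explicitly, since as written your argument proves the lemma only for odd $n\geq 3$.
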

\begin{proof}
Assume $n$ is odd throughout this proof. For \eqref{eq:fupEfdoOapp}, we have, for $n>1$,
\begin{align}
\fup{c}{a}\, E \, \fdo{b}{d} \, O
&= E \, \fup{c}{a}\, \fdo{b}{d} \, O &&\text{by \eqref{eq:fupEapp}} \\
&= E\, \wdo{c}{b}e_1\wup{a}{d}\, O &&\text{by \eqref{eq:wdoe1wup}} \\
&= \wup{c}{b}\, O\,  \wup{a}{d} \, O &&\text{by \eqref{eq:Ewdowe1app}} \\
&= \glab{a}{b}\, \wup{c}{d}\, O &&\text{by \eqref{eq:wupOwupO}}.
\end{align}
For $n=1$, we have
\begin{align}
\fup{c}{a}\, E \, \fdo{b}{d} \, O
&= \fup{c}{a}\fdo{b}{d} &&\text{by definitions of $O$ and $E$} \\
&= \glab{a}{b}\, \wup{c}{d} &&\text{by \eqref{eq:fupfdo1}} \\
&= \glab{a}{b}\,\wup{c}{d}\, O &&\text{by definition of $O$}.
\end{align}
For \eqref{eq:fdoOfupEapp}, we have, for $n>1$,
\begin{align}
\fdo{d}{b}\, O\, \fup{a}{c}\, E 
&= O\, \fdo{d}{b}\, \fup{a}{c}\, E &&\text{by \eqref{eq:fdoOapp}}\\
&= O\, \fup{a}{c}\, \fdo{d}{b}\, E &&\text{by \eqref{eq:fupfdo}} \\
&= O\, \wup{a}{d}e_{n-1}\wdo{c}{b}\, E &&\text{by \eqref{eq:wupen1wdoapp}} \\
&= \wdo{a}{d}\, E \, \wdo{c}{b}\, E &&\text{by \eqref{eq:e1wupEapp}} \\
&= \glab{a}{b}\, \wdo{c}{d} \, E &&\text{by \eqref{eq:wdoEwdoE}}.
\end{align}
For $n=1$,
\begin{align}
\fdo{d}{b}\, O\, \fup{a}{c}\, E &= \fdo{d}{b}\fup{a}{c} &&\text{by definitions of }O\text{ and }E \\
&= \glab{a}{b}\, \wdo{c}{d} &&\text{by \eqref{eq:fdofup1}} \\
&= \glab{a}{b}\, \wdo{c}{d}\, E &&\text{by definition of }E.
\end{align}
For \eqref{eq:wupOfupEapp}, we have, for $n>1$,
\begin{align}
\wup{c}{b}\, O\, \fup{a}{d}\, E &= E\, \wup{c}{b}\fup{a}{d}\, E &&\text{by \eqref{eq:wupOapp}} \\
&= E\, \fup{c}{a}e_1 \wup{d}{b}\, E &&\text{by \eqref{eq:wupfup}} \\
&=\fup{c}{a}\, E\, e_1\wup{d}{b}E &&\text{by \eqref{eq:fupEapp}} \\
&= \fup{c}{a}\, E\, \wdo{d}{b}\, E &&\text{by \eqref{eq:e1wupEapp}} \\
&= \glab{a}{b}\, \fup{c}{d}\, E &&\text{by \eqref{eq:fupEwdoE}}.
\end{align}
For $n=1$,
\begin{align}
\wup{c}{b}\, O\, \fup{a}{d}\, E &= \wup{c}{b}\, \fup{a}{d} &&\text{by definitions of }O\text{ and }E \\
&= \glab{a}{b}\, \fup{c}{d} &&\text{by \eqref{eq:wupfup1}} \\
&= \glab{a}{b}\,\fup{c}{d}\, E &&\text{by definition of }E.
\end{align}
For \eqref{eq:wdoEfdoOapp}, we have, for $n>1$,
\begin{align}
\wdo{a}{c}\, E\, \fdo{b}{d}\, O &= O\, \wdo{a}{c}\, \fdo{b}{d}\, O &&\text{by \eqref{eq:wdoEapp}} \\
&= O\, \fdo{c}{b}e_{n-1}\wdo{a}{d}\, O &&\text{by \eqref{eq:wdofdo}} \\
&= \fdo{c}{b}\, O\, e_{n-1}\wdo{a}{d}\, O &&\text{by \eqref{eq:fdoOapp}} \\
&= \fdo{c}{b}\, O\, \wup{a}{d} \, O &&\text{by \eqref{eq:Ewdowe1app}} \\
&= \glab{a}{b}\, \fdo{c}{d}\, O &&\text{by \eqref{eq:fdoOwupO}}.
\end{align}
For $n=1$,
\begin{align}
\wdo{a}{c}\, E\, \fdo{b}{d}\, O &= \wdo{a}{c}\, \fdo{b}{d} &&\text{by definitions of }E\text{ and }O \\
&= \glab{a}{b}\, \fdo{c}{d} &&\text{by \eqref{eq:wdofdo1}} \\
&= \glab{a}{b}\, \fdo{c}{d}\, E &&\text{by definition of }E.
\end{align}
\end{proof}

\begin{lemma}
For $n$ even,
\begin{align}
\Theta \wup{a}{b} &= \Theta \wdo{a}{b}, \label{eq:Thetawupapp} \\[1mm]
\wup{a}{b} \Theta &= \wdo{a}{b} \Theta. \label{eq:wupThetaapp}
\end{align}
\end{lemma}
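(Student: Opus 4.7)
The plan is to reduce each identity to a purely Temperley-Lieb calculation by conjugating the odd generator past one half of $\Theta$ using the commutation relations \eqref{eq:ejwup} and \eqref{eq:ejwdo}, swapping $\wup{a}{b}$ for $\wdo{a}{b}$ via \eqref{eq:e1wup} or \eqref{eq:wupen1}, and then collapsing the leftover word in the $e_i$'s back to $\Theta$ using \eqref{eq:eiej} and \eqref{eq:TLreduce}.

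For \eqref{eq:Thetawupapp}, first I would split $\Theta=e_1(e_3e_5\cdots e_{n-1})$ and apply \eqref{eq:ejwup} $\tfrac{n}{2}-1$ times (each $e_{2k-1}$ with $k\geq 2$ lies in the valid range $2\leq j\leq n-1$) to push $\wup{a}{b}$ leftward, obtaining $\Theta\wup{a}{b}=e_1\wup{a}{b}\,\Omega$. Then \eqref{eq:e1wup} gives $e_1\wup{a}{b}=e_1e_2\cdots e_{n-1}\wdo{a}{b}$, and pushing $\wdo{a}{b}$ rightward through $\Omega=e_2e_4\cdots e_{n-2}$ using the rewritten form $\wdo{a}{b}e_i=e_{i-1}\wdo{a}{b}$ of \eqref{eq:ejwdo} yields
\[
\Theta\wup{a}{b}=(e_1e_2\cdots e_{n-1})(e_1e_3\cdots e_{n-3})\,\wdo{a}{b}.
\]
It thus suffices to establish the Temperley-Lieb identity $(e_1e_2\cdots e_{n-1})(e_1e_3\cdots e_{n-3})=\Theta$. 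For \eqref{eq:wupThetaapp} I would argue by the horizontal mirror: write $\Theta=(e_1e_3\cdots e_{n-3})e_{n-1}$, push $\wup{a}{b}$ rightward through $e_1e_3\cdots e_{n-3}$ via $\wup{a}{b}e_i=e_{i+1}\wup{a}{b}$ from \eqref{eq:ejwup} to reach $\wup{a}{b}\Theta=\Omega\wup{a}{b}e_{n-1}$, apply \eqref{eq:wupen1} to replace $\wup{a}{b}e_{n-1}$ by $\wdo{a}{b}(e_1e_2\cdots e_{n-1})$, and push $\wdo{a}{b}$ leftward through $\Omega$ via \eqref{eq:ejwdo}. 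This gives $\wup{a}{b}\Theta=\wdo{a}{b}\,(e_3e_5\cdots e_{n-1})(e_1e_2\cdots e_{n-1})$, and the remaining task is the mirror identity $(e_3e_5\cdots e_{n-1})(e_1e_2\cdots e_{n-1})=\Theta$.

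The main obstacle is thus verifying these two Temperley-Lieb identities, for which the careful index-tracking of the absorption steps is the only real work. I would handle the first by iteratively processing the factors of the right-hand word from left to right: the initial $e_1$ commutes past $e_{n-1},e_{n-2},\ldots,e_3$ by \eqref{eq:eiej} and is killed by $e_1e_2e_1=e_1$ from \eqref{eq:TLreduce}, leaving $e_1\cdot e_3e_4\cdots e_{n-1}\cdot(e_3e_5\cdots e_{n-3})$; the next $e_3$ similarly commutes past $e_{n-1},\ldots,e_5$ and is killed by $e_3e_4e_3=e_3$, leaving $e_1e_3\cdot e_5e_6\cdots e_{n-1}\cdot(e_5e_7\cdots e_{n-3})$; and so on. After $\tfrac{n}{2}-1$ such absorption steps the expression collapses to $e_1e_3e_5\cdots e_{n-1}=\Theta$. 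The second identity reduces by the mirror-symmetric computation, and the edge case $n=2$ (where $\Omega=\id$ and the TL identities are trivial) is handled by exactly the same chain of steps.
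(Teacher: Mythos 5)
Your proof is correct and follows essentially the same route as the paper's: both hinge on \eqref{eq:e1wup} (resp.\ \eqref{eq:wupen1}) to trade $\wup{a}{b}$ for $\wdo{a}{b}$, and then collapse the leftover Temperley--Lieb word using \eqref{eq:eiej} and \eqref{eq:TLreduce}. The only difference is cosmetic: the paper commutes $e_1$ to the right end of $\Theta$ and never moves the odd generator, whereas you shuttle $\wup{a}{b}$ and $\wdo{a}{b}$ through half of $\Theta$ via \eqref{eq:ejwup}/\eqref{eq:ejwdo}, landing on the mirror-image Temperley--Lieb identity, which is verified by the same commute-and-absorb computation.
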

\begin{proof}
We have
\begin{align}
\Theta \wup{a}{b} &= e_1e_3e_5 \dots e_{n-1} \wup{a}{b} &&\text{by definition of }\Theta \\
&= e_3e_5 \dots e_{n-1} e_1\wup{a}{b} &&\text{by \eqref{eq:eiej}, repeatedly} \\
&= (e_3e_5 \dots e_{n-1})(e_1e_2e_3e_4e_5\dots e_{n-2}e_{n-1}) \wdo{a}{b} &&\text{by \eqref{eq:e1wup}} \\
&= e_1 (e_3e_2e_3)(e_5e_4e_5) \dots (e_{n-1}e_{n-2}e_{n-1}) \wdo{a}{b} &&\text{by \eqref{eq:eiej}, repeatedly} \\
&= e_1e_3e_5 \dots e_{n-1} \wdo{a}{b} &&\text{by \eqref{eq:TLreduce}} \\
&= \Theta \wdo{a}{b} &&\text{by definition of }\Theta.
\end{align}
This proves \eqref{eq:Thetawupapp}; \eqref{eq:wupThetaapp} is proven analogously.
\end{proof}

\begin{lemma}
For $n$ even,
\begin{align}
\wup{a}{b}\Theta \wup{c}{d} = \fup{a}{c} \fdo{b}{d} \Omega. 
\end{align}
\end{lemma}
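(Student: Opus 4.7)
The plan is to peel off the last factor $e_{n-1}$ of $\Theta$ and then invoke the already-proven identity \eqref{eq:wupen1wdoapp}, namely $\wup{a}{b} e_{n-1} \wdo{c}{d} = \fup{a}{c}\fdo{b}{d}$. The only preparatory work is to turn the right-hand $\wup{c}{d}$ into a $\wdo{c}{d}$ (so that \eqref{eq:wupen1wdoapp} applies) and to slide $\wdo{c}{d}$ through the remaining odd-indexed $e$'s, which is exactly what generates the $\Omega$ on the right-hand side.

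Concretely, I would first apply \eqref{eq:Thetawupapp} to rewrite
\[
\wup{a}{b}\Theta\wup{c}{d} \;=\; \wup{a}{b}\Theta\wdo{c}{d}.
\]
Next, since $\Theta = e_1 e_3 e_5\cdots e_{n-1}$ is a product of $e_{2k-1}$ with indices pairwise at distance $\geq 2$, relation \eqref{eq:eiej} lets me commute $e_{n-1}$ to the front:
\[
\Theta \;=\; e_{n-1}\,(e_1 e_3 e_5 \cdots e_{n-3}).
\]
Then I would repeatedly apply \eqref{eq:ejwdo}, which gives $e_{2k-1}\wdo{c}{d} = \wdo{c}{d} e_{2k}$ for each $1 \le k \le \tfrac{n}{2}-1$, to carry $\wdo{c}{d}$ leftward past $e_1 e_3 \cdots e_{n-3}$, producing
\[
(e_1 e_3 \cdots e_{n-3})\wdo{c}{d} \;=\; \wdo{c}{d}(e_2 e_4 \cdots e_{n-2}) \;=\; \wdo{c}{d}\,\Omega.
\]

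Putting these two steps together yields
\[
\wup{a}{b}\Theta\wdo{c}{d} \;=\; \wup{a}{b}\,e_{n-1}\,\wdo{c}{d}\,\Omega,
\]
and a single application of the appendix lemma \eqref{eq:wupen1wdoapp} collapses the leading three factors to $\fup{a}{c}\fdo{b}{d}$, giving the claimed identity $\fup{a}{c}\fdo{b}{d}\,\Omega$.

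There is no real obstacle here: every step is a commutation, and the only nontrivial input is the already-established relation \eqref{eq:wupen1wdoapp}. The small things to double-check are that $n$ being even makes $\Omega = e_2 e_4 \cdots e_{n-2}$ have exactly the factors produced by the shifts $2k-1 \mapsto 2k$, and that each application of \eqref{eq:ejwdo} stays within its allowed range $1 \le j \le n-2$ (which it does, since the largest index we move past is $n-3$).
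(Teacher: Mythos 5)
Your proof is correct, but it takes a genuinely different route from the paper's. The paper keeps both odd generators as $\wup{a}{b}$ and $\wup{c}{d}$: it slides $e_3e_5\cdots e_{n-1}$ rightward past $\wup{c}{d}$ via \eqref{eq:ejwup} to produce $\Omega$, converts $\wup{a}{b}e_1$ to $e_2\wup{a}{b}$, expands $\wup{a}{b}\wup{c}{d}$ by the defining relation \eqref{eq:wupwup} into $\fup{a}{c}e_1e_2\cdots e_{n-1}\fdo{b}{d}$, and then spends several lines of Temperley--Lieb commutations and reductions \eqref{eq:eiej}, \eqref{eq:TLreduce} collapsing the surplus $e$'s. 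You instead convert $\wup{c}{d}$ to $\wdo{c}{d}$ via the derived relation \eqref{eq:Thetawup}, peel $e_{n-1}$ off $\Theta$, slide $\wdo{c}{d}$ leftward via \eqref{eq:ejwdo} to manufacture $\Omega$, and finish with one application of \eqref{eq:wupen1wdo}. Your version is shorter because it outsources the TL bookkeeping to two appendix lemmas that the paper has already established at that point (so there is no circularity); the paper's version is more self-contained in that it invokes only the defining relations of $\A_n(X)$ rather than other derived identities. All of your range checks are right, including the edge case $n=2$ where $\Omega=\id$ and $\Theta=e_1=e_{n-1}$.
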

\begin{proof}
We have
\begin{align}
\wup{a}{b}\Theta \wup{c}{d} &= \wup{a}{b}e_1e_3e_5\dots e_{n-1} \wup{c}{d} &&\text{by def. of $\Theta$}\\
&= \wup{a}{b} e_1 \wup{c}{d} e_2e_4\dots e_{n-2} &&\text{by \eqref{eq:ejwup}} \\
&= e_2 \wup{a}{b}\wup{c}{d} e_2e_4\dots e_{n-2} &&\text{by \eqref{eq:ejwup}} \\
&= e_2 \fup{a}{c}e_1e_2e_3 \dots e_{n-2}e_{n-1}\fdo{b}{d} e_2e_4\dots e_{n-2} &&\text{by \eqref{eq:wupwup}} \\
&= e_2 \fup{a}{c}e_1(e_2e_3 \dots e_{n-2}e_{n-1}) (e_2e_4 \dots e_{n-4}e_{n-2}) \fdo{b}{d} &&\text{by \eqref{eq:fdoej}}\\
&= e_2 \fup{a}{c}e_1(e_2e_3e_2)(e_4e_5e_4) \dots (e_{n-4}e_{n-3}e_{n-4})(e_{n-2}e_{n-1}e_{n-2})  \fdo{b}{d} &&\text{by \eqref{eq:eiej}}\\
&= e_2 \fup{a}{c}e_1(e_2e_4 \dots e_{n-4}e_{n-2})  \fdo{b}{d} &&\text{by \eqref{eq:TLreduce}} \\
&= \fup{a}{c} e_2e_1e_2 e_4 \dots e_{n-4}e_{n-2} \fdo{b}{d} &&\text{by \eqref{eq:fupej}} \\
&= \fup{a}{c} e_2e_4 \dots e_{n-4}e_{n-2} \fdo{b}{d} &&\text{by \eqref{eq:TLreduce}} \\
&= \fup{a}{c}\fdo{b}{d} e_2e_4 \dots e_{n-4}e_{n-2} &&\text{by \eqref{eq:fdoej}} \\
&= \fup{a}{c} \fdo{b}{d} \Omega &&\text{by def. of $\Omega$}.
\end{align}
\end{proof}

\begin{lemma}
For $n$ even,
\begin{align}
    e_1 \wup{a}{b} \Omega &= \Theta \wup{a}{b}, \\
    e_{n-1} \wdo{a}{b} \Omega &= \Theta \wdo{a}{b}.
\end{align}
\end{lemma}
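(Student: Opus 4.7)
The plan is to reduce both identities to direct applications of the commutation relations \eqref{eq:ejwup}, \eqref{eq:ejwdo}, and \eqref{eq:eiej}, by sliding $\wup{a}{b}$ or $\wdo{a}{b}$ across the even-indexed generators in $\Omega = e_2 e_4 \cdots e_{n-2}$ one at a time.

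For the first identity, I would start from $e_1 \wup{a}{b}\, e_2 e_4 \cdots e_{n-2}$ and apply \eqref{eq:ejwup} rewritten as $\wup{a}{b}\, e_j = e_{j+1}\wup{a}{b}$, which is valid for $1 \le j \le n-2$. Pushing $\wup{a}{b}$ past each factor of $\Omega$ in turn converts $e_{2k}$ into $e_{2k+1}$, so after $n/2 - 1$ applications the expression becomes $e_1 e_3 e_5 \cdots e_{n-1} \wup{a}{b}$, which equals $\Theta \wup{a}{b}$ by the definition of $\Theta$.

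For the second identity, I would start from $e_{n-1}\wdo{a}{b}\, e_2 e_4 \cdots e_{n-2}$ and use \eqref{eq:ejwdo} in the form $\wdo{a}{b}\, e_j = e_{j-1}\wdo{a}{b}$, valid for $2 \le j \le n-1$, to push $\wdo{a}{b}$ rightwards, converting each $e_{2k}$ into $e_{2k-1}$. This yields $e_{n-1} e_1 e_3 \cdots e_{n-3} \wdo{a}{b}$. It then remains to move $e_{n-1}$ across $e_1, e_3, \ldots, e_{n-3}$; since for each odd $2k-1$ with $k \le n/2 - 1$ we have $|(n-1) - (2k-1)| = n - 2k \ge 2$, the commutation relation \eqref{eq:eiej} lets $e_{n-1}$ commute past each of these in turn, giving $e_1 e_3 \cdots e_{n-3} e_{n-1}\wdo{a}{b} = \Theta \wdo{a}{b}$.

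Both arguments are mechanical, so there is no conceptual obstacle; the only care needed is to verify that each invoked commutation relation falls within its stated index range, which is immediate from the parity of $n$ and the particular indices appearing in $\Theta$ and $\Omega$.
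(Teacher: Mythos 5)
Your proof is correct and follows essentially the same route as the paper: slide $\wup{a}{b}$ (resp.\ $\wdo{a}{b}$) across the factors of $\Omega$ using \eqref{eq:ejwup} (resp.\ \eqref{eq:ejwdo}), then for the second identity commute $e_{n-1}$ past the lower-index odd generators via \eqref{eq:eiej}. The index-range checks you flag are exactly the ones needed, and they hold as you state.
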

\begin{proof}
We have 
\begin{align}
e_1 \wup{a}{b} \Omega
&= e_1 \wup{a}{b} e_2e_4\dots e_{n-4}e_{n-2} &&\text{by definition of $\Omega$}\\  
&= e_1 e_3e_5\dots e_{n-3}e_{n-1} \wup{a}{b} &&\text{by \eqref{eq:ejwup}} \\ 
&= \Theta \wup{a}{b} &&\text{by definition of $\Theta$}.
\end{align}
We also have
\begin{align}
e_{n-1} \wdo{a}{b} \Omega
&= e_{n-1} \wdo{a}{b} e_2e_4\dots e_{n-4}e_{n-2} &&\text{by definition of $\Omega$} \\
&= e_{n-1} e_1e_3\dots e_{n-5}e_{n-3}\wdo{a}{b} &&\text{by \eqref{eq:ejwdo}} \\
&=   e_1e_3\dots e_{n-5}e_{n-3}e_{n-1}\wdo{a}{b} &&\text{by \eqref{eq:eiej}} \\
&=  \Theta \wdo{a}{b} &&\text{by definition of $\Theta$}. 
\end{align}
\end{proof}

\begin{lemma}
For $n=1$,
\begin{align}
\fup{c}{a}\wdo{d}{b} &= \glab{a}{b}\,\fup{c}{d}, \label{eq:fupwdo1app}\\
\fdo{c}{b}\wup{a}{d} &= \glab{a}{b}\,\fdo{c}{d}, \label{eq:fdowup1app}\\
\wup{c}{b}\wup{a}{d} &= \glab{a}{b}\,\wup{c}{d}, \label{eq:wupwup1app}\\
\wdo{a}{d}\wdo{c}{b} &= \glab{a}{b}\,\wdo{c}{d}.\label{eq:wdowdo1app}
\end{align}
\end{lemma}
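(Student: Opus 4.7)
The first and fourth identities follow from short two-step chains. For \eqref{eq:wupwup1app}, the $n=1$ specialisation of \eqref{eq:wupwup}---in which the product $e_1\cdots e_{n-1}$ is empty---gives $\wup{c}{b}\wup{a}{d} = \fup{c}{a}\fdo{b}{d}$, and \eqref{eq:fupfdo1} then collapses this to $\glab{a}{b}\wup{c}{d}$. Symmetrically, the $n=1$ version of \eqref{eq:wdowdo} gives $\wdo{a}{d}\wdo{c}{b} = \fdo{d}{b}\fup{a}{c}$, which reduces to $\glab{a}{b}\wdo{c}{d}$ via \eqref{eq:fdofup1}. No auxiliary labels or delicate manoeuvres are needed in these two cases.

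For \eqref{eq:fupwdo1app}, the strategy is to exploit associativity of a carefully chosen triple product. Fix auxiliary labels $x, y \in X$ and expand $\fup{c}{a}\,\fdo{b}{y}\,\fup{x}{d}$ in two ways. Collapsing the right-hand pair $\fdo{b}{y}\fup{x}{d}$ first via \eqref{eq:fdofup1} rewrites the triple as $\glab{x}{y}\,\fup{c}{a}\,\wdo{d}{b}$, whereas collapsing the left-hand pair $\fup{c}{a}\fdo{b}{y}$ first via \eqref{eq:fupfdo1} and then applying \eqref{eq:wupfup1} to the resulting $\wup{c}{y}\fup{x}{d}$ rewrites it as $\glab{a}{b}\glab{x}{y}\,\fup{c}{d}$. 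Equating the two forms yields
\[
\glab{x}{y}\bigl(\fup{c}{a}\wdo{d}{b} - \glab{a}{b}\fup{c}{d}\bigr) = 0 \quad \text{in } \A_1(X)
\]
for every $x, y \in X$, and cancelling the common factor $\glab{x}{y}$ gives the result. The proof of \eqref{eq:fdowup1app} is the symmetric argument applied to the triple $\fdo{c}{b}\,\fup{a}{y}\,\fdo{x}{d}$, using \eqref{eq:fdofup1}, \eqref{eq:wdofdo1}, and \eqref{eq:fupfdo1} in place of the three relations above.

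The main obstacle will be justifying the final cancellation of $\glab{x}{y}$: we have $\glab{x}{y}Z = 0$ for every pair $(x,y)$, and wish to conclude $Z = 0$. Since the parameters are indeterminates by convention, $\glab{x}{y}$ is a non-zero-divisor in the scalar ring $\C[\text{parameters}]$, so the cancellation is legitimate provided $\A_1(X)$ is torsion-free as a module over this ring. The cleanest route is to pass to the localisation at the multiplicative set generated by the parameters---where $\glab{x}{y}$ is invertible and cancellation is immediate---and to observe that none of the defining relations of $\A_n(X)$ introduces $\glab$-torsion, so the integral form injects into the localisation and the identity descends. This is the one step that requires care to keep independent of the main isomorphism theorem, since these very identities feed into the proofs of the spanning-set results used later in the paper.
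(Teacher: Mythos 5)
Your treatment of the third and fourth identities coincides with the paper's: specialise \eqref{eq:wupwup} and \eqref{eq:wdowdo} to $n=1$, where the $e$-products are empty, then apply \eqref{eq:fupfdo1} and \eqref{eq:fdofup1}. That part is fine. For the first two identities, however, you have overlooked a much shorter route, which is the one the paper takes: $n=1$ is odd, so the defining relations \eqref{eq:fupEwdoE} and \eqref{eq:fdoOwupO} apply, and for $n=1$ both $E$ and $O$ are empty products, hence equal to $\id$. Those relations then read $\fup{c}{a}\wdo{d}{b}=\glab{a}{b}\,\fup{c}{d}$ and $\fdo{c}{b}\wup{a}{d}=\glab{a}{b}\,\fdo{c}{d}$ verbatim; the first two identities are literally defining relations of $\A_1(X)$ and need no computation at all.

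Your substitute argument for those two identities has a genuine gap at the step you yourself flag. The two expansions of the triple product are computed correctly and yield $\glab{x}{y}\bigl(\fup{c}{a}\wdo{d}{b}-\glab{a}{b}\,\fup{c}{d}\bigr)=0$, but cancelling $\glab{x}{y}$ requires knowing that $\A_1(X)$ has no $\glab{x}{y}$-torsion over the parameter ring, and your justification --- that none of the defining relations ``introduces $\glab$-torsion'', so the algebra injects into its localisation --- is precisely the assertion that needs proof, not an observation. A presented algebra over a polynomial ring can acquire torsion from interactions among its relations, and the standard way to exclude this is to exhibit a basis; here a basis only becomes available once $\phi$ is proven to be an isomorphism, yet these identities feed into the spanning-set and injectivity arguments that establish that isomorphism, so the route is circular. (If one insists on working over the field of rational functions in the parameters the cancellation is harmless, but then the whole localisation discussion is vacuous.) Either way, the direct specialisation of \eqref{eq:fupEwdoE} and \eqref{eq:fdoOwupO} is the argument to use.
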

\begin{proof}
For \eqref{eq:fupwdo1app}, using \eqref{eq:fupEwdoE} and that $E=\id$ for $n=1$,
\begin{align}
\fup{c}{a}\wdo{d}{b} = \fup{c}{a}\, E\, \wdo{d}{b}\, E = \glab{a}{b}\, \fup{c}{d}\, E = \glab{a}{b}\, \fup{c}{d}.
\end{align}
Similarly, \eqref{eq:fdowup1app} is proven using \eqref{eq:fdoOwupO} and $O=\id$ for $n=1$.

For \eqref{eq:wupwup1app}, we have from \eqref{eq:wupwup} and \eqref{eq:fupfdo1} that
\begin{align}
\wup{c}{b}\wup{a}{d} &= \fup{c}{a}\fdo{b}{d} = \glab{a}{b} \wup{c}{d}.
\end{align}
Similarly, \eqref{eq:wdowdo1app} is proven using \eqref{eq:wdowdo} and \eqref{eq:fdofup1}.
\end{proof}

\vspace{10mm}
\noindent \textbf{Madeline Nurcombe} (\texttt{madeline.nurcombe@sydney.edu.au})

\noindent \textit{School of Mathematics and Statistics, University of Sydney}

\noindent \textit{Camperdown, Sydney, New South Wales 2006, Australia}

\end{document}